\newcommand{\partialt}{\frac{\partial}{\partial t}}
\newcommand{\partialb}{\bar \partial}
\def\paragraph{\@startsection{paragraph}{4}%
	\z@\z@{-\fontdimen2\font}%
	{\normalfont\bfseries}}
\newcommand{\definetitlefootnote}[1]{%
	\newcommand\addtitlefootnote{%
		\makebox[0pt][l]{$^{*}$}%
		\footnote{\protect\@titlefootnotetext}
	}%
	\newcommand\@titlefootnotetext{\spaceskip=\z@skip $^{*}$#1}%
}
\begin{document}
	
			\newtheorem{lemma}{Lemma}[section]
	\newtheorem{prop}[lemma]{Proposition}
	
	\newtheorem{theorem}[lemma]{Theorem}
	\newtheorem{corollary}[lemma]{Corollary}
	\newtheorem{theoremintro}{Theorem}
	\theoremstyle{definition}
	\newtheorem{defi}[lemma]{Definition}
	\newtheorem{example}[lemma]{Example}
	\newtheorem*{claim*}{Claim}
	\newtheorem*{notation}{Notation}
	\newtheorem{assumption}[lemma]{Assumption}

	\newtheorem{remark}[lemma]{Remark}
	
	
	\title[ACyl K\"ahler-Ricci solitons]{Asymptotically cylindrical steady K\"ahler-Ricci solitons}
	\author{Johannes Sch\"afer}

	\begin{abstract}

  Let $D$ be a compact K\"ahler manifold with trivial canonical bundle and $\Gamma$ be a finite cyclical group of order $m$ acting on $\mathbb{C} \times D$ by biholomorphisms, where the action on the first factor is generated by rotation of angle $2\pi /m$. Furthermore, suppose that $\Omega_D$ is a trivialisation of the canonical bundle such that $\Gamma$ preserves the holomorphic form $dz \wedge \Omega_D$ on $\mathbb C \times D$, with $z$ denoting the coordinate on $\mathbb{C}$.

  The main result of this article is the construction of new examples of gradient steady K\"ahler-Ricci solitons on certain crepant resolutions of the orbifolds $\left(  \mathbb{C}\times D \right) / \Gamma$. These new solitons converge exponentially  to a Ricci-flat cylinder $\mathbb{R} \times(\mathbb{S}^1 \times D) / \Gamma$.
	\end{abstract}

	\maketitle

	\section{Introduction}

A \textit{steady Ricci soliton} is a Riemannian manifold $(M,g)$ together with a vector field $X$ such that
\begin{align}\label{intro real soliton equation}
	\operatorname{Ric}(g) = \frac{1}{2} \mathcal{L}_X g,
\end{align}
where $\operatorname{Ric}(g)$ denotes the Ricci tensor of $g$ and $\mathcal{L}_X$ is the Lie derivative in direction of $X$. The soliton $(M,g,X)$ is called \textit{gradient} if $X$ is the gradient field of some function on $M$. 

If $(M,g)$ is K\"ahler and the vector field $X$  real holomorphic, equation (\ref{intro real soliton equation}) is equivalent to 
\begin{align}\label{intro: equation steady KR soliton}
	\operatorname{Ric}(\omega) = \frac{1}{2} \mathcal{L}_X \omega,
\end{align}
where $\omega$ is the K\"ahler form of $g$ and $\operatorname{Ric}(\omega)$ the corresponding Ricci form.
A K\"ahler manifold $(M,g)$ which admits a real holomorphic vector field $X$ satisfying (\ref{intro: equation steady KR soliton}) is called a \textit{steady K\"ahler-Ricci soliton}.

 Steady solitons may be viewed as natural generalisations of Einstein manifolds, which correspond to the case $X\equiv0$. Non-Einstein steady solitons, however, must be non-compact (\cite{ivey1993ricci}). 
 
To each steady Ricci soliton $(M,g,X)$ one can associate a self-similar Ricci-flow by rescaling and pulling back $g$ along the flow of $X$. Thus, steady solitons may be possible candidates for singularity models for Ricci-flow. They  are also  important in the context of so-called Type II singularities, i.e. when a  Ricci-flow exists up to the finite time $T>0$, and the curvature  blows up   faster than $(T-t)^{-1}$. For recent progress in the study of singularities as well as steady Ricci solitons, we refer the reader to \cite{bamler2021four}, \cite{chow2020four}, \cite{bamler2020entropy}, \cite{chow2020curvature}, \cite{deng2020classification}, and the references therein. 

This article focuses on the case of steady K\"ahler-Ricci solitons, and our main result is  the existence of a new class of such solitons. 
In contrast to general Ricci-solitons, it suffices to solve a single equation of top-dimensional differential forms in order to construct a gradient K\"ahler-Ricci soliton. If $M$ is a 
 complex manifold  of (complex) dimension $n$, together with a nowhere-vanishing holomorphic $(n,0)$-form $\Omega$, and  a K\"ahler metric $g$ whose K\"ahler form $\omega$ satisfies
\begin{align}\label{in intro: equation of volume forms}
	\omega ^n = e^{-f} i^{n^2} \Omega \wedge \overline{\Omega}
\end{align}
for some function $f:M \to \mathbb{R}$, then $(M,g,\nabla^g f)$ defines a gradient steady K\"ahler-Ricci soliton. In fact, if $M$ is simply-connected, then one can always associate such a form $\Omega$ to a gradient steady K\"ahler-Ricci soliton, compare \cite{bryant2008gradient}[Theorem 1].

However, given $M$ and  a nowhere-vanishing holomorphic $n$-form $\Omega$ on $M$ it is not known if $M$ admits a steady soliton, i.e. there is no general existence theory for steady K\"ahler-Ricci solitons as is the case for compact Ricci-flat K\"ahler manifolds due to Yau \cite{yau1978ricci}. 

All previously known examples of steady K\"ahler-Ricci solitons may be divided into two classes. The first group consists of solitons constructed by reducing (\ref{intro: equation steady KR soliton}) to an ODE, for instance by Hamilton \cite{hamilton1988ricci}, Cao \cite{Caosoliton}, Dancer and Wang \cite{dancer2011ricci}, Yang \cite{yang2012characterization} and the author \cite{schafer2020existence}. Most notably, we mention Hamilton's cigar on $\mathbb{C}$ (\cite{hamilton1988ricci}) and Cao's soliton on $\mathbb{C}^n$ for $n\geq 2$ (\cite{Caosoliton}). The cigar is asymptotic to the cylinder $dt^2+ d\theta ^2$ on the product $\mathbb{R} \times \mathbb{S}^1 \cong \mathbb C^*$, whereas Cao's soliton has a more complicated asymptotic behavior. (It is a so-called cigar-paraboloid whose precise asymptotics are explained in \cite{conlon2020steady}[Section 3].)

The second group of examples are constructed by PDE methods (\cite{Biquard17}, \cite{conlon2020steady}). Here, the underlying complex manifolds are equivariant, crepant resolutions of certain orbifolds $\mathbb{C}^n / G$ (\cite{Biquard17}) and of more general Calabi-Yau cones (\cite{conlon2020steady}). In both cases, the solitons have an asymptotic behavior similar to Cao's soliton.

In this article, we build on ideas developed in \cite{conlon2020steady} and find new examples of steady K\"ahler-Ricci solitons which are asymptotic to a product $\mathbb{C}\times D$ of Hamilton's cigar and a compact Ricci-flat K\"ahler manifold $D$. (Note that this product is also a steady K\"ahler-Ricci soliton.) These new examples  exist on resolutions $\pi : M \to \left( \mathbb{C} \times D  \right) / \Gamma$ of certain orbifolds $\left( \mathbb{C} \times D  \right) / \Gamma$. Before introducing the precise conditions  on $D,\Gamma$ and $M$,  consider the following example.
\begin{example} \label{in intro: example}
	Let $D= \mathbb{T}$ be the (real) 2-torus and let $\Gamma=\{\pm \operatorname{Id} \} $. Then $\left(\mathbb{C} \times \mathbb{T}  \right)/\Gamma$ has precisely four singular points, each isomorphic to a neighborhood of the origin in $\mathbb{C}^2/ \{ \pm \operatorname{Id}\}$. Thus, we may blow-up each of these singular points to obtain a resolution $\pi :M \to \left(\mathbb{C} \times \mathbb{T}  \right)/\Gamma$.   (Note that previously,  certain Calabi-Yau metrics, so-called ALG gravitational instantons, were constructed on this resolution, see \cite{biquard2011kummer}.)
\end{example}

This resolution $\pi : M \to \left( \mathbb{C} \times \mathbb{T}  \right) / \Gamma$ satisfies three essential properties. First, the resolution is crepant, i.e. the  holomorphic (2,0)-form $\Omega$ on $\left(  \mathbb{C}^* \times \mathbb{T}  \right)/ \Gamma$, which lifts to the canonical form  $dz_1\wedge dz_2$ on $\mathbb{C}^2 $, extends to a nowhere-vanishing form on the entire resolution $M$.   

Second, the $\mathbb{C}^*$-action on $\left(  \mathbb{C}^* \times \mathbb{T}  \right)/ \Gamma$ given by 
	\begin{align*}
	\lambda * (z,w)= (\lambda z, w), \;\;\; \lambda \in \mathbb{C}^*,
\end{align*}
extends $\pi$-equivariantly to a holomorphic action on $M$, since the resolution is toric. In particular, the infinitesimal generator $z_1 \frac{\partial }{\partial z_1}$ on $(\mathbb{C}^* \times \mathbb{T})/ \Gamma$ extends to a holomorphic vector field $Z$ on $M$. 

And third, $M$ admits a natural complex compactification $\overline M$ obtained by adding the  divisor $\overline{\mathbb{T}}:= \mathbb{T}/ \{\pm \operatorname{Id} \} $ `at infinity', i.e. we compactify $\mathbb{C}$ by the Riemann sphere $\mathbb{C} \cup \{\infty\}$ and let $\overline{M}= M \cup \left( \{ \infty\} \times \overline{ \mathbb{T}}    \right) $.
Given a K\"ahler class $\kappa_{\overline{M}} \in H^2(\overline{M},\mathbb{R})$ on $\overline{M}$, it is possible to  construct a new K\"ahler form on $M$ in the class $\kappa_{\overline{M}}|_M \in H^2(M,\mathbb{R})$ that is asymptotic to the cylinder 
\begin{align}\label{in intro: the cylinder}
	|z_1|^{-2} \frac{i}{2} dz_1\wedge d \bar z_1 + \frac{i}{2} dz_2\wedge d \bar z_2.
\end{align}
(This construction follows by adapting ideas from the case of asymptotically cylindrical Calabi-Yau manifolds \cite{haskins2015asymptotically}.) 

Thus, one may ask if there exists a steady K\"ahler-Ricci soliton on $M$ which is asymptotic to the cylinder (\ref{in intro: the cylinder}), whose K\"ahler form is contained in the class $\kappa_{\overline{M}}|_M$ and whose  soliton vector field  equals  the real part of $Z$.
This is indeed a non-trivial question, because $M$ is \textit{not}  a product, but a resolution of the orbifold $\left(\mathbb{C} \times \mathbb{T}  \right) / \Gamma$.

 Our main result (Theorem \ref{geometric existence theorem in introduction}), however, implies that  $M$ \textit{does} admits such solitons. In fact, Theorem \ref{geometric existence theorem in introduction} proves the existence of steady K\"ahler-Ricci solitons for a more general setup:

\begin{theorem} \label{geometric existence theorem in introduction}
	Let $D^{n-1}$ be a compact K\"ahler manifold with nowhere-vanishing holomorphic $(n-1,0)$-form $\Omega_D$. Suppose $\gamma: D \to D$ is a complex automorphism of order $m>1$ such that 
	\begin{align*} 
		\gamma^* \Omega_D = e^{-\frac{2\pi i}{m}} \Omega_D,
	\end{align*} 
	and consider the orbifold $(\mathbb{C} \times D ) / \langle \gamma \rangle$, where $\gamma$ acts on the product via 
	\begin{align*}
		\gamma(z,w)= \left( e^{\frac{2\pi i }{m}}z ,\gamma(w)  \right).
	\end{align*} 
	Let $\pi : M \to (\mathbb{C} \times D ) / \langle \gamma \rangle$ be a crepant resolution   such that the $\mathbb{C}^*$-action on $(\mathbb{C} \times D ) / \langle \gamma \rangle$ given by 
	\begin{align*}
		\lambda * (z,w)= (\lambda z, w), \;\;\; \lambda \in \mathbb{C}^*,
	\end{align*}
	extends $\pi$-equivariantly to a holomorphic action of  $\mathbb{C}^*$ on $M$. 
	
	Let $\overline{M}= M \cup \overline D$ be the complex compactification of $M$ by adding the orbifold divisor $\overline{D}:= D / \langle \gamma \rangle$ at infinity. 
	Then for every orbifold K\"ahler class $\kappa_{\overline{M}}$ on $\overline M$, there exists a steady K\"ahler-Ricci soliton on $M$ whose K\"ahler form is contained in the class $\kappa_{\overline{M}}|_M \in H^2(M,\mathbb R)$. 
\end{theorem}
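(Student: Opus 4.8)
The plan is to reduce the geometric statement to an analytic existence result for the complex Monge–Ampère equation of soliton type, following the strategy for asymptotically cylindrical (ACyl) Calabi–Yau manifolds but adapted to the steady soliton setting with the extra vector field $Z$. First I would fix a \emph{background K\"ahler form} $\omega_0$ on $M$ in the class $\kappa_{\overline M}|_M$ that is exponentially asymptotic, with all covariant derivatives, to the cylindrical model metric $\omega_{\mathrm{cyl}}$ induced on $\mathbb R\times(\mathbb S^1\times D)/\langle\gamma\rangle$ by (the analogue of) \eqref{in intro: the cylinder}; such an $\omega_0$ exists by a Hodge-theoretic gluing argument on $\overline M$, since $\kappa_{\overline M}$ restricts to the cohomology class of the cylindrical K\"ahler form near infinity. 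Because the resolution is crepant, the holomorphic $(n,0)$-form $\Omega$ (pulled back from $dz\wedge\Omega_D$ on $\mathbb C\times D$, which is $\langle\gamma\rangle$-invariant by hypothesis on $\gamma^*\Omega_D$) extends to a nowhere-vanishing form on $M$, so one can write $\omega_0^n = e^{F_0} i^{n^2}\Omega\wedge\overline\Omega$ for a smooth real function $F_0$ on $M$, and $F_0$ will converge exponentially to a constant along the cylinder because the cylindrical metric is Ricci-flat.

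Next I would set up the soliton equation. Writing $\xi=\mathrm{Im}(Z)$ (or $\mathrm{Re}(Z)$, matching the normalization in the paper) and recalling that $Z$ extends holomorphically to $M$, the sought K\"ahler form $\omega=\omega_0+i\partial\bar\partial\varphi$ should satisfy
\begin{align}\label{plan: MA}
	(\omega_0+i\partial\bar\partial\varphi)^n = e^{-\,\theta_Z(\omega_0+i\partial\bar\partial\varphi)\,+\,F_0}\, i^{n^2}\Omega\wedge\overline\Omega,
\end{align}
where $\theta_Z(\omega)$ is a Hamiltonian-type potential for $Z$ with respect to $\omega$, chosen so that $-\iota_Z\omega = d(\text{something})$; equivalently, after the standard manipulation, this is a Monge–Ampère equation for $\varphi$ with a first-order term coming from the Lie derivative $\mathcal L_\xi$. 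The point is that a solution $\varphi$ of \eqref{plan: MA} produces, via \eqref{in intro: equation of volume forms} applied with $f = \theta_Z(\omega) - F_0 + \text{const}$, a gradient steady K\"ahler-Ricci soliton with soliton field (the real part of) $Z$. One must check that the ansatz is compatible with the $\mathbb C^*$-action and the cylindrical end: the model cylinder \eqref{in intro: the cylinder} is itself the product of Hamilton's cigar (with its soliton vector field $z_1\partial_{z_1}$) and the Ricci-flat $D$, so near infinity \eqref{plan: MA} is solved by $\varphi\equiv 0$ up to exponentially small error.

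Then I would solve \eqref{plan: MA} by a continuity method / fixed-point argument in weighted H\"older (or weighted Sobolev) spaces $C^{k,\alpha}_\delta$ on $M$ with small exponential weight $e^{\delta t}$, $\delta>0$ small, measuring decay along the cylinder. The two analytic inputs are: (i) a priori estimates for the linearized operator, which is a perturbation of $\Delta_{\omega_0} + \xi$ (Laplacian plus a drift term), and whose model at infinity on the cylinder is invertible on $C^{k,\alpha}_\delta$ for $\delta$ in a suitable range avoiding the indicial roots — here the drift term from the cigar factor actually helps, contributing a zeroth-order coercive piece exactly as in \cite{conlon2020steady}; and (ii) a priori $C^0$, $C^2$ and higher estimates for \eqref{plan: MA} itself, using the maximum principle (the exponential nonlinearity $e^{-\theta_Z+F_0}$ has the right monotonicity, since $\theta_Z$ enters with a sign that makes the equation "parabolic-like") together with standard complex Monge–Ampère estimates, and a barrier/decay argument to show the solution lies in the weighted space. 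Combining openness (implicit function theorem using invertibility of the linearization) with closedness (the a priori estimates) along the path interpolating $F_0$ to $0$ yields a solution $\varphi\in C^\infty_\delta(M)$.

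The main obstacle I expect is step (i)–(ii) together: establishing that the linearized operator is an isomorphism between the weighted spaces and that the nonlinear estimates close up \emph{uniformly along the continuity path}, in the noncompact ACyl setting where one cannot integrate by parts freely and where the cross-term between the Monge–Ampère nonlinearity and the drift $\xi$ must be controlled. In particular, one has to pin down the indicial roots of the cylindrical model operator $\Delta_{\mathrm{cyl}} + \xi_{\mathrm{cyl}}$ — these govern which weights $\delta$ are admissible — and verify that $0$ is not an obstruction, i.e. that the relevant cokernel (which on a genuine ACyl Calabi-Yau would be spanned by bounded harmonic functions / related to $H^0$ and $H^1$ of the cross-section) vanishes or is accounted for by the freedom in the constant in $f$. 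The cohomological bookkeeping — ensuring $\kappa_{\overline M}|_M$ really does contain a cylindrical-end representative and that no topological obstruction from $H^2(\overline D)$ or from the orbifold structure of $\overline D$ intervenes — is the other place where care is needed, though I expect it to be routine once the ACyl package from \cite{haskins2015asymptotically} is imported and combined with the crepancy of $\pi$.
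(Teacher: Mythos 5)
Your plan follows essentially the same route as the paper: an ACyl background K\"ahler form in $\kappa_{\overline M}|_M$ obtained by a gluing/potential argument from the compactification (with the cross-section metric on $D$ made Ricci-flat via Yau's theorem so that the model cylinder solves the equation), reduction to a drift Monge--Amp\`ere equation via the crepant $(n,0)$-form $\Omega$, and a continuity method in weighted H\"older spaces whose linearization is the ACyl drift Laplacian $\Delta_g+X$, with the $C^0$-estimate as the crux. The one detail worth flagging is the weight: the paper's $C^0$-estimate (via Tian--Zhu-type functionals) forces the exponential weight into the range $(1,2)$ rather than ``small $\delta>0$'', although since the right-hand side is arranged to be compactly supported in the geometric application this does not affect the outcome.
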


As in Example \ref{in intro: example}, $M$ admits a nowhere-vanishing holomorphic $(n,0)$-form because the resolution is crepant, and the infinitesimal generator of the $\mathbb{C}^*$-action on $M$ provides a candidate for the soliton vector field. Also, the K\"ahler class is determined by the compactification $\overline{M}$ and the resulting K\"ahler-Ricci soliton is asymptotic to the cylinder $dt^2 + d \theta ^2 +g_D $ on the product $ \left(\mathbb{C}^* \times D \right) / \langle \gamma \rangle \cong \mathbb{R} \times \left(\mathbb{S}^1 \times D  \right) /\langle \gamma \rangle $ for some Ricci-flat K\"ahler metric $g_D$ on $D$.

The new examples of steady K\"ahler-Ricci solitons provided by Theorem \ref{geometric existence theorem in introduction} are geometrically different from all previously found examples in complex dimension $n\geq2$. For instance, their volume grows linearly since they are asymptotically cylindrical, while the examples modelled on Cao's soliton in complex dimension $n$  have volume growth equal to $n$, compare \cite{Caosoliton}, \cite{Biquard17} and \cite{conlon2020steady}.

Interestingly, our examples  also seem to be the only (non-Einstein) steady K\"ahler-Ricci solitons   whose asymptotic model is \textit{Ricci-flat}. This contrasts with the fact that Cao's soliton has \textit{positive} Ricci curvature (\cite{Caosoliton}[Lemma 2.2]). 
Moreover, our new examples are $\kappa$-noncollapsed, whereas Cao's soliton and the ones constructed by Conlon-Deruelle are collapsed (compare \cite{deng2018asymptotic}[Appendix]).

The strategy for proving Theorem \ref{geometric existence theorem in introduction} is analogue to the proof of \cite{conlon2020steady}[Theorem A]. We adapt Conlon and Deruelle's ideas to our setting and reduce (\ref{intro: equation steady KR soliton}) to a complex Monge-Amp\`ere equation, whose solution exists by the following result, which is similar to \cite{conlon2020steady}[Theorem 7.1]

\begin{theorem} \label{analytic existence theorem in intro}
	Let $(M,g,J)$ be an asymptotically cylindrical K\"ahler manifold of complex dimension $n$ with K\"ahler form $\omega$.  Suppose that  $M$ admits a real holomorphic vector field $X$ such that 
	\begin{align*}
		X = 2 \Phi_* \frac{\partial}{\partial t}
	\end{align*}
	outside some compact domain, where  $\Phi$ denotes the diffeomorphism onto the cylindrical end of $(M,g)$ and $t$ is the radial parameter on this end.
	 Moreover, assume that $JX$ is Killing for $g$. 
	 
	 If  $1 < \varepsilon<2$  and $F\in C^{\infty}_{\varepsilon}(M)$  is JX-invariant, then there exists a unique, $JX$-invariant $\varphi \in C^{\infty}_{\varepsilon}(M)$ such that $\omega + i \partial \partialb \varphi >0$ and 
	\begin{align*}
		\left( \omega + i \partial \partialb \varphi \right) ^n  = e^{F - \frac{X}{2} (\varphi)} \omega^n .
	\end{align*} 
\end{theorem}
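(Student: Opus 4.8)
The plan is to solve the Monge-Amp\`ere equation by the continuity method, keeping careful track of the weighted spaces $C^\infty_\varepsilon(M)$ and the $JX$-invariance throughout. First I would set up the path of equations
\begin{align*}
	\left( \omega + i\partial\partialb \varphi_s \right)^n = e^{sF - \frac{X}{2}(\varphi_s)} \omega^n, \qquad s \in [0,1],
\end{align*}
and let $S \subseteq [0,1]$ be the set of $s$ for which a solution $\varphi_s \in C^\infty_\varepsilon(M)$ exists that is $JX$-invariant and satisfies $\omega + i\partial\partialb\varphi_s > 0$. Clearly $0 \in S$ (take $\varphi_0 = 0$), so $S$ is nonempty.

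For \emph{openness}, the linearization at a solution $\varphi_s$ is the operator $\psi \mapsto \Delta_{\omega_s}\psi - \frac{X}{2}(\psi)$, where $\omega_s = \omega + i\partial\partialb\varphi_s$. Since $X = 2\Phi_*\partial_t$ on the end and $JX$ is Killing, this is a uniformly elliptic, asymptotically translation-invariant operator; its indicial roots on the cylinder can be computed from the model operator $\partial_t^2 - \partial_t + \Delta_{\text{cross-section}}$, and one checks that $\varepsilon \in (1,2)$ avoids them, so the operator is Fredholm of index zero between the relevant weighted H\"older (or Sobolev) spaces. The drift term $-\frac{X}{2}(\psi) = -\partial_t\psi$ on the end is exactly what kills the cokernel: integrating the equation $\Delta_{\omega_s}\psi - \partial_t\psi = 0$ against $\psi$ with the weight $e^{-\text{(potential)}}$, or invoking a maximum-principle argument at the two ends, forces $\psi \equiv 0$, so the linearization is an isomorphism. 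The inverse function theorem in the $JX$-invariant subspace (which is preserved because $X$ and $\omega$ are $JX$-invariant and $F$ is assumed so) then gives openness.

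For \emph{closedness}, I need a priori estimates in $C^\infty_\varepsilon(M)$ uniform in $s$. The $C^0$-estimate is the crucial new point and is where the drift term helps rather than hinders: writing the equation as $\Delta_{\omega_s}\varphi_s$-type plus the $\frac{X}{2}(\varphi_s)$ term and applying the maximum principle, the asymptotically cylindrical geometry together with $F \in C^\infty_\varepsilon$ (hence $F \to 0$ exponentially at infinity) yields a uniform sup-bound, and then a barrier/comparison argument using $e^{-\delta t}$ for suitable $\delta < \varepsilon$ near the end upgrades this to the decay $|\varphi_s| \le C e^{-\varepsilon t}$. Given the $C^0$-bound, the second-order estimate ($\omega_s$ uniformly equivalent to $\omega$) follows from the standard Aubin--Yau computation adapted to the soliton equation (the extra first-order term is lower order and controlled), the complex version of the Evans--Krylov and Schauder theory gives $C^{2,\alpha}$ and then all higher derivatives, and elliptic regularity in weighted spaces propagates the exponential decay to all derivatives. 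Hence $S$ is closed, so $S = [0,1]$ and a solution at $s=1$ exists; \emph{uniqueness} follows from the maximum principle applied to the difference of two solutions, using convexity of $\log\det$ and the sign of the drift term.

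The main obstacle I anticipate is the weighted a priori $C^0$-estimate with the correct exponential rate $\varepsilon$: one must simultaneously exploit the maximum principle at a possibly interior point and the decay of $F$ at the cylindrical end, and ensure the drift term $-\frac{X}{2}(\varphi)$ (which equals $-\partial_t\varphi$ at infinity) does not destroy the sign needed for the barrier argument. Verifying that $\varepsilon \in (1,2)$ really does avoid the indicial roots of the full operator $\Delta - \partial_t$ on the cross-section $(\mathbb{S}^1 \times D)/\langle\gamma\rangle$ — i.e. that no separated solution $e^{\lambda t}\chi(\theta, w)$ exists with $\mathrm{Re}\,\lambda \in (1,2)$ — is the other point requiring care, but this is a direct eigenvalue computation on the compact cross-section.
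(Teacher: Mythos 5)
Your overall architecture (continuity method, openness via the linearized drift Laplacian in weighted H\"older spaces, closedness via a priori estimates, uniqueness by the maximum principle) matches the paper's, but there is a genuine gap at the heart of the closedness argument: the lower $C^0$-bound on $\varphi_s$. The maximum principle gives the \emph{upper} bound essentially for free, because concavity of $\log\det$ yields $\tfrac12\Delta_g\varphi_s+\tfrac{X}{2}(\varphi_s)\ge sF$ with respect to the \emph{fixed} metric $g$, so $\varphi_s$ can be compared with the solution $u_F$ of $\tfrac12\Delta_g u_F+\tfrac{X}{2}(u_F)=F$. The reverse inequality, however, only holds with respect to the unknown metric $g_{\varphi_s}$, namely $(\Delta_{g_{\varphi_s}}+X)(\varphi_s)\le |F|$, and this cannot be turned into a lower bound by a barrier until one already controls $g_{\varphi_s}$ — which is exactly what one does not yet have. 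The paper (following Conlon--Deruelle) closes this circle with a genuinely different mechanism: a uniform bound on the weighted energy $\int_M \varphi_s^2\, e^f f^{-2}\,dV_g$ obtained from the Tian--Zhu functionals $I_{\omega,X}$, $J_{\omega,X}$ combined with a weighted Poincar\'e inequality and a Gr\"onwall argument, then B\l ocki's local $L^2\to L^\infty$ estimate on a compact core, then a barrier argument outside the core that itself requires a separate uniform bound on $X^2(\varphi_s)$. None of these ingredients appears in your sketch, and without them the step \textquotedblleft the maximum principle yields a uniform sup-bound\textquotedblright{} fails. This is also where the hypothesis $\varepsilon>1$ actually enters: it is needed for the convergence of the weighted functionals (the integrand behaves like $e^{(2-2\varepsilon)t}$), not for avoiding indicial roots, which only requires $0<\varepsilon<2$.

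Two smaller points. First, the sign of your drift term is wrong: linearizing $\log\frac{(\omega+i\partial\bar\partial\varphi)^n}{\omega^n}+\tfrac{X}{2}(\varphi)-sF$ gives $\tfrac12\Delta_{g_{\varphi_s}}u+\tfrac{X}{2}(u)$, whose translation-invariant model is $\partial_t^2+2\partial_t+\Delta_L$, not $\partial_t^2-\partial_t+\Delta_L$; with your sign the critical-weight-free interval sits in $(-2,0)$ rather than $(0,2)$. Second, asserting \textquotedblleft Fredholm of index zero\textquotedblright{} is not justified for asymptotically translation-invariant operators (the index jumps across critical weights); the paper instead identifies the image with the orthogonal complement of the kernel of the formal adjoint in $C^{k,\alpha}_{-\varepsilon}$ and shows that kernel is trivial via the identity $\Delta_f(e^{-f}u)=e^{-f}\Delta_f^*u$ together with $\varepsilon<2$. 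Your injectivity argument for the kernel itself is fine.
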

Note that in this theorem, we do allow more general manifolds than those appearing in Theorem \ref{geometric existence theorem in introduction}. This is because the proof of Theorem \ref{analytic existence theorem in intro} essentially only requires that we have a K\"ahler manifold  $(M,g,J)$, asymptotic to a cylinder (in the sense of Definition \ref{definition ACyl manifold} below) and satisfying two further assumptions: Firstly, we need  the radial vector field on the cylinder to be extended to a real holomorphic vector field on $(M,J)$ and secondly, $JX$ must be an infinitesimal isometry of $g$. We will see in Proposition \ref{proposition when acyl metric is hamiltonian} below that this ensures $X= \nabla^g f$ for some function $f$ with understood asymptotical behavior. 

The spaces $C^{\infty}_\varepsilon (M)$ in Theorem \ref{analytic existence theorem in intro} contain all smooth functions on $M$ whose covariant derivatives (with respect to $g$) decay at least like $e^{-\varepsilon t}$ with $t$ denoting the cylindrical parameter of $(M,g)$ (compare Definitions \ref{definition ACyl manifold} and \ref{definition weighted function spaces}). These function spaces are well-adapted to the cylindrical geometry and have previously been used in the construction of asymptotically cylindrical Calabi-Yau manifolds \cite{haskins2015asymptotically}.

Following the proof of \cite{conlon2020steady}[Theorem 7.1], we also implement a continuity method to conclude Theorem \ref{analytic existence theorem in intro}. To this end, we need to show two things. First, that the linearisation of the Monge-Amp\`ere operator is an isomorphism, which can be deduced from standard results on asymptotically translation invariant differential operators. Second, and most importantly, we have to derive a priori-estimates along the continuity path, where the $C^0$-estimate is the key part of the proof. To obtain this estimate, we adapt the $C^0$-estimate of Conlon and Deruelle (\cite{conlon2020steady}[Section 7.1]) to our cylindrical setup. 
These authors first assume that the right-hand side $F$ is \textit{compactly supported} to obtain the $C^0$-estimate (\cite{conlon2020steady}[Theorem 7.1]) and in a second step, they explain how to solve the Monge-Amp\`ere equation for  \textit{decaying} $F$ (\cite{conlon2020steady}[Theorem 9.2]). 
 We, however, present a modification of their argument, which allows us to achieve the $C^0$-estimate \textit{directly} for $F$ decaying exponentially in Theorem \ref{analytic existence theorem in intro}.

This article is structured as follows. In Section \ref{section linear analysis on acyl manifolds}, we recall the notion of asymptotically cylindrical manifolds and the theory of linear asymptotically translation-invariant operators on such manifolds. This is later applied to the linearisation of the Monge-Amp\`ere operator. 

The basics of steady K\"ahler-Ricci solitons are covered in Section \ref{section preliminaries on KRS}. We recall the underlying Monge-Amp\`ere equation and also discuss when a soliton is gradient. Most notably, we show at the end of this section that, under the assumptions of Theorem \ref{analytic existence theorem in intro}, $X$ must be a gradient field.

In Section \ref{section existence theorem}, we reduce Theorem \ref{geometric existence theorem in introduction} to Theorem \ref{analytic existence theorem in intro}. We discuss the existence of cylindrical K\"ahler metrics on manifolds as in Theorem \ref{geometric existence theorem in introduction} in Section \ref{subsection constructing background metric} and also explain which K\"ahler classes do indeed admit such metrics. Theorem \ref{geometric existence theorem in introduction} is then proven in Section \ref{subsection: proof of geometric existence theorem}, before we provide further examples in Section \ref{subsection examples}.

The fifth and final section is entirely devoted to Theorem \ref{analytic existence theorem in intro}. We explain the continuity method and reduce the proof to an a priori-estimate. 

\subsection*{Acknowledgement} This article is part of the author's PhD thesis. The author is financially supported  by the graduate school \grqq IMPRS on Moduli Spaces" of the Max-Planck-Institute for Mathematics in Bonn  and  would like to thank his advisor, Prof. Ursula Hamenstädt, for her encouragement as well as helpful discussions. Moreover, the author is grateful to Prof. Hans-Joachim Hein for his interest in this work and his comments on earlier versions of this article.

	\section{Linear analysis on ACyl manifolds} \label{section linear analysis on acyl manifolds}

In this section, we review the basic definitions and theorems about asymptotically translation-invariant operators on ACyl manifolds following the presentation in \cite{haskins2015asymptotically}[Section 2.1] and \cite{nordstromPhd}[Section 2.3]. The goal is to apply the general theory to the special class of operators that arise as the linearisation of the Monge-Amp\`ere operator in Section \ref{section monge ampere equation} below.

We begin by recalling the definition of ACyl manifolds. For simplicity, we restrict our attention to the case of only \textit{one} cylindrical end, i.e. a connected cross-section. 
\begin{defi} \label{definition ACyl manifold}
	A complete Riemannian manifold $(M,g)$ is called \textit{asymptotically cylindrical (ACyl) of rate $\delta>0$} if there is a bounded open set $U\subset M$, a connected and closed Riemannian manifold $(L,g_L)$ as well as a diffeomorphism $\Phi: [0,\infty) \times L \to M \setminus U$ such that 
	\begin{align*}
		|\nabla^k \left(  \Phi^* g -g_{cyl}\right)|= O(e^{-\delta t} )
	\end{align*}
	for all $k\in \mathbb{N}_0$, where $g_{cyl}:= dt^2 + g_L$ is the product metric and both $\nabla$ and $|\cdot|$ are taken with respect to this metric. Here $t$ denotes the projection onto $[0,\infty)$ and we extend the function $t \circ \Phi^{-1}$ smoothly to all of $M$. This extension is called a \textit{cylindrical coordinate function}, $(L,g_L)$ is called the \textit{cross-section} and $\Phi$ the \textit{ACyl map}.
\end{defi}

Throughout this section, $(M,g)$ denotes an ACyl manifold of rate $\delta>0$ as defined above. It will be convenient to suppress $\Phi$ and simply view $t$ as smooth a function on $M$. 

Let $E,F\to M$ be tensor bundles over $M$ and denote the corresponding space of smooth sections of $E$ and $F$ by $\Gamma(E)$ and $\Gamma(F)$, respectively. Then we consider a differential operator $P: \Gamma(E) \to \Gamma(F)$ of order $l$ and we would like to understand $P$ on the cylindrical end $M\setminus U \cong [0,\infty) \times L$.  

As in \cite{marshallphd}[Section 4], we cover the compact link $L$ by charts $V_1,\dots, V_N$ so that both $E$ and $F$ are trivial over each $\mathbb{R}_+ \times V_\alpha$. Given $u\in \Gamma(E)$, we denote the components of $u$ and $Pu$ on $\mathbb{R}_+\times V_\alpha$ by $u_j ^\alpha$ and $(Pu)_i ^\alpha$, respectively, where  $\alpha=1,\dots, N$,  $j=1,\dots , \operatorname{rank} E$ and $i=1,\dots, \operatorname{rank} F $.  Moreover, there are smooth functions $P_{ij}^{\alpha \beta}:\mathbb{R}_+ \times V_\alpha \to \mathbb{C}$ such that
\begin{align}\label{differential operator in coordinates}
	(Pu)^\alpha_i =\sum_{j=1}^{\operatorname{rank} E} \sum_{0\leq |\beta|\leq l } P^{\alpha \beta}_{ij} D^{\beta} u_j ^{\alpha} 
\end{align}
where the second sum runs over all multi-indices $\beta=(\beta_0,\dots,\beta _{\operatorname{dim}L})$ of order $|\beta|$ at most $l$ and $D^{\beta}$ is defined to be 
\begin{align*}
	D^{\beta}:= \frac{\partial ^{|\beta|}}{\partial t^{\beta _0} x_1^{\beta_1} \cdots \partial x_{\operatorname{dim}L}^{\beta_{\operatorname{dim}L}}}
\end{align*}
for coordinates $(x_1,\dots,x_{\operatorname{dim}L})$ of $V_\alpha$. 

Given a second operator $Q:\Gamma(E) \to \Gamma(F)$ also of order $l$, we say that $P$ is \textit{asymptotic} to $Q$ if the  coefficients $P^{\alpha\beta }_{ij},Q ^{\alpha \beta}_{ij}$ defined by (\ref{differential operator in coordinates}) satisfy
\begin{align*}
	\sup_{ \{t\}\times V_\alpha  } \left| \rho_\alpha D^{\gamma} \left( P^{\alpha\beta}_{ij} - Q^{\alpha \beta }_{ij}   \right) \right| \to 0 \;\; \text{ as } \;\; t \to \infty
\end{align*}
for all $i=1,\dots, \operatorname{rank} F$, $j=1,\dots \operatorname{rank}E$, $\alpha=1,\dots N$, $|\beta|\leq l$ and all multi-indices $\gamma$, where $\rho_1,\dots,\rho_N$ is a partition of unity subordinate to the cover $V_1,\dots, V_N$.   
Note that this definition does neither depend on the choice of covering nor on the partition of unity.

With this notion of asymptotic operators, we may introduce the following definitions, compare \cite{marshallphd}[Section 4.2.2].

\begin{defi}\label{definition translation operators}
	Let $P,P_\infty: \Gamma(E) \to \Gamma(F)$ be two differential operators of order $l$ between sections of tensor bundles $E,F\to M$. 
	\begin{itemize}
		 \item[(i)]  $P_\infty$ is called \textit{translation-invariant} if the functions $(P_\infty)^{\alpha\beta}_{ij}$ defined in (\ref{differential operator in coordinates}) are invariant under translation in the $\mathbb{R}_+$-factor, for all $i=1,\dots, \operatorname{rank}F$, $j=1,\dots , \operatorname{rank}E$, $\alpha=1,\dots, N$ and all multi-indices $\beta$ of order at most $l$. 
		 \item[(ii)] $P$ is called \textit{asymptotically translation-invariant} if $P$ is asymptotic to some translation-invariant operator $P_\infty$. 
	\end{itemize}
\end{defi}

Important examples of asymptotically translation-invariant operators include the Laplacian $\Delta_g$ and the operator $d^*$ associated to the ACyl metric $g$. These  are asymptotic to the corresponding operators associated to the cylinder $g_{cyl}$.

Such operators  may  in general not be Fredholm between the usual H\"older spaces  because $M$ is noncompact. However, this changes if we introduce weight functions.

\begin{defi} \label{definition weighted function spaces}
Let ($M,g$) be an ACyl manifold with cylindrical coordinate $t$ and suppose $E\to M$ is a tensor bundle. The metric on $E$ induced by $g$ is also denoted by $g$, with corresponding connection $\nabla$. 
\begin{itemize}
	\item[(i)] For $\alpha \in (0,1)$, the H\"older semi-norm $[\,\cdot \,]_{C^{0,\alpha}}$ is defined for any continuous tensor field $v$ over $M$ by
	\begin{align*}
		[v]_{C^{0,\alpha}} := \sup_{\substack{x\neq y \in M \\ d_g(x,y)<\frac{i(g)}{2}}} \frac{|v_x-v_y|_g}{d_g(x,y)^\alpha},
	\end{align*}
where $v_x-v_y$ is defined by parallel transport along the minimal geodesic from $x$ to $y$ and $i(g)>0$ denotes the injectivity radius of $g$. 

\item[(ii)] For $k\in \mathbb{N}_0$, $\alpha \in (0,1) $ and $\varepsilon \in \mathbb{R}$, we define $C^{k,\alpha}_\varepsilon (E)$ to be the space of $k$-times continuously differentiable sections $u$ of $E$ such that the norm
\begin{align*}
	||u||_{C_\varepsilon^{k,\alpha}} := \sum_{j=0}^k \sup_M \left|   e^{\varepsilon t} \nabla^ju     \right|_g + [e^{\varepsilon t} \nabla ^k u ]_{C^{0,\alpha}}
\end{align*}
is finite.

\item[(iii)] $C^{\infty}_\varepsilon (E)$ is defined to be the intersection of $C^{k,\alpha}_\varepsilon (E)$ over all $k\in \mathbb{N}_0$.

\item[(iv)] If $u$ is a function on $M$, the corresponding spaces are denoted by $C^{k,\alpha}_\varepsilon (M)$. 
\end{itemize}

\end{defi}

In other words, elements in $C^{\infty}_\varepsilon (E)$, as well as their covariant derivatives, are bounded from above by $e^{-\varepsilon t}$. It is not difficult to see that the definition is independent of the extension of the cylindrical coordinate $t$. 
Moreover, there are continuous inclusions 
\begin{align*}
 C^{k+1}_{\varepsilon} (E)\subseteq C^{k,\alpha}_{\varepsilon} (E) \;\; \text{ and } \;\;	C^{k,\alpha} _{\varepsilon_1 }(E) \subseteq C^{k,\alpha} _{\varepsilon _0}(E),
\end{align*}
if $\varepsilon_0 \leq \varepsilon_1$. 

This notion of weighted H\"older spaces is well-adapted to the study of asymptotically translation-invariant operators. If the operator is moreover elliptic, we have the following weighted Schauder estimates.

\begin{theorem}\label{theorem: ACyl schauder estimates}
Let $(M,g)$ be ACyl and let $P: \Gamma(E) \to \Gamma(F)$ be an elliptic, asymptotically translation-invariant operator of order $l$. Suppose  $h \in C^{k,\alpha}_{\varepsilon}(E)$ and that $u$ is a $k+l$-times continuously differentiable solution to $Pu=h$. If $u \in C^0_\varepsilon (E)$, then $u \in C^{k+l,\alpha}_\varepsilon (E)$ and 
\begin{align*}
	||u||_{C^{k+l,\alpha}_\varepsilon} \leq C \left( ||h||_{C^{k,\alpha}_\varepsilon} + ||u||_{C^0_\varepsilon}   \right)
\end{align*}
for some constant $C>0$ independent of $u$. 
\end{theorem}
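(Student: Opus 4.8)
The plan is to prove the weighted Schauder estimate by a standard localisation-and-scaling argument, reducing the global statement on the ACyl manifold $(M,g)$ to interior Schauder estimates on uniformly bounded balls, and then patching these together using the weight $e^{\varepsilon t}$. First I would separate $M$ into the compact piece $U$ (where there is nothing to do: ordinary interior and boundary-free Schauder estimates apply, the weight being bounded above and below by positive constants) and the cylindrical end $M \setminus U \cong [0,\infty) \times L$, where the real work happens. On the end, the metric $g$ is uniformly equivalent to the product metric $g_{cyl}$, with all derivatives of the difference controlled, so there is a uniform geometry: a fixed radius $r_0>0$ and a finite atlas of charts (pulled back from the $V_\alpha$ covering $L$, times an interval in the $t$-direction) in which $g$ and all the coefficients $P^{\alpha\beta}_{ij}$ of $P$, together with their derivatives, are bounded in $C^{k,\alpha}$ uniformly in $t$. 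This uniformity is exactly what the hypotheses ``asymptotically translation-invariant'' and ``ACyl of rate $\delta$'' buy us.

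The key step is the rescaled estimate on a single unit-size chart. Fix a point $p$ with $t(p)=T$ large, work in a chart $B=B(p,r_0)$, and apply the classical interior Schauder estimate for the elliptic operator $P$ (whose coefficients are uniformly $C^{k,\alpha}$-bounded on $B$, uniformly in $T$) to get
\begin{align*}
	\|u\|_{C^{k+l,\alpha}(B(p,r_0/2))} \leq C\left( \|Pu\|_{C^{k,\alpha}(B(p,r_0))} + \|u\|_{C^0(B(p,r_0))} \right),
\end{align*}
with $C$ independent of $p$. On such a fixed-size ball the weight $e^{\varepsilon t}$ varies only by a bounded factor (since $|\nabla t|$ is bounded, $t$ oscillates by at most $O(1)$ on $B(p,r_0)$), so multiplying through by $e^{\varepsilon T}$ and absorbing the bounded oscillation into the constant converts the estimate into its weighted form:
\begin{align*}
	\|e^{\varepsilon t} u\|_{C^{k+l,\alpha}(B(p,r_0/2))} \leq C\left( \|e^{\varepsilon t} Pu\|_{C^{k,\alpha}(B(p,r_0))} + \|e^{\varepsilon t} u\|_{C^0(B(p,r_0))} \right).
\end{align*}
Here I am implicitly using that the difference between taking covariant derivatives $\nabla$ of $g$ and coordinate derivatives in the chart, and between the global Hölder seminorm and its local chart version, is controlled by the uniform geometry; this is routine but needs a sentence. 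Then one covers $M \setminus U$ by countably many such half-balls $B(p_i,r_0/2)$ with uniformly bounded overlap, takes the supremum of the left-hand sides over $i$, and bounds the right-hand sides by the global weighted norms $\|h\|_{C^{k,\alpha}_\varepsilon}$ and $\|u\|_{C^0_\varepsilon}$ (the $C^0_\varepsilon$ hypothesis on $u$ is used precisely here — it is what makes the right side finite). Combining with the estimate on the compact part $U$ gives the claimed global bound, and in particular shows the left-hand norm is finite, i.e. $u \in C^{k+l,\alpha}_\varepsilon(E)$.

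The main obstacle, and the only place requiring genuine care rather than bookkeeping, is establishing the \emph{uniformity} of the constant $C$ in the interior Schauder estimate as $t \to \infty$ — that is, checking that the ACyl decay of $g - g_{cyl}$ and the asymptotic translation-invariance of $P$ really do provide a uniform family of charts with uniform $C^{k,\alpha}$-bounds on the metric coefficients, the ellipticity constants, and the coefficients of $P$, all independent of which chart one is in. Once this uniform local picture is in place (it is essentially the content of the ``bounded geometry'' underlying \cite{marshallphd} and \cite{haskins2015asymptotically}), the rest is the standard covering argument together with the elementary observation that $e^{\varepsilon t}$ is slowly varying on unit scales. I would also remark that one should handle the Hölder seminorm term with a little attention: the definition uses parallel transport along minimal geodesics, but on the unit-scale balls of the cylindrical end the exponential map and parallel transport are uniformly comparable to their Euclidean/product counterparts, so the chart-based Schauder estimate transfers to the intrinsic seminorm of Definition \ref{definition weighted function spaces} with only a bounded loss.
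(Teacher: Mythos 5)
Your argument is correct in outline, but note that the paper does not actually prove this statement: its entire ``proof'' is a citation to \cite{mazyaschauder}[Theorem 3.16], and what you have written is precisely the standard argument that underlies such a citation (the same covering-and-uniform-geometry scheme reappears explicitly later in the paper, e.g.\ in Theorem \ref{Theorem: uniform coordinates for (M,g)} and Propositions \ref{proposition uniform local C3alpha bound}--\ref{proposition weighted Ckalpha estimate for epsilon 0}). So there is no methodological divergence to report; you are supplying the proof the paper outsources. Two small points you should make explicit if you write this up in full. First, uniformity of the ellipticity constant as $t\to\infty$: being ``elliptic'' pointwise does not by itself prevent the ellipticity from degenerating at infinity, so you need that the symbol of $P$ converges (by the definition of asymptotic translation-invariance) to the symbol of a translation-invariant \emph{elliptic} operator $P_\infty$, which is the standing convention in this setting; similarly, uniform $C^{k,\alpha}$-bounds on the coefficients of $P$ follow from the decay of $\rho_\alpha D^\gamma(P^{\alpha\beta}_{ij}-(P_\infty)^{\alpha\beta}_{ij})$ together with the boundedness of the translation-invariant limit coefficients. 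Second, since $P$ is an operator of order $l$ between sections of tensor bundles, the local estimate you invoke is the interior Schauder estimate for elliptic systems of order $l$ (Agmon--Douglis--Nirenberg), not just the scalar second-order case; this changes nothing in the structure of the argument but is worth naming. With those two clarifications the covering argument, the slowly-varying-weight observation, and the remark about comparing the intrinsic parallel-transport H\"older seminorm with its chart version are all sound.
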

 \begin{proof}
 	This is \cite{mazyaschauder}[Theorem 3.16]. 
 \end{proof}

Every translation-invariant operator $P: \Gamma (E) \to \Gamma (F)$ of order $l$ induces  a bounded map $P: C^{k+l,\alpha}_{\varepsilon}(E) \to C^{k,\alpha}_{\varepsilon}(F)$. If $P$ is moreover elliptic, it depends on the weight $\varepsilon\in \mathbb{R}$ whether or not the induced map $P:C^{k+l,\alpha}_{\varepsilon}(E) \to C^{k,\alpha}_{\varepsilon}(F)$ is Fredholm. This naturally leads to the definition of so called critical weights.

\begin{defi}
Let $P:\Gamma(E) \to \Gamma(F)$ be a differential operator asymptotic to a translation-invariant operator $P_\infty: \Gamma(E) \to \Gamma(F)$. $\varepsilon\in \mathbb{R}$ is called a \textit{critical weight} if there exists a non-trivial solution $v=e^{i\lambda t} u: \mathbb{R} \times L\to \mathbb{C}$ to
\begin{align}\label{definition of critical weight}
	P_{\infty} (v)=0
\end{align}
  for some $\lambda \in \mathbb{C}$ with $\operatorname{Im} \lambda = \varepsilon$ and for some smooth section $u=u(t,x)$ of $E$ over $\mathbb{R} \times L$ that is a polynomial in $t$.
\end{defi}
 Note that the set of critical weights is discrete in $\mathbb{R}$.  In the case of functions, i.e. if $E$ is the trivial line bundle, $u$ in the above definition is simply a polynomial in $t$ with smooth functions on $L$ as coefficients. This is crucial because it allows us to explicitly compute critical weights in examples.

The fundamental result in the theory of asymptotically translation-invariant operators is  the following

\begin{theorem}\label{fredholm property of Atrans operator}
Let $P: \Gamma(E)  \to \Gamma(F)$ be an elliptic, translation-invariant operator of order $l$. If $\varepsilon$ is not a critical weight, then the map $P: C^{k+l,\alpha}_\varepsilon(E) \to C^{k,\alpha}_\varepsilon (F)$ is Fredholm. 
\end{theorem}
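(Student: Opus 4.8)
The plan is to reduce the Fredholm property on ACyl manifolds to the corresponding statement on the exact cylinder $\mathbb{R} \times L$, where the translation invariance in the $\mathbb{R}$-direction makes the problem amenable to Fourier analysis. First I would observe that since $P$ is translation-invariant, on the cylindrical end $[0,\infty) \times L$ it has the form $P = \sum_{j=0}^{l} A_j(x,\partial_x) \, \partial_t^{\,j}$ with each $A_j$ a differential operator on $L$ alone whose coefficients do not depend on $t$. Conjugating by the weight, i.e. setting $u = e^{-\varepsilon t} w$, transforms $P : C^{k+l,\alpha}_\varepsilon(E) \to C^{k,\alpha}_\varepsilon(F)$ into the unweighted mapping problem for the shifted operator $P_\varepsilon := e^{\varepsilon t} P e^{-\varepsilon t}$, which is again elliptic and translation-invariant; so it suffices to prove the $\varepsilon = 0$ case, under the hypothesis that $0$ is not a critical weight for $P$ (equivalently $\varepsilon$ is not critical for the original $P$).

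Next I would introduce the \emph{indicial family}: apply the Fourier transform (or equivalently substitute $w = e^{i\lambda t} u(x)$) in the $t$-variable to get the family of elliptic operators $\widehat{P}(\lambda) := \sum_{j=0}^l A_j(x,\partial_x)(i\lambda)^j$ acting on sections of $E|_L$ over the compact manifold $L$, depending holomorphically on $\lambda \in \mathbb{C}$. By ellipticity each $\widehat{P}(\lambda)$ is Fredholm of index zero on the appropriate Hölder (or Sobolev) spaces on $L$, and standard analytic Fredholm theory shows that $\widehat{P}(\lambda)$ is invertible for all but a discrete set of $\lambda \in \mathbb{C}$, and is invertible for all $\lambda$ in any horizontal strip $\{\operatorname{Im}\lambda = \varepsilon\}$ precisely when $\varepsilon$ is not a critical weight — this is exactly the content of the definition of critical weight (a non-trivial polynomial-in-$t$ solution $e^{i\lambda t}u(t,x)$ of $P_\infty v = 0$ corresponds to $\lambda$ being a point where $\widehat{P}(\lambda)$ fails to be injective, with the polynomial order reflecting the order of the pole of $\widehat{P}(\lambda)^{-1}$). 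I would record the uniform bound $\|\widehat{P}(\lambda)^{-1}\| \leq C(1+|\operatorname{Re}\lambda|)^{-l}$ along the line $\operatorname{Im}\lambda = 0$, which follows from the ellipticity estimate for $\widehat P(\lambda)$ together with compactness of $L$.

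Then I would construct a parametrix. On the cylindrical end, use the inverse Fourier transform of $\widehat{P}(\lambda)^{-1}$ along $\operatorname{Im}\lambda = 0$ to build a right (and left) inverse $G_\infty$ for $P$ modulo operators that are smoothing and exponentially decaying; the uniform bound on $\widehat{P}(\lambda)^{-1}$ guarantees that $G_\infty$ is bounded $C^{k,\alpha}_0 \to C^{k+l,\alpha}_0$, using the weighted Schauder estimates of Theorem \ref{theorem: ACyl schauder estimates} to convert the Sobolev-type mapping properties into Hölder ones (or by running the whole argument in weighted Hölder spaces from the start). Glue $G_\infty$ on the end to a local parametrix for the elliptic operator $P$ over the compact piece $U$ using a partition of unity; the error terms are compact operators between the weighted Hölder spaces because they are supported where $t$ is bounded (hence on a precompact set, where the Arzelà–Ascoli / Rellich compactness applies) or are exponentially smoothing. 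This exhibits $P$ as invertible modulo compact operators on both sides, hence Fredholm.

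The main obstacle I anticipate is the harmonic-analysis estimate making the inverse-Fourier-transform parametrix $G_\infty$ a \emph{bounded} operator between the weighted Hölder spaces: Fourier analysis most naturally produces $L^2$- or Sobolev-bounded operators, and transferring to $C^{k,\alpha}_\varepsilon$ requires care (either the Mikhlin–Hörmander-type multiplier estimates adapted to this setting, or a bootstrap via the interior Schauder estimates of Theorem \ref{theorem: ACyl schauder estimates} to upgrade an a priori Sobolev solution to a Hölder solution with the right decay). The rest — conjugation by the weight, holomorphic Fredholm theory for $\widehat P(\lambda)$, the partition-of-unity gluing, and compactness of the error terms — is routine. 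I would cite \cite{marshallphd}, \cite{mazyaschauder}, or \cite{nordstromPhd} for the technical details of the multiplier estimate rather than reproducing it.
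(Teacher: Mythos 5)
Your outline reproduces the standard Lockhart--McOwen argument (conjugation by the weight function, passage to the indicial family $\widehat P(\lambda)$, analytic Fredholm theory identifying the exceptional $\lambda$ with critical weights, and an inverse-Fourier-transform parametrix on the end glued to an interior parametrix), which is essentially the paper's own route: the paper offers no independent proof but cites \cite{lockhart1985elliptic}[Theorem 6.2] for the weighted Sobolev case and \cite{haskins2015asymptotically}[Section 2.1] for the observation that the same proof runs in weighted H\"older spaces. The one genuinely delicate step you single out --- H\"older (rather than $L^2$) boundedness of the multiplier-type parametrix, or equivalently the Schauder upgrade of the Sobolev Fredholm theory --- is exactly the point that both you and the paper delegate to the references, so your proposal and the paper's treatment coincide.
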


This result was originally formulated for weighted Sobolev spaces (\cite{lockhart1985elliptic}[Theorem 6.2]). However, as explained in \cite{haskins2015asymptotically}[Section 2.1], the same proof applies in the H\"older setting as well.

Knowing that the induced map $P: C^{k+l,\alpha}_\varepsilon(E) \to C^{k,\alpha}_\varepsilon (F)$ is Fredholm for all non-critical weights $\varepsilon$, we would now like to have a better understanding of its kernel and image. 

\begin{prop}\label{prop kernel independent of small weight change}
	Let $P : \Gamma(E)\to \Gamma (F)$ be an elliptic, translation-invariant operator of order $l$. If an interval $[\varepsilon_1,\varepsilon_2]$ contains no critical weights, then the kernels of $P$ in $C^{k,\alpha} _{\varepsilon_1}(M)$ and $C^{k,\alpha}_{{\varepsilon_2}}(M)$ are equal. 
\end{prop}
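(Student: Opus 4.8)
The plan is to show that any element $u$ in the kernel of $P$ acting on $C^{k,\alpha}_{\varepsilon_2}(M)$ already lies in $C^{k,\alpha}_{\varepsilon_1}(M)$, and conversely; since $\varepsilon_1 \leq \varepsilon_2$ the inclusion $C^{k,\alpha}_{\varepsilon_2}(M) \subseteq C^{k,\alpha}_{\varepsilon_1}(M)$ is automatic (larger weight means faster decay), so only the reverse direction requires an argument. So suppose $u \in C^{k,\alpha}_{\varepsilon_1}(M)$ with $Pu = 0$. The idea is that the presence or absence of decay of a solution is controlled entirely by the critical weights: as we slide the weight from $\varepsilon_1$ up to $\varepsilon_2$, no critical weight is crossed, so no "new" asymptotic mode can appear and the solution must keep decaying at the faster rate $e^{-\varepsilon_2 t}$.

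The key technical input is the asymptotic expansion of solutions of elliptic translation-invariant operators on a cylinder. Concretely, on the cylindrical end $[0,\infty) \times L$, a solution $u$ of $Pu=0$ that decays like $e^{-\varepsilon_1 t}$ admits (after separation of variables, i.e. Fourier/spectral decomposition in the $L$-directions combined with the analysis of the resulting ODE pencil) an expansion of the form $u \sim \sum_{j} e^{i\lambda_j t} p_j(t,x) + (\text{faster decaying remainder})$, where the sum runs over the finitely many "indicial roots" $\lambda_j \in \mathbb{C}$ with $\varepsilon_1 \leq \operatorname{Im}\lambda_j < \varepsilon_2$ and each $p_j$ is polynomial in $t$ with smooth coefficients on $L$. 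By the hypothesis that $[\varepsilon_1,\varepsilon_2]$ contains no critical weight, there are no such $\lambda_j$ at all, so the expansion is empty and the remainder term — which decays like $e^{-\varepsilon' t}$ for some $\varepsilon' \geq \varepsilon_2$ — is all of $u$ on the end. Hence $e^{\varepsilon_2 t}u$ is bounded on $M\setminus U$, and of course bounded on the compact part $\overline{U}$, so $u \in C^0_{\varepsilon_2}(M)$. Applying the weighted Schauder estimate (Theorem \ref{theorem: ACyl schauder estimates}) with $h=0$ then upgrades this to $u \in C^{k,\alpha}_{\varepsilon_2}(M)$, and in fact $u \in C^{\infty}_{\varepsilon_2}(M)$.

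I would carry this out in three steps: (1) reduce to controlling $u$ on the cylindrical end, using that $u$ is automatically smooth by elliptic regularity and bounded near $\overline{U}$; (2) invoke the cylinder expansion theorem for $P_\infty$ (this is the analogue of Lockhart–McOwen's asymptotic expansion, available in the references \cite{lockhart1985elliptic}, \cite{marshallphd} cited above) to write $u$ on the end as a finite sum of indicial contributions plus a remainder in the next weighted space, and note the sum is empty by the no-critical-weight hypothesis; (3) conclude $u \in C^0_{\varepsilon_2}(M)$ and bootstrap via Theorem \ref{theorem: ACyl schauder estimates}. One subtlety worth flagging: $P$ is only asymptotically translation-invariant, not exactly translation-invariant, although in this proposition $P$ is stated to be translation-invariant so the expansion applies to $P$ directly; if one wanted the asymptotic version, the error terms from $P - P_\infty$ decaying like $e^{-\delta t}$ have to be fed back in and absorbed, which is the standard perturbation argument.

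The main obstacle is Step (2): making precise and citing (rather than reproving) the asymptotic-expansion result that says a solution decaying at rate $\varepsilon_1$ either decays at the next faster rate determined by the indicial roots, or else genuinely exhibits one of the intermediate asymptotic modes $e^{i\lambda_j t}p_j(t,x)$. This is exactly the content that makes critical weights the "jumps" in the kernel, and everything else (elliptic regularity, the Schauder bootstrap, the trivial inclusion in one direction) is routine. If the expansion theorem is not stated in the forms already cited, the fallback is to run the Fourier-in-$t$ / contour-shift argument: write $\hat u(\zeta) = \int e^{-i\zeta t} u\, dt$, which is holomorphic in the strip $\operatorname{Im}\zeta < -\varepsilon_1$ with values in sections over $L$, observe $P_\infty$ forces $\hat u(\zeta)$ to be meromorphic with poles only at the indicial roots, and shift the inversion contour from $\operatorname{Im}\zeta = -\varepsilon_1 - 0$ to $\operatorname{Im}\zeta = -\varepsilon_2 + 0$, picking up no residues because there are no indicial roots in between; the shifted integral gives the $e^{-\varepsilon_2 t}$ bound on the end.
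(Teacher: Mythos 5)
Your proposal is correct and is essentially the argument behind the result the paper relies on: the paper gives no proof of its own here, simply citing \cite{lockhart1985elliptic}[Lemma~7.1], and that lemma is established by exactly the indicial-root/asymptotic-expansion (equivalently, Fourier--Laplace transform in $t$ and contour-shift) argument you sketch, combined with the weighted Schauder bootstrap to upgrade $C^0_{\varepsilon_2}$ decay to $C^{k,\alpha}_{\varepsilon_2}$. The only point to be careful about in a full write-up is the standard one you already flag: one must cut $u$ off to the cylindrical end before transforming, so that $\hat u(\zeta)$ is holomorphic in the half-plane $\operatorname{Im}\zeta<-\varepsilon_1$ and continues meromorphically with poles only at the indicial roots, after which the absence of critical weights in $[\varepsilon_1,\varepsilon_2]$ lets you shift the contour without picking up residues.
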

This is proven in \cite{lockhart1985elliptic}[Lemma 7.1]. 
To give a precise characterization of the image of $P$, we need to introduce the formal adjoint $P^*: \Gamma(F) \to \Gamma(E)$. It is uniquely defined by the condition that
\begin{align}\label{definition of formal adjoint}
	\langle P u, v\rangle_{L^2} = \langle u, P^*v \rangle _{L^2}
\end{align}
holds for all smooth, compactly supported sections $u,v$. Here, the $L^2$-inner product is defined with respect to the ACyl metric $g$. 
Observe that the identity (\ref{definition of formal adjoint}) extends to sections $u,v$ in certain H\"older spaces.  

\begin{lemma}\label{lemma integration by parts}
	Let $P: \Gamma (E) \to \Gamma (F)$ be an asymptotically translation-invariant operator of order $l$ with formal adjoint $P^*: \Gamma(F) \to \Gamma (E)$. Suppose that $u\in C^{l,\alpha}_{\varepsilon_1}(E)$ and $v\in C^{l,\alpha}_{\varepsilon_2}(F)$ with $\varepsilon_1+ \varepsilon_2 >0$. Then 
	\begin{align*}
		\langle P u, v\rangle_{L^2} = \langle u, P^*v \rangle _{L^2}.
	\end{align*} 
\end{lemma}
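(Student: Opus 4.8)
The plan is to reduce the statement to the defining identity \eqref{definition of formal adjoint}, which holds for compactly supported sections, by a cut-off argument together with a decay estimate on the error terms. First I would fix a smooth cut-off function: choose $\chi \in C^\infty(\mathbb{R})$ with $\chi \equiv 1$ on $(-\infty,1]$ and $\chi \equiv 0$ on $[2,\infty)$, and set $\chi_R := \chi(t - R)$ for $R>0$, using the cylindrical coordinate function $t$ on $M$. Then $\chi_R u$ is compactly supported (though only $C^{l,\alpha}$, which is enough after a routine mollification or after noting that \eqref{definition of formal adjoint} extends to $C^{l,\alpha}_c$ sections by density), so
\begin{align*}
	\langle P(\chi_R u), v\rangle_{L^2} = \langle \chi_R u, P^* v\rangle_{L^2}.
\end{align*}
The right-hand side converges to $\langle u, P^* v\rangle_{L^2}$ as $R\to\infty$ by dominated convergence, since $u \in C^{l,\alpha}_{\varepsilon_1}(E)$ and $P^*v \in C^{0,\alpha}_{\varepsilon_2 - l}(E)$ decay so that their product is integrable once $\varepsilon_1 + \varepsilon_2 > 0$ (the volume form of an ACyl metric has at most polynomial, in fact bounded, growth on the end).

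The main work is to show that the left-hand side converges to $\langle Pu, v\rangle_{L^2}$. Using the Leibniz rule, $P(\chi_R u) = \chi_R\, Pu + [P,\chi_R]u$, where the commutator $[P,\chi_R]$ is a differential operator of order at most $l-1$ whose coefficients involve derivatives of $\chi_R$; these are supported in the region $\{R \le t \le R+2\}$ and bounded independently of $R$ (since $\chi_R$ is a translate of a fixed function in the translation-invariant coordinate $t$, and $P$ is asymptotically translation-invariant so its coefficients and their derivatives are bounded on the end). Hence
\begin{align*}
	\big|\langle [P,\chi_R]u, v\rangle_{L^2}\big| \leq C \int_{\{R\le t\le R+2\}} \Big(\sum_{j=0}^{l-1}|\nabla^j u|_g\Big)\,|v|_g \;\dif \mathrm{vol}_g \leq C' e^{-(\varepsilon_1+\varepsilon_2)R} \to 0,
\end{align*}
using $u \in C^{l,\alpha}_{\varepsilon_1}$, $v\in C^{l,\alpha}_{\varepsilon_2}$, and the fact that the volume of $\{R \le t \le R+2\}$ is bounded (the cross-section $L$ is compact and the metric is asymptotic to a product). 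Meanwhile $\langle \chi_R Pu, v\rangle_{L^2} \to \langle Pu, v\rangle_{L^2}$ by dominated convergence, again because $Pu \in C^{0,\alpha}_{\varepsilon_1 - l}$ paired with $v \in C^{0,\alpha}_{\varepsilon_2}$ gives an integrand decaying like $e^{-(\varepsilon_1+\varepsilon_2)t}$. Combining the three limits yields the claim.

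I expect the only genuine subtlety to be bookkeeping: verifying that all the pairings that appear are absolutely convergent under the single hypothesis $\varepsilon_1 + \varepsilon_2 > 0$ (so that Fubini/dominated convergence applies), and handling the mild regularity issue that $\chi_R u$ is merely $C^{l,\alpha}$ rather than smooth — the latter is dispatched either by first establishing \eqref{definition of formal adjoint} for compactly supported $C^{l,\alpha}$ sections via a standard mollification, or by observing that $P$ and $P^*$ only require $l$ derivatives so the integration-by-parts identity on compact sets holds for $C^{l,\alpha}_c$ sections directly. No curvature or ellipticity input is needed beyond the asymptotic translation-invariance that controls the coefficient growth of the commutator; the estimate is entirely about decay rates on the cylindrical end.
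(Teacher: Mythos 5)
Your argument is correct and is essentially the standard cut-off proof that the paper itself omits (it defers to \cite{nordstromPhd}[Lemma 2.3.15]): multiply by a translated cut-off, invoke the defining identity (\ref{definition of formal adjoint}) for compactly supported sections, and kill the commutator term on the transition annulus using that $\varepsilon_1+\varepsilon_2>0$ and that the cross-section is compact. One small slip: you write $P^*v\in C^{0,\alpha}_{\varepsilon_2-l}$ and $Pu\in C^{0,\alpha}_{\varepsilon_1-l}$, but in the exponentially weighted spaces of Definition \ref{definition weighted function spaces} an order-$l$ operator with bounded coefficients preserves the weight (it maps $C^{k+l,\alpha}_{\varepsilon}$ to $C^{k,\alpha}_{\varepsilon}$, with no loss of $l$ in the subscript, unlike the polynomially weighted conical setting); this does not affect your proof, since the decay rate $e^{-(\varepsilon_1+\varepsilon_2)t}$ you actually use for the integrands is the correct one.
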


The proof is straight forward, and written out in (\cite{nordstromPhd}[Lemma 2.3.15]), for example.

\begin{prop} \label{prop image of A- trans invariant operator}
	Let $P: \Gamma (E) \to \Gamma (F)$ be an elliptic, asymptotically translation-invariant operator of order $l$ with formal adjoint $P^*: \Gamma(F ) \to \Gamma (E)$. If $\varepsilon$ is not a critical weight, then the image of $P: C^{k+l,\alpha}_{\varepsilon}(E) \to C^{k,\alpha}_\varepsilon(F)$ is precisely  the $L^2$-orthogonal complement to the kernel of $P^* : C^{k+l,\alpha}_{-\varepsilon}(F) \to C^{k,\alpha}_{-\varepsilon}(E)$ in $C^{k,\alpha}_\varepsilon (F)$. 
\end{prop}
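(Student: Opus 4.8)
The plan is to identify the image of $P$ on weighted Hölder spaces with an $L^2$-orthogonal complement by combining Fredholmness (Theorem \ref{fredholm property of Atrans operator}), the integration-by-parts identity (Lemma \ref{lemma integration by parts}), and an elliptic-regularity bootstrap for the adjoint. First I would establish the easy inclusion: if $h = Pu$ with $u\in C^{k+l,\alpha}_\varepsilon(E)$ and $v\in\ker\bigl(P^*\colon C^{k+l,\alpha}_{-\varepsilon}(F)\to C^{k,\alpha}_{-\varepsilon}(E)\bigr)$, then since $u$ has weight $\varepsilon$ and $v$ has weight $-\varepsilon$, so that $\varepsilon + (-\varepsilon) = 0$ — this is the boundary case, so I would first apply the argument with weights $\varepsilon' < \varepsilon$ slightly smaller (chosen so that $[\varepsilon',\varepsilon]$ contains no critical weight) and invoke Proposition \ref{prop kernel independent of small weight change} to see $u\in C^{k+l,\alpha}_{\varepsilon'}(E)$ as well, giving $\varepsilon' + (-\varepsilon) < 0$; wait, that has the wrong sign. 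Instead I would shrink the adjoint weight: since $-\varepsilon$ is non-critical, so is $-\varepsilon + \eta$ for small $\eta>0$ with no critical weight in $[-\varepsilon, -\varepsilon+\eta]$, hence by Proposition \ref{prop kernel independent of small weight change} the kernel of $P^*$ is unchanged and $v\in C^{k+l,\alpha}_{-\varepsilon+\eta}(F)$, so now $\varepsilon + (-\varepsilon+\eta) = \eta > 0$ and Lemma \ref{lemma integration by parts} applies: $\langle h,v\rangle_{L^2} = \langle Pu,v\rangle_{L^2} = \langle u, P^*v\rangle_{L^2} = 0$. Thus $\operatorname{im} P$ lies in the claimed orthogonal complement.

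For the reverse inclusion I would use the Fredholm property. By Theorem \ref{fredholm property of Atrans operator} the map $P\colon C^{k+l,\alpha}_\varepsilon(E)\to C^{k,\alpha}_\varepsilon(F)$ has closed image of finite codimension, say $c$. The image is contained in the subspace $V := \{h : \langle h,v\rangle_{L^2}=0 \text{ for all } v\in\ker P^*\cap C^{k+l,\alpha}_{-\varepsilon}(F)\}$, so it suffices to show $\operatorname{codim} V \le c$, equivalently that $\dim\bigl(\ker P^*\cap C^{k+l,\alpha}_{-\varepsilon}(F)\bigr)$, together with the pairing being non-degenerate on it, accounts for the full codimension. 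The standard way to close this is to show that the pairing $\ker P^* \times \bigl(C^{k,\alpha}_\varepsilon(F)/\operatorname{im}P\bigr) \to \mathbb{R}$ is a perfect pairing of finite-dimensional spaces. Concretely: given a bounded linear functional on $C^{k,\alpha}_\varepsilon(F)$ vanishing on $\operatorname{im}P$, one represents it (after the usual regularity argument) by an $L^2$-pairing against some distributional section $v$; the condition that it annihilate $\operatorname{im}P$ says $P^*v = 0$ weakly; elliptic regularity for the asymptotically translation-invariant operator $P^*$ (Theorem \ref{theorem: ACyl schauder estimates} applied on the cylindrical end, together with interior Schauder estimates on the compact core) promotes $v$ to a smooth section, and a decay estimate — obtained because $-\varepsilon$ is non-critical, so solutions of $P^*v=0$ that are bounded in the relevant sense must actually decay at the rate $-\varepsilon$ — places $v\in C^{k+l,\alpha}_{-\varepsilon}(F)$. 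This identifies the annihilator of $\operatorname{im}P$ with $\ker\bigl(P^*\colon C^{k+l,\alpha}_{-\varepsilon}(F)\to C^{k,\alpha}_{-\varepsilon}(E)\bigr)$, and since $\operatorname{im}P$ is closed of finite codimension, $\operatorname{im}P$ equals the $L^2$-orthogonal complement of that kernel inside $C^{k,\alpha}_\varepsilon(F)$.

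The main obstacle I expect is the decay step: showing that a solution $v$ of $P^*v = 0$ which a priori only lies in some large weighted space (or is merely a tempered distribution) in fact lies in $C^{k+l,\alpha}_{-\varepsilon}(F)$. This is exactly where non-criticality of $-\varepsilon$ is used — between consecutive critical weights the kernel is constant (Proposition \ref{prop kernel independent of small weight change}), and below all critical weights relevant to $L^2$ one gets genuine exponential decay via separation of variables on the cylinder and the discreteness of the set of critical weights. Since the excerpt attributes the underlying theory to \cite{lockhart1985elliptic} and \cite{haskins2015asymptotically}, I would present this inclusion as following from the duality statement in \cite{lockhart1985elliptic}[Theorem 6.2 and its corollaries], carried over to Hölder spaces exactly as in the discussion preceding Theorem \ref{fredholm property of Atrans operator}, rather than reproving it from scratch. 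The only genuinely new bookkeeping is the boundary weight $\varepsilon + (-\varepsilon) = 0$ in the easy inclusion, handled by the small-shift argument above using Proposition \ref{prop kernel independent of small weight change} and Lemma \ref{lemma integration by parts}.
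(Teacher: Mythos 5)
Your proposal is correct and follows essentially the same route as the paper, which simply deduces the statement from Theorem \ref{fredholm property of Atrans operator} and Proposition \ref{prop kernel independent of small weight change} and refers to \cite{nordstromPhd}[Proposition 2.3.16] for the details; in particular, your weight-shift trick for the boundary case $\varepsilon+(-\varepsilon)=0$ in the easy inclusion (moving $v$ into $C^{k+l,\alpha}_{-\varepsilon+\eta}$ via Proposition \ref{prop kernel independent of small weight change} so that Lemma \ref{lemma integration by parts} applies) is exactly the right way to handle that point. The one step you defer to the literature — identifying the annihilator of the closed, finite-codimensional image with the $L^2$-pairings against $\ker P^*$ in weight $-\varepsilon$, which is delicate because H\"older spaces are not reflexive and is usually routed through the weighted Sobolev theory of \cite{lockhart1985elliptic} — is precisely the step the paper also outsources, so your write-up matches the paper's level of rigour.
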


\begin{proof}
	This can be deduced from Theorem \ref{fredholm property of Atrans operator} and Proposition \ref{prop kernel independent of small weight change}, compare \cite{nordstromPhd}[Proposition 2.3.16] for details.
\end{proof}

We seek to apply this general theory to a certain subclass of asymptotically translation-invariant operators, which naturally arise as the linearisation of the Monge-Amp\`ere operator in Section \ref{section monge ampere equation} below.

\begin{defi}\label{definition ACyl drift Laplacian}
	Let $f$ be a smooth function on an ACyl manifold $(M,g)$. Then the following operator 
	\begin{align*}
		\Delta_f u:= \Delta_g u + g(\nabla^g  f, \nabla^g u)
	\end{align*}
	is called the \textit{drift Laplacian with potential function f}. If additionally 
	$f-2t \in C^{\infty}_{\delta_0} (M)$	for some $\delta_0>0$, we refer to  $\Delta_f$ as an \textit{ACyl drift Laplace operator}.
\end{defi}
Any such operator $\Delta_f$ is self-adjoint with respect to the $L^2$-inner product induced by the measure $e^{f} \operatorname{d V}_g$, i.e.
\begin{align*}
	\int_M (\Delta_fu) v \, e^f \operatorname{d V}_g = \int_M  u (\Delta_f v) \, e^f\operatorname{dV}_g 
\end{align*}
for all smooth, compactly supported functions $u,v$. If $\Delta_f$ is moreover an ACyl drift Laplacian, then it is asymptotic to the translation-invariant operator 
\begin{align} \label{asymptotic model of acyl drift laplacian}
\Delta_{2t} u = \Delta_{g_{cyl}} u + g_{cyl}\left(2\frac{\partial}{\partial t}, \nabla^{g_{cyl} } u\right) = \frac{\partial ^2u}{\partial t ^2}  + 2 \frac{\partial u}{\partial t}  + \Delta_{g_L} u 
\end{align}
where $g_{cyl}=dt^2 + g_L$ is the product metric. From the general theory, we deduce the next

\begin{theorem}\label{theorem ACyl drift laplace is iso}
	Let $(M,g)$ be an ACyl manifold and suppose that $\Delta_f$ is an ACyl drift Laplacian with potential function $f$. Then for any  $k\in \mathbb{N}_0$, $\alpha \in (0,1)$ and  $0<\varepsilon<2$ the operator 
	\begin{align*}
		\Delta_f  : C^{k+2,\alpha}_\varepsilon  (M) \to C^{k,\alpha}_\varepsilon (M)
	\end{align*}
	is an isomorphism.
\end{theorem}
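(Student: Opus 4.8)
The plan is to run the Fredholm machinery of Section~\ref{section linear analysis on acyl manifolds} for $\Delta_f$, feeding in two facts special to this operator: the location of its critical weights, and the fact that $\Delta_f$ is self-adjoint for the weighted measure $e^{f}\operatorname{dV}_g$. \emph{Step 1 (critical weights).} By Definition~\ref{definition ACyl drift Laplacian} and \eqref{asymptotic model of acyl drift laplacian}, $\Delta_f$ is elliptic of order $2$ and asymptotic to the translation-invariant operator $\Delta_{2t}=\partial_t^2+2\partial_t+\Delta_{g_L}$. I would compute its critical weights by separation of variables: with an eigendecomposition $\Delta_{g_L}\phi_j=-\mu_j\phi_j$, $0=\mu_0<\mu_1\le\cdots$, a solution $v=e^{i\lambda t}p(t)\phi_j(x)$ of $\Delta_{2t}v=0$ with $p$ polynomial in $t$ forces the indicial equation $\lambda^2-2i\lambda+\mu_j=0$; its discriminant $-4(1+\mu_j)$ never vanishes, so $p$ must be constant and $\lambda=i\bigl(1\pm\sqrt{1+\mu_j}\bigr)$. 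Hence the critical weights are exactly the numbers $1\pm\sqrt{1+\mu_j}$, and the only ones lying in $[0,2]$ are $0$ and $2$, coming from $\mu_0=0$. In particular $(0,2)$ contains no critical weight, so by Theorem~\ref{fredholm property of Atrans operator} (in the form valid for asymptotically translation-invariant operators, used in the proof of Proposition~\ref{prop image of A- trans invariant operator}) the map $\Delta_f\colon C^{k+2,\alpha}_\varepsilon(M)\to C^{k,\alpha}_\varepsilon(M)$ is Fredholm for every $\varepsilon\in(0,2)$.

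\emph{Step 2 (injectivity).} Let $\varepsilon\in(0,2)$ and $u\in C^{k+2,\alpha}_\varepsilon(M)$ with $\Delta_f u=0$. Since no critical weight lies in $[\varepsilon,\varepsilon_0]$ for any $\varepsilon_0\in(\varepsilon,2)$, the analogue of Proposition~\ref{prop kernel independent of small weight change} for asymptotically translation-invariant operators shows $u\in C^{k+2,\alpha}_{\varepsilon_0}(M)$; fixing $\varepsilon_0\in(1,2)$ we may assume $|u|,|\nabla u|=O(e^{-\varepsilon_0 t})$ with $\varepsilon_0>1$. On the sublevel sets $M_T=\{t\le T\}$, using $\operatorname{div}(e^{f}\nabla u)=e^{f}\Delta_f u$ and the divergence theorem,
\begin{align*}
0=\int_{M_T}(\Delta_f u)\,u\,e^{f}\operatorname{dV}_g=-\int_{M_T}|\nabla u|^2 e^{f}\operatorname{dV}_g+\int_{\partial M_T}u\,(\partial_\nu u)\,e^{f}\operatorname{dA}.
\end{align*}
Because $e^{f}\sim e^{2t}$ while $u,\nabla u=O(e^{-\varepsilon_0 t})$ with $\varepsilon_0>1$, the boundary integrand is $O\bigl(e^{(2-2\varepsilon_0)t}\bigr)\to0$ and the bulk integrand is integrable on the end; letting $T\to\infty$ gives $\int_M|\nabla u|^2 e^{f}\operatorname{dV}_g=0$, so $u$ is constant and hence $u\equiv0$ by decay. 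Thus $\ker\Delta_f=\{0\}$ in $C^{k+2,\alpha}_\varepsilon(M)$ for all $\varepsilon\in(0,2)$.

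\emph{Step 3 (surjectivity).} By Proposition~\ref{prop image of A- trans invariant operator}, the image of $\Delta_f\colon C^{k+2,\alpha}_\varepsilon(M)\to C^{k,\alpha}_\varepsilon(M)$ is the $L^2(\operatorname{dV}_g)$-orthogonal complement of the kernel of the formal adjoint $\Delta_f^{*}$ on $C^{k+2,\alpha}_{-\varepsilon}(M)$. Since $\Delta_f$ is self-adjoint for $e^{f}\operatorname{dV}_g$, its $\operatorname{dV}_g$-formal adjoint is $\Delta_f^{*}v=e^{f}\Delta_f\bigl(e^{-f}v\bigr)$, so $\Delta_f^{*}v=0$ if and only if $u:=e^{-f}v$ satisfies $\Delta_f u=0$. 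If $v\in C^{k+2,\alpha}_{-\varepsilon}(M)$ then $v=O(e^{\varepsilon t})$ and, since $f-2t\in C^\infty_{\delta_0}(M)$, we get $u=O(e^{-(2-\varepsilon)t})$ with $2-\varepsilon\in(0,2)$, i.e.\ $u\in C^{0}_{2-\varepsilon}(M)$; the weighted Schauder estimate Theorem~\ref{theorem: ACyl schauder estimates} applied to $\Delta_f u=0$ then promotes $u$ to $C^{k+2,\alpha}_{2-\varepsilon}(M)$. By Step~2, $u=0$, hence $v=0$, so $\Delta_f^{*}$ has trivial kernel and $\Delta_f$ is onto. Combining Steps~2 and~3, $\Delta_f$ is a continuous bijection between Banach spaces, hence an isomorphism by the open mapping theorem.

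The point I expect to carry the real weight is Step~1 together with the decay improvement it powers in Step~2: once $(0,2)$ is known to be free of critical weights, Fredholmness is automatic and the energy identity closes. The two slightly delicate items are the indicial-root count — where the absence of a repeated root (equivalently $\mu_j\neq-1$) is exactly what excludes polynomial-in-$t$ solutions and confines the critical weights meeting $[0,2]$ to $\{0,2\}$ — and the a priori non-obvious upgrade of an arbitrary kernel element to decay rate strictly greater than $1$, which is possible precisely because the interval $(0,2)$ is critical-weight-free.
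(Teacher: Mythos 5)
Your proof is correct, and its skeleton coincides with the paper's: show $(0,2)$ is free of critical weights, get Fredholmness, prove injectivity, and prove surjectivity via Proposition \ref{prop image of A- trans invariant operator} together with the identity $\Delta_f^{*}v=e^{f}\Delta_f\bigl(e^{-f}v\bigr)$ and the vanishing of $\ker\Delta_f^{*}$ on $C^{k+2,\alpha}_{-\varepsilon}(M)$. Two sub-steps are executed differently. For the critical weights, the paper substitutes $v=e^{i\lambda t}\sum_j a_jt^j$ into the model equation, isolates the coefficient of $t^d$ to get $\Delta_{g_L}a_d+(-\lambda^2+2i\lambda)a_d=0$, and excludes $\operatorname{Im}\lambda\in(0,2)$ by splitting $\lambda=\gamma+i\varepsilon$; your eigenfunction decomposition with the explicit indicial roots $\lambda=i\bigl(1\pm\sqrt{1+\mu_j}\bigr)$ reaches the same conclusion and, as a bonus, identifies the full set of critical weights and explains why no polynomial-in-$t$ solutions occur. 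The real divergence is injectivity: the paper disposes of it in one line by the maximum principle, since a kernel element in $C^{k+2,\alpha}_{\varepsilon}(M)$ with $\varepsilon>0$ tends to zero at infinity and $\Delta_f$ has no zeroth-order term; you instead first upgrade the decay rate of a kernel element past $1$ (legitimately, using the critical-weight-free interval and the asymptotically translation-invariant analogue of Proposition \ref{prop kernel independent of small weight change}) and then close an energy identity against the measure $e^{f}\operatorname{dV}_g$, the upgrade being exactly what kills the boundary term against $e^{f}\sim e^{2t}$. This works, but is heavier than needed, and the same remark applies to your surjectivity step, where the paper again invokes the maximum principle on $e^{-f}v$ directly rather than routing through injectivity and a Schauder upgrade. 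Both arguments are complete; yours trades the maximum principle for Fredholm-theoretic decay improvement plus integration by parts.
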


\begin{proof}
	Since $\varepsilon>0$, the injectivity of $\Delta_f$ follows immediately from the standard maximum principle, so we only need to show surjectivity. 
	Before using Proposition \ref{prop image of A- trans invariant operator}, we need to prove the following
	\begin{claim*}
		There are no critical weights for $\Delta_f$ in the interval $(0,2)$. 
	\end{claim*}
Since $\Delta_f$ is asymptotic to the operator given in (\ref{asymptotic model of acyl drift laplacian}), the definition of critical weights requires us  to show that there are no solutions $v$  to the equation
\begin{align}\label{in iso theorem: model equation}
\frac{\partial ^2 v}{\partial t ^2} + 2\frac{\partial v}{\partial t} + \Delta_{g_L} v=0
\end{align}
of the form
\begin{align}\label{in iso theorem: ansatz for v}
	v=e^{i\lambda t} \sum_{j=0}^{d} a_j t^j
\end{align}
  with $\operatorname{Im} \lambda =\varepsilon \in (0,2)$ and functions $a_j$  on $L$. To see this, we plug (\ref{in iso theorem: ansatz for v})  into (\ref{in iso theorem: model equation}) and by considering the coefficient of $t^d$, we observe that (\ref{in iso theorem: model equation}) can only be satisfied if 
  \begin{align}\label{in iso theorem: reduced equation in highest coefficient}
  	\Delta_{g_L} a_d + (-\lambda^2 +2 i \lambda) a_d =0.
  \end{align}
  This implies that $-\lambda^2 + 2i \lambda$ must be real and non-negative because $\Delta_{g_L}$ is a negative and self-adjoint operator on the closed manifold $(L,g_L)$. 
  Writing $\lambda= \gamma + i \varepsilon$, this translates into 
  \begin{align}\label{in iso theorem: final equation}
  2 \gamma (1-\varepsilon)=0, \;\;\; \text{and } \;\; -\gamma^2 +  \varepsilon (\varepsilon-2) \geq 0.
  \end{align}
   If $\varepsilon=1$, the second equation in (\ref{in iso theorem: final equation}) gives a contradiction, and so $\gamma=0$. Then, however, the second equation implies $\varepsilon\geq 2$ since $\varepsilon>0$. Thus, there cannot be a solution to (\ref{in iso theorem: model equation}) of the form (\ref{in iso theorem: ansatz for v}) with $\varepsilon \in (0,2)$, proving the claim. 
   
Hence, according to Proposition \ref{prop image of A- trans invariant operator}, it suffices to show that the formal adjoint $\Delta_f ^*$ of $\Delta_f$ is injective when viewed as a map $\Delta_f^*: C^{k+2,\alpha}_{-\varepsilon} (M) \to C^{k,\alpha}_{-\varepsilon}(M)$ with $0<\varepsilon<2$. A simple computation shows that $\Delta_f ^*$ is given by 
\begin{align*}
	\Delta_f^* u= \Delta_g u - g(\nabla^g f, \nabla^g u) - u \, \Delta_g f.
\end{align*}
  Assuming that $u \in C^{k+2,\alpha}_{-\varepsilon}(M)$ satisfies $\Delta^*_f u =0$, we compute
  \begin{align*}
  	\Delta_f (e^{-f }u )&= u\, \Delta_f e^{-f} + e^{-f} \Delta_f  u + 2 g(\nabla^g e^{-f},\nabla^g  u) \\
  	&=- u e^{-f} \Delta_f f+ e^{-f} \Delta_g u - e^{-f} g(\nabla ^g f, \nabla^g u)\\
  	&= e^{-f} \Delta_f ^* u\\
  	&=0.
  \end{align*}
  Since $\varepsilon<2$, the function $e^{-f}u$ tends to zero as $t\to \infty$, and so the maximum principle implies that $e^{-f}u$ vanishes identically. Thus, the kernel of $\Delta^*_f$ is trivial and the theorem follows. 
\end{proof}

We end this section by proving a (global) Poincar\'e-inequality for a certain drift Laplace operator, which is needed later on to obtain $L^2$-estimates for the Monge-Amp\`ere operator as in \cite{conlon2020steady}. 

\begin{prop}\label{poincare inequality}
	Let $(M,g)$ be an ACyl manifold.  If $f$ is a $C^2$-function on $M$ satisfying $f-2t\in C^{2}_{\delta_0}(M)$ for some $\delta_0>0$ then there exists a constant $\lambda>0$ such that
	\begin{align*}
		\lambda  \int_M u^2 \frac{e^{f}}{(f+c)^2} \operatorname{d V}_g \leq \int_M |\nabla^g u|^2_g\, \frac{e^f}{(f+c)^2} \operatorname{dV}_g	
	\end{align*}
	holds for all smooth, compactly supported functions $u$ on $M$, where $c>0$ is chosen so that $f+c>0$. 
\end{prop}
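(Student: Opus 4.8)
The plan is to establish the weighted Poincaré inequality by a contradiction-and-concentration argument combined with the spectral gap of the model cylinder. Write $d\mu := \frac{e^f}{(f+c)^2}\,\mathrm{dV}_g$ for the weighted measure. First I would observe that on the cylindrical end $M \setminus U \cong [0,\infty)\times L$, since $f-2t\in C^2_{\delta_0}(M)$, the density $\frac{e^f}{(f+c)^2}$ is comparable to $\frac{e^{2t}}{4t^2}$, and the relevant drift operator is a perturbation of the model operator $\Delta_{2t}$ from \eqref{asymptotic model of acyl drift laplacian}. The key structural fact is that the model operator on the cylinder, acting on functions with the weight $e^{2t}\,dt\,\mathrm{dV}_{g_L}$, has a spectral gap: integrating by parts, $\int |\partial_t v|^2 e^{2t} \geq \int v^2 e^{2t}$ (after completing the square, using that $\partial_t(e^t v)$ contributes $\int (\partial_t v + v)^2 e^{2t} \geq 0$, which rearranges to the stated inequality once boundary terms are controlled by compact support), and the $L$-directions only help since $-\Delta_{g_L}\geq 0$. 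This is exactly the computation of critical weights already carried out in the proof of Theorem \ref{theorem ACyl drift laplace is iso}: the value $\varepsilon = 1$ is the relevant lower eigenvalue, and there are no critical weights strictly inside $(0,2)$, which is what gives the strict positivity $\lambda>0$.

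The argument I would carry out: suppose the inequality fails for every $\lambda>0$. Then there is a sequence $u_k$ of smooth compactly supported functions with $\int u_k^2\,d\mu = 1$ and $\int |\nabla^g u_k|^2_g\,d\mu \to 0$. On any fixed compact set, Rellich compactness gives a subsequence converging in $L^2$; a diagonal argument over an exhaustion produces a limit $u_\infty \in W^{1,2}_{loc}$ with $\nabla^g u_\infty = 0$, hence $u_\infty$ is locally constant, hence constant since $M$ is connected. But a nonzero constant is not $\mu$-integrable (the measure $d\mu$ has infinite total mass, since $\frac{e^{2t}}{4t^2}$ is not integrable over $[0,\infty)$ against $dt$), so $u_\infty = 0$. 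This forces the $L^2(d\mu)$-mass of $u_k$ to escape to infinity along the cylindrical end. On that end, however, the comparability of $d\mu$ with the model weight and the model spectral gap (the completing-the-square computation above, now done on $[T,\infty)\times L$ for $T$ large, with error terms from $f-2t$ and from $(f+c)^{-2}$ absorbed for $T$ large by the decay $e^{-\delta_0 t}$ and the slow variation of $(f+c)^{-2}$) yield $\int_{t\geq T} |\nabla^g u_k|^2\,d\mu \geq \tfrac{1}{2}\int_{t \geq T} u_k^2\,d\mu$, contradicting $\int u_k^2\,d\mu = 1$ with vanishing gradient energy. Alternatively — and this may be cleaner to write — one avoids the contradiction argument entirely: split $u = u\chi + u(1-\chi)$ with $\chi$ a cutoff supported near the cylindrical end, handle the compactly supported piece by a standard Poincaré inequality on a slightly larger compact domain (valid because the domain is connected with the constant killed by, say, subtracting and using the noncompact collar), and handle the end piece by the direct integration-by-parts estimate with the model weight.

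The main obstacle I anticipate is the bookkeeping on the cylindrical end: one must show that the $(f+c)^{-2}$ factor, which is \emph{not} exponentially close to a translation-invariant weight (it decays only polynomially in $t$), does not destroy the spectral gap. The resolution is that $(f+c)^{-2}$ varies slowly — its logarithmic derivative is $O(1/t)$ — so on each unit-length slab $[n,n+1]\times L$ it is essentially constant and can be pulled out of both sides of the slab-wise inequality; summing over slabs recovers the global estimate with a constant $\lambda$ independent of the slab, at the cost of shrinking $\lambda$ slightly from the ideal value $1$. The exponential error $f - 2t = O(e^{-\delta_0 t})$ is harmless since it is absorbed for $t$ large and only affects a compact region otherwise. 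Tracking these two perturbations carefully is the only real work; the core inequality is the elementary $\int(\partial_t v + v)^2 e^{2t} \geq 0$ rearrangement together with the connectedness/infinite-mass dichotomy.
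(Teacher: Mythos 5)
Your proof takes a genuinely different route from the paper's. The paper's argument is a two-line barrier computation: it invokes \cite{conlon2020steady}[Lemma 5.1], which reduces the weighted Poincar\'e inequality to exhibiting a positive $C^2$ function $v$ with $\Delta_{f-2\log(f+c)}v\leq -\lambda_0 v$ outside a compact set, and then checks that $v=e^{-f/2}$ works because $\Delta_g f\to 0$, $|\nabla^g f|_g^2\to 1$ and $(f+c)^{-1}\to 0$. Your argument is instead a self-contained variational one: a direct integration-by-parts (spectral gap) estimate on the cylindrical end, combined with a Rellich/concentration argument in the interior that exploits the infinite total mass of $\frac{e^f}{(f+c)^2}\operatorname{dV}_g$ to kill constants. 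What the paper's route buys is brevity at the cost of a black-box lemma; what yours buys is transparency about \emph{why} the inequality holds (the gap of the model operator at weight $1$, i.e.\ the absence of critical weights in $(0,2)$) and independence from the cited reference. Both are standard and both work; note only that your sharp model constant $1$ versus the paper's $\lambda_0=1/8$ is immaterial since the statement asks for some $\lambda>0$.

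One sub-step does not work as literally written: the slab-wise treatment of the factor $(f+c)^{-2}$. There is no Poincar\'e inequality on a single slab $[n,n+1]\times L$ for functions without boundary conditions (constants defeat it), and the global inequality $\int|\partial_t v|^2e^{2t}\geq\int v^2e^{2t}$ is an honest integration by parts over the whole half-line, so it cannot be localised to slabs and reassembled without uncontrolled interface terms. The repair is immediate and stays within your own framework: run the completing-the-square argument directly against the full weight $w=\frac{e^f}{(f+c)^2}$. Since $\partial_t\log w=\partial_t f\bigl(1-\tfrac{2}{f+c}\bigr)\to 2$, expanding $\int_{\{t\geq T\}}(\partial_t v+v)^2\,w\geq 0$ and integrating by parts gives $\int_{\{t\geq T\}}|\partial_t v|^2 w\geq v^2(T)w(T)+\int_{\{t\geq T\}}v^2\bigl(\partial_t\log w-1\bigr)w\geq(1-\eta)\int_{\{t\geq T\}}v^2w$ for $T$ large, with the boundary term having a favourable sign, so no cutoff at $t=T$ is even needed. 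With that replacement (and the implicit use of connectedness of $M$ in the concentration step), your argument is complete.
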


\begin{proof} First of all note that we can  assume that $f+c>0$ for some $c>0$ because $f$ is proper since $f-2t \in C^{2}_{\delta_0}(M)$. 
	By \cite{conlon2020steady}[Lemma 5.1], it is sufficient to find a positive $C^2$-function $v$ on $M$ and a positive constant $\lambda_0$ such that $\Delta_{f-2\log (f+c)} \, v \leq -\lambda_0 v$ outside some compact subset $K\subset M$. 
	
	We claim that this condition holds for the function $v:= e^{-\frac{f}{2}}$. Indeed, we first calculate
	\begin{align*}
		\Delta_{f-2\log (f+c)} \, e^{-\frac{f}{2}}= - e^{-\frac{f}{2}} \left( \frac{1}{2} \Delta_g f + \left(\frac{1}{4}-\frac{1}{f+c}\right)g(\nabla^g f, \nabla^g f)   \right),
	\end{align*}
	and, since $f-2t\in C^2_{\delta_0}(M)$, we then observe that $(f+c)^{-1} \to 0$ in the limit $t\to \infty$, as well as
	\begin{align*}
	\Delta_g f \to \Delta_{g_{cyl}} t =0, \;\; \text{and }\;  |\nabla^g f|^2 _g \to |\nabla^{g_{cyl}} t|^2_{g_{cyl}}=1 \;\; \text{if } \;\; t\to \infty.  
	\end{align*}
	The claim now follows by taking for instance $\lambda_0=1/8$ and  $ K:= \{ x\in M \; | \; t(x)\leq m  \}$ for $m\gg 1$ large enough. 
\end{proof}

	\section{Preliminaries on K\"ahler-Ricci solitons} \label{section preliminaries on KRS}

In this section, we recall some basic definitions and facts about steady K\"ahler-Ricci solitons. In particular, we review when a solitons is gradient. The main result in this direction is Proposition \ref{proposition when acyl metric is hamiltonian}, which states a criterion for the radial vector field on an ACyl K\"ahler  manifold to be a gradient field. 

\begin{defi}\label{definition KRS}
	 A triple $(M,g,X)$ consisting of a    complete K\"ahler manifold $(M,g)$ and a complete real holomorphic vector field $X$ on $M$ is  a   \textit{steady K\"ahler-Ricci soliton} if the corresponding K\"ahler form $\omega$ satisfies
	 \begin{align}\label{KRS defining equation}
	 	\operatorname{Ric}(\omega) = \frac{1}{2} \mathcal{L}_X \omega,
	 \end{align}
	 where $\operatorname{Ric}(\omega)$ denotes the Ricci form of $\omega$ and $\mathcal{L}_X $  is the Lie derivative  in direction of $X$. The vector field $X$ is called the \textit{soliton vector field}.  
\end{defi}

We say that  a steady K\"ahler-Ricci soliton ($M,g, X$) is \textit{gradient} if $X=\nabla^g  f$ for some smooth real-valued function $f$ on $M$. In this case, $f$ is called the \textit{soliton potential} and equation (\ref{KRS defining equation}) becomes 
\begin{align}\label{formula for ricci form of gradient KRS}
 \operatorname{Ric}(\omega) = i \partial \partialb f.
\end{align}
For us, the most important example is Hamilton's cigar soliton \cite{hamilton1988ricci}.

\begin{example}[Cigar soliton] \label{example cigar soliton}
	Let $M= \mathbb{C}$ and consider the following  metric 
	\begin{align*}
		g_{cig} = \frac{1}{1+x^2 +y^2}\, (dx^2 + dy^2)  
	\end{align*}	
	which is K\"ahler with K\"ahler form 
	\begin{align*}
		\omega_{cig} = \frac{1}{1+|z|^2}\, \frac{i}{2}dz \wedge d\bar{z},
	\end{align*}
	where $z=x+iy$ is the standard coordinate for $\mathbb{C}$. A straight forward computation then shows that  $(\mathbb{C}, g_{cig})$  defines a K\"ahler-Ricci soliton with real holomorphic vector field 
	\begin{align*}
		X= 2 x \frac{\partial}{\partial x} + 2y \frac{\partial}{\partial y}= 4 \operatorname{Re} \left( z \frac{\partial}{\partial z} \right).
	\end{align*}

\end{example}

In fact, $(\mathbb{C},g_{cig})$ is also an ACyl manifold in the sense of Definition \ref{definition ACyl manifold}, with ACyl map $\Phi: \mathbb{R}\times S^1 \to \mathbb{C}^*$ given by
\begin{align*}
	\Phi(t, e^{2\pi i \theta }) := e^{t+ 2\pi i \theta}.
\end{align*}
Under this change of coordinates, we have 
\begin{align*}
\Phi_* \frac{\partial }{\partial t} =  x \frac{\partial}{\partial x} + y \frac{\partial}{\partial y} \;\; \text{ and } \;\;
\Phi^* g_{cig}  = \frac{1}{1+ e^{2t}} (dt^2 + d\theta^2), 
\end{align*}
from which it is easy to see that $g_{cig}$ is indeed asymptotic to $g_{cyl}= dt^2 + d\theta ^2$. 

In higher dimension, further examples of ACyl K\"ahler-Ricci solitons can be obtained by taking the product of the cigar soliton with a compact, Ricci-flat K\"ahler manifold.
 Such examples, and their finite quotients, are the asymptotic model for the solitons constructed in Theorem \ref{geometric existence theorem}. 
 
 Under certain conditions, the soliton equation (\ref{KRS defining equation}) can be reduced to solving a  Monge-Amp\`ere equation, as shown in \cite{conlon2020steady}[Proposition 4.5], for example. We adapt their arguments to obtain the next
 \begin{lemma}\label{lemma reducing to MA equation}
 	Let $(M,g,J)$ be a K\"ahler manifold of dimension $n$ with K\"ahler form $\omega$. Let $X$ be a real holomorphic vector field such that $X=\nabla^g f$, for some smooth function $f:M\to \mathbb{R}$, and suppose $M$ admits a nowhere-vanishing, holomorphic $(n,0)$-form $\Omega$. If there is a smooth function $\varphi:M \to \mathbb{R}$ satisfying
 	\begin{align} \label{MA in lemma on reducing in general}
 		\left(\omega+i \partial \partialb \varphi \right)^n= e^{-f-\frac{X}{2}(\varphi)} i^{n^2 } \, \Omega \wedge \overline{\Omega},
 	\end{align}
 	then $\omega+i\partial \partialb \varphi $ defines a steady K\"ahler-Ricci soliton with vector field $X$. Moreover, if $\varphi$ is $JX$-invariant,  the resulting soliton is gradient. 
 \end{lemma}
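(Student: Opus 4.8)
The plan is to verify that $\tilde\omega := \omega + i\partial\bar\partial\varphi$ satisfies the steady Kähler–Ricci soliton equation $\operatorname{Ric}(\tilde\omega) = \tfrac12\mathcal L_X\tilde\omega$, using the given Monge–Ampère identity and the standard formula for the Ricci form in terms of a volume form. First I would recall that for any Kähler metric with Kähler form $\tilde\omega$ and any nowhere-vanishing holomorphic $(n,0)$-form $\Omega$, one has $\operatorname{Ric}(\tilde\omega) = -i\partial\bar\partial\log\big(\tilde\omega^n / (i^{n^2}\Omega\wedge\overline\Omega)\big)$, since $i^{n^2}\Omega\wedge\overline\Omega$ is, locally, $|h|^2$ times the Euclidean volume form for a local holomorphic frame, and the $\partial\bar\partial$ of $\log|h|^2$ vanishes. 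Taking $\log$ of equation (\ref{MA in lemma on reducing in general}) gives $\log\big(\tilde\omega^n/(i^{n^2}\Omega\wedge\overline\Omega)\big) = -f - \tfrac{X}{2}(\varphi)$, so that
\begin{align*}
\operatorname{Ric}(\tilde\omega) = i\partial\bar\partial f + \tfrac12\, i\partial\bar\partial\big(X(\varphi)\big) = i\partial\bar\partial f + \tfrac12\, i\partial\bar\partial\big(df(X)\big),
\end{align*}
where I used $X=\nabla^g f$ to write $X(\varphi) = g(\nabla^g f,\nabla^g\varphi)$, but it will be cleaner to keep it as the derivative of $\varphi$ along $X$ and instead reorganize the right-hand side of the soliton equation to match.

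Next I would compute $\tfrac12\mathcal L_X\tilde\omega$. Since $X$ is real holomorphic, its flow acts by biholomorphisms, and one has the identity $\mathcal L_X\alpha = d\,\iota_X\alpha + \iota_X d\alpha$; applied to the closed form $\tilde\omega$ this gives $\mathcal L_X\tilde\omega = d\,\iota_X\tilde\omega$. The key classical fact (this is where the hypothesis $X=\nabla^g f$ enters) is that for a Kähler metric the gradient vector field $\nabla^g f$ satisfies $\iota_{\nabla^g f}\omega = -d^c f = -Jdf = i(\bar\partial - \partial)f$ up to the usual sign conventions, hence $d\iota_{\nabla^g f}\omega = 2i\partial\bar\partial f$. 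But here there is a subtlety: $X = \nabla^g f$ is the gradient with respect to the original metric $g$, not with respect to $\tilde g$, so $\iota_X\tilde\omega \neq \iota_X\omega$ in general. The correct bookkeeping is: $\iota_X\tilde\omega = \iota_X\omega + \iota_X(i\partial\bar\partial\varphi)$, and since $X$ is real holomorphic one has $\iota_X(i\partial\bar\partial\varphi) = i\partial\bar\partial(X(\varphi)) \cdot \tfrac{1}{?}$ — more precisely, using $\mathcal L_X = d\iota_X + \iota_X d$ and $\mathcal L_X(i\partial\bar\partial\varphi) = i\partial\bar\partial(X\varphi)$ (holomorphicity of $X$ lets $\mathcal L_X$ pass through $\partial$ and $\bar\partial$), we get $d\,\iota_X(i\partial\bar\partial\varphi) = i\partial\bar\partial(X\varphi)$. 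Combining, $\tfrac12\,d\,\iota_X\tilde\omega = \tfrac12\,d\,\iota_X\omega + \tfrac12\,i\partial\bar\partial(X\varphi) = i\partial\bar\partial f + \tfrac12\,i\partial\bar\partial(X\varphi)$, which is exactly $\operatorname{Ric}(\tilde\omega)$ as computed above. Thus $(M,\tilde g,X)$ is a steady Kähler–Ricci soliton.

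For the final sentence — that the soliton is gradient when $\varphi$ is $JX$-invariant — I would argue as follows. We already know $X = \nabla^g f$; we must produce a function $\tilde f$ with $X = \nabla^{\tilde g}\tilde f$, where $\tilde g$ is the metric of $\tilde\omega$. The natural candidate is $\tilde f := f + \tfrac12 X(\varphi)$, consistent with rewriting (\ref{MA in lemma on reducing in general}) as $\tilde\omega^n = e^{-\tilde f}\,i^{n^2}\Omega\wedge\overline\Omega$. To check $\nabla^{\tilde g}\tilde f = X$, it suffices (since $\tilde g$ is nondegenerate) to check $\iota_X\tilde\omega = -d^c_{\tilde g}\tilde f = J d\tilde f$; equivalently that the $(0,1)$-part of $\iota_X\tilde\omega$ equals $-i\bar\partial\tilde f$ (with appropriate signs). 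From the previous paragraph $\iota_X\tilde\omega = \iota_X\omega + i\partial\bar\partial\varphi$-contraction, and $\iota_X\omega = Jdf = -d^c f$ since $X=\nabla^g f$; one then needs $\iota_X(i\partial\bar\partial\varphi) = -d^c(\tfrac12 X\varphi) + (\text{exact real part that integrates away})$, and the $JX$-invariance of $\varphi$, i.e. $(JX)(\varphi)=0$, is precisely what kills the extra imaginary/real discrepancy so that $\iota_X(i\partial\bar\partial\varphi)$ is exactly $d^c$ of $-\tfrac12 X(\varphi)$ rather than only differing from it by a closed form. Concretely: $\iota_X(i\partial\bar\partial\varphi)$ naturally decomposes, using $X - iJX$ of type $(1,0)$, and the vanishing $(JX)\varphi = 0$ forces the relevant combination to be $\partial(\tfrac12 X\varphi) - \bar\partial(\tfrac12 X\varphi)$ up to sign, i.e. $-d^c(\tfrac12 X\varphi)$. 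Hence $\iota_X\tilde\omega = -d^c\tilde f$, so $X = \nabla^{\tilde g}\tilde f$ and the soliton is gradient.

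**Main obstacle.** The routine-but-delicate point is the sign/type bookkeeping around the gradient identity $\iota_{\nabla f}\omega = \pm d^c f$ together with the fact that $X=\nabla^g f$ is the $g$-gradient while the soliton metric is $\tilde g$; one must be careful that the contraction $\iota_X(i\partial\bar\partial\varphi)$ produces exactly $\mp d^c(\tfrac12 X\varphi)$ and not merely something $d$-cohomologous to it. This is where the $JX$-invariance hypothesis does real work, and getting the conventions consistent (the paper's $\operatorname{Ric}(\omega) = i\partial\bar\partial f$ for gradient solitons fixes the normalization) is the part that needs care rather than the overall structure, which is a direct computation of $\operatorname{Ric}$ via the log of the volume-form identity matched against $\tfrac12 d\iota_X\tilde\omega$.
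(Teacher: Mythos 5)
Your proposal is correct and follows essentially the same route as the paper: compute $\operatorname{Ric}(\omega_\varphi)$ from the logarithm of the volume-form identity, match it against $\tfrac12 d\iota_X\omega_\varphi$ by splitting off $\tfrac12 d\iota_X\omega = i\partial\bar\partial f$ (from $X=\nabla^g f$) and $\tfrac12\mathcal{L}_X(i\partial\bar\partial\varphi)=i\partial\bar\partial(\tfrac{X}{2}\varphi)$ (from holomorphicity), and then identify the new soliton potential as $f+\tfrac{X}{2}(\varphi)$ via the contraction identity $\iota_{JX}(2i\partial\bar\partial\varphi)=-dX(\varphi)$, which is exactly where the paper also invokes the $JX$-invariance of $\varphi$.
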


\begin{proof} We closely follow the computation provided in the proof of \cite{conlon2020steady}[Proposition 4.5].
	Suppose $\omega_{\varphi}:= \omega + i \partial \partialb \varphi$ satisfies (\ref{MA in lemma on reducing in general}) and compute:
	\begin{align*}
	\operatorname{Ric}(\omega_{\varphi})&= - i \partial \partialb \log \frac{\omega_{\varphi}^n}{i^{n^2} \Omega\wedge \overline{\Omega}} \\
	&=i \partial \partialb  f+ \frac{X}{2}(\varphi) \\
	&=\frac{1}{2}\mathcal{L}_X \omega + \frac{1}{2}\mathcal{L}_X \left(i \partial \partialb \varphi \right) =\frac{1}{2} \mathcal{L}_X \omega_\varphi,
	\end{align*}
	where we used in the last line that $X$ is real holomorphic, and
	\begin{align*}
		\frac{1}{2} \mathcal{L}_X \omega = \frac{1}{2} d \iota_{X} \omega = \frac{1}{2} d J \iota_{JX} \omega = -\frac{1}{2} d  J d f = i \partial \partialb f
	\end{align*}
	since $X=\nabla^g f$. 
	So, if $g_\varphi$ is the metric corresponding to $\omega_{\varphi}$, the triple $(M,g_\varphi, X)$ defines a steady K\"ahler-Ricci soliton. 
	
	For the second claim, assume $\varphi$ to be $JX$-invariant. It is not difficult to see that $\iota_{JX} \left(2i\partial \partialb \varphi \right) =-d X(\varphi)$, compare the proof of \cite{conlon2020steady}[Lemma 7.3] for instance. Then we conclude
	\begin{align*}
	\iota_{JX} \left(\omega + i \partial \partialb \varphi \right) =  -d\left( f + \frac{X}{2}(\varphi) \right)
	\end{align*}
	i.e. $X= \nabla^{g_\varphi} \left(f+ \frac{X}{2}(\varphi)\right)$ as claimed. 
\end{proof}
 
 We conclude this section by addressing the question when a given K\"ahler-Ricci soliton is gradient. It is not difficult to see that if it is gradient, then $JX$ is a Killing vector field for the metric. Under certain conditions, the converse is true as well. 
 
 \begin{lemma} \label{lemma when soliton is gradient}
 	Let $(M,g,X)$ be a steady K\"ahler-Ricci soliton and suppose that $JX$ is  Killing for $g$, where $J$ denotes the complex structure of $(M,g)$. If $H^1(M,\mathbb{R})=0$, then the soliton $(M,g,X)$ is gradient. 
  \end{lemma}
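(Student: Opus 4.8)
The goal is to produce a function $f$ with $X = \nabla^g f$, given that $JX$ is Killing and $H^1(M,\mathbb{R}) = 0$. The natural starting point is the $1$-form $\iota_X \omega$, where $\omega$ is the K\"ahler form. Since $\omega$ is parallel and $J$ is compatible with $g$, one has the pointwise identity $\iota_X \omega = -(\nabla^g\text{-metric dual of }JX) \circ J$-type relation; concretely $\omega(X,\cdot) = g(JX,\cdot)$ (up to sign conventions), so $\iota_X\omega$ is the metric dual of $JX$ if we instead look at $\iota_{JX}\omega = -g(X,\cdot)$. Thus it is cleanest to work with $\eta := \iota_{JX}\omega$, which is the $1$-form metrically dual to $-X$; showing $X$ is a gradient is equivalent to showing $\eta$ is exact. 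First I would record that $\eta$ is \emph{closed}: $d\eta = d\iota_{JX}\omega = \mathcal{L}_{JX}\omega - \iota_{JX}d\omega = \mathcal{L}_{JX}\omega$ since $\omega$ is closed, and $\mathcal{L}_{JX}\omega = 0$ because $JX$ is Killing and preserves $J$ (being a real holomorphic Killing field on a K\"ahler manifold preserves $\omega = g(J\cdot,\cdot)$). Here one uses that $X$ real holomorphic implies $JX$ real holomorphic, and a holomorphic Killing field preserves the K\"ahler form.

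\textbf{Key steps.} With $\eta$ closed and $H^1(M,\mathbb{R}) = 0$, exactness follows: there is a smooth function $h$ with $\eta = dh$. Then unwinding the duality, $dh = \iota_{JX}\omega = -g(X,\cdot)$, which reads $X = -\nabla^g h = \nabla^g(-h)$, so $f := -h$ is the desired soliton potential. The only subtlety is that $H^1(M,\mathbb{R}) = 0$ is about de Rham cohomology with compact... no, ordinary de Rham cohomology, and a closed $1$-form on $M$ representing the zero class in $H^1_{dR}(M,\mathbb{R})$ is exact by definition — no completeness or decay issue arises here since we only need a primitive to exist, not to have good asymptotics. So the argument is essentially: (1) $JX$ Killing $+$ $J$-invariant $\Rightarrow$ $\mathcal{L}_{JX}\omega = 0$; (2) hence $\iota_{JX}\omega$ is closed; (3) $H^1(M,\mathbb{R}) = 0$ $\Rightarrow$ $\iota_{JX}\omega = dh$; (4) reinterpret via the metric-K\"ahler form compatibility to get $X = \nabla^g(-h)$.

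\textbf{Main obstacle.} There is no serious analytic difficulty; the one point that deserves care is justifying $\mathcal{L}_{JX}\omega = 0$. The cleanest route is: $JX$ is Killing, so $\mathcal{L}_{JX}g = 0$; and since $X$ is real holomorphic, so is $JX$, hence $\mathcal{L}_{JX}J = 0$; combining, $\mathcal{L}_{JX}\omega = \mathcal{L}_{JX}(g(J\cdot,\cdot)) = 0$. Alternatively, one invokes the fact (standard for K\"ahler manifolds) that a real holomorphic Killing vector field is automatically \emph{Hamiltonian} with respect to $\omega$ on a simply-connected manifold, but here we want the weaker statement and $H^1 = 0$ handles the passage from closed to exact. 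I would also double-check sign conventions in the identity relating $\iota_X\omega$, $\iota_{JX}\omega$ and the metric duals of $X$, $JX$ — this is where it is easiest to introduce an error, but it is purely bookkeeping. Finally, I would remark that the resulting $f$ then automatically makes $JX$-invariance and the Monge--Amp\`ere reformulation of Lemma \ref{lemma reducing to MA equation} available, tying this back to the main line of the paper.
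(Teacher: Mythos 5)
Your proof is correct. The chain $JX$ Killing $+$ $JX$ real holomorphic $\Rightarrow \mathcal{L}_{JX}\omega=0 \Rightarrow \iota_{JX}\omega$ closed $\Rightarrow$ exact by $H^1(M,\mathbb{R})=0$, followed by the duality $\iota_{JX}\omega=-g(X,\cdot)$, is exactly the standard argument, and your sign conventions agree with those used elsewhere in the paper (e.g.\ in the proofs of Lemma \ref{lemma reducing to MA equation} and Proposition \ref{proposition when acyl metric is hamiltonian}, where $X=\nabla^g f$ is equivalent to $\iota_{JX}\omega=-df$). The only difference from the paper is that the paper does not prove the lemma at all but cites it as a special case of an appendix result of Conlon--Deruelle--Sun; your self-contained two-line argument is what that citation unpacks to, and note that the soliton equation itself is never needed --- only that $X$ is real holomorphic, $JX$ is Killing, and $H^1(M,\mathbb{R})=0$.
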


\begin{proof}
	This is a special case of \cite{conlon2020expanding}[Corollary A.7].
\end{proof}

In the special case of ACyl manifolds, we can replace the condition $H^1 (M,\mathbb{R})=0$ in Lemma \ref{lemma when soliton is gradient} by an asymptotic condition on the vector field $X$. In fact, there is the following statement for more general ACyl K\"ahler manifolds.

\begin{prop} \label{proposition when acyl metric is hamiltonian}
	Let $(M,g)$ be an ACyl K\"ahler manifold of rate $\delta >0$ with complex structure $J$ and ACyl map $\Phi$. Suppose $X$ is a real holomorphic vector field on $M$ such that 
	\begin{align} \label{in prop acyl implies hamilton: condition on the vector field}
	X= 2 \Phi_* \frac{\partial}{\partial t}
	\end{align}
	outside some compact domain. If $JX$ is Killing for $g$, then there exists a smooth function $f:M \to \mathbb{R}$ with 
	$
		f-2t \in C^{\infty}_\delta  (M)
	$
	such that $X= \nabla^g f$. 
\end{prop}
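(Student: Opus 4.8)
\emph{Set-up and closedness.} Since $X=\nabla^{g}f$ is equivalent to $X^{\flat}:=g(X,\cdot)=df$, it suffices to show that the one-form $X^{\flat}$ is exact with a primitive of the form $2t+(\text{exponentially decaying})$. First, $X$ real holomorphic and $J$ integrable imply $JX$ real holomorphic ($\mathcal L_{JX}J=J\,\mathcal L_{X}J=0$); together with the hypothesis that $JX$ is Killing this gives $\mathcal L_{JX}\omega=0$, so $d\iota_{JX}\omega=\mathcal L_{JX}\omega-\iota_{JX}d\omega=0$. As $\iota_{JX}\omega=\omega(JX,\cdot)=-g(X,\cdot)=-X^{\flat}$, the form $X^{\flat}$ is closed. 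On the cylindrical end $X=2\Phi_{*}\tfrac{\partial}{\partial t}$ and $\Phi^{*}g=g_{cyl}+O(e^{-\delta t})$ to all orders, so $\Phi^{*}X^{\flat}=2\,dt+O(e^{-\delta t})$. Writing $t$ also for a smooth extension of the cylindrical coordinate to $M$, the one-form $\beta:=X^{\flat}-2\,dt$ is closed and lies in $C^{\infty}_{\delta}(\Lambda^{1}M)$. It is therefore enough to find $v\in C^{\infty}_{\delta}(M)$ with $dv=\beta$, for then $f:=2t+v$ satisfies $df=X^{\flat}$ and $f-2t=v\in C^{\infty}_{\delta}(M)$.

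\emph{Solving $dv=\beta$ up to a harmonic remainder.} The Laplacian $\Delta_{g}$ on functions is elliptic and asymptotically translation-invariant (its model is $\tfrac{\partial^{2}}{\partial t^{2}}+\Delta_{g_{L}}$), and its critical weights are discrete with $0$ the only one in a neighbourhood of $0$. Fix a small non-critical $\varepsilon\in(0,\delta)$. As in the proof of Theorem \ref{theorem ACyl drift laplace is iso}, $\Delta_{g}\colon C^{k+2,\alpha}_{\varepsilon}(M)\to C^{k,\alpha}_{\varepsilon}(M)$ is Fredholm, and by Proposition \ref{prop image of A- trans invariant operator} its image is the $L^{2}$-orthogonal complement of the constants: an ACyl manifold with connected end admits no harmonic function of growth $o(e^{\varepsilon t})$ other than constants, because the flux $\int_{\{t=T\}}\partial_{\nu}u=\int_{\{t\le T\}}\Delta_{g}u=0$ kills the possible linear term and the maximum principle then forces $u$ to be constant. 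Since $\int_{M}d^{*}\beta\,\operatorname{dV}_{g}=\langle\beta,d1\rangle_{L^{2}}=0$ by Lemma \ref{lemma integration by parts}, there is $v_{0}\in C^{\infty}_{\varepsilon}(M)$ with $\Delta_{g}v_{0}=d^{*}\beta$ (smoothness by Theorem \ref{theorem: ACyl schauder estimates}). Then $\gamma:=\beta-dv_{0}$ is closed and co-closed, hence a harmonic one-form, and $\gamma\in C^{\infty}_{\varepsilon}(\Lambda^{1}M)\subset L^{2}$.

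\emph{Killing the harmonic remainder --- the main step.} I claim $\gamma\equiv0$. Since $(M,g,J)$ is K\"ahler, the Hodge Laplacian satisfies $\Delta_{d}=2\Delta_{\partialb}$, so the $(1,0)$-part $\gamma^{1,0}$ is $\partialb$-harmonic; its exponential decay (and $\nabla J=0$ on the end) makes $\langle\Delta_{\partialb}\gamma^{1,0},\gamma^{1,0}\rangle_{L^{2}}=\|\partialb\gamma^{1,0}\|^{2}_{L^{2}}+\|\partialb^{*}\gamma^{1,0}\|^{2}_{L^{2}}$ legitimate, whence $\partialb\gamma^{1,0}=0$, i.e. $\gamma^{1,0}$ has holomorphic coefficients. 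As $X$ is real holomorphic, $X^{1,0}:=\tfrac12(X-iJX)$ is a holomorphic vector field, so $h:=\iota_{X^{1,0}}\gamma^{1,0}$ is a holomorphic function on $M$; on the end $X^{1,0}=\tfrac{\partial}{\partial t}-iJ\tfrac{\partial}{\partial t}$ has bounded $g$-norm (because $J$ is a $g$-isometry and $g\to g_{cyl}$), so $|h|\le|\gamma^{1,0}|_{g}\,|X^{1,0}|_{g}=O(e^{-\varepsilon t})\to0$, and since $M$ has a single end the maximum principle gives $h\equiv0$. Splitting $X=X^{1,0}+\overline{X^{1,0}}$ by type and using $\gamma^{0,1}=\overline{\gamma^{1,0}}$ yields $\gamma(X)=h+\bar h=0$ pointwise. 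Hence, using $d^{*}\gamma=0$ and integration by parts (valid since $\gamma$ decays exponentially while $v_{0}$ decays and $t$ grows only linearly),
\begin{align*}
	\|\gamma\|^{2}_{L^{2}}=\langle\gamma,\beta-dv_{0}\rangle_{L^{2}}=\langle\gamma,\beta\rangle_{L^{2}}=\int_{M}\langle\gamma,X^{\flat}\rangle_{g}\operatorname{dV}_{g}-2\int_{M}\langle\gamma,dt\rangle_{g}\operatorname{dV}_{g}=\int_{M}\gamma(X)\operatorname{dV}_{g}=0 .
\end{align*}
Thus $\gamma=0$ and $\beta=dv_{0}$. Finally $dv_{0}=\beta\in C^{\infty}_{\delta}$ together with $v_{0}\in C^{\infty}_{\varepsilon}$ forces $v_{0}\in C^{\infty}_{\delta}(M)$ (integrate $\tfrac{\partial}{\partial t}v_{0}=O(e^{-\delta t})$ inward from $t=\infty$, and differentiate), and $v:=v_{0}$ finishes the construction.

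\emph{The main obstacle.} Everything outside the third paragraph is routine bookkeeping with the linear ACyl theory of Section \ref{section linear analysis on acyl manifolds}. The genuine difficulty is the vanishing $\gamma\equiv0$, equivalently $[\iota_{JX}\omega]=0$ in $H^{1}(M,\mathbb R)$, which is \emph{false} for a general Killing field (already on a flat torus) and so must use the K\"ahler structure, the holomorphy of $X$, and the precise normalisation $X=2\Phi_{*}\tfrac{\partial}{\partial t}$. The mechanism I would rely on is exactly the one above: contracting the holomorphic form $\gamma^{1,0}$ with the holomorphic field $X^{1,0}$ produces a decaying holomorphic function, forcing the pointwise identity $\gamma(X)\equiv0$, and this is precisely what annihilates $\|\gamma\|_{L^{2}}^{2}$.
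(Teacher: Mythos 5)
Your proposal is correct, and its overall skeleton matches the paper's: reduce to showing that the closed, exponentially decaying $1$-form $X^{\flat}-2\,dt$ is exact, split off an exact part, and kill the harmonic remainder by producing the pointwise identity $\gamma(X)\equiv 0$ and feeding it into an $L^{2}$ pairing. The two places where you genuinely diverge are instructive. First, instead of invoking the full ACyl Hodge decomposition (harmonic $+$ exact $+$ co-exact, as in Nordstr\"om's thesis) and separately disposing of the co-exact piece, you solve a single Poisson equation $\Delta_{g}v_{0}=d^{*}\beta$ in a small non-critical weight and observe that the remainder is automatically closed and co-closed; this is leaner, at the price of having to identify the cokernel of $\Delta_{g}$ by hand (your flux argument plus the maximum principle does this correctly, and the orthogonality $\langle d^{*}\beta,1\rangle_{L^{2}}=0$ is exactly the solvability condition). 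Second, and more substantially, your mechanism for $\gamma(X)\equiv 0$ is different: the paper follows Frankel, showing $\mathcal{L}_{JX}h=0$, that $Jh$ is again harmonic and closed, and hence that $\iota_{JX}(Jh)=-h(X)$ is a constant which must vanish by decay; you instead use the K\"ahler identity $\Delta_{d}=2\Delta_{\partialb}$ to see that $\gamma^{1,0}$ is a holomorphic $1$-form, contract it with the holomorphic field $X^{1,0}$, and kill the resulting decaying holomorphic function by the maximum principle. Your version makes the role of holomorphy more transparent and bypasses the Lie-derivative bookkeeping, while the paper's version works directly with real harmonic forms. Two cosmetic remarks: your identity $d^{*}\gamma=d^{*}\beta-d^{*}dv_{0}=0$ presumes the convention $\Delta_{g}=d^{*}d$ on functions (the paper's Laplacian is the negative of this, so one should solve $\Delta_{g}v_{0}=-d^{*}\beta$ in its convention — harmless); and your normalisation $\beta=X^{\flat}-2\,dt$ is in fact the correct one for the conclusion $f-2t\in C^{\infty}_{\delta}(M)$.
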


\begin{proof}
	The idea is to adapt a proof of Frankel for compact manifolds (\cite{frankel59}) to the ACyl setting. This is possible because there is a version of Hodge splitting on such manifolds, see for example \cite{nordstromPhd}[Section 2.3.3].
	
	Let $\omega$ be the K\"ahler form of $(M,g,J)$. First, since $JX$ is Killing and $X$ is real holomorphic, we have $\mathcal{L}_{JX} g=\mathcal{L}_{JX}J=0$ and so $\mathcal{L}_{JX} \omega =0$. In particular, the 1 form $\iota _{JX} \omega$ is closed. We would like to show that it is in fact exact, for which we need to understand its asymptotic behaviour. 
	
	Let $\Phi ^{-1} \circ t$ be a cylindrical coordinate function for $(M,g)$, whose smooth extension to all of $M$ is denoted by $\tau$. Then we claim that 
	\begin{align}\label{in proof of prop hamiltonian: estimate of omega plus d tau}
		\iota_{JX} \omega + d\tau  \in C^{\infty}_{\delta} (\Lambda^1 (M)).
	\end{align} 
	Indeed, outside of a sufficiently large domain so that (\ref{in prop acyl implies hamilton: condition on the vector field}) is satisfied, we can estimate
	\begin{align*}
	 |d\tau + \iota_{JX} \omega  |_g &= |\iota_{\Phi_* \partial_t}  (\Phi_*g_{cyl})- \iota_X g  |_g 
	 \leq  |X|_g \cdot |\Phi_* g_{cyl} - g|_g = O(e^{-\delta t})		  
	\end{align*}
	because $(M,g)$ is ACyl of rate $\delta>0$ and the norm of $X$ is uniformly bounded on $M$. Here we used that on the product $\mathbb{R} \times L$,  the tensors $dt$ and $g_{cyl}$ are related by $\iota_{\partial_t} g_{cyl} = dt$. A similar estimate holds for the first covariant derivative 
	\begin{align*}
		|\nabla^g \left( \iota_{\Phi_* \partial t} (\Phi_* g_{cyl} ) - \iota_X g   \right)    |_g &\leq |\nabla^g X|_g \cdot|\Phi_* g_{cyl} -g |_g + |X|_g \cdot |\nabla^g g_{cyl}|_g \\
		& = O(e^{-\delta t})
	\end{align*}
	since $|\nabla^g X|_g =O(1)$ and $|\nabla^g g_{cyl}|_g$ decays exponentially of rate $\delta$. Similarly, we can proceed by induction to obtain bounds on higher derivatives, which implies (\ref{in proof of prop hamiltonian: estimate of omega plus d tau}). 
	
	By the  ACyl version of Hodge splitting (\cite{nordstromPhd}[Theorem 2.3.27]), 
	there are 1-forms $h,\alpha, \beta \in C^{\infty}_{\varepsilon}(\Lambda^1 M)$ such that 
	\begin{align} \label{in proof of prop hamiltonian action: Hodge decomposition}
		\iota_{JX}\omega + d\tau = h + \alpha + \beta,
	\end{align}
	where $h$ is $\Delta_g$-harmonic, $\alpha$ exact and $\beta$ co-exact. Here, $0<\varepsilon<\min \{\delta,\lambda\}$, with $\lambda$ denoting the smallest (positive) critical weight of the Laplace operator $\Delta_g$ acting on 1-forms. Moreover, we can write 
	\begin{align*}
	\alpha = d\tilde{f} \;\;\text{ and } \;\; \beta = d^* \gamma
	\end{align*}
	 for some $\tilde{f} \in C^{\infty}_{\varepsilon} (M)$ and $\gamma \in C^{\infty}_0 (\Lambda ^2 M)$. (Note that the growth of $\gamma$  follows from \cite{nordstromPhd}[Theorem 2.3.27] since translation-invariant forms on the cylinder $\mathbb{R}\times L$ are bounded with respect to $g_{cyl}$, and  that we can indeed assume $\tilde{f}$ decays at infinity because the only translation-invariant harmonic functions are constants.) 
	 
	 We have to show that both $h$ and $\beta$ vanish identically. We begin by observing that $h$ is closed. Since $h$ is $\Delta_g$-harmonic and in $C^{\infty}_\varepsilon (\Lambda ^1 M)$, we may,  according to Lemma \ref{lemma integration by parts}, integrate by parts to obtain
	 \begin{align}\label{in prop hamiltonian function: h is closed}
0= \langle h, \Delta_g h \rangle_{L^2} = \langle dh, dh\rangle_{L^2} + \langle d^* h, d^*h\rangle_{L^2} ,
	 \end{align}
	 i.e. $dh=0$ and $d^*h=0$. 
	  Hence, we deduce immediately from the decomposition (\ref{in proof of prop hamiltonian action: Hodge decomposition}) that $\beta$ is also closed. Integrating by parts then yields
	 \begin{align*}
	 	\langle \beta,\beta \rangle_{L^2 } = \langle \beta, d^* \gamma\rangle_{L^2 } = \langle d \beta , \gamma  \rangle_{L^2}=0,
	 \end{align*}
	 so $\beta \equiv0$ as desired. 
	 
	 Next, we follow the proof of \cite{frankel59}[Lemma 2] to show that $h\equiv0$. By assumption, $JX$ is Killing for $g$ and so 
	 \begin{align*}
	 	\Delta_g \left( \mathcal{L}_{JX} h\right) = \mathcal{L}_{JX} \left( \Delta_g h \right)=0,
	 \end{align*}
	 but also $\mathcal{L}_{JX}h= d (\iota_{JX} h)$, i.e. $\mathcal{L}_{JX}h$ is a harmonic and exact  1-form in $C^{\infty}_{\varepsilon}(\Lambda^1 M)$. Using the orthogonality of Hodge's decomposition, we conclude $\mathcal{L}_{JX} h=0$
	 
	 Moreover, the 1-form $J h (\cdot):= h(J\cdot)$ is also harmonic since the Laplace operator on a K\"ahler manifold preserves the bi-degree decomposition of the cotangent bundle. Using the same computation as in (\ref{in prop hamiltonian function: h is closed}), we conclude that $Jh$ is closed, from which we further deduce that 
	 \begin{align*}
	 	d \left( \iota_{JX} (Jh) \right) =\mathcal{L}_{JX}(Jh)= \mathcal{L}_{JX}(J) h + J \mathcal{L}_{JX} h=0
	 \end{align*}
	 because $JX$ is real holomorphic, i.e. $\mathcal{L}_{JX} J=0$. In particular, the function $\iota_{JX} (Jh)=- h(X)$ is constant on $M$, and thus it can only be identically zero as $h(X)$ tends to zero at infinity. 
	 This, together with integration by parts, in turn gives
	 \begin{align*}
	 	\langle h, h\rangle _{L^2}& = \langle \iota _{JX} \omega + d\tau , h \rangle_{L^2} \\
	 	&= \langle \iota_{JX} \omega,h\rangle_{L^2 } + \langle \tau , d^* h \rangle_{L^2} \\
	 	&= -\int _M h(X) \operatorname{d V}_g \\
	 	&=0,
	 \end{align*}
	 where we used in the penultimate line that $\iota_{JX} \omega$ is the negative $g$-dual of $X$ and $d^*h=0$. We conclude $h\equiv0$, and consequently
	 \begin{align*}
	 	\iota_{JX} \omega = d \tilde{f} - d \tau
	 \end{align*}
	 with $\tilde{f} \in C^{\infty}_{\varepsilon}(M)$. It remains to improve the decay rate of $\tilde{f}$, i.e. we need to show $\tilde{f} \in C^{\infty}_{\delta}(M)$ instead of merely $\tilde{f} \in C^{\infty }_\varepsilon(M)$.  It clearly suffices to prove $\tilde{f}\in C^{0}_{\delta}(M)$ because we already know from (\ref{in proof of prop hamiltonian: estimate of omega plus d tau}) that $df \in C^{\infty} _{\delta}(\Lambda^1M)$.
	 
	 Working on the cylindrical end, we write $\tilde{f} (t,x):= \tilde{f}\circ \Phi (t,x)$ for $t\in \mathbb{R}_+$ and $x\in L$, and express $\tilde{f}$ as an integral of the radial derivative as follows:
	 \begin{align*}
	 	\tilde{f}(t,x)= -\int^{\infty} _t \partial_s \tilde{f}(s,x)ds.
	 \end{align*}
	 This, together with $d\tilde{f} (X)=O(e^{-\delta t})$, implies $\tilde{f}= O(e^{-\delta t})$ as required. Proposition \ref{proposition when acyl metric is hamiltonian} now follows by setting $f:= -\tilde{f} +\tau$.
\end{proof}
	 
	\section{The existence theorem} \label{section existence theorem}

The goal of this section  is to show the main result of this article (Theorem \ref{geometric existence theorem in introduction}). We begin by introducing a more general  setup and discussing the existence of ACyl K\"ahler metrics on the considered manifolds.  Step by step, we then add further assumptions and point out their importance for Theorem \ref{geometric existence theorem in introduction}. This discussion will also be accompanied by a simple, but illustrative example.

Throughout this section, let $D=D^{n-1}$ be a compact K\"ahler manifold of complex dimension $n-1$ and  assume that $\gamma : D \to D$ is a biholomorphism of order $m>1$.
Consider the orbifold $M_{orb}:= \left( \mathbb{C} \times D   \right) / \,  \Gamma$, where we set $\Gamma:= \langle \gamma \rangle \cong \mathbb{Z}_m$ and let $\gamma$ act on the product via
\begin{align}\label{gamma acts on product}
	\gamma (z,w):= \left(e^{\frac{2\pi i}{m}}  z, \gamma(w)  \right).
\end{align}
The  singular part $M_{orb}^{sing}$ of $M_{orb}$ is clearly contained in the slice $(\{0\} \times D) / \, \Gamma$ and corresponds to the fixed points of $\gamma$ on $D$. 

Let $\pi: M \to M_{orb}$ be a  resolution of $M_{orb}$, with exceptional set $E= \pi^{-1}(M_{orb}^{sing})$. Then we use $\pi$ to identify $M \setminus E \cong M_{orb} \setminus M^{sing}_{orb}$ and, in particular, we view $\left( \mathbb{C}^* \times D \right) / \, \Gamma$ as an (open) complex submanifold of $M$. 

It is instructive to keep the following example in mind.

\begin{example}[A first example] \label{example: a first example}Let $D= \mathbb{T}$ be the (real) 2-torus and define $\gamma=- \operatorname{Id}$. Then consider the orbifold $\left(\mathbb{C}\times D\right) / \langle \gamma \rangle$ with four isolated singular points contained inside the slice $\{0\} \times D /\langle \gamma \rangle$ and  locally isomorphic to a neighborhood of the origin in  $\mathbb{C}^2 / \mathbb{Z}_2$. Blowing-up each of these rational double points  then yields a resolution  $\pi:M \to \left(\mathbb{C}\times D\right) / \langle \gamma \rangle $. 
	
We point out that this complex manifold $M$ does admit K\"ahler metrics, and in fact, certain Calabi-Yau metrics (so-called ALG gravitational instantons)	 were constructed on $M$ in \cite{biquard2011kummer}[Theorem 2.3]. 
\end{example}

Before finding ACyl K\"ahler metrics on a resolution $\pi:M \to M_{orb}$, we have to fix  an asymptotic model 
$g_{cyl}$ on $\left( \mathbb{C}^* \times D \right) / \Gamma$. For this, we  choose a $\gamma$-invariant K\"ahler metric $g_D$ on $D$ and define the cylindrical parameter $t: \mathbb{C}^* \times D \to \mathbb{R}$ to be
\begin{align}\label{cylindrical parameter}
	t(z,w):= \log |z|.
\end{align}
If $g_{\mathbb{C}}$  denotes the standard flat metric on $\mathbb{C}$, then the product metric
\begin{align}\label{cylindrical metric in terms of t}
	g_{cyl} := e^{-2t} g_{\mathbb{C}} + g_{D}
\end{align}
is $\Gamma$-invariant and can thus be viewed as a metric on 
the quotient $\left( \mathbb{C}^* \times D \right) / \Gamma$. Note that if we let 
\begin{align}\label{ACyl map biholomorphism}
	\begin{split}
		\Phi: \mathbb{R}\times \mathbb{S}^1 \times D &\to \mathbb{C}^* \times D,\\
		(t,e^{2\pi i \theta},w) &\mapsto (e^{t+2\pi i \theta },w)
	\end{split}
\end{align}
then $\Phi^* (g_{cyl})= dt ^2+ g_{\mathbb S ^1} + g_D$, where $g_{\mathbb S ^1}$ denotes the metric on $\mathbb{S}^1$ of length 1. So $g_{cyl}$  is indeed a $\Gamma$-invariant cylinder with cross-section $(\mathbb{S}^1\times D, g_{\mathbb{S}^1} + g_D)$.
 The corresponding K\"ahler form $\omega_{cyl}$ on $\mathbb{C}^* \times D$ is given by 
\begin{align}\label{cylindrical kahler form}
	\omega_{cyl}= |z|^{-2} \frac{i}{2} dz \wedge d \bar z + \omega_{D} = i \partial \partialb t^2+ \omega_{D},
\end{align}
where $\omega_{D}$ is the K\"ahler form associated to $g_D$. 

We would like to understand how to construct ACyl K\"ahler metris on $M$ that are asymptotic to $g_{cyl}$ as in (\ref{cylindrical metric in terms of t})  for some choice of K\"ahler metric $g_D$ on $D$. Moreover, we wish to know which de Rham cohomology classes contain the corresponding K\"ahler forms.

To simplify notation, we introduce the following notion of K\"ahler class.

\begin{defi} \label{definition Kahler class}
	Let $\pi:M \to M_{orb}$ be as above. A class $\kappa\in H^2(M,\mathbb{R})$ is said to be \textit{K\"ahler} if there exists a K\"ahler form $\omega \in \kappa$. 
	
	A K\"ahler class is called \textit{ACyl} if it contains a K\"ahler form whose metric $g$ is ACyl and satisfies
	\begin{align}\label{ACyl Kahler class}
		|\left(\nabla^{g_{cyl}}\right)^k \left( g-g_{cyl} \right)  |_{g_{cyl}}= O(e^{-\delta t}) \;\; \text{ as } \;\; t\to \infty,
	\end{align}
	for some $\delta>0$ and all $k\in \mathbb{N}_0$, where $g_{cyl}$  is given by (\ref{cylindrical metric in terms of t}) for some $\gamma$-invariant K\"ahler metric $g_D$ on $D$. 
\end{defi}

We point out that this notion of ACyl K\"ahler classes is quite restrictive since we only allow ACyl metrics with ACyl diffeomorphism $\Phi$ defined by (\ref{ACyl map biholomorphism}). In particular, the ACyl K\"ahler metric $g$ and its asymptotic cylinder are K\"ahler with respect to the \textit{same} complex structure since $M\setminus E$ is biholomorphic to $\left( \mathbb{C}^* \times D  \right)/ \Gamma$.  

One way to describe ACyl classes is by introducing  a complex compactification $\overline{M}$ of $M$, whose construction we now describe. 

Recall that $\mathbb{C}$ can naturally be compactified to the Riemann sphere $\mathbb{CP}^1 $ by adding one point `at infinity'. We denote this point by $\infty$, i.e. $\mathbb{CP}^1 = \mathbb{C} \cup \{\infty\}$. Consequently,  the orbifold $M_{orb}$ is naturally  compactified by $\left( \mathbb{CP}^1 \times D\right) / \, \Gamma$ and, since $\left(\mathbb{C}^* \times D  \right)/\Gamma$ and $M$ are biholomorphic outside of the exceptional set $E$,  we also obtain a compactification $\overline{M}$ of $M$. 

In other words,  $\overline{M}$ is constructed from $M$ by gluing in the orbifold divisor $\overline{D} := \left(\{\infty \} \times D \right) / \, \Gamma$ at 'infinity'. We emphasize this by writing $\overline{M} = M \cup \overline{D}$. Then the following theorem provides equivalent characterisations of ACyl K\"ahler classes.

\begin{theorem} \label{theorem: characterisation of ACyl classes}
	Let $\pi :M \to M_{orb}$ be as introduced at the beginning of Section \ref{section existence theorem}, and suppose that $\overline{M}=M\cup \overline{D}$ is the compacification obtained by adding an orbifold divisor $\overline{D}$ at infinity.
	For a given $\kappa \in H^2(M,\mathbb{R})$, the following are equivalent:
	
	\begin{itemize}
		\item[(i)] $\kappa $ is an ACyl K\"ahler class.
		
		\item[(ii)] $\kappa= \kappa_{\overline{M}} |_{M}$ for some orbifold K\"ahler class $\kappa_{\overline{M}}$ on $\overline M$.  
	\end{itemize}
Moreover, if the $\mathbb{C}^*$-action $(\mathbb{C} \times D ) / \langle \gamma \rangle$ given by 
\begin{align}\label{in theorem characterisation: C* action}
	\lambda * (z,w)= (\lambda z, w), \;\;\; \lambda \in \mathbb{C}^*,
\end{align}
extends $\pi$-equivariantly to a holomorphic action of  $\mathbb{C}^*$ on $M$, then $(i)$ is equivalent to the following:
\begin{itemize}
	\item[(iii)] There exists some K\"ahler form $\omega_0 \in \kappa $ on $M$ such that the $1$-form $	\iota_{J \frac{\partial }{\partial t}} \omega_0 $ is defined on $M$ and the restriction of $	\iota_{J \frac{\partial }{\partial t}} \omega_0 $  to the open set  $\left(\mathbb{C}^*\times D\right) / \langle \gamma \rangle$ is exact, where $J$ denotes the complex structure on $M$ and $t$ is defined in (\ref{cylindrical parameter}).
	
\end{itemize}
\end{theorem}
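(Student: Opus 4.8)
The plan is to prove the cycle of implications $(ii) \Rightarrow (i) \Rightarrow (iii) \Rightarrow (ii)$, with the first implication being the heart of the construction and the other two being essentially soft arguments.

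\emph{Step 1: $(ii) \Rightarrow (i)$.} Given an orbifold Kähler class $\kappa_{\overline M}$ on $\overline M$, pick any orbifold Kähler form $\bar\omega$ representing it and let $\omega_0 = \bar\omega|_M$. On the end $\left(\mathbb{C}^* \times D\right)/\Gamma$, which sits inside $\overline M$ as a punctured-disk bundle neighbourhood of the divisor $\overline D$, I want to modify $\omega_0$ to make it \emph{exactly} equal to the model cylinder $\omega_{cyl} = i\partial\partialb t^2 + \omega_D$ (for a suitable $\gamma$-invariant $g_D$) up to exponentially small error. Concretely, near $\overline D$ use the coordinate $w = 1/z$ vanishing on $\overline D$, so that $t = \log|z| = -\log|w| \to \infty$ corresponds to $w\to 0$; the adjunction-type fact that the resolution is crepant (hence $D$ carries a trivialised canonical bundle) lets us run the Calabi--Yau-type argument from \cite{haskins2015asymptotically}: first restrict $\bar\omega$ to $\overline D$, invoke Yau's theorem to replace this restriction by a Ricci-flat $\omega_D$ in the same orbifold class on $\overline D$, correct $\bar\omega$ by an $i\partial\partialb$ of a globally defined function so that on the end it agrees with the pullback of $\omega_D$ plus the transverse cylinder term $i\partial\partialb t^2$ to leading order, and finally cut off the remaining discrepancy (which is $i\partial\partialb$ of a function decaying like a positive power of $|w|$, i.e. exponentially in $t$) with a smooth cut-off supported near infinity. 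Since the discrepancy is $O(e^{-\delta t})$ in all $C^k$-norms, adding $i\partial\partialb(\chi \psi)$ for a cut-off $\chi$ keeps the form positive on the (compact part of the) end for $t$ large, and one gets a Kähler form $\omega$ in $\kappa = \kappa_{\overline M}|_M$ whose metric satisfies \eqref{ACyl Kahler class}. This is the step I expect to be the main obstacle: one must (a) be careful that the Kähler class on the end really does restrict correctly to the orbifold class on $\overline D$ so that Yau applies, (b) control that the ``horizontal'' part of $\bar\omega$ near $\overline D$ can be split off from the ``vertical'' (fibre) direction with exponentially small mixed terms, and (c) verify positivity survives the cut-off. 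Most of this is an adaptation of the ACyl Calabi--Yau package, so I would cite \cite{haskins2015asymptotically} for the technical lemmas and only indicate the modifications forced by the orbifold singularities and the $\Gamma$-action, all of which are away from $E$ and hence harmless on the end.

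\emph{Step 2: $(i) \Rightarrow (ii)$.} Suppose $\omega \in \kappa$ is ACyl as in Definition \ref{definition Kahler class}. I want to show $\kappa$ extends to an orbifold Kähler class on $\overline M$. Since $\omega - \omega_{cyl} = O(e^{-\delta t})$ in all derivatives on the end, and $\omega_{cyl} = i\partial\partialb t^2 + \omega_D$ where in the coordinate $w = 1/z$ we have $t^2 = (\log|w|)^2$, the form $i\partial\partialb t^2$ extends (as a closed current, indeed smoothly in the orbifold sense after the standard change of variables on the punctured disk — $|z|^{-2}\tfrac{i}{2}dz\wedge d\bar z$ is the cylindrical metric on $\mathbb C^*$, which is the model near a divisor) to a smooth orbifold form on a neighbourhood of $\overline D$ cohomologous to a positive multiple of the Poincaré dual of $\overline D$. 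More precisely: the exponential decay of $\omega-\omega_{cyl}$ means that $[\omega] = [\omega_{cyl}]$ in the compactly-supported-minus-something sense allows us to glue $\omega$ on the compact core to the standard extendable model near $\overline D$, after subtracting an exact correction $d\eta$ with $\eta$ exponentially decaying (so $d\eta$ also extends by zero across $\overline D$). The resulting closed form $\bar\omega$ on $\overline M$ is smooth in the orbifold sense, represents a class $\kappa_{\overline M}$ with $\kappa_{\overline M}|_M = \kappa$, and is positive on the core; on the end it is (cohomologous to) the model, which is Kähler there, so after a further $i\partial\partialb$ of a bump function $\kappa_{\overline M}$ is represented by a genuine orbifold Kähler form. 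I would phrase this as: exponential decay $\Longleftrightarrow$ the obstruction to extending the class across $\overline D$ vanishes.

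\emph{Step 3: $(i) \Leftrightarrow (iii)$ under the $\mathbb{C}^*$-hypothesis.} Assume now that the $\mathbb{C}^*$-action extends holomorphically and $\pi$-equivariantly to $M$, so $Z = z\partial_z$ extends to a holomorphic vector field on $M$ and $\Phi_*\partial_t = \operatorname{Re} Z$ (up to the factor coming from \eqref{ACyl map biholomorphism}) extends to all of $M$; correspondingly $J\partial_t$ extends. For $(i)\Rightarrow(iii)$: take the ACyl Kähler form $\omega$ produced above; since the $\mathbb C^*$-action preserves the end and $\omega$ is asymptotic to the $\mathbb C^*$-invariant cylinder, one can average $\omega$ over the maximal compact $S^1 \subset \mathbb C^*$ to assume $\mathcal L_{J\partial_t}\omega = 0$ (this only improves the decay rate and stays in $\kappa$). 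Then $\iota_{J\partial_t}\omega$ is closed on $M$; on the end, $\iota_{J\partial_t}\omega$ differs from $\iota_{J\partial_t}\omega_{cyl} = -dt$ (using $\iota_{\partial_t}g_{cyl} = dt$ and $\omega_{cyl}(\cdot,\cdot)=g_{cyl}(J\cdot,\cdot)$) by an exponentially decaying closed $1$-form, which is exact on the end by the ACyl Hodge argument (\cite{nordstromPhd}[Theorem 2.3.27], exactly as in the proof of Proposition \ref{proposition when acyl metric is hamiltonian}), since the only translation-invariant harmonic $1$-form contributing is the model $dt$ itself which we have already subtracted; hence $\iota_{J\partial_t}\omega = -dt + dh = d(h - t)$ on $\left(\mathbb{C}^* \times D\right)/\Gamma$, proving exactness there. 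Conversely, for $(iii)\Rightarrow(i)$: given $\omega_0 \in \kappa$ with $\iota_{J\partial_t}\omega_0$ exact on the end, write $\iota_{J\partial_t}\omega_0 = d\phi$ there; then $\mathcal L_{J\partial_t}\omega_0 = d\iota_{J\partial_t}\omega_0 = 0$ on the end, so $\omega_0$ is already $\mathbb C^*$-invariant near infinity, and a $\mathbb C^*$-invariant Kähler form on $\left(\mathbb C^* \times D\right)/\Gamma$ that is moreover a product-type Hamiltonian reduction is forced, by the usual symplectic-reduction/Gibbons--Hawking-type normal form, to be cohomologous to a cylindrical model $\omega_{cyl}$ for some $g_D$ up to an exact exponentially-decaying correction; running Step 2 on this end shows $\kappa$ extends across $\overline D$, i.e. $(ii)$ holds, hence $(i)$. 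The one delicate point here is extracting the explicit cylindrical normal form from $\mathbb C^*$-invariance plus positivity — I would reduce it to the statement that an $S^1$-invariant Kähler metric on a punctured-disk-times-$D$ with bounded geometry and the radial field equal to $\partial_t$ is exponentially asymptotic to $dt^2 + g_{\mathbb S^1} + g_D$, which again is in the spirit of the asymptotics analysis in \cite{haskins2015asymptotically}.
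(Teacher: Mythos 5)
Your overall architecture (gluing via potentials near infinity, averaging over $\mathbb S^1$, and the Haskins--Hein--Nordstr\"om extension machinery for $(i)\Leftrightarrow(ii)$) matches the paper's, and your $(i)\Rightarrow(iii)$ step is essentially the paper's Proposition \ref{lemma: kahler potential for ACyl metric}. But there is a genuine gap exactly where you declare the argument ``soft'': the implication $(iii)\Rightarrow(i)$. Given only that $\iota_{J\partial_t}\omega_0=df$ on the end, you assert that $\mathbb S^1$-invariance plus positivity ``forces'' $\omega_0$ to be exponentially asymptotic to a cylindrical model. This is false: $\omega_0=i\partial\partialb(t^2+e^{t/2})+p^*\omega_D$ is $\mathbb S^1$-invariant and Hamiltonian for $J\partial_t$ on the end but is nowhere near ACyl. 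The correct statement is only that $\omega_0$ lies in the \emph{same class} as a cylindrical form, and establishing this is the actual content of the paper's Proposition \ref{prop: construction of ACyl metric from a given Kahler class}: one integrates the Hamiltonian, setting $\varphi(t,y)=2\int_0^t f(s,y)\,ds$, proves that $\omega_0-i\partial\partialb\varphi$ is a basic form for the projection $p:\mathbb C^*\times D\to D$ (by checking $\iota_{\partial_t}$ and $\iota_{J\partial_t}$ of it vanish), hence $\omega_0=i\partial\partialb\varphi+p^*\omega_D$ on $\{t\ge0\}$, and only then cuts off $\varphi$ against $t^2$ with a compensating bump form $\rho\,dt\wedge d^ct$ to preserve positivity. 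Your proposal contains no substitute for this potential construction, and routing through Step 2 is circular since that step presupposes an ACyl representative.

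Two secondary points. In Step 1 you invoke crepancy and Yau's theorem to make the cross-section Ricci-flat; neither hypothesis is available in Theorem \ref{theorem: characterisation of ACyl classes} (crepancy enters only in Theorem \ref{geometric existence theorem}, and $D$ need not have $c_1=0$ here), and neither is needed, since Definition \ref{definition Kahler class} permits an arbitrary $\gamma$-invariant K\"ahler cross-section metric $g_D$ --- the paper simply takes $\omega_D=\omega_{\overline M}|_{\overline D}$. In Step 2, the claim that $i\partial\partialb t^2$ ``extends to a smooth orbifold form'' near $\overline D$ is wrong: in the coordinate $w=1/z$ it equals $\tfrac i2|w|^{-2}dw\wedge d\bar w$, the complete cylindrical metric, which blows up along $\overline D$; what extends is the cohomology class, which is the content of the cited result \cite{haskins2015asymptotically}[Theorem 3.2] that the paper uses for this direction.
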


The equivalence of $(i)$ and $(ii)$ originates in work on ACyl Calabi-Yau manifolds \cite{haskins2015asymptotically}, however, it is impractical to verify in concrete examples. This is why we introduce criterion $(iii)$. In fact, this condition allows us to prove:

\begin{corollary} \label{corollary: H1=0 implies every kahler class admits a nice ACyl metric}
	Let $\pi : M \to M_{orb}$ be as introduced at the beginning of Section \ref{section existence theorem} and assume that the $\mathbb{C}^*$-action given by (\ref{in theorem characterisation: C* action}) extends $\pi$-equivariantly to a holomorphic action on $M$. 
	
	If every closed, $\gamma$-invariant 1-form on $D$ is exact, then each K\"ahler class is ACyl. 
\end{corollary}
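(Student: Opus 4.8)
The plan is to deduce Corollary \ref{corollary: H1=0 implies every kahler class admits a nice ACyl metric} directly from the equivalence $(i) \Leftrightarrow (iii)$ in Theorem \ref{theorem: characterisation of ACyl classes}. So let $\kappa \in H^2(M,\mathbb{R})$ be an arbitrary K\"ahler class; pick any K\"ahler form $\omega_0 \in \kappa$ on $M$. By hypothesis the $\mathbb{C}^*$-action extends $\pi$-equivariantly, so its infinitesimal generator — the vector field $z\frac{\partial}{\partial z}$ on $(\mathbb{C}^* \times D)/\langle\gamma\rangle$, which under the ACyl map $\Phi$ corresponds to $\frac{\partial}{\partial t}$ up to the $\mathbb{S}^1$-factor — extends to a holomorphic vector field on $M$, and in particular $J\frac{\partial}{\partial t}$ is a globally defined vector field on $M$. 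Hence the $1$-form $\iota_{J\frac{\partial}{\partial t}}\omega_0$ is defined on all of $M$. By criterion $(iii)$, to conclude that $\kappa$ is ACyl it then suffices to show that the restriction of $\iota_{J\frac{\partial}{\partial t}}\omega_0$ to the open subset $(\mathbb{C}^* \times D)/\langle\gamma\rangle$ is exact.

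\textbf{Main step: exactness on the cylindrical end.} First I would observe that this $1$-form is closed on the cylindrical end. Indeed, since the extended $\mathbb{C}^*$-action is holomorphic and $\omega_0$ is closed, $\mathcal{L}_{J\frac{\partial}{\partial t}}\omega_0 = d\,\iota_{J\frac{\partial}{\partial t}}\omega_0$ is exact; but one can also arrange (or argue) that $\iota_{J\frac{\partial}{\partial t}}\omega_0$ is closed on the end — most cleanly, after averaging $\omega_0$ over the compact torus generated by $J\frac{\partial}{\partial t}$ (whose closure acts on $M$) we may assume $\mathcal{L}_{J\frac{\partial}{\partial t}}\omega_0 = 0$, so $d\,\iota_{J\frac{\partial}{\partial t}}\omega_0 = 0$ on $M$. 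Now $(\mathbb{C}^* \times D)/\langle\gamma\rangle$ is diffeomorphic, via $\Phi$, to $\mathbb{R}\times (\mathbb{S}^1\times D)/\langle\gamma\rangle$, so its first de Rham cohomology is that of the cross-section $(\mathbb{S}^1\times D)/\langle\gamma\rangle$. A closed $1$-form is exact there iff it pairs to zero against $H_1$, equivalently iff its cohomology class vanishes. By the K\"unneth formula and the fact that $\gamma$-invariant forms compute the cohomology of the quotient, $H^1\big((\mathbb{S}^1\times D)/\langle\gamma\rangle,\mathbb{R}\big)$ is built from the $\gamma$-invariant part of $H^1(\mathbb{S}^1)\otimes H^0(D)$ and $H^0(\mathbb{S}^1)\otimes H^1(D)$; the hypothesis that every closed $\gamma$-invariant $1$-form on $D$ is exact kills the second summand, so $H^1$ of the cross-section reduces to the class of $d\theta$. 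Thus it remains to check that the restriction of $\iota_{J\frac{\partial}{\partial t}}\omega_0$ has trivial $d\theta$-component, i.e. that $\int_{\mathbb{S}^1\times\{w\}} \iota_{J\frac{\partial}{\partial t}}\omega_0 = 0$ for $w \in D$; this is the period of a K\"ahler form along the $2$-cycle swept out by the $\mathbb{S}^1$-orbit under the radial flow, and vanishes because — choosing $\omega_0$ in a suitably normalized form near infinity, or appealing to the compactification $\overline M$ from part $(ii)$ — this $2$-cycle bounds (it is homologous to the orbifold divisor $\overline D$ capped off, hence pairs trivially with the restriction of a class from $\overline M$). Cleanly: since $(i)\Leftrightarrow(ii)$, $\kappa = \kappa_{\overline M}|_M$ automatically once we know $\kappa$ extends over $\overline M$, and the vanishing of this one residual period is exactly the obstruction to such an extension, which one checks is automatic for the $\mathbb{S}^1$-orbit cycle as it is homologically trivial in $\overline M$.

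\textbf{Expected obstacle.} The genuine content — and the step I expect to require the most care — is the cohomological bookkeeping on the quotient cross-section $(\mathbb{S}^1\times D)/\langle\gamma\rangle$: one must be precise that $\Gamma = \langle\gamma\rangle$ acts freely enough (or that we work with orbifold/invariant de Rham cohomology) so that $H^1$ of the quotient really is the $\Gamma$-invariant part of $H^1(\mathbb{S}^1\times D)$, and then correctly identify which invariant classes survive. Granting the hypothesis on $\gamma$-invariant $1$-forms on $D$, the only surviving generator comes from the $\mathbb{S}^1$-direction, and disposing of it amounts to the period computation above. Once exactness on the end is established, criterion $(iii)$ of Theorem \ref{theorem: characterisation of ACyl classes} immediately gives that $\kappa$ is an ACyl K\"ahler class, and since $\kappa$ was arbitrary this proves the corollary.
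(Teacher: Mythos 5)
Your overall route is the same as the paper's: average $\omega_0$ over the $\mathbb{S}^1$-action so that $\mathcal{L}_{JY}\hat\omega_0=0$ and $\iota_{JY}\hat\omega_0$ is closed, restrict to the cylindrical end, use the hypothesis on closed $\gamma$-invariant $1$-forms on $D$ to kill everything except a possible $d\theta$-component, and then invoke criterion $(iii)$ of Theorem \ref{theorem: characterisation of ACyl classes}. The cohomological bookkeeping on the cross-section is fine: the $\Gamma$-action on $\mathbb{S}^1\times D$ is free because the rotation on the $\mathbb{S}^1$-factor is, so with real coefficients $H^1$ of the quotient is indeed the $\Gamma$-invariant part of $H^1(\mathbb{S}^1\times D)$, and the hypothesis removes the $H^1(D)$-summand.

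The gap is in how you dispose of the residual $d\theta$-component. The quantity $\int_{\mathbb{S}^1\times\{w\}}\iota_{JY}\hat\omega_0$ is the period of a $1$-form over a $1$-cycle, not ``the period of a K\"ahler form along a $2$-cycle''; the surface swept out by the orbit under the radial flow is a noncompact half-cylinder, so ``this $2$-cycle bounds'' is not a meaningful statement in $M$, and the appeal to $(i)\Leftrightarrow(ii)$ is circular, since whether $\kappa$ extends to an orbifold K\"ahler class on $\overline{M}$ is exactly condition $(ii)$, which you have not established independently. Fortunately the fact you need is immediate and requires none of this: the tangent vector to the orbit $\mathbb{S}^1\times\{w\}$ is a multiple of $JY$ itself, so the integrand is $\hat\omega_0(JY,JY)=0$ pointwise by antisymmetry. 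This is precisely the observation the paper uses ($\iota_{JY}\hat\omega_0(JY)=0$ together with $\mathbb{S}^1$-invariance), from which it concludes that the restriction of $\iota_{JY}\hat\omega_0$ to a slice $\{0\}\times\mathbb{S}^1\times D$ is a pullback $p_D^*\alpha$ of a closed, $\gamma$-invariant $1$-form on $D$, hence exact by hypothesis. With that one correction your argument closes and coincides with the paper's.
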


The proof of this corollary also partly justifies   extending the $\mathbb{C}^*$-action (\ref{in theorem characterisation: C* action}) to the resolution.  
\begin{proof}
	Let $\omega_0$ a  K\"ahler form on $M$. Since $\mathbb{S}^1$ is compact and connected, we can average $\omega_0$ over this group to obtain a new closed 2-form $\hat \omega_0$ such that $[\hat \omega_0] = [\omega_0] \in H^2(M,\mathbb{R})$. In fact, $\hat \omega_0$ is a positive (1,1)-form because $\mathbb{S}^1$ acts by biholomorphisms and the averaging does not affect the positivity. 
	
	As the $\mathbb{C}^*$-action (\ref{in theorem characterisation: C* action}) extends to $M$, the radial vector field $\partial /  \partial t$ also extends to a real holomorphic vector field $Y$ on $M$. In particular, 
	\begin{align}\label{genertor of S1 action in corollary}
		Y= \frac{\partial}{\partial t} \;\; \text{ on } \;\; \left( \mathbb{C}^*\times D\right) / \Gamma \subset M
	\end{align}
	and  $JY$ is a generator of the corresponding $\mathbb S ^1$-action, so that 
	\begin{align*}
		\mathcal{L}_{JY} (\hat \omega_0)=0.
	\end{align*}
	  Hence, the 1-form $\iota_{JY}(\hat \omega_0)$ is closed and to apply $(iii)$ of Theorem \ref{theorem: characterisation of ACyl classes}, we need to show that its restriction to $M\setminus E \cong \left( \mathbb{C}^* \times D \right) / \langle \gamma \rangle$ is exact.
	
	Observe that  it is sufficient for the lift of $\iota_{JY}(\hat \omega_0)$ to $\mathbb{C}^*\times D$ to be exact. This lift, in turn, is clearly exact if its restriction  to a slice $\{0\}\times \mathbb{S}^1 \times D \subset \mathbb{R} \times S^1 \times D \cong \mathbb{C^*} \times D$ is exact. Since $\hat \omega_0$ is $\mathbb{S}^1$-invariant and we have $\iota_{JY}(\hat \omega_0) (JY)=0$, this restriction, however, is of the form $p_D^* \alpha$ for some 1-form $\alpha$ on $D$, where $p_D: \mathbb{S}^1\times D \to D$ denotes the projection.  Using that $\iota_{JY}(\hat \omega_0)$ is also closed and $\gamma$-invariant, we conclude that $\alpha$ must  be closed and $\gamma$-invariant  as well, and hence exact by assumption. 
\end{proof}

The proof of  Theorem \ref{theorem: characterisation of ACyl classes} is postponed to Section \ref{subsection constructing background metric}.

\begin{remark} \label{remark on first example}
Let us examine the usefulness of this corollary by considering  Example \ref{example: a first example}. Recall that  in this case, the resolution $\pi : M \to \left( \mathbb{C}^*\times \mathbb{T} \right) / \langle \gamma \rangle$ is obtained by blowing-up the four  fixed points of  $\gamma= - \operatorname{Id}$ on $\mathbb{C} \times \mathbb{T}$. For showing that the $\mathbb{C}^*$-action given by (\ref{in theorem characterisation: C* action}) extends to the blow-up $M$, it suffices to do so locally near each singularity because these are isolated points. This, however, is clearly true because the blow-up
\begin{align} \label{blow-up of C2 mod Z2}
	\mathcal{O}_{\mathbb{CP}^1}(-2) \to \mathbb{C}^2 / \{ \pm \operatorname{Id_{\mathbb{C}^2}} \}
\end{align}
is a toric resolution (with respect to the standard action of $(\mathbb{C}^*)^2$ on $\mathbb{C}^2$). 

Verifying the condition in Corollary \ref{corollary: H1=0 implies every kahler class admits a nice ACyl metric} is also straight forward. Indeed, denoting the holomorphic coordinate of the universal cover $\mathbb{C}$ of $\mathbb{T}^1$ by $w=u+iv$, we see that the translation-invariant 1-forms $du$ and $dv$ are clearly \textit{not} fixed by the action of $-\operatorname{Id}$ on $\mathbb{C}$. 
Thus, \textit{every} K\"ahler class of the blow-up $M$ admits an ACyl K\"ahler metric.
\end{remark}

Having understood when a resolution $\pi : M \to M_{orb}$ admits ACyl K\"ahler metrics, we may continue adding further assumptions in order to find steady K\"ahler-Ricci solitons on $M$. Namely, assume that $D^{n-1}$ admits a nowhere-vanishing holomorphic $(n-1,0)$-form $\Omega_D$ such that
\begin{align*}
	\gamma^* \Omega_D= e^{-\frac{2\pi i }{m}} \Omega_D.
 \end{align*}
This, together with (\ref{gamma acts on product}), implies that the holomorphic $(n,0)$-form $\Omega:=dz\wedge \Omega_D$ is $\gamma$-invariant  and  descends to $M_{orb}$. Thus, we may require the resolution  $\pi : M \to M_{orb}$ to be \textit{crepant},  i.e. we assume that $\Omega$ extends to a nowhere-vanishing form on $M$. 

As in Theorem \ref{theorem: characterisation of ACyl classes}, we additionally assume the extension of the $\mathbb{C}^*$-action (\ref{in theorem characterisation: C* action}) from $M_{orb}$ to $M$. This guarantees that the infinitessimal generator $Y$ of the corresponding $\mathbb{R}_+$-action  is a real holomorphic vector field and thus, multiples of $Y$ are candidates for the soliton field of the desired solitons.  

With these conditions, we recall the main result of this article. 

\begin{theorem} \label{geometric existence theorem}
	Let $D^{n-1}$ be a compact K\"ahler manifold with nowhere-vanishing holomorphic $(n-1,0)$-form $\Omega_D$. Suppose $\gamma: D \to D$ is a complex automorphism of order $m>1$ such that 
	\begin{align} \label{in geometric existence thm: gamma acts on omega D}
		\gamma^* \Omega_D = e^{-\frac{2\pi i}{m}} \Omega_D,
	\end{align} 
	and consider the orbifold $(\mathbb{C} \times D ) / \langle \gamma \rangle$, where $\gamma$ acts on the product via 
	\begin{align}\label{in geometric existence thm: gamma acts on product}
		\gamma(z,w)= \left( e^{\frac{2\pi i }{m}}z ,\gamma(w)  \right).
	\end{align} 
	Let $\pi : M \to (\mathbb{C} \times D ) / \langle \gamma \rangle$ be a crepant resolution   such that the $\mathbb{C}^*$-action on $(\mathbb{C} \times D ) / \langle \gamma \rangle$ given by 
	\begin{align*}
		\lambda * (z,w)= (\lambda z, w), \;\;\; \lambda \in \mathbb{C}^*,
	\end{align*}
	extends $\pi$-equivariantly to a holomorphic action of  $\mathbb{C}^*$ on $M$. 
	
	Then every ACyl K\"ahler class admits a gradient steady K\"ahler-Ricci soliton. 
Moreover, the soliton metric is ACyl of rate $\varepsilon$ for each $0<\varepsilon<2$ and with asymptotic cylinder given by
\begin{align*}
	g_{cyl}=e^{-2t} g_\mathbb{C} + g_{RF},
\end{align*}
where $g_{RF}$ is a  Ricci-flat K\"ahler metric on $D$.
\end{theorem}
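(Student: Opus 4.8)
\textbf{Plan for the proof of Theorem \ref{geometric existence theorem}.}

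The strategy is to reduce the existence of the soliton to the complex Monge--Amp\`ere equation solved in Theorem \ref{analytic existence theorem in intro}, via Lemma \ref{lemma reducing to MA equation}. First I would fix an ACyl K\"ahler class $\kappa$ on $M$ and, using Theorem \ref{theorem: characterisation of ACyl classes}, choose a K\"ahler form $\omega \in \kappa$ whose metric $g$ is ACyl with asymptotic cylinder $g_{cyl} = e^{-2t}g_\mathbb{C} + g_D$ for some $\gamma$-invariant K\"ahler metric $g_D$ on $D$. Since the $\mathbb{C}^*$-action extends to $M$, the radial vector field $\partial/\partial t$ on $(\mathbb{C}^*\times D)/\langle\gamma\rangle$ extends to a real holomorphic vector field $Y$ on $M$, and I set $X := 2Y$, so that $X = 2\Phi_*\partial/\partial t$ outside a compact set. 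After averaging $\omega$ over the $\mathbb{S}^1$-action generated by $JY$ (which preserves the class and positivity, as in the proof of Corollary \ref{corollary: H1=0 implies every kahler class admits a nice ACyl metric}), I may assume $JX$ is Killing for $g$. Proposition \ref{proposition when acyl metric is hamiltonian} then gives a function $f$ with $f - 2t \in C^\infty_\delta(M)$ and $X = \nabla^g f$, so in particular $\Delta_f$ is an ACyl drift Laplacian and $f$ is $JX$-invariant (since $JX$ is Killing and $f$ is essentially the moment map, up to a constant).

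Next I would identify the correct right-hand side for the Monge--Amp\`ere equation. Because the resolution is crepant, the holomorphic $(n,0)$-form $\Omega = dz\wedge\Omega_D$ extends to a nowhere-vanishing form on $M$. Define $F$ by
\begin{align*}
	e^{F}\,\omega^n = e^{-f}\, i^{n^2}\,\Omega\wedge\overline{\Omega},
\end{align*}
i.e. $F = \log\bigl(i^{n^2}\Omega\wedge\overline{\Omega}/\omega^n\bigr) + f$. The key point is to check that $F\in C^\infty_\varepsilon(M)$ for every $\varepsilon\in(1,2)$ and that $F$ is $JX$-invariant. Invariance follows because $g$, $f$ and $\Omega$ are all (at least after the averaging step) invariant under the $\mathbb{S}^1$-action, up to noting that $JX$-invariance of $\Omega\wedge\overline\Omega$ holds since $\mathcal L_{JX}\Omega$ is a multiple of $\Omega$. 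For the decay: on the cylindrical model one computes directly that $e^{-f_{cyl}}i^{n^2}\Omega\wedge\overline\Omega = \omega_{cyl}^n$ precisely when $g_D$ is replaced by the Ricci-flat representative $g_{RF}$ in its K\"ahler class (here I invoke Yau's theorem to get a Ricci-flat $\gamma$-invariant metric $g_{RF}$ on $D$, and note $\Omega_D\wedge\overline\Omega_D$ is parallel for $g_{RF}$). Replacing $g_D$ by $g_{RF}$ changes the asymptotic cylinder, and one checks that $\omega$ remains ACyl with respect to the new cylinder (the two cylinders differ by an exact, exponentially-decaying-in-the-relevant-sense correction, or rather: the original $g_D$ must in fact be cohomologous to $g_{RF}$ restricted to the slice, so one adjusts $\omega$ by $i\partial\bar\partial$ of a compactly supported correction if needed). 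Then $F$, being the logarithm of a ratio of two volume forms each asymptotic to $\omega_{cyl,RF}^n$, together with the exponential convergence of all the pieces, lies in $C^\infty_\varepsilon(M)$ for all $\varepsilon < 2$; the constraint $\varepsilon > 1$ is harmless since we may take $\varepsilon$ as close to $2$ as we like (and the decay rates $\delta$ of $g$ and $f-2t$ can be assumed $>1$, or improved by the above adjustment).

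With $F\in C^\infty_\varepsilon(M)$ $JX$-invariant and $1<\varepsilon<2$, Theorem \ref{analytic existence theorem in intro} yields a unique $JX$-invariant $\varphi\in C^\infty_\varepsilon(M)$ with $\omega_\varphi := \omega + i\partial\bar\partial\varphi > 0$ and $\omega_\varphi^n = e^{F - \frac{X}{2}(\varphi)}\omega^n = e^{-f - \frac{X}{2}(\varphi)}\,i^{n^2}\Omega\wedge\overline\Omega$. By Lemma \ref{lemma reducing to MA equation}, $(M, g_\varphi, X)$ is a gradient steady K\"ahler--Ricci soliton, and $[\omega_\varphi] = [\omega] = \kappa$ since $\varphi$ is globally defined. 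Finally, because $\varphi \in C^\infty_\varepsilon(M)$ with $\varepsilon < 2$, the correction $i\partial\bar\partial\varphi$ decays exponentially with all derivatives, so $g_\varphi$ is still ACyl of rate $\varepsilon$ with the same asymptotic cylinder $g_{cyl} = e^{-2t}g_\mathbb{C} + g_{RF}$; and since this holds for every $\varepsilon < 2$, the soliton is ACyl of rate $\varepsilon$ for each $0<\varepsilon<2$.

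\textbf{Main obstacle.} The delicate step is the second paragraph: matching the asymptotic cylinder so that the right-hand side $F$ genuinely decays, i.e. arranging that the background ACyl metric can be taken asymptotic to $e^{-2t}g_\mathbb{C} + g_{RF}$ with $g_{RF}$ Ricci-flat, rather than to an arbitrary $g_D$. This requires combining Yau's theorem on $D$ with the characterisation of ACyl K\"ahler classes (Theorem \ref{theorem: characterisation of ACyl classes}) to see that the choice of $\gamma$-invariant $g_D$ within the relevant class at infinity is immaterial up to an exponentially small, globally defined correction — and then verifying that the ratio of volume forms decays at the full rate, so that any $\varepsilon<2$ is admissible in Theorem \ref{analytic existence theorem in intro}. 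The reduction via Lemmas \ref{lemma reducing to MA equation} and \ref{proposition when acyl metric is hamiltonian} is then essentially formal.
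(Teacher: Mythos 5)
Your plan follows the paper's proof of Theorem \ref{geometric existence theorem} essentially step for step: reduce to the Monge--Amp\`ere equation of Lemma \ref{lemma reducing to MA equation}, build a background ACyl form in the given class, make the cross-section Ricci-flat via Yau's theorem, average over $\mathbb{S}^1$ so that Proposition \ref{proposition when acyl metric is hamiltonian} applies, and invoke Theorem \ref{analytic existence theorem}. Two details deserve attention. First, as written your right-hand side $F$, defined by $e^{F}\omega^n = e^{-f}\,i^{n^2}\Omega\wedge\overline{\Omega}$, tends to a \emph{nonzero constant} at infinity: on the Ricci-flat cylinder one has $\omega_{cyl}^n = \tfrac{cn}{2}\,e^{-2t}\,i^{n^2}\Omega\wedge\overline{\Omega}$, where $c$ is the constant from Yau's theorem on $D$, so you must insert the normalising factor $\alpha=cn/2$ (equivalently, shift $f$ by a constant) before $F$ can lie in any $C^{\infty}_{\varepsilon}(M)$; also note the sign slip between your displayed definition and the inline formula $F=\log(\cdot)+f$. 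Second, the issue you single out as the main obstacle --- whether $F$ decays at a rate $>1$, given that an ACyl class a priori only comes with some rate $\delta>0$ --- is resolved in the paper not by estimating rates but by Proposition \ref{prop: construction of ACyl metric from a given Kahler class}: every ACyl K\"ahler class contains a form \emph{exactly} equal to $i\partial\bar\partial t^2+\omega_D$ outside a compact set, and after gluing in $i\partial\bar\partial(\chi\cdot u_D)$ (with a bump form $\rho\,dt\wedge d^ct$ to preserve positivity) one gets $\omega_0=i\partial\bar\partial t^2+\omega_{RF}$ and $f=2t$ exactly for $t$ large, so that $F$ is in fact compactly supported and the constraint $\varepsilon>1$ is vacuous. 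With that substitution your argument closes, and the final upgrade to rate $\varepsilon$ for every $0<\varepsilon<2$ follows from the uniqueness statement in Theorem \ref{analytic existence theorem}, exactly as you indicate.
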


Looking back at our Example \ref{example: a first example}, we see that the resolution $\pi:M \to (\mathbb{C} \times \mathbb{T})/ \{\pm \operatorname{Id} \}$ satisfies all requirements  because the blow-up (\ref{blow-up of C2 mod Z2}) of each singularity is indeed crepant, and $\gamma= -\operatorname{Id}$ acts on the holomorphic 1-form on $\mathbb{T}^1$ by multiplication with $-1$. Hence, Theorem \ref{geometric existence theorem}, together with Remark \ref{remark on first example}, imply the existence of a steady K\"ahler-Ricci soliton in \textit{each} K\"ahler class on $M$.

 Following ideas of Conlon and Deruelle developed in \cite{conlon2020steady}[Section 4.2],
the strategy for proving Theorem \ref{geometric existence theorem} is reducing it to a complex Monge-Amp\`ere equation. As explain before Theorem \ref{geometric existence theorem}, the assumptions ensure the existence of a nowhere-vanishing holomorphic $(n,0)$-form as well as  suitable real holomorphic vector fields, so that Lemma \ref{lemma reducing to MA equation} may indeed be used to set up  a Monge-Amp\`ere equation for finding a steady K\"ahler-Ricci soliton. 
 The technical argument for solving the resulting  equation is then provided by Theorem \ref{analytic existence theorem} below, whose proof we postpone to Section \ref{section monge ampere equation}.

\begin{theorem} \label{analytic existence theorem}
	Let $(M,g,J)$ be an ACyl K\"ahler manifold of complex dimension $n$ with K\"ahler form $\omega$. Suppose that  $M$ admits a real holomorphic vector field $X$ such that 
	\begin{align*}
		X = 2 \Phi_*\frac{\partial}{\partial t}
	\end{align*}
	outside some compact domain, where $\Phi$ is the ACyl map and $t$ the cylindrical coordinate function. Moreover, assume that $JX$ is Killing for $g$. 
	
	If  $1 < \varepsilon<2$  and $F\in C^{\infty}_{\varepsilon}(M)$ is JX-invariant, then there exists a unique, $JX$-invariant $\varphi \in C^{\infty}_{\varepsilon}(M)$ such that $\omega + i \partial \partialb \varphi >0$ and 
	\begin{align}
		\left( \omega + i \partial \partialb \varphi \right) ^n  = e^{F - \frac{X}{2} (\varphi)} \omega^n 
	\end{align} 
\end{theorem}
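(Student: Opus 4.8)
The plan is to solve the complex Monge-Amp\`ere equation
\[
\left( \omega + i \partial \partialb \varphi \right)^n = e^{F - \frac{X}{2}(\varphi)} \omega^n
\]
by the continuity method, following the scheme of \cite{conlon2020steady}[Theorem 7.1] but adapted to the cylindrical setting. First I would set up the family of equations
\[
\left( \omega + i \partial \partialb \varphi_s \right)^n = e^{s F - \frac{X}{2}(\varphi_s)} \omega^n, \qquad s \in [0,1],
\]
and let $S \subseteq [0,1]$ be the set of $s$ for which a $JX$-invariant solution $\varphi_s \in C^{\infty}_{\varepsilon}(M)$ with $\omega + i\partial\partialb\varphi_s > 0$ exists. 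The set $S$ is nonempty since $\varphi_0 = 0$ works. Uniqueness follows from the maximum principle applied to the difference of two solutions (the first-order $-\frac{X}{2}(\varphi)$ term has no zeroth-order part, so at an interior max of $\varphi - \varphi'$ one gets a contradiction using the concavity of $\log\det$), and this also forces $JX$-invariance of the solution once $F$ is $JX$-invariant.

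The openness of $S$ comes from the implicit function theorem: the linearisation of the Monge-Amp\`ere operator at a solution $\varphi_s$ is $\psi \mapsto \Delta_{\omega_{\varphi_s}} \psi + \frac{1}{2} g_{\omega_{\varphi_s}}(\nabla f_s, \nabla \psi)$ up to a positive factor, where $f_s$ is the relevant potential; since $X = 2\Phi_*\partial_t$ at infinity and $JX$ is Killing, Proposition \ref{proposition when acyl metric is hamiltonian} gives $X = \nabla^{g_{\varphi_s}} f_s$ with $f_s - 2t \in C^{\infty}_\delta$, so this linearisation is exactly an ACyl drift Laplacian in the sense of Definition \ref{definition ACyl drift Laplacian}, and Theorem \ref{theorem ACyl drift laplace is iso} shows it is an isomorphism $C^{k+2,\alpha}_\varepsilon \to C^{k,\alpha}_\varepsilon$ for $0 < \varepsilon < 2$. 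Restricting everything to the closed subspace of $JX$-invariant functions keeps the isomorphism property, so a solution at $s$ perturbs to solutions for nearby parameters, and elliptic regularity (Theorem \ref{theorem: ACyl schauder estimates}, bootstrapped) upgrades these to $C^{\infty}_\varepsilon$.

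The main obstacle, and the heart of the proof, is closedness of $S$, which reduces to a priori estimates uniform in $s$: a $C^0$-estimate, then higher-order estimates. For the higher-order estimates one uses the standard Aubin--Yau second-order (Laplacian) estimate adapted to the cylindrical end, controlling $\operatorname{tr}_\omega \omega_{\varphi_s}$ in terms of $\|\varphi_s\|_{C^0}$ and geometric quantities of $g$, followed by a complex Evans--Krylov / Calabi third-order argument and Schauder bootstrapping via Theorem \ref{theorem: ACyl schauder estimates}; the decay of $\varphi_s$ at the rate $e^{-\varepsilon t}$ is then recovered from the linear theory because the solution lies in $C^0_\varepsilon$ (one first gets a weaker or merely bounded solution, then improves the weight using that $\varepsilon$ is non-critical and $sF \in C^{\infty}_\varepsilon$). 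The $C^0$-estimate itself is where the cylindrical geometry forces a modification of Conlon--Deruelle: one multiplies the equation by a suitable power of $\varphi_s$ and integrates against the weighted measure $\frac{e^{f_s}}{(f_s+c)^2}\operatorname{dV}_{g}$, using the weighted Poincar\'e inequality of Proposition \ref{poincare inequality} to get an $L^2$- and then $L^p$-bound via Moser iteration, before passing to $L^\infty$. Crucially, one must argue that $\varphi_s$ actually decays (so the integrations by parts are legitimate and no boundary terms at infinity appear) — here the fact that $1 < \varepsilon < 2$ and that $F$ decays exponentially lets us run the argument \emph{directly} for decaying $F$ rather than first passing through compactly supported $F$ as in \cite{conlon2020steady}. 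Once the uniform $C^0$-bound is in hand, the higher estimates close the argument and $S = [0,1]$, giving the solution at $s=1$.
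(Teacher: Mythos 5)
Your overall architecture matches the paper's: the same continuity path, uniqueness by the maximum principle, openness via the observation that the linearisation is an ACyl drift Laplacian (Theorem \ref{theorem ACyl drift laplace is iso}) restricted to $JX$-invariant functions, closedness via a priori estimates, an Aubin--Yau/Calabi/Schauder chain for the higher-order bounds, and a final bootstrapping of the weight from some small $\varepsilon_0$ up to $\varepsilon$ using that the nonlinearity decays twice as fast. The genuine gap is in your $C^0$-estimate. You propose to multiply the equation by a power of $\varphi_s$, integrate against $\frac{e^{f}}{(f+c)^2}\operatorname{dV}_g$, and run Moser iteration. This is not the classical Yau situation and the scheme as stated does not close. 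First, the right-hand side of the equation contains $e^{-\frac{X}{2}(\varphi_s)}$, a first-order term in the unknown, so ``multiplying and integrating'' does not produce an a priori $L^2$-bound without an independent control of $X(\varphi_s)$; the paper instead obtains the weighted $L^2$-bound from the Tian--Zhu functionals $I_{\omega,X}$ and $J_{\omega,X}$, whose key property is the path-independence of $J_{\omega,X}$ (Theorem \ref{theorem functional J is independent of path}): evaluating $I-J$ along two different paths (the linear path $\tau\varphi_s$ and the path $\varphi_{\tau s}$) yields a Gr\"onwall inequality via Proposition \ref{poincare inequality}. This functional input is the actual content of the $L^2$-step and is absent from your outline. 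Second, Moser iteration requires a Sobolev inequality for the weighted measure, not merely the Poincar\'e inequality of Proposition \ref{poincare inequality}; the measure $\frac{e^f}{(f+c)^2}\operatorname{dV}_g \sim \frac{e^{2t}}{t^2}\operatorname{dV}_g$ has infinite total mass on the cylinder, and no such Sobolev inequality is established or obvious, so the passage $L^2 \to L^p \to L^\infty$ is unjustified.

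What the paper actually does after the weighted $L^2$-bound is not iteration at all: the upper bound on $\varphi_s$ comes from a barrier comparison with the solution $u_F$ of the linear equation $(\Delta_g+X)u_F = 2F$; the lower bound is split into a compact region, handled by B\l ocki's local stability estimate (which converts the weighted $L^2$-bound into a pointwise bound), and the end, handled by the barrier $e^{-\varepsilon_0(f+\frac{X}{2}(\varphi_s))}$ and Hopf's maximum principle. Both the end-barrier and the later $C^2$-estimate require uniform control of the radial derivatives $X(\varphi_s)$ and $X^2(\varphi_s)$ (since $\frac{X}{2}(\varphi_s)$ enters $\operatorname{Ric}(\omega_{\varphi_s})$), which the paper extracts by separate maximum-principle arguments (Lemma \ref{lemma: unform bound on X^2 of varphi}, Corollary \ref{proposition: estimate on radial derivative}); your proposal does not address these, and without them neither the lower bound at infinity nor the trace estimate can be closed.
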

The remainder of this section is structured as follows. In Section \ref{subsection constructing background metric}, 
we focus on proving Theorem \ref{theorem: characterisation of ACyl classes}. In fact, we  provide a detailed construction of the ACyl metrics, and thus obtain   more precise statements than those in Theorem \ref{theorem: characterisation of ACyl classes}. 

Having derived the necessary tools, we then present the proof of Theorem \ref{geometric existence theorem} by reducing it to Theorem \ref{analytic existence theorem}.
Further examples to which Theorem \ref{geometric existence theorem} may be applied are then discussed in Section \ref{subsection examples}.

\subsection{Constructing ACyl K\"ahler metrics} \label{subsection constructing background metric}

The goal is to prove Theorem \ref{theorem: characterisation of ACyl classes}, and we use the  notation introduced at the beginning of Section \ref{section existence theorem}. 

Let $\pi:M \to M_{orb}:= \left( \mathbb{C} \times D   \right) / \Gamma$ be a resolution, where $D$ denotes some compact K\"ahler manifold, and the action of $\Gamma=\langle \gamma \rangle \cong \mathbb{Z}_m$ is given by (\ref{gamma acts on product}). Also, recall that the cylindrical parameter $t: \mathbb{C}^* \times D\to \mathbb{R}$ is defined as $t(z,w)= \log |z|$. 

We begin by focusing on the equivalence of Conditions $(i)$ and $(iii)$ in Theorem \ref{theorem: characterisation of ACyl classes}
as this is most relevant to our purpose. That $(iii)$ implies $(i)$ is settled in the next proposition.

\begin{prop}\label{prop: construction of ACyl metric from a given Kahler class}
	Let $\pi : M \to M_{orb}$ be as introduced at the beginning of Section \ref{section existence theorem} and let the function $t$ be defined by (\ref{cylindrical parameter}). Suppose that $g_0$ is a K\"ahler metric on $M$, whose K\"ahler form $\omega_0$  satisfies
	\begin{align} \label{in proposition construction of ACyl metric: exactness condition}
		\iota_{J \frac{\partial }{\partial t}} \omega_0 = df \;\; \text{ on } \;\; \{ t\geq0 \} \subset \left(\mathbb{C}^* \times D  \right) / \Gamma,
	\end{align}
for some smooth $f: \{t\geq 0\} \to \mathbb{R}$, where $J$ denotes the complex structure on $M$. Then there exists an ACyl K\"ahler metric $g$ on $M$, with K\"ahler form $\omega$, such that $[\omega]=[\omega_0] \in H^2(M,\mathbb{R})$. 

Moreover, if $g$ is lifted to $\mathbb{C}^* \times D$, it is explicitly given by
\begin{align} \label{ACyl metric equal to cylinder outside compact domain}
	g= g_{cyl}= e^{-2t}g_{\mathbb{C}} + g_D \;\; \text{ on } \;\; \{t\geq t_0\}\subset \mathbb{C}^* \times D
\end{align}
for some $t_0>1$, where $g_\mathbb{C}$ denotes the Euclidean metric on $\mathbb C$ and $g_D$ is the restriction of $g_0$ to the slice $\{1\}\times D\subset \mathbb{C}^*\times D$.
\end{prop}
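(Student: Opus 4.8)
The plan is to build the ACyl metric $g$ by interpolating between a cylindrical model near infinity and the given metric $g_0$ on a compact region, using the exactness condition (\ref{in proposition construction of ACyl metric: exactness condition}) to control this interpolation at the level of K\"ahler potentials. First, I would record the geometry of the asymptotic end: on $\{t \geq 0\} \subset (\mathbb{C}^* \times D)/\Gamma$ we have the coordinate $t = \log|z|$ and the cylindrical K\"ahler form $\omega_{cyl} = i\partial\partialb t^2 + \omega_D$ as in (\ref{cylindrical kahler form}), where here $\omega_D$ is the restriction of $\omega_0$ to the slice $\{1\} \times D$ (which is K\"ahler since $\omega_0$ is). The candidate $g_{cyl}$ near infinity is exactly the one in the statement, with this choice of $g_D$. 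Next I would observe that (\ref{in proposition construction of ACyl metric: exactness condition}) is precisely the kind of hypothesis that, together with $\mathcal{L}_{J\partial_t}\omega_0 = d\iota_{J\partial_t}\omega_0 = d\,df = 0$ (so $\omega_0$ is $\mathbb{S}^1$-invariant on the end after averaging, if it is not already), lets us write $\omega_0$ on the end in terms of a potential depending on $t$ and a form pulled back from $D$.

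The key computational step is to show that, on the cylindrical end, the difference $\omega_0 - \omega_{cyl}$ is $i\partial\partialb$ of a function decaying exponentially in $t$. Concretely: both $\omega_0$ and $\omega_{cyl}$ are closed $\mathbb{S}^1$-invariant $(1,1)$-forms on $\{t \geq 0\}$ in the same cohomology class (after checking the restriction of $[\omega_0]$ to the end agrees with $[\omega_{cyl}]$, which holds because $(\mathbb{C}^* \times D)/\Gamma$ retracts onto $(\mathbb{S}^1 \times D)/\Gamma$ and both forms restrict to $\omega_D$ there plus an exact term governed by (\ref{in proposition construction of ACyl metric: exactness condition})). An $\mathbb{S}^1$-invariant $\partial\partialb$-lemma on the cylinder $\mathbb{R}_+ \times \mathbb{S}^1 \times D$ then produces $\psi$ with $\omega_0 - \omega_{cyl} = i\partial\partialb\psi$; the hypothesis (\ref{in proposition construction of ACyl metric: exactness condition}) is what pins down $\psi$ up to a term that is bounded (indeed the contraction of $i\partial\partialb\psi$ with $J\partial_t$ integrates the $t$-derivative of $\psi$), and after subtracting off the harmonic/linear part one gets $\psi$ bounded, hence $i\partial\partialb\psi$ and all its covariant derivatives decay — but to get \emph{exponential} decay I would argue on each slice $\{t\} \times \mathbb{S}^1 \times D$ that $\psi(t,\cdot)$ is harmonic-plus-error in the cross-sectional variables and invoke separation of variables / the spectral gap of the cross-section Laplacian, exactly as in the theory of Section \ref{section linear analysis on acyl manifolds}. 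Then I would choose $t_0 > 1$ and a cutoff $\chi$ vanishing for $t \leq t_0$ and equal to $1$ for $t \geq t_0 + 1$, and set
\begin{align*}
	\omega := \omega_0 + i\partial\partialb\big( \chi \cdot (\text{comparison potential})\big)
\end{align*}
so that $\omega$ equals $\omega_{cyl}$ for $t \geq t_0 + 1$ and equals $\omega_0$ for $t \leq t_0$, interpolating smoothly; positivity on the overlap region $\{t_0 \leq t \leq t_0+1\}$ is arranged by taking $t_0$ large, since there $i\partial\partialb(\chi\psi)$ is $O(e^{-\delta t_0})$ in $C^2$ and hence a small perturbation of the positive form $\omega_0$. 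This $\omega$ is closed, in $[\omega_0]$ by construction, positive everywhere, and its metric is literally $g_{cyl}$ outside $\{t \leq t_0+1\}$, hence trivially ACyl of every rate; renaming $t_0 + 1$ as $t_0$ gives the stated form (\ref{ACyl metric equal to cylinder outside compact domain}).

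The main obstacle I anticipate is the exponential-decay bound on the comparison potential $\psi$ — i.e. upgrading "$\omega_0$ and $\omega_{cyl}$ are cohomologous closed $(1,1)$-forms on the end with the same contraction against $J\partial_t$" to "their difference is $i\partial\partialb$ of an exponentially decaying function." The exactness hypothesis (\ref{in proposition construction of ACyl metric: exactness condition}) controls only the $dt$-component of the primitive, so I would need to combine it with $\mathbb{S}^1$-invariance and the $\partial\partialb$-lemma on the compact cross-section $D$ to control the transverse components, and then feed the resulting elliptic equation on $\mathbb{R}_+ \times \mathbb{S}^1 \times D$ into the asymptotically-translation-invariant theory (Theorem \ref{fredholm property of Atrans operator} and Proposition \ref{prop kernel independent of small weight change}) to extract a definite exponential rate $\delta > 0$ from the spectral gap of the cross-sectional Laplacian. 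Everything else — the cutoff construction, positivity for large $t_0$, the cohomology bookkeeping — is routine once $\psi$ is under control. Finally I would note that the equivalence of $(iii)$ with $(ii)$, and hence with $(i)$, follows by identifying the comparison potential near infinity with a potential defined across the divisor $\overline{D}$ in $\overline{M}$, but that belongs to the second half of the proof of Theorem \ref{theorem: characterisation of ACyl classes} rather than to this proposition.
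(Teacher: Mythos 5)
There is a genuine gap at the heart of your argument: the claim that $\omega_0 - \omega_{cyl} = i\partial\partialb\psi$ with $\psi$ \emph{exponentially decaying} is false in general, and your positivity argument in the transition region depends on it. The hypothesis (\ref{in proposition construction of ACyl metric: exactness condition}) only says that $\iota_{J\partial_t}\omega_0$ is exact on the end; it does not force $\omega_0$ to be asymptotic to any cylinder. A concrete counterexample: on $\mathbb{C}^*\times D$ take $\omega_0 = \tfrac{i}{2}dz\wedge d\bar z + \omega_D$. Then $\iota_{J\partial_t}\omega_0 = -d\bigl(\tfrac{1}{2}e^{2t}\bigr)$ is exact, so the hypothesis holds, but $\omega_0 - \omega_{cyl} = (e^{2t}-1)\,dt\wedge d^c t$ and any potential $\psi$ grows like $e^{2t}$. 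Consequently no amount of Fredholm theory or spectral-gap analysis will produce decay, and your step "positivity on the overlap is arranged by taking $t_0$ large, since $i\partial\partialb(\chi\psi)$ is $O(e^{-\delta t_0})$" collapses. The proposition is precisely about manufacturing an ACyl representative in the class of a K\"ahler form that is typically \emph{not} close to the cylinder at infinity, so a small-perturbation gluing cannot be the mechanism.

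What the paper does instead is entirely explicit and avoids decay. From (\ref{in proposition construction of ACyl metric: exactness condition}) one \emph{defines} $\varphi(t,y) := 2\int_0^t f(s,y)\,ds$ and verifies by a direct computation (checking that $\omega_0 - i\partial\partialb\varphi$ is basic for the projection $p:\mathbb{C}^*\times D\to D$, i.e.\ that its contractions with $\partial_t$ and $J\partial_t$ vanish) that $\omega_0 = i\partial\partialb\varphi + p^*\omega_D$ on $\{t\geq 0\}$. The glued form is $\omega = i\partial\partialb\bigl(\chi t^2 + (1-\chi)\varphi\bigr) + \rho\,dt\wedge d^c t + p^*\omega_D$, where $\rho\,dt\wedge d^c t$ is an exact bump form. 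Positivity in the transition region is obtained not by smallness but by two separate observations: in the $D$-directions, $\omega(v,\bar v) = (1-\chi)\,\omega_0(v,\bar v) + \chi\, p^*\omega_D(v,\bar v) > 0$ by convexity (this is where the explicit formula for $\varphi$ is used), and the remaining $\partial_t, J\partial_t$ directions are handled by taking $\rho \gg 1$. You should replace your Hodge-theoretic search for a decaying comparison potential with this explicit potential, and replace the smallness argument with the convexity-plus-bump-form argument; otherwise the construction does not close.
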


Interestingly, the ACyl metrics obtained  by the previous proposition are of \textit{optimal rate}, i.e. they are \textit{equal} to its asymptotic model $g_{cyl}$ outside some compact domain. This is an even stronger statement than claimed in Theorem \ref{theorem: characterisation of ACyl classes}. 

\begin{proof}
	Analogously to \cite{haskins2015asymptotically}[Section 4.2], the idea is to glue the K\"ahler form $\omega_0$ to a certain cylindrical K\"ahler form $\omega_{cyl}$ outside of some compact domain. Doing so, however, requires that the difference of these two (1,1)-forms  is $\partial \partialb$-exact. 
	
	Thus, before we can perform any gluing, we need to have a description of $\omega_0$ in terms of a K\"ahler potential, at least on the set $\{t\geq 0\}$. We begin by explaining the construction of such a potential function.
	
	Suppose that $\omega_0$ is a  K\"ahler form   satisfying
	\begin{align} \label{in proposition: construction of Kahler form from potential}  
		\iota_{J \frac{\partial }{\partial t}} \omega_0 = df \;\; \text{ on } \;\; \{ t\geq0 \} \subset \left(\mathbb{C}^* \times D  \right) / \Gamma,
	\end{align}
	for some smooth function $f$. Working on $\mathbb{C}^*\times D$, we lift $\omega_0$ and $f$ to $\Gamma$-invariant forms denoted by the same letters. We view $\mathbb{C}^*\times D$ as a (trivial) fibre bundle over $D$, and introduce two holomorphic maps
	\begin{align*}
		j : D \to \mathbb{C}^* \times D \;\; \text{ and } \;\; p: \mathbb{C}^* \times D \to D,
	\end{align*}
	where $j$ is the inclusion of the slice $\{1\} \times D \subset \mathbb{C}^* \times D$, and $p$  the projection onto $D$. Then we \textit{define} a K\"ahler form $\omega_D$ on $D$ by setting
	\begin{align*}
		\omega_D := j^* \omega_0.
	\end{align*}
	Using the cylindrical parameter $t$ as defined in (\ref{cylindrical parameter}), we identify $\mathbb{C}^* \cong \mathbb{R}\times \mathbb S ^1 $ and  define a new function $\varphi$ by 
	\begin{align*}
		\varphi (t,y):= 2 \int_0 ^ t f(s,y) ds \;\; \text{ for }  t \in  \mathbb{R}_{\geq 0}  \text{ and } y\in \mathbb{S}^1\times D.
	\end{align*}
	Then we claim that 
	\begin{align} \label{in prop: construction of ACyl metric:: omega0 admits a potential}
		\omega_0 = i \partial \partialb \varphi  + p^* \omega_D \;\; \text{ on } \;\; \{t\geq 0\} \cong \mathbb{R}_{\geq 0} \times \mathbb{S}^1\times D.
	\end{align}
	In other words, we have to show that the $(1,1)$-form $\alpha:= \omega_0 - i \partial \partialb \varphi$ is a basic form for the fibre bundle $p: \mathbb{C}^* \times D\to D$. This means that 
	\begin{align}\label{in prop construction of ACyl: basic forms definition}
		\mathcal{L}_V \alpha=0 \;\; \text{ and } \;\; \iota_V \alpha=0
	\end{align}
	for all vector fields $V$ on $\mathbb{C}^* \times D$ which are tangent to the fibres of the projection $p$. However, since $\alpha$ is $d$-closed, it suffices to show the second condition in (\ref{in prop construction of ACyl: basic forms definition}), and thus we only have to prove that 
	\begin{align} \label{in prop construction of ACyl metric: sufficient condition to proof claim}
		\iota_{\frac{\partial }{\partial t}} \alpha=0 \;\; \text{ and } \iota_{J \frac{\partial}{\partial t}} \alpha=0
	\end{align}
	since any vector field tangent to fibres of $p$ can be written in terms of $\partial / \partial t$ and $J \partial / \partial t$.
	
	Let us begin by considering the first equation in (\ref{in prop construction of ACyl metric: sufficient condition to proof claim}). Keeping in mind that $(J\partial / \partial t) (f)=0$ by (\ref{in proposition: construction of Kahler form from potential}), we split $df= d_t f+ d_D f$, where $d_t$ and $d_D$ are the differentials in direction of the $\mathbb{R}$- and $D$-factor, respectively. Using  the definition of $\varphi$ and the fact that $ \partial \partialb t =0$, we observe 
	\begin{align*}
		2 i  \partial \partialb \varphi =  dJd \varphi =  2 df\wedge Jdt +  d_t Jd_D \varphi + d_D Jd_D\varphi,
	\end{align*}
	so that we conclude from (\ref{in proposition: construction of Kahler form from potential})
	\begin{align*}
		\iota_{\frac{\partial}{\partial t}} \left( i  \partial \partialb \varphi  \right) 
		= \frac{\partial}{\partial t} f Jdt + \frac{1}{2}Jd_D  \frac{\partial}{\partial t}\varphi
		= Jdf
		= \iota_{\frac{\partial}{\partial t}} \omega_0,
	\end{align*}
	as claimed. The second equation in (\ref{in prop construction of ACyl metric: sufficient condition to proof claim}) follows similarly:
	\begin{align*}
		\iota_{J\frac{\partial}{\partial t}} \left( i  \partial \partialb \varphi  \right) = -df \cdot (Jdt)\left( J \frac{\partial}{\partial t}  \right) = df = \iota_{J \frac{\partial}{\partial t}} \omega_0.
	\end{align*}
	This finishes the proof of (\ref{in prop: construction of ACyl metric:: omega0 admits a potential}). 
	
	Let us define the cylindrical K\"ahler form $\omega_{cyl}$ on $\mathbb{C}^* \times D$ to be 
	\begin{align*}
		\omega_{cyl}:= i \partial \partialb t^2 + p^* \omega_D.
	\end{align*}
	The goal is to construct a new K\"ahler form $\omega$, cohomologous to $\omega_0$, such that
	\begin{align} \label{in prop: construction of Acyl metric:: omega inside and at infinity equal to omega cyl}
		\omega=	\begin{cases}
			\omega_{cyl} & \text{on}\;\; \{t\geq t_2\},\\
			\omega_0 & \text{on} \;\; \{t\leq t_1\}
		\end{cases}
	\end{align} 
	for some positive numbers $t_1<t_2$.
	The following gluing procedure is an adaptation of the one contained on \cite{haskins2015asymptotically}[p. 247]. For this construction, we first fix $t_0>1$ and choose a cut-off function $\chi=\chi(t)$ satisfying
	\begin{align*}
		\chi(t) = 
		\begin{cases}
			1 & \text{if}\;\; t\geq t_0,\\
			0 & \text{if} \;\; t\leq 1,
		\end{cases}
	\end{align*}
	and then we define a $\Gamma$-invariant ($1,1$)-form $\omega$ on $\{t\geq 0 \}$ by
	\begin{align*}
		\omega:= i \partial \partialb \left(\chi(t) \cdot t^2 + (1-\chi(t)) \cdot \varphi    \right) + \rho(t) dt\wedge d^c t + p^*\omega_D,
	\end{align*}
	where $\rho(t) dt\wedge d^ct$ is an exact bump-form supported inside a neighborhood of $[1,t_0]$, say $[1/2, t_0+ 1/2]$. Clearly, $\omega-\omega_0$ is exact and $\omega$ agrees with $\omega_0$ inside the region $\{t\leq 1/2\}$, so that $\omega$ extends to a (1,1)-form on $M$. 
	
	Moreover, we notice that $\omega= \omega_{cyl}$ if $t\geq t_0+1/2$, and thus, the only thing left to show is the positivity of $\omega$ on the region $\{1/2 \leq t\leq t_0 + 1/2 \}$. For $t\in [1/2,t_0+1/2] \setminus [1,t_0]$, this is clear because we have
	\begin{align*}
		\omega= \begin{cases}
			\omega_{cyl} + \rho dt \wedge d^ct & \text{on} \;\; \{t\geq t_0 \}, \\
			\omega_0 + \rho dt \wedge d^ct & \text{on} \;\; \{t\leq 1 \}
		\end{cases}
	\end{align*}
	and $\rho\geq 0$, so we only need to focus on the case $t\in [1,t_0]$.
	
	To show that $\omega>0$ on this region, it suffices to check that $\omega$ is positive in the direction of the $D$-factor since we can then compensate for potentially negative terms by choosing $\rho$ sufficiently large inside $[1,t_0]$. Hence, consider $0\neq v\in T_{\mathbb{C}}D$ and observe 
	\begin{align*}
		\omega(v,\overline{v}) &= (1-\chi(t)) \cdot \left( i \partial \partialb \varphi \right) (v,\overline{v}) + p^*\omega_D (v,\overline{v}) \\
		&=(1-\chi(t)) \cdot  \omega_0 (v,\overline{v}) + \chi (t) \cdot p^*\omega_D (v,\overline{v}) \\
		&>0,
	\end{align*}
	where we used in the first line that $\chi$ only depends on $t$, and the second equation follows from (\ref{in prop: construction of ACyl metric:: omega0 admits a potential}). 
	As explain before,  $\omega$ is positive on $\{ 1\leq t\leq t_0 \}$ once we choose $\rho\gg 1$ on $[1,t_0]$, and so we constructed a K\"ahler form $\omega$ on $M$ in the same cohomology class as $\omega_0$, which also satisfies (\ref{in prop: construction of Acyl metric:: omega inside and at infinity equal to omega cyl}). The corresponding ACyl metric $g$ then fulfills (\ref{ACyl metric equal to cylinder outside compact domain}), since both $g$ and $g_{cyl}$ are K\"ahler with respect to the same complex structure.

\end{proof}

For the converse to Proposition \ref{prop: construction of ACyl metric from a given Kahler class}, i.e. that $(i)$ of Theorem \ref{theorem: characterisation of ACyl classes} implies $(iii)$,  
 we additionally assume that the $\mathbb{C^*}$-action on $M_{orb}$ given by (\ref{in theorem characterisation: C* action})
extends $\pi$-equivariantly to a holomorphic action on the resolution $\pi : M \to M_{orb}$. Hence, the infinitesimal generators of this action extend to real holomorphic vector fields on all of $M$. Let $Y$ denote the generator of the induced $\mathbb{R}_+$-action (corresponding to translation in the cylindrial parameter $t$), i.e. 
\begin{align}\label{generator of the S1 action}
	Y= \frac{\partial}{\partial t} \;\; \text{ on } \;\; \left( \mathbb{C}^*\times D\right) / \Gamma \subset M.
\end{align}
Note that if $J$ is the complex structure on $M$, then $JY$ is generating the $\mathbb{S}^1$-action on $M$. 

Next, we show that Condition $(iii)$ in Theorem \ref{theorem: characterisation of ACyl classes} is in fact necessary for a K\"ahler class to be ACyl. 

\begin{prop}  \label{lemma: kahler potential for ACyl metric}
Let $\pi : M \to M_{orb}$ be as introduced at the beginning of Section \ref{section existence theorem} and assume that the $\mathbb{C}^*$-action given by (\ref{in theorem characterisation: C* action}) extends $\pi$-equivariantly to a holomorphic action on $M$. 

Then every ACyl K\"ahler class contains an  ACyl K\"ahler form $\hat{\omega}$ such that
	\begin{align*}
		\iota_{JY} \hat \omega = df,
	\end{align*}
where $JY$ is the infinitessimal generator of the $\mathbb
S ^1$-action. 
\end{prop}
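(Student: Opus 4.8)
The plan is to start with an arbitrary ACyl K\"ahler form $\omega_0$ in the given ACyl class and modify it in two steps: first average it over the $\mathbb{S}^1$-action to make it $JY$-invariant, and then show that the averaged form automatically satisfies the desired exactness condition $\iota_{JY}\hat\omega = df$. Concretely, since $JY$ generates a compact $\mathbb{S}^1$-action by biholomorphisms, I would set $\hat\omega := \int_{\mathbb{S}^1} \phi_s^* \omega_0 \, ds$ where $\phi_s$ is the flow of $JY$. As in the proof of Corollary \ref{corollary: H1=0 implies every kahler class admits a nice ACyl metric}, averaging preserves positivity and the $(1,1)$-type, and $[\hat\omega]=[\omega_0]$; moreover $\hat\omega$ is still ACyl, since the cylinder $g_{cyl}$ is itself $\mathbb{S}^1$-invariant (the circle action is rotation in the $\mathbb{S}^1$-factor of $\mathbb{R}\times\mathbb{S}^1\times D$), so averaging an exponentially-cylindrical form keeps it exponentially cylindrical with the same rate. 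By construction $\mathcal{L}_{JY}\hat\omega=0$, hence $\iota_{JY}\hat\omega$ is a closed $1$-form on $M$.

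It remains to prove that this closed $1$-form is exact on all of $M$. The idea is to exploit that $M$ retracts onto a compact region together with the structure of the cylindrical end: on the end $\left(\mathbb{C}^*\times D\right)/\Gamma$ we have $Y=\partial/\partial t$, and since $\hat\omega$ is asymptotic to $\omega_{cyl}= i\partial\partialb t^2 + p^*\omega_D$ which satisfies $\iota_{J\partial_t}\omega_{cyl}= -dt\cdot(Jdt)(J\partial_t) = -dt$ (up to sign conventions, exact), the restriction of $\iota_{JY}\hat\omega$ to the end differs from an exact form by an exponentially decaying closed $1$-form. More precisely, on a slice $\{0\}\times\mathbb{S}^1\times D$ the $\mathbb{S}^1$-invariance together with $\iota_{JY}\hat\omega(JY)=0$ forces $\iota_{JY}\hat\omega|_{\text{slice}} = p_D^*\alpha$ for a closed, $\gamma$-invariant $1$-form $\alpha$ on $D$; but this $\alpha$ is precisely $j^*$ of a form, and here is where the \emph{ACyl} hypothesis pays off: the argument in the proof of Theorem \ref{theorem: characterisation of ACyl classes} (equivalence of $(i)$ and $(iii)$, via the compactification $\overline M = M\cup\overline D$) shows that such $\alpha$ must be exact, because $\hat\omega$ extends to an orbifold K\"ahler form on $\overline M$ and the slice class $[\alpha]$ is then controlled by the intersection-theoretic/cohomological structure of $\overline M$. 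I would either cite this directly or reprove it: lift to $\mathbb{C}^*\times D$, write $\hat\omega = i\partial\partialb\varphi + p^*\omega_D$ on the end as in Proposition \ref{prop: construction of ACyl metric from a given Kahler class}, whence $\iota_{JY}\hat\omega = \iota_{JY}(i\partial\partialb\varphi)$ on the end, and $\iota_{JY}(i\partial\partialb\varphi)= d\!\left(\tfrac12 Y(\varphi)\right)$-type expressions are manifestly exact there.

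Once $\iota_{JY}\hat\omega$ is exact on the end, one patches: write $\iota_{JY}\hat\omega = dg_{\mathrm{end}}$ on $\{t\geq 0\}$ for some function $g_{\mathrm{end}}$, and since $\iota_{JY}\hat\omega$ is globally closed on $M$, the obstruction to extending $g_{\mathrm{end}}$ to a global primitive $f$ lies in $H^1(M,\mathbb{R})$ relative to the end; but restriction $H^1(M,\mathbb{R})\to H^1(\text{end})$ combined with the Mayer--Vietoris / long exact sequence of the pair $(\overline M, \overline D)$ shows that a closed form exact on the end and extending the cylindrical model is globally exact (this is exactly the content that makes condition $(iii)$ of Theorem \ref{theorem: characterisation of ACyl classes} \emph{equivalent} to being ACyl, not merely implied by it). Then $f$ is smooth, $JY$-invariant up to constants, and $\iota_{JY}\hat\omega = df$ as claimed.

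\textbf{Main obstacle.} The delicate point is not the averaging (routine) but establishing that the closed $1$-form $\iota_{JY}\hat\omega$ is \emph{globally} exact on $M$, i.e. genuinely using the ACyl hypothesis rather than just $\mathbb{S}^1$-invariance. I expect the cleanest route is to invoke the already-claimed equivalence $(i)\Leftrightarrow(iii)$ in Theorem \ref{theorem: characterisation of ACyl classes} — once we know $[\hat\omega]$ is ACyl, $(iii)$ furnishes \emph{some} K\"ahler form in the class with the exactness property, and a further averaging argument transfers the property to the $JY$-invariant representative $\hat\omega$ itself. Care is needed to ensure the two operations (averaging to get $JY$-invariance, and choosing a representative satisfying the exactness condition) are compatible; averaging $\iota_{JY}(\text{of a form satisfying }(iii))$ over $\mathbb{S}^1$ and using that $JY$ commutes with its own flow should close the loop.
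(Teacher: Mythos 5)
Your first step (averaging over the $\mathbb{S}^1$-action to obtain a $JY$-invariant ACyl representative $\hat\omega$ with $\iota_{JY}\hat\omega$ closed) matches the paper. The gap is in the second step, the exactness of the closed $1$-form $\iota_{JY}\hat\omega$, which is the entire content of the proposition. None of the routes you sketch actually closes it. Invoking the equivalence $(i)\Leftrightarrow(iii)$ of Theorem \ref{theorem: characterisation of ACyl classes} is circular: this proposition \emph{is} the proof that $(i)$ implies $(iii)$, so you cannot assume that some form in the class already satisfies the exactness condition. Likewise, writing $\hat\omega = i\partial\partialb\varphi + p^*\omega_D$ on the end via Proposition \ref{prop: construction of ACyl metric from a given Kahler class} is circular, since that potential representation is \emph{derived from} the hypothesis $\iota_{J\partial_t}\omega_0=df$, i.e.\ from the very exactness you are trying to prove. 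The slice argument ($\iota_{JY}\hat\omega$ restricted to $\{0\}\times\mathbb{S}^1\times D$ equals $p_D^*\alpha$ with $\alpha$ closed and $\gamma$-invariant, hence exact) only works under the extra hypothesis of Corollary \ref{corollary: H1=0 implies every kahler class admits a nice ACyl metric}, which is not assumed here. Finally, the topological claim that a closed $1$-form which is exact on the end must be globally exact is false in general: $H^1(M,\mathbb{R})$ need not vanish in the generality of this proposition, and a class can die on the end while surviving globally.

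The paper's proof instead feeds $\hat\omega$ into Proposition \ref{proposition when acyl metric is hamiltonian}: since $Y=\partial/\partial t$ on the end and $JY$ is Killing for $\hat g$, that proposition produces $f$ with $Y=\nabla^{\hat g} f$, equivalently $\iota_{JY}\hat\omega=df$. The real work there is an ACyl Hodge decomposition $\iota_{JY}\hat\omega + d\tau = h+\alpha+\beta$ together with a Frankel-type argument: the co-exact part $\beta$ dies because the form is closed, and the harmonic part $h$ dies not for topological reasons but because $Jh$ is again harmonic and closed, $\iota_{JY}(Jh)=-h(Y)$ is a constant that decays at infinity and hence vanishes, and then $\langle h,h\rangle_{L^2}=-\int_M h(Y)\,\operatorname{dV}=0$. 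This mechanism, which is what genuinely uses the cylindrical asymptotics of $Y$ rather than any vanishing of $H^1$, is absent from your proposal.
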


\begin{proof} Let $g$ be an ACyl K\"ahler metric, with K\"ahler form $\omega$, such that (\ref{ACyl Kahler class}) holds.
	First, average $\omega$ over the $\mathbb S^1$-action to obtain a K\"ahler form $\hat{\omega}$ in the same cohomology class. Then observe that the averaging does not change the asymptotic behavior since both $g_{cyl}$ and $t$ are $\mathbb{S}^1$-invariant, so that the corresponding metric $\hat g$ is  ACyl and satisfies (\ref{ACyl Kahler class}). In particular, the function $t=\log |z|$ is also the cylindrical parameter for $\hat g$. 
	
	 Then we notice that $JY$, for $Y$ given by (\ref{generator of the S1 action}), is a Killing field for $\hat g$ because $\mathcal{L}_{JY}\hat \omega=0$. Thus, Proposition \ref{proposition when acyl metric is hamiltonian} implies that $Y$ is the gradient field of some function on $M$, or equivalently that $\iota_{JY} \hat \omega$ is exact. 
\end{proof}

It only remains to show the equivalence of $(i)$ and $(ii)$ in Theorem \ref{theorem: characterisation of ACyl classes}, i.e. that each ACyl K\"ahler class is the restriction of some orbifold K\"ahler class on the complex compactification $\overline{M}$. 

This goes back to a construction of Haskins, Hein and Nordstr\"om \cite{haskins2015asymptotically}. In fact, their ideas can be used to prove the following

\begin{prop}\label{proposition: restriction of orbifold class}
	Let $\pi :M \to M_{orb}$ be as introduced at the beginning of Section \ref{section existence theorem}, and suppose that $\overline{M}=M\cup \overline{D}$ is the compacification obtained by adding the orbifold divisor $\overline{D}= D/\Gamma$ at infinity.
	
	For a given $\kappa \in H^2(M,\mathbb{R})$, the following are equivalent:
	
	\begin{itemize}
		\item[(i)] $\kappa $ is an ACyl K\"ahler class.
		
		\item[(ii)] $\kappa= \kappa_{\overline{M}} |_{M}$ for some orbifold K\"ahler class $\kappa_{\overline{M}}$ on $\overline M$.  
	\end{itemize}

\end{prop}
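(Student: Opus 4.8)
The plan is to prove the two implications separately, in both cases passing through a K\"ahler potential description on the cylindrical end and using the compactification $\overline M = M \cup \overline D$. For the direction $(ii)\Rightarrow (i)$, I would start with an orbifold K\"ahler form $\omega_{\overline M}$ on $\overline M$ representing $\kappa_{\overline M}$ and restrict it to $M$; the task is to modify it (within its cohomology class on $M$) so that near $\overline D$ it becomes ACyl of the form \eqref{cylindrical metric in terms of t}. The key local model is that a neighbourhood of $\overline D$ in $\overline M$ looks like a neighbourhood of $(\{\infty\}\times D)/\Gamma$ in $(\mathbb{CP}^1\times D)/\Gamma$, with $\mathbb{CP}^1$ coordinatised near $\infty$ by $1/z$. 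On this neighbourhood one has an ordinary (orbifold) $\partial\bar\partial$-lemma, so $\omega_{\overline M}$ differs from the natural product form $\frac{i}{2}\,d(1/z)\wedge d(\overline{1/z}) + p^*\omega_D$ by $i\partial\bar\partial$ of some smooth function. Pulling back under $\Phi$ and using $t = \log|z|$, I would check that the product form on $\{|z|>R\}$ corresponds under the change of variables $s = e^{-t}$ to a metric that, after an exponential reparametrisation, is asymptotic to $dt^2 + g_{\mathbb{S}^1} + g_D$ — i.e. the cylinder \eqref{cylindrical metric in terms of t} up to exponentially small corrections. (This is exactly the step where the "finite distance to the divisor $\leftrightarrow$ infinite cylindrical distance" dictionary of \cite{haskins2015asymptotically} is used.) One then glues, using a cut-off in $t$ and the smooth potential produced by the local $\partial\bar\partial$-lemma, to replace $\omega_{\overline M}|_M$ by a genuinely ACyl K\"ahler form in the same class on $M$, exactly as in the gluing argument of Proposition \ref{prop: construction of ACyl metric from a given Kahler class}. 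Positivity on the gluing region is arranged by the usual trick of adding a large multiple of an exact bump form $\rho(t)\,dt\wedge d^ct$, which does not affect positivity in the $D$-directions.

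For the converse $(i)\Rightarrow(ii)$, I would take an ACyl K\"ahler form $\omega$ on $M$ with metric satisfying \eqref{ACyl Kahler class}. By Proposition \ref{lemma: kahler potential for ACyl metric} (after averaging over $\mathbb{S}^1$, which costs nothing since we only want the cohomology class) we may assume $\iota_{JY}\omega = df$ on the end, and then the computation in the proof of Proposition \ref{prop: construction of ACyl metric from a given Kahler class} shows $\omega = i\partial\bar\partial\varphi + p^*\omega_D$ on $\{t\geq 0\}$ for an explicit function $\varphi$ built by integrating $f$ in the radial direction. The point is now to understand the growth of $\varphi$ in $t$: since $g$ is asymptotic to $g_{cyl}$, the potential $\varphi$ is $t^2 + O(1)$ (indeed the cylinder's own potential is $t^2$), so $e^{-t^2}$-type weights are irrelevant and what matters is that $\varphi - t^2$ is bounded with bounded derivatives. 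Translating to the variable $s = 1/z$ near $\infty$, $t = -\log|s|$ and $t^2 = (\log|s|)^2$, and one checks that $i\partial\bar\partial\big((\log|s|)^2\big) = |s|^{-2}\frac{i}{2}ds\wedge d\bar s$, which is precisely the (singular-looking but actually) smooth form that extends the cylindrical end across the divisor — this is the content of the Haskins–Hein–Nordstr\"om calculation. Hence $\omega$, written in the $s$-coordinate, extends to a smooth orbifold form $\omega_{\overline M}$ on a neighbourhood of $\overline D$ in $\overline M$; gluing this to $\omega$ on a compact piece of $M$ via a cut-off (again correcting positivity with a bump form) produces an orbifold K\"ahler form on all of $\overline M$ whose class restricts to $\kappa$. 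A cohomological bookkeeping step — that the class on $\overline M$ so obtained is well-defined and independent of choices, using the Mayer–Vietoris / excision relation between $H^2(\overline M)$, $H^2(M)$ and $H^2$ of a neighbourhood of $\overline D$ — completes the argument.

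The main obstacle I anticipate is the precise verification that the cylindrical end, in the holomorphic coordinate $s = 1/z$ transverse to the divisor, genuinely extends to a \emph{smooth} (orbifold) K\"ahler form across $\overline D$ — i.e. that the apparently singular term $|s|^{-2}\,ds\wedge d\bar s = i\partial\bar\partial(\log|s|)^2$ and the exponentially decaying error terms from the ACyl condition all assemble into something $C^\infty$ (and positive) up to and including $\overline D$, with the correct $\Gamma$-equivariance so that it descends to the orbifold $(\mathbb{CP}^1\times D)/\Gamma$. This is a local-in-$s$ regularity statement but requires care: the weights $C^\infty_\delta$ in the cylindrical variable $t$ must be matched against H\"older/smoothness scales in $s$, and one must ensure the $\Gamma$-action (rotation by $2\pi/m$ on $z$, hence on $s$, combined with $\gamma$ on $D$) is compatible with the orbifold chart structure at the divisor. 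Everything else — the $\partial\bar\partial$-lemma on the collar, the gluing with cut-offs, the positivity correction via $\rho\,dt\wedge d^ct$, and the cohomology-class bookkeeping — is by now standard and parallels \cite{haskins2015asymptotically} and Proposition \ref{prop: construction of ACyl metric from a given Kahler class} closely.
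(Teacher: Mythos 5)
Your direction $(ii)\Rightarrow(i)$ is essentially the paper's argument (restrict $\omega_{\overline M}$, add $i\partial\bar\partial(\chi\cdot t^2)$ via a cut-off, correct positivity with $\rho\,dt\wedge d^ct$), though one sentence in it is wrong as stated: the smooth product form $\tfrac{i}{2}\,d(1/z)\wedge d(\overline{1/z})+p^*\omega_D$ is \emph{not} asymptotic to the cylinder $dt^2+g_{\mathbb S^1}+g_D$ after any reparametrisation --- in cylindrical coordinates its $\mathbb{CP}^1$-part decays like $e^{-2t}$ relative to $g_{cyl}$, so the restriction of an orbifold K\"ahler form to $M$ degenerates at infinity. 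The cylindrical behaviour is created entirely by the added term $i\partial\bar\partial(\chi\cdot t^2)=\chi\,|z|^{-2}\tfrac{i}{2}dz\wedge d\bar z+\dots$, not by the smooth form you start from. Since the gluing recipe you invoke does include that term, the direction goes through, but the justification should be corrected.

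The genuine gap is in $(i)\Rightarrow(ii)$. Your argument hinges on the claim that $|s|^{-2}\tfrac{i}{2}\,ds\wedge d\bar s=i\partial\bar\partial(\log|s|)^2$ is ``singular-looking but actually smooth'' across $s=0$, so that the ACyl form $\omega$ itself extends to a smooth orbifold form on $\overline M$. This is false: that form blows up at $s=0$ (its metric is the complete cylinder metric on the punctured disc, which puts the divisor at infinite distance), so neither it nor $\omega$ extends smoothly across $\overline D$. The correct mechanism --- and the content of \cite{haskins2015asymptotically}[Theorem 3.2], which the paper simply cites here --- is the opposite one: the singular term is globally $i\partial\bar\partial$-exact on $M$, with potential the globally defined function $\chi\cdot t^2$, so one may \emph{subtract} $i\partial\bar\partial(\chi\cdot t^2)$ without changing $\kappa\in H^2(M,\mathbb R)$; it is the \emph{remainder} $\omega-i\partial\bar\partial(\chi\cdot t^2)$, which decays exponentially towards $\overline D$ when measured against a smooth metric on $\overline M$, whose class extends across the divisor (and one must then still argue that the extended class is K\"ahler on $\overline M$, e.g.\ by exhibiting a K\"ahler representative). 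Your final paragraph correctly identifies this as the delicate point, but frames it as a verification of a statement that does not hold; as written, the step would fail.
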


\begin{proof}
	That $(i)$ implies $(ii)$ is a  direct consequence of \cite{haskins2015asymptotically}[Theorem 3.2], which can be applied here since $g$ and $g_{cyl}$ are K\"ahler with respect to the same complex structure.
	
	The construction required for the converse implication
 can be found on \cite{haskins2015asymptotically}[p. 247], so we only briefly sketch the idea.
	
	If $\omega_{\overline M}$ is a K\"ahler form on $\overline M$, then we define $\omega_D$ to be the restriction of $\omega_{\overline M}$ to the orbifold divisor $\overline D = \{ \infty\} \times D / \Gamma $. Note that $\omega_D$ lifts to a smooth $\Gamma$-invariant form on $D$, and so we can define the asymptotic model $\omega_{cyl}$ on $\mathbb{C}^*\times D$ to be
	\begin{align*}
		\omega_{cyl}:= i \partial \partial t^2 + \omega_D.
	\end{align*}
 The new ACyl K\"ahler form asymptotic to $\omega_{cyl}$ is then constructed as
\begin{align*}
	\omega:= \omega_{\overline M} + i \partial \partial \left(\chi \cdot t^2 \right) + \rho dt\wedge d^ct,
\end{align*}
for some cut-off function $\chi$ and a bump-function $\rho$. The cut-off $\chi$   is equal to 1 in a neighborhood of $\overline D$ and 0 if $t\leq 0$, and  $\rho$ is chosen sufficiently large to ensure positivity. 
\end{proof}

This concludes the proof of Theorem \ref{theorem: characterisation of ACyl classes}, and so we  focus on proving Theorem \ref{geometric existence theorem} next.

\subsection{Proof of Theorem \ref{geometric existence theorem}}
\label{subsection: proof of geometric existence theorem}
Let $D^{n-1},\Omega_{D}, \Gamma =\langle \gamma \rangle$ and $\pi:M \to (\mathbb{C} \times D ) / \langle \gamma \rangle$ be defined as in Theorem \ref{geometric existence theorem}. In particular, the discussion of the previous subsection applies and we use the same notation as introduced at the beginning of Section \ref{section existence theorem}. We also assume that the $\mathbb{C}^*$-action on $M_{orb}$ defined by  
\begin{align*}
	\lambda * (z,w):= (\lambda z,w), \;\; \lambda \in \mathbb{C}^*,
\end{align*}
extends $\pi$-equivariantly to a holomorphic action on $M$. As a consequence, the infinitesimal generators of this action extend to real holomorphic vector fields on $M$. Let $X$ be two-times the generator of the induced $\mathbb{R}$-action (corresponding to translation in the cylindrical parameter $t$), i.e.
\begin{align*}
	X= 2 \partialt \;\;\; \text{on }\;\; \left( \mathbb{C}^*\times D \right) / \Gamma \subset M. 
\end{align*}
Then $JX$ is  two-times the generator of the $\mathbb{S}^1$-action, where $J$ is the complex structure on $M$. 

Moreover, we point out that the action of $\gamma$ given by (\ref{in geometric existence thm: gamma acts on product}) preserves the holomorphic ($n,0$) form $\Omega$ on $\mathbb{C}^*\times D$ defined as
\begin{align*}
	\Omega:= dz \wedge \Omega_{D}
\end{align*}
since $\gamma$ satisfies (\ref{in geometric existence thm: gamma acts on omega D}). In particular, $\Omega$  descends to $M_{orb}$ and, because the resolution $\pi : M \to M_{orb}$ is crepant, $\Omega$ then extends to a holomorphic $(n,0)$-form on $M$, which  we also denote by $\Omega$.

Let $\kappa \in H^2(M,\mathbb{R})$ be an ACyl K\"ahler class, i.e. there exists an ACyl metric $g$ satisfying (\ref{ACyl Kahler class}) and with K\"ahler form $\omega \in \kappa$.
We need to find a different ACyl metric $g_0$ with K\"ahler form $\omega_0$ also contained in the given class $\kappa$, such that $X=\nabla^{g_0} f$ and  
\begin{align}\label{in existence theorem: final MA equation we desire}
	\left( \omega_0 + i \partial \partialb \varphi  \right) ^n =\alpha e^{-f-\frac{X}{2}(\varphi)} i^{n^2} \Omega\wedge \overline{\Omega},
\end{align}
for some $JX$-invariant functions $f,\varphi:M\to \mathbb{R}$ and some constant $\alpha \in \mathbb{R}$. According to Lemma \ref{lemma reducing to MA equation}, $\omega_0 + i \partial \partialb \varphi$ is then a gradient steady K\"ahler-Ricci soliton, as required.   To achieve this, we begin by modifying $\omega$ near infinity to improve the convergence rate and to ensure that it is asymptotic to a \textit{Ricci-flat} cylinder.

First, we improve the asymptotic behavior of $\omega$ by applying Proposition \ref{prop: construction of ACyl metric from a given Kahler class}, so that there exists an ACyl K\"ahler form $\omega_1 \in [\omega]$ which, if lifted to $\mathbb{C}^*\times D$, is of the form
\begin{align*}
	\omega_1=i \partial \partialb t^2+ \omega_D \;\; \text{ on } \;\; \{t\geq t_0  \}
\end{align*}
for some $t_0>0$. Here,  $\omega_D$ denotes the restriction of $\omega$ to the slice $\{1\}\times D$. 

In a second step, we modify $\omega_0$ so that it becomes Ricci-flat if restricted to $\{t\} \times D$ for $t\gg t_0$.
 Recall that by Yau's Theorem \cite{yau1978ricci}, there exists  $u_D: D\to \mathbb{R}$ such that $\omega_{RF}:= \omega_D + i \partial \partialb u_D>0$ and
 \begin{align}\label{in existence theorem: MA on D}
 	\left(\omega_{RF}  \right)^{n-1} = c i^{(n-1)^2} \Omega_D \wedge \overline{\Omega}_D.
 \end{align}
Moreover, the uniqueness of solutions to (\ref{in existence theorem: MA on D}) implies that $u_D$ is $\gamma$-invariant, because $\gamma$ preserves both $\omega_D$  and $\Omega_D \wedge \overline{\Omega}_D$.

Choosing a cut-off function $\chi$ with
\begin{align*}
	\chi(t) = \begin{cases}
		1 & \text{if } \; t\geq t_0+2\\
		0& \text{if }\; t\leq t_0+1,
	\end{cases}
\end{align*}
we then define a $\Gamma$-invariant $(1,1)$-form by
\begin{align*}
	\omega_0:= \omega_1 + i \partial \partialb \left( \chi\cdot u_D \right) + \rho dt\wedge d^ct,
\end{align*}
where $\rho$ is a bump-function supported in a small neighborhood of $[t_0+1,t_0+2]$. By the same reasoning as in the proof of Proposition \ref{prop: construction of ACyl metric from a given Kahler class}, $\omega_0$ is positive if $\rho$ is sufficiently large and thus, $\omega_0$ defines a K\"ahler metric on $M$ in the class $\kappa=[\omega]$. Note that by construction we have
\begin{align} \label{in existence theorem: omega0 equals Ricci flat cylinder}
	\omega_0 = i \partial \partialb t^2 + \omega_{RF} 
\end{align}
on the region $\{ t\geq t_0 +3\}$. 

The next step is to further modify $\omega_0$ so that it satisfies the requirements of Theorem \ref{analytic existence theorem}. 
Note that after averaging $\omega_0$ over the compact and connected group $\mathbb{S}^1$ we can assume that $\omega_0$ is invariant under the $\mathbb{S}^1$-action  because averaging  neither affects the cohomology class, nor the positivity of $\omega_0$. 
Hence, $JX$ is a Killing field for the corresponding K\"ahler metric $g_0$ and by Proposition \ref{proposition when acyl metric is hamiltonian}, there exists a function $f$ such that
\begin{align*}
	X= \nabla^{g_0} f \;\; \text{ and } \;\; f-2t \in C^{\infty}_{\delta}(M),
\end{align*}
for each $\delta >0$. In fact, we conclude from (\ref{in existence theorem: omega0 equals Ricci flat cylinder}) that 
\begin{align}\label{in existence theorem: }
	f=2t \;\; \text{ on } \;\; \{ t\geq t_0+3  \}.
\end{align}
In particular, we notice that $(M,g_0 )$ satisfies the assumptions of Theorem \ref{analytic existence theorem}. 

Let us define a $JX$-invariant function $F:M \to \mathbb{R}$ by
\begin{align*}
		F:= \log \frac{\alpha i^{n^2}\,\Omega\wedge\overline{\Omega}}{\omega_0^n} - f
\end{align*}
for some constant $\alpha$ to be fixed later. For an appropriate choice of $\alpha$, we claim that $F$ has compact support. 
To see this, first observe from (\ref{in existence theorem: MA on D}) and (\ref{in existence theorem: omega0 equals Ricci flat cylinder}) that the cylindrical volume form of $\omega_{cyl}$ can be computed as
\begin{align*}
\omega_{cyl} ^n = \frac{c\,n}{2} |z|^{-2} i^{n^2} dz \wedge \Omega_D \wedge d\bar{z} \wedge\overline\Omega_{D},
\end{align*}
so we set $\alpha:= cn/2$, and obtain
\begin{align*}
	F&= \log \frac{\alpha i ^{n^2 }\Omega \wedge \overline{\Omega}}{\omega_{cyl} ^n } + \log \frac{\omega_{cyl}^n}{\omega_0^n} -f \\
	&= 2t-f \\
	&=0,
\end{align*}
if $t\geq t_0+3$. Thus, $F$ is compactly supported.

If we fix \textit{some} $0<\varepsilon<2$,  
Theorem \ref{analytic existence theorem} yields a $JX$-invariant $\varphi\in C^{\infty}_{\varepsilon}(M)$ such that
\begin{align}\label{in geometric existence proof: MA equation with F}
	\left( \omega_0 + i \partial \partialb \varphi \right)^n = e^{F-\frac{X}{2}(\varphi)} \omega_0 ^n = \frac{cn}{2} e^{-f-\frac{X}{2}(\varphi)} i^{n^2}\Omega\wedge \overline{\Omega} ,
\end{align}
which is precisely (\ref{in existence theorem: final MA equation we desire}), so that  $\omega_0 + i \partial \partialb \varphi$  defines a gradient steady K\"ahler-Ricci soliton. The underlying K\"ahler metric is clearly ACyl  of rate $\varepsilon$. 

However, since $F\in C^{\infty}_\varepsilon (M)$ for \textit{all} $0<\varepsilon<2$ and since solutions   to  (\ref{in geometric existence proof: MA equation with F}) contained in $C^\infty_\varepsilon(M)$ are \textit{unique}, we may conclude that indeed $\varphi\in C^{\infty}_\varepsilon(M)$ for all $0<\varepsilon<2$, finishing the proof.

\subsection{Examples} \label{subsection examples}

We begin by providing further examples in complex dimension two. 
The   manifolds $M_k$ considered below are  defined as in \cite{biquard2011kummer}[Section 2.2], and their construction is similar to Example \ref{example: a first example}.

\begin{example}
	For $k=2,3,4,6$ we consider the maps $\gamma_k : \mathbb{C}^2 \to \mathbb{C}^2$ given by
	\begin{align*}
		\gamma_k(z_1,z_2) := 
		\left( e^{\frac{2\pi i}{k}  } z_1, e^{-\frac{2\pi i}{k}}  z_2  \right)
	\end{align*}
	If we let $\mathbb{T}$ be the (real) 2-torus, then $\gamma_k$ descends to $\mathbb{C} \times \mathbb{T}$, provided the lattice in $\mathbb{C}$ is chosen appropriately: For $k=2,4$, let $\mathbb{T}$ be obtained from the square-lattice, and for $k=3,6$ use the hexagonal one  instead. 
	
	In any case, we may define orbifolds $M_{orb}^k:= \left( \mathbb{C} \times \mathbb{T} \right) / \langle \gamma_k\rangle $ with isolated singular points which are locally modelled on a neighborhood of the origin in $\mathbb{C}^2/ \mathbb{Z}_j$, with $\mathbb{Z}_j$-action induced by the map
	\begin{align*}
		(z_1,z_2) \mapsto (e^{\frac{2\pi i }{j}}  z_1, e^{-\frac{2\pi i }{j}} z_2)
	\end{align*}
	for $j\in \{2,3,4,6\}$. More precisely,
	\begin{itemize}
		\item If $k=2$, $M_{orb}^2$ has four singularities, all isomorphic to $\mathbb{C}^2/\mathbb{Z}_2$. 
		
		\item If $k=4$,  the corresponding orbifold $M^4_{orb}$ has one $\mathbb{C}^2/\mathbb{Z}_2$ and two $\mathbb{C}^2/\mathbb{Z}_4$  singularities.
		
		\item If $k=3$, there are three singular points in $M_{orb}^3$ and all are isomorphic to $\mathbb{C}^2/\mathbb{Z}_3$. 
		
		\item If $k=6$, $M^6_{orb}$ also has three singularities: one $\mathbb{C}^2/\mathbb{Z}_2$, one  $\mathbb{C}^2/\mathbb{Z}_3$ and  one $\mathbb{C}^2/\mathbb{Z}_6$ singularity.
	\end{itemize}

In each case, condition (\ref{in geometric existence thm: gamma acts on omega D}) is fulfilled and the blow-up of all singularities results in a complex manifold denoted by $M_k$.
The corresponding resolution is indeed crepant since all singularities are isolated points and because blowing-up the origin in the  local models $\mathbb{C}^2 / \mathbb{Z}_j$ yields in fact a crepant resolution.
 Similar to the reasoning in Example \ref{example: a first example} and Remark \ref{remark on first example}, one can show that $M_k$ satisfies the requirements of both Theorem \ref{geometric existence theorem} and Corollary \ref{corollary: H1=0 implies every kahler class admits a nice ACyl metric}. 
 
Thus, there is a  steady K\"ahler-Ricci soliton in \textit{each} K\"ahler class of $M_k$. 
	Interestingly, these manifolds also admit ALG gravitational instantons by \cite{biquard2011kummer}[Theorem 2.3], for instance.  
\end{example}

For finding examples of complex dimension 3, we may take $D$ to be a  product $\mathbb{T}\times \mathbb{T}$, but then we consider a different resolution, as the next example shows. 

\begin{example}
Let $\mathbb T$ be constructed from the hexagonal lattice in $\mathbb{C}$. By setting $D:= \mathbb{T} \times \mathbb{T}$ we define $\gamma: \mathbb{C} \times D\to \mathbb{C} \times D$ by
\begin{align*}
	\gamma(z_1,z_2,z_3)= e^{\frac{2\pi i }{3}} (z_1,z_2,z_3)
\end{align*}
and note that $\gamma^* (dz_2\wedge dz_3)= e^{-\frac{2\pi i }{3}} dz_2\wedge dz_3$, i.e. (\ref{in geometric existence thm: gamma acts on omega D}) is satisfied. Each of the $3^2=9$ singularities of $\left(\mathbb{C} \times D \right) / \mathbb{Z}_3$ is modelled on $\mathbb{C}^3 / \mathbb{Z}_3$, and so we may consider the blow-up $M$ of all singular points. 

As before, this resolution is crepant and the $\mathbb{C}^*$-action on the first factor extends, because the same is true for the resolution 
\begin{align*}
	\mathcal{O}_{\mathbb{CP}^2} (-3) \to \mathbb{C}^3/  \mathbb{Z}_3.
\end{align*}
Moreover, the only closed, $\gamma$-invariant 1-forms on $D$ are clearly exact, so that again \textit{each}  K\"ahler class admits a steady K\"ahler-Ricci soliton. 

\end{example}

We conclude this section by discussing another class of examples with $D$ a K3-surface and $\gamma$ an antisymplectic involution. 
Explicit examples of such K3-surfaces can for instance be obtain form the Kummer's construction.

\begin{example}
	Let $D$ be a K3-surface together with  a trivialisation $\Omega_D$ of the canonical bundle. Suppose that $\gamma_D$ is a holomorphic involution on $D$ such that
	\begin{align*}
		\gamma_D^ *\Omega_D  =- \Omega_D.
	\end{align*}
Also assume that the fixed point set $\operatorname{Fix}(\gamma_D)$ is non-empty. This implies that $\operatorname{Fix}(\gamma_D)$ is the disjoint union of smooth, complex curves. (In fact, there is a classification for all possibilities of $\operatorname{Fix}(\gamma_D)$, compare \cite{nikulin1983factor}.)

At any $p\in \operatorname{Fix}(\gamma_D)$, we may linearise  $\gamma_D$ so that its action in a suitable chart is given by  
\begin{align}\label{in example K3: Z2 action}
	\begin{split}
	\mathbb{C}^2&\to \mathbb{C}^2 \\
	(z_1,z_2) &\to (-z_1,z_2)
	\end{split}
\end{align}
In particular, the singular set of the orbifold $D/\langle \gamma_D \rangle$ locally corresponds to  $\{ z_1=0 \} $ inside $ \mathbb{C}^2 / \mathbb{Z}_2$, with   $\mathbb{Z}_2$-action defined by (\ref{in example K3: Z2 action}).

As in Theorem \ref{geometric existence theorem}, we let $\gamma: \mathbb{C} \times D \to \mathbb{C} \times D$ be 
\begin{align*}
	\gamma(z_0,z):= (-z_0,\gamma_D (z)).
\end{align*}
Then the singularities of $M_{orb}=(\mathbb{C}\times D)/ \langle \gamma \rangle $ are locally isomorphic to $\mathbb{C}^3/ \mathbb{Z}_2 \cong \mathbb{C}^2 / \mathbb{Z}_2 \times \mathbb{C}$, where $\mathbb{Z}_2$ acts by $-1$ in the first two factors, and trivially in the third one. 
This orbifold, however, admits a \textit{unique} crepant resolution
\begin{align}\label{in example K3: local resolution}
		\mathcal{O}_{\mathbb{CP}^1} (-2) \times \mathbb{C} \to \mathbb{C}^2/ \mathbb{Z}_2   \times \mathbb{C},
\end{align}
so that the local resolutions may be patched together to yield a crepant resolution $M \to M_{orb}$. 
Moreover, the $\mathbb{C}^*$-action by multiplication in the first factor extends to $M$, because this is clearly true for the local model (\ref{in example K3: local resolution}).

Since $H^1(D,\mathbb{R})=0$, we deduce that each K\"ahler class on $M$  admits a steady K\"ahler-Ricci soliton, thanks to Theorem \ref{geometric existence theorem} and Corollary \ref{corollary: H1=0 implies every kahler class admits a nice ACyl metric}.

 \end{example}

	\section{The Monge-Amp\`ere equation} \label{section monge ampere equation}

In this section, we present the proof of Theorem \ref{analytic existence theorem}. We consider a more general setting as in Theorem \ref{geometric existence theorem} in order to clarify  which assumptions are used for the a priori estimates below. The following list of properties is assumed throughout this section:

\begin{assumption} \label{section MA: assumptions on ACyl manifold}
	Let $(M,g)$ be an ACyl manifold of (real) dimension $2n$ in the sense of Definition \ref{definition ACyl manifold}.
	\begin{enumerate}[label=\textbf{A.\arabic*},ref=A.\arabic*]
	
		\item \label{section MA: Acyl metric shall be kahler} Suppose there exists a complex structure $J$ on $M$, so that $(M,g,J)$ is K\"ahler and denote the K\"ahler form by $\omega$.  
		
		\item \label{section MA: analytic + cpt support-- definition of X} There exists a real holomorphic vector field $X$ on $M$ such that 
		\begin{align*} 
		 X = 2 \Phi_* \frac{\partial}{\partial t},
		\end{align*}
		where $\Phi$ denotes the ACyl map  and $t$ the cylindrical coordinate function of $(M,g)$.
		
		\item \label{section MA: asympotitc behaviour of hamiltonian function} $JX$ is a Killing field of $g$. In particular, $\mathcal{L}_{JX} \omega =0$ and according to Proposition \ref{proposition when acyl metric is hamiltonian}, there exists a smooth $\tilde f:M\to \mathbb{R}$ such that $X= \nabla^g \tilde f$ and 
		\begin{align*}
		\tilde f- 2t \in C^{\infty}_\delta (M),
		\end{align*}
	where $\delta>0$ is the convergence rate of $(M,g)$ to its asymptotic model.
		We normalise the proper function $\tilde f$ by choosing a $c>0$ such that $f:=\tilde f +c \geq 1$ so that we still have $X= \nabla ^g f$. 
	\end{enumerate}
\end{assumption}

The reader may recall that the ACyl metric constructed in Section \ref{subsection: proof of geometric existence theorem} satisfies all of these requirements. 

We define new function spaces $C^{\infty}_{\varepsilon,JX}(M)$ consisting of all elements in $C^{\infty}_{\varepsilon}(M)$ which are $JX$-invariant, i.e.
\begin{align*}
	C^{\infty}_{\varepsilon,\,JX}(M) :=\left\{  u \in C^{\infty}_{\varepsilon}(M) \, | \, JX(u)=0   \right\}.
\end{align*}

Using this notation, the main result of this section is the next
\begin{theorem} \label{analytic existence theorem for compactly supported RHS}
	Let $(M,g)$ be an ACyl manifold of real dimension $2n$ satisfying the assumptions \ref{section MA: Acyl metric shall be kahler}, \ref{section MA: analytic + cpt support-- definition of X} and \ref{section MA: asympotitc behaviour of hamiltonian function}. Given $F\in C^{\infty}_{\varepsilon,\,JX}(M)$ for some $1<\varepsilon<2$, there exists a unique $\varphi \in C^{\infty}_{\varepsilon,\,JX}(M)$ such that $\omega + i \partial \partialb \varphi$ is K\"ahler and satisfies
	\begin{align}\label{section MA: analytic + epsilon RHS -- the MA equation}
	\left(   \omega+ i \partial \partialb   \varphi  \right) ^n = e^{F-\frac{X}{2}(\varphi)} \omega^n .
	\end{align} 
\end{theorem}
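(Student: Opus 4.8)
The plan is to solve \eqref{section MA: analytic + epsilon RHS -- the MA equation} by a continuity argument, following the scheme of \cite{conlon2020steady}[Section 7]. For $s\in[0,1]$ set $F_s:=sF$ and consider the family of equations
\begin{align*}
\left(\omega+i\partial\partialb\varphi_s\right)^n = e^{F_s-\frac{X}{2}(\varphi_s)}\,\omega^n,\qquad \omega+i\partial\partialb\varphi_s>0,
\end{align*}
seeking $\varphi_s\in C^\infty_{\varepsilon,JX}(M)$. Let $S\subseteq[0,1]$ be the set of $s$ for which a solution exists. Clearly $0\in S$ (take $\varphi_0\equiv0$). I would prove $S$ is open and closed, whence $S=[0,1]$ and $s=1$ gives the theorem; uniqueness will follow from the maximum principle once the a priori estimates are in place.

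\textbf{Openness.} Fix a solution $\varphi_{s_0}$ and linearise. Writing $\omega_{s_0}=\omega+i\partial\partialb\varphi_{s_0}$, the linearisation of the operator $\psi\mapsto \log\frac{(\omega+i\partial\partialb(\varphi_{s_0}+\psi))^n}{\omega^n}+\frac{X}{2}(\varphi_{s_0}+\psi)$ at $\psi=0$ is $\psi\mapsto \Delta_{\omega_{s_0}}\psi+\frac12 X(\psi)=\tfrac12\Delta_{h}\psi$, where $\Delta_h$ is the drift Laplacian associated to $\omega_{s_0}$ and the potential $h=f+\tfrac{X}{2}(\varphi_{s_0})$ (using $X=\nabla^{g_{s_0}}h$, which holds by Lemma~\ref{lemma reducing to MA equation} applied in the $JX$-invariant setting, together with Assumption~\ref{section MA: asympotitc behaviour of hamiltonian function}). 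Since $\varphi_{s_0}\in C^\infty_\varepsilon(M)$ with $\varepsilon>0$, $g_{s_0}$ is again ACyl with the same asymptotic cylinder and $h-2t\in C^\infty_{\delta_0}(M)$ for a suitable $\delta_0>0$, so $\Delta_h$ is an ACyl drift Laplacian in the sense of Definition~\ref{definition ACyl drift Laplacian}. Theorem~\ref{theorem ACyl drift laplace is iso} then gives that $\Delta_h\colon C^{k+2,\alpha}_\varepsilon(M)\to C^{k,\alpha}_\varepsilon(M)$ is an isomorphism for $1<\varepsilon<2$; restricting to the $JX$-invariant subspaces (which is legitimate since $JX$ is Killing for $g_{s_0}$, so it commutes with $\Delta_h$) and invoking the implicit function theorem in the Banach spaces $C^{k+2,\alpha}_{\varepsilon,JX}$, $C^{k,\alpha}_{\varepsilon,JX}$ yields solvability for $s$ near $s_0$. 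Elliptic regularity (Theorem~\ref{theorem: ACyl schauder estimates}) upgrades the solution to $C^\infty_{\varepsilon,JX}(M)$.

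\textbf{Closedness and a priori estimates.} This is the heart of the matter. One needs uniform $C^{k,\alpha}_\varepsilon$-bounds on $\varphi_s$ independent of $s\in S$. The higher-order estimates are standard once the $C^0$-bound is known: the second-order (Laplacian) estimate follows from the usual Aubin--Yau computation adapted to the cylindrical end (the drift term $\frac{X}{2}(\varphi)$ is controlled because $X$ has bounded norm and, outside a compact set, $X=2\partial_t$), and the $C^{2,\alpha}$ and higher estimates follow from Evans--Krylov and Schauder theory together with Theorem~\ref{theorem: ACyl schauder estimates} to recover the exponential decay in the weighted norm. \textbf{The main obstacle is the weighted $C^0$-estimate}, i.e. showing $\|\varphi_s\|_{C^0_\varepsilon}\le C$ uniformly. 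Here I would adapt the argument of \cite{conlon2020steady}[Section 7.1]: test the equation against a suitable power of $\varphi_s$ weighted by the measure $\frac{e^{f}}{(f+c)^2}\,\dif V_g$, use the Poincar\'e inequality of Proposition~\ref{poincare inequality} to obtain an $L^2$-type bound, and then run a Moser iteration (or De~Giorgi--Nash--Moser scheme) with respect to the drift operator to pass from $L^2$ to $L^\infty$. The new point compared to \cite{conlon2020steady}, where $F$ is first taken compactly supported, is that here $F\in C^\infty_\varepsilon(M)$ decays exponentially with $\varepsilon>1$; I expect this decay, combined with the weight $e^{f}\sim e^{2t}$ in the measure, to make the relevant integrals converge and to give the estimate \emph{directly}, bypassing the two-step procedure of \cite{conlon2020steady}. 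Care is needed to check that all integrations by parts are justified on the noncompact $M$ — this is where the exponential decay of $\varphi_s$, which we may assume a priori along the continuity path by the openness step and a bootstrap, and Lemma~\ref{lemma integration by parts} are used. Once $\|\varphi_s\|_{C^0_\varepsilon}$ is bounded, the full $C^\infty_{\varepsilon,JX}$-bound follows and $S$ is closed; uniqueness is immediate from the comparison principle applied to the difference of two solutions. \qed
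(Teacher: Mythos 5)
Your overall architecture (continuity method; openness via the ACyl drift Laplacian being an isomorphism on the $JX$-invariant weighted H\"older spaces; closedness via a priori estimates with the weighted $C^0$-bound as the crux) coincides with the paper's. The openness step as you describe it is essentially correct. The gap is in the $C^0$-estimate, which you correctly identify as the main obstacle but then dispatch with a mechanism that is not the one that works. Testing the equation against a power of $\varphi_s$ does not close for the steady soliton equation: the fully nonlinear term and the factor $e^{-\frac{X}{2}(\varphi_s)}$ prevent the standard Yau-type energy argument, and this is exactly why the paper (following Conlon--Deruelle) introduces the Tian--Zhu functionals $I_{\omega,X}$ and $J_{\omega,X}$ (Definition \ref{definition of I and J}). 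The weighted $L^2$-bound of Proposition \ref{weighted L^2 estimate} comes from computing $I_{\omega,X}-J_{\omega,X}$ along \emph{two different paths} (the linear path $\tau\varphi_s$ and the path $\varphi_{\tau s}$), using the path-independence of $J_{\omega,X}$ (Theorem \ref{theorem functional J is independent of path}), and then solving a Gr\"onwall-type differential inequality in $s$; none of this is captured by ``test against a power of $\varphi_s$''. Likewise, the passage from the weighted $L^2$-bound to the lower bound on $\inf_M\varphi_s$ is not done by Moser iteration. A De Giorgi--Nash--Moser scheme for the drift operator would require a weighted Sobolev inequality adapted to the measure $e^f f^{-2}\operatorname{dV}_g$ on a cylinder of linear volume growth, which you have not established and which is not in the paper; instead the paper uses B\l ocki's local $L^1\to L^\infty$ estimate for plurisubharmonic functions on a compact set (Proposition \ref{prop: lower bound for varphi on compact sets}) combined with a barrier and the Hopf maximum principle on the cylindrical end (Lemma \ref{lemma: construction of compactum} and Proposition \ref{prop: lower bound OUTSIDE a compact set}).

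A second, smaller gap: you assert that the drift term $\frac{X}{2}(\varphi_s)$ in the second-order estimate is controlled ``because $X$ has bounded norm''. Bounding $|X(\varphi_s)|$ by $|X|_g\,|\nabla^g\varphi_s|_g$ requires a gradient bound on $\varphi_s$ that is not available before the $C^2$-estimate, so this is circular. The paper obtains $\sup_M|X(\varphi_s)|\le C$ independently (Corollary \ref{proposition: estimate on radial derivative}), and this rests on a uniform bound on $X(X(\varphi_s))$ proved by differentiating the Monge--Amp\`ere equation twice along $X$ and applying the maximum principle (Lemma \ref{lemma: unform bound on X^2 of varphi}); these radial-derivative bounds also feed into the barrier constructions for the lower $C^0$-bound and into the volume-form bound $\omega_{\varphi_s}^n/\omega^n=e^{sF-\frac{X}{2}(\varphi_s)}$ needed for the trace estimate. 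Without these auxiliary estimates the closedness step does not go through as sketched.
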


This theorem is analogue to \cite{conlon2020steady}[Theorem 7.1], and we also follow the same strategy as in \cite{conlon2020steady}[Section 7] to prove it, i.e.
  we set up  a continuity method.
 
  For given $k\in \mathbb{N}_0$, $\alpha \in (0,1)$ and $F\in C^{\infty}_{\varepsilon,\,JX}(M)$ with $1<\varepsilon<2$, we define the Monge-Amp\`ere operator on the set $\mathcal{U}$  containing all $\varphi \in  C^{k+2,\alpha}_{\varepsilon, \, JX}(M)$ with $\omega+ i \partial \partialb \varphi >0$ as follows:
\begin{align}
\label{monge ampere operator for continuity method}
	\begin{split}
	\mathcal{M} : \mathcal{U}  \times [0,1] &\to C^{k,\alpha}_{\varepsilon, \, JX}(M) \\
	(\varphi,s) &\mapsto \log \frac{(\omega+ i \partial \partialb\varphi)^n}{\omega^n} + \frac{X}{2} (\varphi) -s F
	\end{split}
\end{align}
It is worth mentioning that the function $\mathcal{M}(\varphi,s)$ is indeed $JX$-invariant since $ F$ is assumed to be invariant under $JX$, and also $\mathcal{L}_{JX} \omega =0$ by \ref{section MA: asympotitc behaviour of hamiltonian function}. 
Before applying the implicit function theorem, we need to compute the linearization  of $\mathcal{M}$, i.e. the derivative at the point $(\varphi,s)$ in direction of $(u,0)$:
\begin{align} \label{monge ampere operator derivative}
D \mathcal{M} _{(\varphi,s)} (u,0) = \frac{1}{2} \Delta_{g_{\varphi}} (u) + \frac{X}{2}(u).
\end{align}
Here $\Delta_{g_{\varphi}}$ denotes the Riemannian Laplace operator of the metric $g_{\varphi}$ associated to the K\"ahler form $\omega + i \partial \partialb \varphi$.

As in \cite{conlon2020steady}, the first step is to show that the
  linearized operator is an isomorphism $C^{k+2,\alpha}_{\varepsilon, \, JX}(M) \to C^{k,\alpha}_{\varepsilon, \, JX}(M)$, which is covered in the next

\begin{prop}\label{linearization is an isomorphism}
 Let $(M,g)$ be an ACyl manifold of real dimension $2n$ satisfying the assumptions \ref{section MA: Acyl metric shall be kahler}, \ref{section MA: analytic + cpt support-- definition of X} and \ref{section MA: asympotitc behaviour of hamiltonian function}. Given $k\in \mathbb{N}_0$, $\alpha\in (0,1)$ and $0<\varepsilon<2$, the operator 
 \begin{align*}
 	\Delta_g + X : C^{k+2,\alpha}_{\varepsilon, \, JX}(M) \to C^{k,\alpha}_{\varepsilon, \, JX}(M)
 \end{align*}
 is an isomorphism. 
\end{prop}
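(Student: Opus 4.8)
The plan is to recognise $\Delta_g+X$ as an ACyl drift Laplacian and then to check that the isomorphism provided by Theorem~\ref{theorem ACyl drift laplace is iso} restricts to the $JX$-invariant subspaces. For a function $u$ one has $X(u)=du(X)=g(\nabla^g u,X)$, and by Assumption~\ref{section MA: asympotitc behaviour of hamiltonian function} we have $X=\nabla^g\tilde f$ with $\tilde f-2t\in C^{\infty}_\delta(M)$, so
\[
(\Delta_g+X)u=\Delta_g u+g(\nabla^g\tilde f,\nabla^g u)=\Delta_{\tilde f}u,
\]
which is an ACyl drift Laplacian in the sense of Definition~\ref{definition ACyl drift Laplacian} with potential function $\tilde f$. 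Hence Theorem~\ref{theorem ACyl drift laplace is iso} already gives that $\Delta_{\tilde f}\colon C^{k+2,\alpha}_\varepsilon(M)\to C^{k,\alpha}_\varepsilon(M)$ is an isomorphism for every $0<\varepsilon<2$, and it remains only to show that it restricts to an isomorphism $C^{k+2,\alpha}_{\varepsilon,\,JX}(M)\to C^{k,\alpha}_{\varepsilon,\,JX}(M)$.

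The key observation is that $\Delta_{\tilde f}$ commutes with the derivation $JX(\cdot)$ on functions. Indeed, $JX$ is Killing for $g$ by Assumption~\ref{section MA: asympotitc behaviour of hamiltonian function}, so its flow acts by isometries and $\Delta_g(JX(u))=JX(\Delta_g u)$; and $X$ is real holomorphic, i.e.\ $\mathcal{L}_X J=0$, whence $[X,JX]=\mathcal{L}_X(JX)=J[X,X]=0$ and therefore $X(JX(u))=JX(X(u))$. Consequently, if $u\in C^{k+2,\alpha}_{\varepsilon,\,JX}(M)$ then $JX(\Delta_{\tilde f}u)=\Delta_{\tilde f}(JX(u))=0$, so $\Delta_{\tilde f}u\in C^{k,\alpha}_{\varepsilon,\,JX}(M)$: the operator is well defined between the invariant subspaces, and it is injective there simply because it is injective on the larger space (Theorem~\ref{theorem ACyl drift laplace is iso}).

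For surjectivity onto $C^{k,\alpha}_{\varepsilon,\,JX}(M)$ I would proceed as follows. Given such an $h$, let $u\in C^{k+2,\alpha}_\varepsilon(M)$ be the unique solution of $\Delta_{\tilde f}u=h$ from Theorem~\ref{theorem ACyl drift laplace is iso}; it then suffices to prove $JX(u)\equiv 0$. Since $JX$ is a smooth vector field whose covariant derivatives are bounded to all orders (it equals $2\Phi_*\frac{\partial}{\partial t}$ outside a compact set by Assumption~\ref{section MA: analytic + cpt support-- definition of X}), the function $v:=JX(u)$ lies in $C^{k+1,\alpha}_\varepsilon(M)\subseteq C^0_\varepsilon(M)$, and by the commutation just established $\Delta_{\tilde f}v=JX(\Delta_{\tilde f}u)=JX(h)=0$. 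Being $\Delta_{\tilde f}$-harmonic and tending to zero at infinity, $v$ vanishes identically by the maximum principle, exactly as in the proof of Theorem~\ref{theorem ACyl drift laplace is iso} (here one uses $\varepsilon>0$). Thus $u$ is $JX$-invariant, which completes the proof. The only point that is not purely formal is this last step — that the $JX$-invariance of the right-hand side propagates automatically to the solution — and it is settled precisely by combining the commutator identities with the uniqueness already built into Theorem~\ref{theorem ACyl drift laplace is iso}.
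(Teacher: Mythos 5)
Your proof is correct and follows essentially the same route as the paper: identify $\Delta_g+X$ with the ACyl drift Laplacian $\Delta_{\tilde f}$, invoke Theorem~\ref{theorem ACyl drift laplace is iso} on the full weighted spaces, and then propagate $JX$-invariance of the data to the solution via the commutator identities $[X,JX]=0$ and $JX(\Delta_g u)=\Delta_g(JX(u))$ together with the maximum principle applied to the decaying function $JX(u)$. The only (harmless) slip is the parenthetical claim that $JX$ equals $2\Phi_*\tfrac{\partial}{\partial t}$ near infinity — that is $X$, not $JX$ — but since $J$ is parallel the boundedness of $JX$ and its covariant derivatives still follows.
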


Here, our arguments differ from those in \cite{conlon2020steady}[Theorem 6.6], because the metrics we consider have a different asymptotic behavior.  Instead, we reduce the proof to Theorem \ref{theorem ACyl drift laplace is iso}.
\begin{proof}
	First, we observe by Assumption \ref{section MA: asympotitc behaviour of hamiltonian function}, that $\Delta_g +X$ is an ACyl drift operator in the sense of Definition \ref{definition ACyl drift Laplacian}. Thus, according to Theorem \ref{theorem ACyl drift laplace is iso}, the map 
	\begin{align*}
		\Delta_g +X : C^{k+2,\alpha}_{\varepsilon}(M) \to C^{k,\alpha}_{\varepsilon}(M)
	\end{align*}
	is an isomorphism for $k\in \mathbb{N}_0, \alpha\in (0,1)$ and $0<\varepsilon<2$. Consequently, it only remains to show that $u \in C^{k+2,\alpha}_{\varepsilon}(M)$ is $JX$-invariant, provided $(\Delta_g +X) (u)$ is invariant under $JX$. 
	To see this, we use that $X$ is real holomorphic and obtain
	\begin{align*}
	[X,JX]=J [X,X]=0,
	\end{align*}
	so that $JX(X(u))=X(JX(u))$. Moreover, we have $JX(\Delta_g u )= \Delta_g (JX(u))$ which follows directly from the relation 
	\begin{align*}
		\frac{1}{2} \Delta_{g} u \, \omega^n = n\, i \partial \partialb u \wedge \omega^{n-1}
	\end{align*}
	by applying $\mathcal{L}_{JX} \omega=0$. Hence, we conclude that if $(\Delta_g +X)(u)$ is $JX$-invariant for some $u \in C^{k+2,\alpha}_\varepsilon (M)$, then
	\begin{align*}
		0=JX( (\Delta_g +X)(u) ) = (\Delta_g +X)(JX(u)).
	\end{align*}
	As $|X|_g $ is bounded,  $JX(u) $ tends to $0$ as $t\to \infty$, and the maximum principle yields $JX(u)=0$, as desired. 
\end{proof}

\begin{remark}[on the decay rate  $\varepsilon$]\label{remark on rate of convergence}
	The reader may notice that Proposition \ref{linearization is an isomorphism} holds for all $0<\varepsilon<2$, whereas Theorem \ref{analytic existence theorem for compactly supported RHS} only includes the case $F\in C^{\infty}_{\varepsilon, \, JX}$ with $1<\varepsilon<2$. This is because Conlon and Deruelle's approach to the uniform $C^0$-estimate requires the convergence of certain weighted  functionals, compare Definition \ref{definition of I and J} below. 
	
	However, it seems plausible to use Theorem \ref{analytic existence theorem for compactly supported RHS} together with  ideas contained in \cite{conlon2020steady}[Section 9] to cover the case $0<\varepsilon\leq 1$ as well, but we do not pursue this further in this article.
\end{remark}

We also obtain the following regularity statement for the Monge-Amp\`ere operator.

\begin{prop}[Regularity] \label{proposition regularity}
Let $(M,g), F \in C^{\infty}_{\varepsilon, JX}(M)$ and $1<\varepsilon<2$ be as in Theorem \ref{analytic existence theorem for compactly supported RHS}. Suppose that  $\varphi \in C^{3,\alpha}_{\varepsilon', \, JX}(M)$  for some $0<\varepsilon'\leq \varepsilon$ satisfies 
\begin{align*}
(\omega+ i \partial \partialb \varphi)^n = e^{F- \frac{X}{2}(\varphi)} \omega^n.
\end{align*}
Then $\varphi \in C^{\infty}_{\varepsilon,\, JX}(M)$. 
\end{prop}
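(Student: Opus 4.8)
The plan is a two-stage bootstrap: first I would upgrade $\varphi$ from $C^{3,\alpha}_{\varepsilon',\,JX}(M)$ to $C^{\infty}_{\varepsilon',\,JX}(M)$, and then improve the decay rate from $\varepsilon'$ up to $\varepsilon$. At every stage $JX$-invariance is automatic: it holds initially, and each differential operator that appears commutes with $\mathcal{L}_{JX}$ (using $\mathcal{L}_{JX}\omega=0$ and $[X,JX]=0$, exactly as in the proof of Proposition \ref{linearization is an isomorphism}); so below I only track the H\"older exponent and the weight and suppress the subscript $JX$.

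\emph{Stage 1 (smoothness).} Write the equation in local holomorphic coordinates as $\log\det(g_{i\bar j}+\varphi_{i\bar j})-\log\det(g_{i\bar j})+\tfrac{X}{2}(\varphi)=F$ and differentiate it by a coordinate vector field $\partial_e$. Then $u:=\partial_e\varphi$ solves a linear equation $Lu=h$, where $L:=g_\varphi^{i\bar j}\,\partial_i\partial_{\bar j}+\tfrac{X}{2}$, with $g_\varphi:=\omega+i\partial\partialb\varphi>0$, is elliptic with coefficients of the same H\"older regularity as $g_\varphi$, and $h$ gathers $\partial_e F$ together with terms built from $g$, $X$ and \emph{first} derivatives of $\varphi$; crucially $h$ is one H\"older derivative better than $u$ itself, and — because $g^{i\bar j}-g_\varphi^{i\bar j}$ is a bounded smooth multiple of $i\partial\partialb\varphi$, hence decaying, and $X-2\Phi_*\partial/\partial t$ is compactly supported — it lies in $C^{1,\alpha}_{\varepsilon'}$. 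Moreover $L$ is asymptotically translation-invariant: $\varphi$ decays, so $g_\varphi\to g_{cyl}$, and outside a compact set $X=2\Phi_*\partial/\partial t$ is translation-invariant. Since $u\in C^{2,\alpha}_{\varepsilon'}\subset C^0_{\varepsilon'}$, the weighted Schauder estimate (Theorem \ref{theorem: ACyl schauder estimates}) gives $u\in C^{3,\alpha}_{\varepsilon'}$, i.e. $\varphi\in C^{4,\alpha}_{\varepsilon'}$; iterating (each round $g_\varphi$, the coefficients of $L$, and $h$ gain one derivative) yields $\varphi\in C^{\infty}_{\varepsilon'}(M)$.

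\emph{Stage 2 (improving the rate).} Expanding the logarithm of the Monge-Amp\`ere ratio gives
\[
\log\frac{(\omega+i\partial\partialb\varphi)^n}{\omega^n}=\tfrac12\Delta_g\varphi+Q,
\]
where $Q$ is a pointwise expression, smooth in the entries of $i\partial\partialb\varphi$ and vanishing there to second order; the expansion converges because $\varphi$ decays. Thus the equation reads $(\Delta_g+X)\varphi=2F-2Q$, and by Assumption \ref{section MA: asympotitc behaviour of hamiltonian function}, $\Delta_g+X=\Delta_f$ is an ACyl drift Laplacian. Suppose inductively that $\varphi\in C^{\infty}_\mu(M)$ with $0<\mu\le\varepsilon$ (Stage 1 gives $\mu=\varepsilon'$). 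Then $i\partial\partialb\varphi\in C^{\infty}_\mu$, so $Q\in C^{\infty}_{2\mu}$, hence $2F-2Q\in C^{\infty}_{\mu'}$ with $\mu':=\min(\varepsilon,2\mu)\in(0,2)$ and $\mu'\ge\mu$. By Theorem \ref{theorem ACyl drift laplace is iso} there is a unique $\psi\in C^{\infty}_{\mu'}(M)$ with $\Delta_f\psi=2F-2Q$; since $\varphi$ and $\psi$ both solve this equation in $C^{2,\alpha}_\mu(M)$ and $\Delta_f$ is injective there, $\varphi=\psi\in C^{\infty}_{\mu'}(M)$. After finitely many steps $2^N\varepsilon'\ge\varepsilon$, so the rate saturates at $\varepsilon$; together with the $JX$-invariance noted above this gives $\varphi\in C^{\infty}_{\varepsilon,\,JX}(M)$, as claimed.

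\emph{Main obstacle.} The delicate point is Stage 1: checking that the differentiated equation really is an elliptic, asymptotically translation-invariant linear equation for $u=\partial_e\varphi$ to which Theorem \ref{theorem: ACyl schauder estimates} applies, and that every term in its right-hand side decays at rate $\varepsilon'$ in the weighted norm — in particular the terms containing derivatives of the background metric $g$, which are only bounded but enter multiplied by $g^{i\bar j}-g_\varphi^{i\bar j}\propto i\partial\partialb\varphi$. Once this bookkeeping is in place, both iterations are entirely routine.
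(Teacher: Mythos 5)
Your overall architecture matches the paper's: bootstrap regularity with decay rate $\varepsilon'$ first, then upgrade the rate by feeding the equation $(\Delta_g+X)\varphi = 2F-2Q$ (the paper writes $Q$ explicitly as $-\int_0^1\int_0^\tau|\partial\bar\partial\varphi|^2_{g_{\tau\varphi}}\,d\sigma\,d\tau$, which is exactly your quadratic remainder) through the isomorphism of Theorem \ref{theorem ACyl drift laplace is iso} and the maximum principle, doubling the weight until it saturates at $\varepsilon$. Your Stage 2 is essentially identical to the paper's final step. Where you diverge is Stage 1: the paper never invokes the global weighted Schauder estimate of Theorem \ref{theorem: ACyl schauder estimates} for the differentiated equation. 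Instead it works in the uniformly controlled holomorphic charts of Theorem \ref{Theorem: uniform coordinates for (M,g)} and applies \emph{interior} Schauder estimates twice: once to $\partial_j\varphi$ to get $\varphi\in C^{k,\alpha}_{\operatorname{loc}}$ for all $k$ (pure regularity, no weights), and then to the linear equation $F=a^{\bar j i}\partial_i\partial_{\bar j}\varphi+b_j\partial_j\varphi$ with $a^{\bar j i}=\int_0^1 g^{\bar ji}_{\tau\varphi}\,d\tau$ on each ball $B_x$, where the known bound $\varphi=O(e^{-\varepsilon' t})$ on $\|\varphi\|_{C^0(B_x)}$ propagates the weight to all derivatives because the Schauder constants are uniform in $x$.

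The caveat in your Stage 1 is not just bookkeeping. Theorem \ref{theorem: ACyl schauder estimates} is stated for elliptic, asymptotically translation-invariant operators, and the paper's Definition \ref{definition translation operators} of that class presumes smooth coefficients whose derivatives of \emph{all} orders converge to translation-invariant ones. At the first round of your bootstrap the coefficients $g_\varphi^{i\bar j}$ of $L$ are only $C^{1,\alpha}$ (since $\varphi$ is a priori only $C^{3,\alpha}$), so the theorem as stated does not literally apply; you need the finite-regularity version of the weighted Schauder estimate, which is precisely what the paper manufactures by patching interior estimates over the uniform charts. Relatedly, $u=\partial_e\varphi$ for a coordinate field is only chart-defined, so a genuinely global application of Theorem \ref{theorem: ACyl schauder estimates} would require recasting the differentiated equation as an elliptic system for $d\varphi\in\Gamma(T^*M)$, with the attendant curvature terms. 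Both points are repairable — and your identification of the right-hand side's decay, including the term $(g^{i\bar j}-g_\varphi^{i\bar j})\partial_e g_{i\bar j}$, is correct — but as written Stage 1 cites a theorem whose hypotheses are not met at the first iteration; the local-chart route the paper takes is the standard way around this.
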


Note that this statement only gives \textit{qualitative} information about the function $\varphi$, i.e. it does \textit{not} provide uniform estimates for the $C^{\infty}_{\varepsilon}(M)$-norm of $\varphi$. The 
 crucial part of the continuity method, however, is precisely to obtain uniform  a priori bounds on  $||\varphi||_{C^{k,\alpha}_\varepsilon}$.  This is achieved in the next 

\begin{theorem}[A priori estimates]\label{theorem APRIORI estimates and regularity}
Let $(M,g)$, $F \in C^{\infty}_{\varepsilon, \, JX}(M)$ and $1<\varepsilon<2$ be as in Theorem \ref{analytic existence theorem for compactly supported RHS}. Suppose that $(\varphi_{ s})_{0\leq s\leq 1}$ is a family in
 $C^{\infty}_{\varepsilon, \, JX}(M)$ 
such that $\omega + i \partial \partialb \varphi_{ s}$ is  K\"ahler for each $s\in [0,1]$ and satisfies
	\begin{align}\label{one-parameter family of MA}
	\left(   \omega+ i \partial \partialb   \varphi_s  \right) ^n =  e^{s\cdot F-\frac{X}{2}(\varphi_s)} \omega^n.
\end{align}
Then, for given $k\in \mathbb{N}_0, \alpha \in (0,1)$, there exists a constant $C>0$ such that
\begin{align*}
	\sup _{s\in [0,1]} ||\varphi_s ||_{C^{k,\alpha}_\varepsilon } \leq C,
\end{align*}
where $C$ only depends on $k,\alpha, F$ and the geometry of $(M,g)$. 
\end{theorem}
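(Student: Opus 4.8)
The strategy is the standard hierarchy of a priori estimates for complex Monge--Ampère equations, adapted to the cylindrical, weighted setting following \cite{conlon2020steady}[Section 7.1], with the key modification that the $C^0$-estimate is obtained \emph{directly} for exponentially decaying $F$ rather than only for compactly supported $F$. Write $\omega_s := \omega + i\partial\partialb\varphi_s$ and $g_s$ for the associated metric. The estimates proceed in the order: $C^0 \Rightarrow$ second-order ($C^2$, i.e.\ a bound on $\trace_\omega\omega_s$) $\Rightarrow$ complex Evans--Krylov / Schauder for the full $C^{2,\alpha}$ bound $\Rightarrow$ bootstrap to all $C^{k,\alpha}_\varepsilon$ via the weighted Schauder estimate Theorem \ref{theorem: ACyl schauder estimates}. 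Throughout, $JX$-invariance of $\varphi_s$ is preserved, so all estimates take place in the $JX$-invariant subspaces.

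\textbf{Step 1: the $C^0$-estimate (the main obstacle).} First I would derive an $L^2$-type estimate by testing equation \eqref{one-parameter family of MA} against suitable powers of $\varphi_s$ with the weight $e^{f}/(f+c)^2\,\dif V_g$, exploiting that the drift operator $\Delta_g + X$ is self-adjoint with respect to $e^f\,\dif V_g$ (Assumption \ref{section MA: asympotitc behaviour of hamiltonian function}). The Poincaré inequality of Proposition \ref{poincare inequality} is precisely what is available for this weighted measure, and it is what allows one to close the integral estimate; this is where the restriction $1<\varepsilon<2$ enters, since one needs the relevant weighted functionals (the analogues of Conlon--Deruelle's $I$ and $J$ functionals alluded to in Remark \ref{remark on rate of convergence}) to converge, and $F \in C^\infty_\varepsilon$ with $\varepsilon>1$ combined with $f\sim 2t$ makes $Fe^f/(f+c)^2$ integrable. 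One then runs a Moser iteration (or the De~Giorgi--Nash--Moser scheme) against this weighted measure to upgrade the $L^2$ bound to an $L^\infty$ bound, giving $\sup_M |\varphi_s| \le C$ with $C$ independent of $s$. This is the heart of the argument and the step most sensitive to the cylindrical geometry: one must check that the constants in Moser iteration depend only on the bounded geometry of $(M,g)$ (uniformly, thanks to ACyl-ness) and that the exponential decay of $F$ is genuinely used so that one never needs compact support. The modification relative to \cite{conlon2020steady} is to carry the decay of $F$ through the iteration directly rather than approximating $F$ by compactly supported data and taking limits.

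\textbf{Step 2: second-order estimate.} With $\|\varphi_s\|_{C^0}$ under control, apply the Chern--Lu / Yau--Aubin inequality for $\log\trace_\omega\omega_s$: using $\omega^n_s = e^{sF - \frac{X}{2}(\varphi_s)}\omega^n$, one computes $\Delta_{g_s}\log\trace_\omega\omega_s \ge -C_1\trace_{g_s}\omega - C_2$ where $C_1$ bounds the bisectional curvature of $\omega$ and $C_2$ absorbs $\Delta_\omega(sF - \tfrac{X}{2}\varphi_s)$; the drift term $\tfrac{X}{2}(\varphi_s)$ is controlled because $X$ is bounded in $g$-norm and $\varphi_s$ is already $C^0$-bounded, while $\Delta_\omega F$ is bounded since $F\in C^\infty_\varepsilon$. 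Applying the maximum principle to $\log\trace_\omega\omega_s - A\varphi_s$ for suitable $A$ (here using again $\sup|\varphi_s|\le C$) yields $C^{-1}\omega \le \omega_s \le C\omega$ uniformly in $s$; on the ACyl manifold the maximum is attained because the quantity tends to a finite limit at infinity, as $\varphi_s$ and its derivatives decay.

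\textbf{Step 3: higher regularity and weighted bootstrap.} Once $\omega_s$ is uniformly equivalent to $\omega$, the equation becomes uniformly elliptic, so the complex Evans--Krylov estimate gives a uniform interior $C^{2,\alpha}$ bound (using the bounded geometry to make it a genuine global bound), and then differentiating the equation and applying Schauder iteratively gives uniform $C^{k,\alpha}$ bounds on every compact piece. To obtain the \emph{weighted} bound $\|\varphi_s\|_{C^{k,\alpha}_\varepsilon}\le C$, I would write the equation in the form $(\Delta_g + X)\varphi_s = h_s$ where $h_s$ collects $sF$, the error between $\log\tfrac{\omega_s^n}{\omega^n}$ and $\tfrac12\Delta_g\varphi_s$ (a lower-order, quadratically-small-in-derivatives term), and apply the weighted Schauder estimate of Theorem \ref{theorem: ACyl schauder estimates} for the asymptotically translation-invariant elliptic operator $\Delta_g + X$. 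Since $1<\varepsilon<2$ is non-critical (by the Claim in the proof of Theorem \ref{theorem ACyl drift laplace is iso}) and $h_s \in C^{k,\alpha}_\varepsilon$ with norm controlled by $F$ and the lower-order bounds already obtained, one gets $\|\varphi_s\|_{C^{k+2,\alpha}_\varepsilon} \le C(\|h_s\|_{C^{k,\alpha}_\varepsilon} + \|\varphi_s\|_{C^0_\varepsilon})$; a separate weighted $C^0_\varepsilon$ bound, deduced by comparing with barriers $e^{-\varepsilon t}$ and using the maximum principle for the drift operator on the end (where $f = 2t$), closes the loop. Feeding the resulting uniform bounds back, one obtains the claimed $C$ depending only on $k,\alpha,F$ and the geometry of $(M,g)$, completing the proof.
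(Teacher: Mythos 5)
Your overall architecture ($C^0\Rightarrow C^2\Rightarrow$ Evans--Krylov/Schauder $\Rightarrow$ weighted bootstrap) matches the paper's, and your Step 3 is essentially the route taken (the paper uses Calabi's third-order estimate in place of Evans--Krylov, and carries out your proposed weighted Schauder bootstrap by first establishing decay at some small rate $\varepsilon_0$ and then iterating $\varepsilon_0\to\min\{2\varepsilon_0,\varepsilon\}\to\cdots\to\varepsilon$, since the quadratic remainder $|\partial\partialb\varphi_s|^2$ initially only decays at rate $2\varepsilon_0$). However, there are two genuine gaps in Steps 1 and 2.

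First, you repeatedly treat the drift term $\tfrac{X}{2}(\varphi_s)$ as if it were controlled by $\sup_M|\varphi_s|$: in Step 2 you write that it ``is controlled because $X$ is bounded in $g$-norm and $\varphi_s$ is already $C^0$-bounded.'' This is false --- $X(\varphi_s)=d\varphi_s(X)$ is a first derivative, and a $C^0$ bound on $\varphi_s$ gives no pointwise control of it. This is exactly why the paper devotes Lemma \ref{lemma: unform bound on X^2 of varphi}, Lemma \ref{lemma: bound of X varphi from above on compact set} and Corollary \ref{proposition: estimate on radial derivative} to bounding $X(\varphi_s)$ and $X^2(\varphi_s)$ by separate maximum-principle arguments obtained by differentiating (\ref{one-parameter family of MA}) along the flow of $X$; these bounds are indispensable both for the lower $C^0$ bound outside a compact set (the barrier of Lemma \ref{lemma: construction of compactum} is built from $e^{-\varepsilon_0(f+\frac{X}{2}\varphi_s)}$) and for the trace estimate. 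Relatedly, in your Chern--Lu inequality the term $\Delta_\omega(\tfrac{X}{2}\varphi_s)$ is a third derivative of $\varphi_s$ and cannot be ``absorbed'' into a constant $C_2$; the paper instead applies the drift operator $\Delta_{g_{\varphi_s}}+X$ to $\log\operatorname{tr}_\omega\omega_{\varphi_s}$ so that this term cancels against $\tfrac{X}{2}(\operatorname{tr}_\omega\omega_{\varphi_s})$, see (\ref{in proposition bound on metric: inequality for delta plus X over 2 applied to trace}).

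Second, your $L^2\to L^\infty$ step via Moser iteration is not justified at this stage: before the $C^2$ estimate the operator is not uniformly elliptic, and the weighted measure $e^f\operatorname{dV}_g\sim e^{2t}\operatorname{dV}_g$ has exponential volume growth, so the Sobolev inequality underlying Moser iteration is unavailable. The paper's route is different and worth internalising: the \emph{upper} bound comes from a pure barrier comparison with the solution of $(\Delta_g+X)u_F=2F$ (Proposition \ref{upper-C^0 estimate for varphi_s from above}); the weighted $L^2$ bound comes from the Tian--Zhu functionals $I_{\omega,X}$, $J_{\omega,X}$, path-independence of $J_{\omega,X}$, the Poincar\'e inequality and a Gr\"onwall argument (Proposition \ref{weighted L^2 estimate}); and the \emph{lower} bound is obtained by combining B\l ocki's local $L^2\to L^\infty$ estimate for plurisubharmonic functions on a fixed compact set (which needs only a local upper bound on $\omega_{\varphi_s}^n/\omega^n$, supplied by $\tfrac{X}{2}(\varphi_s)\ge -f$) with a barrier/maximum-principle argument on the cylindrical end. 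Your instinct that $\varepsilon>1$ is needed for convergence of the weighted functionals is correct, but without the $X(\varphi_s)$, $X^2(\varphi_s)$ bounds and the compact-set/end splitting, the $C^0$ lower bound does not close.
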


The strategy for proving Proposition \ref{proposition regularity} and Theorem \ref{theorem APRIORI estimates and regularity} is to follow, up to some minor adjustments, the arguments provided by Conlon and Deruelle (\cite{conlon2020steady}[Section 7]). In particular, we use  their idea to achieve the uniform $C^0$-bound, but we present a variation of their arguments which allows us to immediately assume $F\in C^{\infty}_{\varepsilon, \, JX}(M)$ with $1<\varepsilon<2$, instead of first considering functions $F$ with compact support as in \cite{conlon2020steady}[Theorem 7.1].

We postpone the proofs of both Proposition \ref{proposition regularity} and Theorem \ref{theorem APRIORI estimates and regularity} to subsequent sections and for now assume these results to conclude Theorem \ref{analytic existence theorem for compactly supported RHS}.

\begin{proof}[Proof of Theorem \ref{analytic existence theorem for compactly supported RHS}]
First, we point out that we only need to show the existence statement since the  uniqueness part is a direct consequence of the maximum principle, see \cite{Biquard17}[Proposition 1.2]. 

For the proof of existence, assume we are  given $F\in C^{\infty}_{\varepsilon, \, JX}(M)$, and  consider the set 
\begin{align*}
	S:= \{ s\in [0,1] \,|\, \text{there exists a } \varphi_{ s} \in C^{\infty}_{\varepsilon, \, JX}(M) \text{ satisfying } (\ref{one-parameter family of MA})  \}.
\end{align*}
Clearly, $0 \in S$ and so it is sufficient to show that $S$ is both open and closed.

The openness is a consequence of Proposition \ref{linearization is an isomorphism}. To see this,   let  $\mathcal{U}$ be the set   of all $\psi \in C^{3,\alpha}_{\varepsilon, \, JX}(M)$ such that $\omega + i \partial \partialb \psi>0$ and consider the Monge-Amp\`ere operator $\mathcal{M}$ defined by 
\begin{align*}
	\mathcal{M} : \mathcal{U}  \times [0,1] &\to C^{1,\alpha}_{\varepsilon, \, JX}(M) \\
	(\psi,s) &\mapsto \log \frac{(\omega+ i \partial \partialb\psi)^n}{\omega^n} + \frac{X}{2} (\psi) -s F
\end{align*}
Suppose we are given $s_0 \in S$, i.e. $\varphi_{s_0} \in C^{\infty}_{\varepsilon, \, JX}(M)$ solving (\ref{one-parameter family of MA}). Since $\varphi_{ s_0}$ is $JX$-invariant and $\varphi_{ s_0}\in C^{\infty}_{\varepsilon}(M)$, the Riemannian metric $g_{\varphi_{ s_0}}$ corresponding to $\omega + i \partial \partialb \varphi_{ s_0}$ is ACyl, with the same ACyl map as $g$, and satisfies Assumptions \ref{section MA: Acyl metric shall be kahler}, \ref{section MA: analytic + cpt support-- definition of X} and \ref{section MA: asympotitc behaviour of hamiltonian function}. Hence, the linearization of $\mathcal{M}$ at the point $(\varphi_{ s},s)$, which is given by (\ref{monge ampere operator derivative}), is  injective if restricted  to the subspace $C^{3,\alpha}_{\varepsilon,\, JX}(M)$ and also surjective according to Proposition \ref{linearization is an isomorphism}.
Thus, the implicit function theorem implies the existence of a $\delta_0>0$ such that for all $s\in (s_0 -\delta_0, s_0+\delta_0)$ there exists a $\varphi_{ s}\in C^{3,\alpha}_{\varepsilon} (M)$ solving (\ref{one-parameter family of MA}). But then $\varphi_{ s} \in C^{\infty}_{\varepsilon, \, JX}(M)$ by Proposition \ref{proposition regularity}, and consequently $(s_0-\delta_0 , s_0 + \delta_0) \cap [0,1] \subset S$. 

That $S$ is closed follows from Theorem \ref{theorem APRIORI estimates and regularity}. Indeed, consider a sequence $(s_k)_{k\in \mathbb{N}}$ in $S$ which converges to some $s_{\infty}\in \mathbb [0,1]$, and denote the corresponding sequence in $C^{\infty}_{\varepsilon, \, JX}(M)$ of solutions to (\ref{one-parameter family of MA})  by $(\varphi_{ s_k})$. 
According to Theorem \ref{theorem APRIORI estimates and regularity}, this sequence $(\varphi_{ s_k})$ is uniformly bounded in $C^{3,\alpha}_{\varepsilon}(M)$. Choosing $\varepsilon' \in (0,\varepsilon)$ and $\beta \in (0,\alpha)$, the inclusion $C^{3,\alpha}_{\varepsilon} (M ) \subset C^{3,\beta}_{\varepsilon'}(M)$ is compact (by \cite{marshallphd}[Theorem 4.3] for instance), so that we can extract a subsequence of $(\varphi_{ s_k})$ converging in $C^{3,\beta}_{\varepsilon'}(M)$ to some limit $\varphi_{s_\infty} \in C^{3,\beta}_{\varepsilon'}(M)$.  Note that we must have $JX(\varphi_{s_\infty}) =0$ and that $\varphi_{s_\infty}$ satisfies
\begin{align*}
	(\omega+ i \partial \partialb \varphi_{s_\infty}) ^n = e^{s_{\infty} F - \frac{X}{2}(\varphi_{s_\infty})} \omega^n,
\end{align*}
as we can take the point-wise limit $k\to \infty$ in (\ref{one-parameter family of MA}). From this, we immediately see that $\omega + i \partial \partialb \varphi_{s_\infty}$ is a K\"ahler form, and applying Proposition \ref{proposition regularity} then implies $\varphi_{s_\infty} \in C^{\infty}_{\varepsilon, \,  JX}(M)$, i.e. $s_\infty \in S$. This concludes the proof. 
\end{proof}

The rest of this section is devoted to proving Proposition \ref{proposition regularity} and Theorem \ref{theorem APRIORI estimates and regularity}. We begin in Section \ref{subsection: C0 estimate} by deriving the $C^0$-estimate which is the key part of the proof. Then we move on to higher-order estimates in Section \ref{subsection: higher order estimates} to finish the proof of Theorem \ref{theorem APRIORI estimates and regularity}. Afterwards, we conclude by verifying Proposition \ref{proposition regularity}.

\subsection{The $C^0$-estimate} \label{subsection: C0 estimate}
Throughout this section, let $(M,g)$ satisfy Assumptions \ref{section MA: Acyl metric shall be kahler}, \ref{section MA: analytic + cpt support-- definition of X} and \ref{section MA: asympotitc behaviour of hamiltonian function}. The goal is to obtain uniform estimates for solutions $(\varphi_{ s})_{0\leq s\leq 1}$ to (\ref{one-parameter family of MA}), among which the $C^0$-bound is the most difficult one to achieve.

The proof of the $C^0$-estimate  is split into three parts: First, we obtain a weighted upper bound on $ \varphi_{s }$, then an $L^2$-bound with a certain weight and finally, we can conclude a lower bound on $\inf_M \varphi_{ s}$. The last two steps closely follow the ideas developed in \cite{conlon2020steady}[Section 7.1]. Before beginning with the preparations, let us fix some notation.

\begin{notation}
	We denote the metric associated with $\omega + i \partial \partialb \varphi_s$ by $g_{\varphi_s}$, and $\nabla^{g_{\varphi_s}}$, $\Delta_{g_{\varphi_s}}$, etc. denote the Levi-Civita connection, the Laplace operator, etc. of $g_{\varphi_s}$. We point out that $\Delta_{g_{\varphi_s}}$ is the \textit{Riemannian} Laplace operator, i.e. it satisfies
	\begin{align}\label{riemannian laplace operator kähler relation}
		\frac{1}{2} \Delta_{g_{\varphi_s}} u \, \omega_{\varphi_s}^n = n\, i \partial \partialb u \wedge \omega_{\varphi_s}^{n-1}
	\end{align}
	for each $C^2$-function $u$. 
\end{notation}

\subsubsection{An upper bound on $\varphi_s$}
We begin by estimating $\varphi_s$ from above:
\begin{prop}[Weighted upper bound on $\varphi_s$]\label{upper-C^0 estimate for varphi_s from above}
	Let $1<\varepsilon <2$ and suppose $(\varphi_s)_{0\leq s\leq 1}$ is  a family in $C^{\infty}_{\varepsilon, \, JX}(M)$ solving  (\ref{one-parameter family of MA}). Then there exists a constant $C>0$ such that 
	\begin{align*}
		\sup_{s\in[0,1]}\sup_M e^{\varepsilon t}\varphi_s \leq C,
	\end{align*}
	where $C$ only depends on  $F\in C^{\infty}_{\varepsilon, JX}(M)$ and the geometry of $(M,g)$.
\end{prop}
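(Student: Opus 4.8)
The plan is to sidestep barrier constructions and instead read the weighted bound directly off the invertibility of the background ACyl drift Laplacian $\Delta_{g}+X=\Delta_{f}$ (Theorem~\ref{theorem ACyl drift laplace is iso}), after converting the Monge--Amp\`ere equation into a \emph{linear} differential inequality by concavity of $\log\det$.

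First I would establish a differential inequality for the fixed operator $\Delta_{g}+X$. Working in local holomorphic coordinates, write $\omega=i\,g_{j\bar k}\,dz^{j}\wedge d\bar z^{k}$ and $\omega+i\partial\partialb\varphi_{s}=i\,(g_{j\bar k}+\partial_{j}\partial_{\bar k}\varphi_{s})\,dz^{j}\wedge d\bar z^{k}$, so that (\ref{one-parameter family of MA}) reads $\log\det(g_{j\bar k}+\partial_{j}\partial_{\bar k}\varphi_{s})-\log\det(g_{j\bar k})=sF-\tfrac{X}{2}(\varphi_{s})$. Since $A\mapsto\log\det A$ is concave on positive-definite Hermitian matrices, its graph lies below the tangent plane at $(g_{j\bar k})$, which gives $\log\det(g_{j\bar k}+\partial_{j}\partial_{\bar k}\varphi_{s})\le\log\det(g_{j\bar k})+g^{j\bar k}\partial_{j}\partial_{\bar k}\varphi_{s}$; combining this with the K\"ahler identity $g^{j\bar k}\partial_{j}\partial_{\bar k}\varphi_{s}=\tfrac12\Delta_{g}\varphi_{s}$ (the pointwise form of (\ref{riemannian laplace operator kähler relation})) and rearranging yields
\begin{align*}
(\Delta_{g}+X)\varphi_{s}\ \ge\ 2sF\qquad\text{on }M .
\end{align*}

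Next I would eliminate the right-hand side. By Assumption~\ref{section MA: asympotitc behaviour of hamiltonian function}, $\Delta_{g}+X=\Delta_{f}$ with $f-2t\in C^{\infty}_{\delta}(M)$, so it is an ACyl drift Laplacian and, since $0<\varepsilon<2$, Theorem~\ref{theorem ACyl drift laplace is iso} furnishes a unique $\zeta\in C^{\infty}_{\varepsilon}(M)$ with $(\Delta_{g}+X)\zeta=2F$, and the bounded inverse gives $\|\zeta\|_{C^{0}_{\varepsilon}}\le C$ with $C$ depending only on $F$ and the geometry of $(M,g)$. Setting $\zeta_{s}:=s\zeta$ we get $(\Delta_{g}+X)(\varphi_{s}-\zeta_{s})\ge 2sF-2sF=0$, and since both $\varphi_{s}$ and $\zeta_{s}$ belong to $C^{0}_{\varepsilon}(M)$ they tend to $0$ along the cylindrical end. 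The maximum principle for the elliptic operator $\Delta_{g}+X$, which has no zeroth-order term — a function with nonnegative $(\Delta_{g}+X)$-image that vanishes at infinity is nonpositive, exactly as used in the proof of Theorem~\ref{theorem ACyl drift laplace is iso} — then forces $\varphi_{s}\le\zeta_{s}$ on $M$, whence $e^{\varepsilon t}\varphi_{s}\le s\,e^{\varepsilon t}\zeta\le s\,\|\zeta\|_{C^{0}_{\varepsilon}}\le C$ for all $s\in[0,1]$, which is the assertion.

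The only step that is not bookkeeping is the passage from the Monge--Amp\`ere equation to the inequality $(\Delta_{g}+X)\varphi_{s}\ge 2sF$: one must apply concavity with the sign and normalisation of $\Delta_{g}$ fixed in this paper and check that the operator produced is precisely the ACyl drift Laplacian $\Delta_{f}$, with no zeroth-order term, so that both Theorem~\ref{theorem ACyl drift laplace is iso} and the maximum principle genuinely apply. A more hands-on alternative would be to test directly against the barrier $C e^{-\varepsilon t}$: on the cylindrical end $(\Delta_{g}+X)e^{-\varepsilon t}=\varepsilon(\varepsilon-2)e^{-\varepsilon t}+(\text{lower order})<0$, since $\varepsilon(\varepsilon-2)<0$ for $0<\varepsilon<2$ — which is exactly the positivity-of-weight phenomenon underlying Theorem~\ref{theorem ACyl drift laplace is iso} — but the operator-theoretic route is cleaner because it also absorbs the compact part without introducing a cut-off.
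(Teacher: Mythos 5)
Your proposal is correct and is essentially the paper's own argument: the paper likewise derives $(\Delta_g+X)\varphi_s\ge 2sF$ from concavity of $\log$ (diagonalising $i\partial\bar\partial\varphi_s$ and using $\log(1+\tau)\le\tau$ rather than the tangent-plane form of $\log\det$, which is the same inequality), then solves $\tfrac12\Delta_g u_F+\tfrac{X}{2}(u_F)=F$ via Theorem~\ref{theorem ACyl drift laplace is iso} and applies Hopf's maximum principle on exhausting domains to get $\varphi_s\le s\,u_F$. The function you call $\zeta$ is exactly the ``barrier'' $u_F$ of the paper, so the two routes coincide.
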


We  present a proof  based on the use of a  barrier function, so our argument differs from the one given in \cite{conlon2020steady}[Proposition 7.9].

\begin{proof}
	
	We begin by observing that  $\varphi_s$ satisfies
	\begin{align}\label{in proposition on C0 upper bound: inequality for Delta_g varphi}
	\frac{1}{2}\Delta_g (\varphi_s) + \frac{X}{2} (\varphi_s) \geq sF.
	\end{align}
Indeed, consider any $p\in M$ and  holomorphic coordinates $(z_1,\dots, z_n)$ such that
\begin{align*}
	g_{i\bar j} = \delta_{i\bar j} \;\; \text{ and } \;\; \frac{\partial ^2 u}{\partial z_i \partial \bar z_j}   = \lambda _i \delta_{i\bar j} \;\; \text{ at } \;\;p
\end{align*}
for some $\lambda_i \in \mathbb{R}$ with $1+\lambda_i>0$, where $g_{i\bar j}$ are the local components of $g$ and $\delta_{i\bar j}$ denotes the Kronecker delta. Starting from (\ref{one-parameter family of MA}), we compute at $p$ that
\begin{align*}
s F -\frac{X}{2}(\varphi_s)&=\log  \frac{ \left(\omega + i \partial \partialb \varphi_{ s}   \right)^n}{\omega ^n } \\
&= \log (1+\lambda_1) \cdots (1+\lambda_n) \\
&= \sum_{j=1}^n \log (1+\lambda_j) \\
&\leq \sum_{j=1}^n \lambda_j \\
&= \operatorname{tr}_\omega (i \partial \partialb \varphi_{ s})=\frac{1}{2} \Delta_{g} (\varphi_s),
\end{align*}
where $\operatorname{tr}_\omega (i \partial \partialb \varphi_{ s})$ denotes the trace of $i \partial \partialb \varphi_{ s}$ with respect to $\omega$ and
 we used $\log (1+\tau)\leq \tau $ if $\tau >-1$ to obtain the inequality in the fourth line. This finishes the proof of  (\ref{in proposition on C0 upper bound: inequality for Delta_g varphi}).

Moreover, since $F\in C^{\infty}_{\varepsilon}(M)$ with $0<\varepsilon<2$ and because of Assumption \ref{section MA: asympotitc behaviour of hamiltonian function}, Theorem \ref{theorem ACyl drift laplace is iso} implies the existence of a  function $u_F \in C^{\infty}_{\varepsilon}(M)$ such that
\begin{align*} 
	\frac{1}{2} \Delta_g (u_F) + \frac{X}{2}(u_F) = F,
\end{align*}
which, in combination with (\ref{in proposition on C0 upper bound: inequality for Delta_g varphi}),  leads to
\begin{align*}
	(\Delta_g + X)(\varphi_{ s}  - su_F) \geq 2 s(F-F)=0. 
\end{align*}
Choosing a sequence $(t_k)_{k\in \mathbb{N}}$ with $t_k \to \infty$ and applying 
   Hopf's maximum principle to a sequence of domains of the form $\{ t \leq t_k\} \subset M$ then yields
\begin{align*}
	\sup_M (\varphi_{ s} - s u_F) \leq \lim_{t\to \infty} (\varphi_{ s} - s u_F) =0,
\end{align*}
i.e. $\varphi_{ s} \leq s u_F$ holds on all of $M$. In particular, we observe that
\begin{align*}
	e^{\varepsilon t} \varphi_{ s} \leq su_F e^{\varepsilon t} \leq ||u_F||_{C^0 _\varepsilon} =:C,
\end{align*}
which proves the claim. 
\end{proof}

For obtaining a lower bound on $\varphi_s$, we need to work considerably harder. The important idea in \cite{conlon2020steady}  is to first obtain a weighted $L^2$-bound.

\subsubsection{A weighted $L^2$-bound} As in \cite{conlon2020steady}[Subsection 7.1.1.], we consider two functionals which were used by Tian and Zhu \cite{tian2000uniqueness} to study shrinking K\"ahler-Ricci solitons on compact Fano manifolds.

\begin{defi}\label{definition of I and J}
	Consider $1<\varepsilon<2$ and let $(\psi_\tau)_{0\leq \tau \leq 1}$ be  a $C^1$-path in $C^{\infty}_{\varepsilon,\, JX}(M)$ from $\psi_0=0$ to $\psi_1=\psi$ and assume  for each $\tau \in [0,1]$ that $\omega_{\psi_{\tau}}:=\omega+ i \partial \partialb \psi_\tau >0$.  Define: 
	\begin{align*}
		I_{\omega, \,X}(\psi)&:= \int_M \psi \left(e^f\omega^n - e^{f + \frac{X}{2}(\psi)} \omega^n _{\psi}     \right),\\
		J_{\omega, \, X}(\psi)	&:= \int_0 ^1 \int_M \dot{\psi_{\tau}} \left(e^f \omega ^n - e^{f+\frac{X}{2}(\psi_\tau)} \omega_{\psi_{\tau}} ^n    \right) \wedge d \tau,
	\end{align*} 
	where $\dot{\psi_{\tau}}= \frac{\partial}{\partial \tau} \psi_{\tau}$. 
\end{defi}

Since $M$ is non-compact, we need to show that $I_{\omega, \, X}$ and $J_{\omega, \, X}$ are well-defined, i.e. that the resulting integrals are finite. Given $\psi \in C^{\infty}_{\varepsilon,\, JX}(M)$ with $1<\varepsilon<2$, we deduce from (\ref{section MA: asympotitc behaviour of hamiltonian function}) that $\psi \, e^{f}=O(e^{(2-\varepsilon)t})$, so it suffices to show
\begin{align}\label{subsection 4.1.1: I is finite}
	|\omega^n  - e^{\frac{X}{2}(\psi)}\omega_{\psi}^n |_g =O(e^{-\varepsilon t}) 
\end{align}
since $\varepsilon>1$. To see that this is true, we expand $\omega_\psi ^n $ and obtain
\begin{align*}
	\omega^n  - e^{\frac{X}{2}(\psi)}\omega_{\psi}^n = \left( 1-e^{\frac{X}{2}(\psi)}  \right) \omega^n - e^{\frac{X}{2}(\psi)} \sum_{k=1}^{n} \binom{n}{k} \left( i \partial \partialb \psi   \right) ^k \wedge \omega^{n-k}
\end{align*}
from which (\ref{subsection 4.1.1: I is finite}) follows because $\frac{X}{2}(\psi)=O(e^{-\varepsilon t})$ and $|i \partial \partialb \psi|_g =O(e^{-\varepsilon t})$ by definition of $C^{\infty}_\varepsilon(M)$. Thus $I_{\omega, \, X} (\psi)$ is finite, and the same argument also proves that $J_{\omega,\, X}$ is well-defined.
The crucial starting point is the next
\begin{theorem} \label{theorem functional J is independent of path}
	Let $(\psi_{\tau})_{0\leq \tau \leq 1}$  be a $C^1$-path as in Definition \ref{definition of I and J}. Then the first variation of the difference $I_{\omega, \, X} - J_{\omega, \, X}$ is given by 
	\begin{align*}
		\frac{d}{d\tau} (I_{\omega, \, X} - J_{\omega, \, X})(\psi_{\tau})=- \int_M \psi_{\tau} \left( \frac{1}{2} \Delta_{g_{\psi_{\tau}}} (\dot{\psi}_\tau) + \frac{X}{2} (\dot \psi_\tau)  \right) e^{f+ \frac{X}{2}(\psi_{\tau})} \omega^n _{\psi_{\tau}},
	\end{align*}
	where $g_{\psi_{\tau}}$ is the metric with K\"ahler form $\omega_{\psi_{\tau}}= \omega + i \partial \partialb \psi_{\tau}$. Moreover, $J_{\omega,\,X}$ does not depend on the choice of path $(\psi_{\tau})_{0\leq \tau \leq 1}$, but only on the end points $\psi_0=0$ and $\psi_1= \psi$. 
\end{theorem}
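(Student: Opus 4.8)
The plan is to follow the classical computation of Tian--Zhu, adapted to the noncompact ACyl setting, where the only new point is that all the integrations by parts are legitimate because of the exponential decay of the test functions. First I would compute the first variation of $I_{\omega,\,X}$ directly. Writing $\omega_\tau := \omega_{\psi_\tau}$ and $\dot\psi_\tau = \partial_\tau\psi_\tau$, differentiating the defining integral $I_{\omega,\,X}(\psi_\tau) = \int_M \psi_\tau\bigl(e^f\omega^n - e^{f+\frac{X}{2}(\psi_\tau)}\omega_\tau^n\bigr)$ under the integral sign gives three terms: one from differentiating the explicit $\psi_\tau$, one from differentiating $e^{\frac{X}{2}(\psi_\tau)}$, and one from differentiating $\omega_\tau^n$. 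For the last of these I would use the standard identity $\partial_\tau(\omega_\tau^n) = n\, i\partial\partialb\dot\psi_\tau \wedge \omega_\tau^{n-1} = \tfrac12 \Delta_{g_{\psi_\tau}}(\dot\psi_\tau)\,\omega_\tau^n$ (this is exactly relation \eqref{riemannian laplace operator kähler relation} for the metric $g_{\psi_\tau}$). Collecting, the variation of $I_{\omega,\,X}$ becomes
\begin{align*}
\frac{d}{d\tau} I_{\omega,\,X}(\psi_\tau) = \int_M \dot\psi_\tau\bigl(e^f\omega^n - e^{f+\frac{X}{2}(\psi_\tau)}\omega_\tau^n\bigr) - \int_M \psi_\tau\Bigl(\tfrac12\Delta_{g_{\psi_\tau}}(\dot\psi_\tau) + \tfrac{X}{2}(\dot\psi_\tau)\Bigr) e^{f+\frac{X}{2}(\psi_\tau)}\omega_\tau^n.
\end{align*}
The first integral here is precisely the integrand in the definition of $J_{\omega,\,X}$, i.e. it equals $\frac{d}{d\tau} J_{\omega,\,X}(\psi_\tau)$ by the fundamental theorem of calculus applied to the $\tau$-integral in Definition \ref{definition of I and J}. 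Subtracting gives the claimed formula for $\frac{d}{d\tau}(I_{\omega,\,X} - J_{\omega,\,X})(\psi_\tau)$ at once.

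The path-independence of $J_{\omega,\,X}$ is then a consequence of the same computation: once we know $\frac{d}{d\tau}(I - J)(\psi_\tau)$ is given by an expression depending only on $\psi_\tau$ and $\dot\psi_\tau$ (not on how the path was chosen), the standard argument shows $J$ depends only on endpoints. Concretely, I would show that the $1$-form $\tau \mapsto \int_M \dot\psi_\tau\bigl(e^f\omega^n - e^{f+\frac{X}{2}(\psi_\tau)}\omega_\tau^n\bigr)\,d\tau$ on the space of Kähler potentials is closed: given a two-parameter family $\psi_{\tau,\sigma}$, one checks that $\partial_\sigma$ of the $\tau$-integrand equals $\partial_\tau$ of the $\sigma$-integrand, which again reduces to an integration by parts using \eqref{riemannian laplace operator kähler relation} together with the symmetry of mixed partials $\partial_\sigma\partial_\tau\psi = \partial_\tau\partial_\sigma\psi$ and the self-adjointness (up to the drift term $X$) of $\tfrac12\Delta_{g_{\psi_\tau}} + \tfrac{X}{2}$ with respect to the weighted measure $e^{f+\frac{X}{2}(\psi_\tau)}\omega_\tau^n$. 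Since the space of Kähler potentials $C^\infty_{\varepsilon,\,JX}(M)$ with $\omega + i\partial\partialb\psi > 0$ is convex, hence simply connected, a closed $1$-form is exact, so the integral over a path depends only on its endpoints; as $I_{\omega,\,X}$ itself obviously depends only on $\psi = \psi_1$, so does $J_{\omega,\,X}$.

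The main obstacle — and the only place the noncompactness genuinely enters — is justifying all these manipulations: differentiation under the integral sign, and especially the integration by parts that moves $\tfrac12\Delta_{g_{\psi_\tau}} + \tfrac X2$ from $\dot\psi_\tau$ onto $\psi_\tau$ with no boundary contribution. Here I would invoke the decay estimates already established before the theorem: the computation \eqref{subsection 4.1.1: I is finite} shows $|e^f\omega^n - e^{f+\frac X2(\psi)}\omega_\psi^n|_g = O(e^{(2-\varepsilon)t}\cdot e^{-\varepsilon t}) $ — more precisely $\psi\, e^f = O(e^{(2-\varepsilon)t})$ while the difference of volume forms is $O(e^{-\varepsilon t})$ relative to $e^f\omega^n$ — so with $\varepsilon > 1$ all integrands and their $\tau$-derivatives are dominated by fixed $L^1$ functions on $M$; this licenses differentiation under the integral. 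For the integration by parts, one works on the exhaustion $\{t \le t_k\}$ and notes that the boundary terms involve $\psi_\tau \cdot \nabla\dot\psi_\tau$ (and $\dot\psi_\tau\cdot\nabla\psi_\tau$) integrated over $\{t = t_k\}$ against the weight $e^{f+\frac X2(\psi_\tau)}$; since $\psi_\tau,\dot\psi_\tau \in C^\infty_{\varepsilon,\,JX}(M)$ decay like $e^{-\varepsilon t}$ together with their derivatives while $e^f$ grows only like $e^{2t}$ and the cross-section has bounded volume, these boundary integrals are $O(e^{(2-2\varepsilon)t_k}) \to 0$ as $t_k \to \infty$ because $\varepsilon > 1$. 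This is exactly the weighted-integration-by-parts mechanism of Lemma \ref{lemma integration by parts}, and it is the reason the hypothesis $1 < \varepsilon < 2$ (rather than merely $0 < \varepsilon < 2$) is imposed, as already flagged in Remark \ref{remark on rate of convergence}.
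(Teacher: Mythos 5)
Your proposal is correct and follows essentially the same route as the paper, which simply cites the formal Tian--Zhu/Conlon--Deruelle computation and observes that the only non-formal ingredient in the ACyl setting is the validity of integration by parts, guaranteed by Lemma \ref{lemma integration by parts} and the decay hypotheses with $\varepsilon>1$. You have merely written out that formal computation (first variation of $I$, identification of the $\dot\psi_\tau$ term with $\tfrac{d}{d\tau}J$, and path-independence via closedness of the defining $1$-form on the convex space of potentials) together with the boundary-term estimates, which is exactly what the paper delegates to the references.
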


\begin{proof}
 This is \cite{conlon2020steady}[Theorem 7.5], whose proof in turn relies on \cite{tian2000uniqueness}. The reader may observe that this proof is a completely formal calculation, which applies word-by-word to our case if Stokes theorem holds. This, however, is only used once on \cite{conlon2020steady}[p. 50]. Given our asymptotics, it is clear from Lemma \ref{lemma integration by parts} that we as well can integrate by parts because the integrands decay exponentially in the parameter $t$.  
\end{proof}

Before we can continue with the weighted $L^2$ bounds, we need another  lemma as preparation.

\begin{lemma}[A first  bound on $\inf _M X(\varphi_s)$]   \label{lemma rough lower bound on X(varphi) in terms of f}
	Let $1<\varepsilon <2$ and suppose $(\varphi_s)_{0\leq s\leq 1}$ is  a family in $C^{\infty}_{\varepsilon, \, JX}(M)$ solving  (\ref{one-parameter family of MA}). Then there exists a constant $C>0$ such that
	\begin{align}\label{in lemma a first bound on X(varphi)}
		\inf_{s\in[0,1]} \inf _M \left( f + \frac{X}{2}(\varphi_s)   \right)\geq 1,
	\end{align}
	where $C$ only depends on the geometry of $(M,g)$. 
\end{lemma}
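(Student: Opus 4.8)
The plan is to recognize $f + \frac{X}{2}(\varphi_s)$ as the soliton potential of the metric $g_{\varphi_s}$ with K\"ahler form $\omega_{\varphi_s} := \omega + i \partial \partialb \varphi_s$, and to evaluate it at its minimum.

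First I would set $F_s := f + \frac{X}{2}(\varphi_s)$ and record that
\[
X = \nabla^{g_{\varphi_s}} F_s \qquad \text{on } M.
\]
This is exactly the identity established in the proof of Lemma~\ref{lemma reducing to MA equation}: the computation there showing $\iota_{JX}\big(\omega + i \partial \partialb \varphi\big) = -d\big(f + \frac{X}{2}(\varphi)\big)$ uses only that $\varphi$ is $JX$-invariant and that $\iota_{JX}\omega = -df$ (which holds since $X = \nabla^g f$ by Assumption~\ref{section MA: asympotitc behaviour of hamiltonian function}), and \emph{not} the Monge--Amp\`ere equation~(\ref{one-parameter family of MA}); here $\varphi_s \in C^{\infty}_{\varepsilon,\,JX}(M)$ and $\omega_{\varphi_s} > 0$, so it applies.

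Next I would check that $F_s$ is proper at the cylindrical end and bounded below, so that it attains its infimum. Since $|X|_g$ is uniformly bounded and $\varphi_s \in C^{\infty}_\varepsilon(M)$, we have $\frac{X}{2}(\varphi_s) = \tfrac12\, g(X,\nabla^g \varphi_s) = O(e^{-\varepsilon t})$, in particular bounded; and $f - 2t \in C^{\infty}_\delta(M)$ forces $f \to +\infty$ as $t \to \infty$. Hence $F_s \to +\infty$, so the continuous function $F_s$ attains a global minimum at some $p_s \in M$. At $p_s$ we have $dF_s|_{p_s} = 0$, hence $X_{p_s} = \nabla^{g_{\varphi_s}} F_s|_{p_s} = 0$, and therefore $\frac{X}{2}(\varphi_s)(p_s) = \tfrac12\, d\varphi_s|_{p_s}(X_{p_s}) = 0$. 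Consequently
\[
\inf_M\Big(f + \tfrac{X}{2}(\varphi_s)\Big) = F_s(p_s) = f(p_s) \geq 1,
\]
using the normalization $f \geq 1$ from Assumption~\ref{section MA: asympotitc behaviour of hamiltonian function}. Since this lower bound is independent of $s$, taking the infimum over $s \in [0,1]$ yields the claim.

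I do not anticipate a genuine obstacle: the argument is short once one sees that $F_s$ is the soliton potential of $g_{\varphi_s}$ and that the potential of a gradient steady soliton vanishes wherever its soliton vector field does. The two points to treat with a little care are the properness of $F_s$ (so that a minimum is actually attained, not merely an infimum), which follows from $f \sim 2t$ and the decay of $\varphi_s$, and extracting $X = \nabla^{g_{\varphi_s}} F_s$ from Lemma~\ref{lemma reducing to MA equation} in a form that does not presuppose the Monge--Amp\`ere equation.
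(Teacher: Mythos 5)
Your proposal is correct and follows essentially the same route as the paper: establish $X=\nabla^{g_{\varphi_s}}\bigl(f+\tfrac{X}{2}(\varphi_s)\bigr)$ (the paper cites the analogous identity from Conlon--Deruelle, you re-derive it from the computation in Lemma~\ref{lemma reducing to MA equation}, which is legitimate), use properness of $f+\tfrac{X}{2}(\varphi_s)$ to locate a global minimum, and conclude that $X$ vanishes there so the infimum equals a value of $f$, which is $\geq 1$ by normalization. No gaps.
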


\begin{proof}
	Since both $f$ and $\varphi_s$ are $JX$-invariant, the argument \cite{conlon2020steady}[(7.6)] applies and we obtain that 
	\begin{align}\label{in Lemma: a first bound on X(varphi); X is gradient}
		X= \nabla^{g_{\varphi_s}} \left( f+ \frac{X}{2} (\varphi_s)   \right).
	\end{align}
	Also observe that $\frac{X}{2}(\varphi_s) \to 0$ as $t\to \infty$ because $X$ is bounded with respect to the norm $g_{\varphi_s}$. Thus, we conclude from (\ref{section MA: asympotitc behaviour of hamiltonian function}) that $f+\frac{X}{2}(\varphi_s)$ converges to the function $2t+c$ with $c>0$ and consequently, $f+\frac{X}{2}(\varphi_s)$ attains a global minimum at some point $p\in M$. By (\ref{in Lemma: a first bound on X(varphi); X is gradient}), we see that $X$ must vanish at $p$,  so we conclude that
	\begin{align*}
		\inf_M \left(f + \frac{X}{2} (\varphi_s)\right) = \min_{\{  X=0\}  } \left(f + \frac{X}{2} (\varphi_s)\right) =  \min_{\{  X=0\}  } f
	\end{align*}
	holds for all $s\in [0,1]$. In particular, (\ref{in lemma a first bound on X(varphi)}) follows since we normalised $f$ such that $f\geq 1$ on $M$. 
\end{proof}

\begin{prop}[A priori bound on weighted energy] \label{weighted L^2 estimate}
	Let $1<\varepsilon <2$ and suppose $(\varphi_s)_{0\leq s\leq 1}$ is  a family in $C^{\infty}_{\varepsilon, \, JX}(M)$ solving  (\ref{one-parameter family of MA}).  Then there exists a constant $C>0$  such that
\begin{align}
	\sup _{0\leq s \leq 1} \int _M |\varphi_s|^2 \frac{e^f}{f^2} \operatorname{dV}_g \leq C ,
\end{align}
where $C$ only depends on $F\in C^{\infty}_{\varepsilon,\,JX}(M)$ and on the geometry of $(M,g)$.
\end{prop}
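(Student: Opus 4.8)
The plan is to follow Tian--Zhu's method in the form used by Conlon and Deruelle in \cite{conlon2020steady}[Section 7.1.1]: I will control $\int_M|\varphi_s|^2\,\tfrac{e^f}{f^2}\operatorname{dV}_g$ by combining the monotonicity of the functional $I_{\omega,X}-J_{\omega,X}$ along the continuity path with the Poincar\'e inequality of Proposition \ref{poincare inequality}. First I would differentiate the family (\ref{one-parameter family of MA}) in the parameter $s$: writing $\dot\varphi_s=\partial_s\varphi_s$ (which is legitimate since the solutions along the continuity path depend smoothly on $s$ by the implicit function theorem), taking logarithms in (\ref{one-parameter family of MA}) and differentiating gives the linearised equation $\tfrac12\Delta_{g_{\varphi_s}}\dot\varphi_s+\tfrac{X}{2}(\dot\varphi_s)=F$. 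Substituting this into the first variation formula of Theorem \ref{theorem functional J is independent of path} and using (\ref{one-parameter family of MA}) once more to rewrite $e^{f+\frac{X}{2}(\varphi_s)}\omega_{\varphi_s}^n=e^{f+sF}\omega^n$, I obtain the identity
\begin{align*}
	\frac{d}{ds}\big(I_{\omega,X}-J_{\omega,X}\big)(\varphi_s)=-\int_M\varphi_s\,F\,e^{f+sF}\,\omega^n .
\end{align*}

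The next ingredient is the weighted Aubin-type inequality underlying \cite{conlon2020steady}[Section 7.1.1]: after the integrations by parts, which are legitimate here because all the integrands lie in exponentially weighted spaces (cf.\ Lemma \ref{lemma integration by parts}), the functional $I_{\omega,X}-J_{\omega,X}$ is non-negative and bounds a weighted Dirichlet energy from below,
\begin{align*}
	\big(I_{\omega,X}-J_{\omega,X}\big)(\varphi_s)\ \geq\ c\int_M|\nabla^g\varphi_s|_g^2\,e^f\,\omega^n ,
\end{align*}
for a constant $c>0$ depending only on $(M,g)$. Since $f\geq 1$, the right-hand side dominates a constant multiple of $\int_M|\nabla^g\varphi_s|_g^2\,\tfrac{e^f}{f^2}\operatorname{dV}_g$, so Proposition \ref{poincare inequality} (applicable because the potential $f$ of \ref{section MA: asympotitc behaviour of hamiltonian function} satisfies $f-2t\in C^\infty_\delta(M)$) yields
\begin{align*}
	\int_M\varphi_s^2\,\frac{e^f}{f^2}\operatorname{dV}_g\ \leq\ C\,\big(I_{\omega,X}-J_{\omega,X}\big)(\varphi_s).
\end{align*}
Thus the whole problem reduces to bounding $(I_{\omega,X}-J_{\omega,X})(\varphi_s)$ uniformly in $s$.

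For that I would bound $\tfrac{d}{ds}(I_{\omega,X}-J_{\omega,X})(\varphi_s)=-\int_M\varphi_s F\,e^{f+sF}\omega^n$ from above, i.e.\ bound $\int_M\varphi_s F\,e^{f+sF}\omega^n$ from below. Writing $\varphi_s=\varphi_s^+-\varphi_s^-$ and $F=F^+-F^-$, the upper bound $0\leq\varphi_s^+\leq Ce^{-\varepsilon t}$ of Proposition \ref{upper-C^0 estimate for varphi_s from above}, together with $F\,e^{f+sF}=O(e^{(2-\varepsilon)t})$ (using $f\sim 2t$), makes every term involving $\varphi_s^+$ integrable on the cylindrical end and uniformly bounded, precisely because $\varepsilon>1$. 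The only dangerous term is $\int_M\varphi_s^-F^+e^{f+sF}\omega^n$, which by Cauchy--Schwarz is at most
\begin{align*}
	\Big(\int_M(\varphi_s^-)^2\tfrac{e^f}{f^2}\operatorname{dV}_g\Big)^{1/2}\Big(\int_M (F^+)^2 f^2 e^{f+2sF}\operatorname{dV}_g\Big)^{1/2}\ \leq\ C\,\Big(\int_M\varphi_s^2\tfrac{e^f}{f^2}\operatorname{dV}_g\Big)^{1/2},
\end{align*}
where the second factor is finite again because $\varepsilon>1$ and $(M,g)$ has linear volume growth with $f\sim 2t$ (and $|sF|$ is bounded for $s\in[0,1]$). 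Feeding in the inequality from the previous step, the function $y(s):=(I_{\omega,X}-J_{\omega,X})(\varphi_s)\geq 0$ with $y(0)=0$ satisfies a differential inequality of the form $y'(s)\leq A+B\,y(s)^{1/2}$, whose solutions stay bounded on $[0,1]$ by a constant depending only on $A,B$, hence only on $F$ and the geometry of $(M,g)$; this gives $(I_{\omega,X}-J_{\omega,X})(\varphi_s)\leq C$ and the Proposition follows. The main obstacle is the weighted Aubin inequality together with the need to justify, via the exponential decay of all relevant quantities, every integration by parts on the non-compact end and to keep all constants independent of $s$; the hypothesis $\varepsilon>1$ (rather than merely $\varepsilon>0$) enters exactly to secure the two integrability statements used in the last step, as anticipated in Remark \ref{remark on rate of convergence}.
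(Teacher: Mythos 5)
Your proposal follows the same route as the paper: the Tian--Zhu functionals $I_{\omega,X}$, $J_{\omega,X}$, the first-variation/path-independence statement of Theorem \ref{theorem functional J is independent of path} evaluated along the continuity path itself (your differentiation in $s$ is equivalent to the paper's choice of the second path $\psi_\tau=\varphi_{\tau s}$), the Poincar\'e inequality of Proposition \ref{poincare inequality}, and a Gr\"onwall-type closing argument; your splitting of $\varphi_s$ and $F$ into positive and negative parts is an unnecessary but harmless substitute for the paper's direct Cauchy--Schwarz with weight $f|F|e^{f/2+|F|}$ against $|\varphi_s|e^{f/2}/f$.

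One intermediate claim is overstated. The ``weighted Aubin-type inequality'' you assert, $(I_{\omega,X}-J_{\omega,X})(\varphi_s)\geq c\int_M|\nabla^g\varphi_s|^2_g\,e^f\,\omega^n$, is not what the method yields: choosing the linear path $\psi_\tau=\tau\varphi_s$ and using $\omega_{\tau\varphi_s}\geq(1-\tau)\omega$ together with $\tfrac{X}{2}(\varphi_s)\geq -f$ (Lemma \ref{lemma rough lower bound on X(varphi) in terms of f}), the lower bound one obtains carries the weight
\begin{align*}
n\int_0^1\tau(1-\tau)^{n-1}e^{(1-\tau)f}\,d\tau\ \asymp\ \frac{e^f}{f^2},
\end{align*}
not $e^f$ (this is precisely \cite{conlon2020steady}[Claim 7.8]); with only the a priori information available at this stage there is no way to remove the $f^{-2}$ loss. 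Since you immediately pass to the $e^f/f^2$-weighted Dirichlet energy anyway (``since $f\geq1$\dots''), this does not break the argument, but you should state the inequality with the correct weight, as that is exactly the form needed to match the Poincar\'e inequality. With that correction, and noting $y(0)=(I_{\omega,X}-J_{\omega,X})(\varphi_0)=0$ because $\varphi_0\equiv0$ by the maximum principle, your differential inequality $y'\leq A+B\,y^{1/2}$ closes the proof just as the paper's integral formulation does.
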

\begin{proof}
	We follow \cite{conlon2020steady}[Proposition 7.7]. The idea is to consider two different paths in $C^\infty_{\varepsilon,JX}(M)$ with $1<\varepsilon<2$ and to use Theorem \ref{theorem functional J is independent of path} for obtaining the required bound.
	
	We begin by considering a linear path from $0$ to $\varphi_s$. Given $s\in [0,1]$, define this path $(\psi_{\tau})_{0\leq \tau \leq 1}$ by 
	$\psi_{\tau}:= \tau \varphi_s $. Since $\omega + i \partial \partialb \psi_\tau >0$, Theorem \ref{theorem functional J is independent of path} implies that 
	\begin{align}
	(	I_{\omega,X}-J_{\omega,X} )(\varphi_s)= - \int _{0}^1 \int _M \frac{\tau \varphi_s}{2} \left( \Delta_{g_{\tau\varphi_s}} + X  \right)(\varphi_s) e^{f+ \tau\frac{X}{2}(\varphi_s) } \omega^n _{\tau \varphi_s}\wedge d\tau
	\end{align}
	Recalling that $X= \nabla^{g_{\tau\varphi_s}} (f+ \frac{X}{2}(\varphi_s)) $, we integrate by parts and obtain
	\begin{align} \label{in proposition energy bound: I-J estimated from below}
&(	I_{\omega,X}-J_{\omega,X} )(\varphi_s) = n \int _0^1 \int _M \tau e^{f+\tau \frac{X}{2}(\varphi_s)} i \partial \varphi_s  \wedge \partialb \varphi_s \wedge \omega^{n-1}_{\tau \varphi_s} \wedge d\tau \\ \nonumber
=& n \int _0 ^1 \int _M \tau e^{f+\tau \frac{X}{2}(\varphi_s)} i \partial \varphi_s  \wedge \partialb \varphi_s \wedge ((1-\tau)\omega+ \tau \omega_{\varphi_s})^{n-1} \wedge d\tau \\ \nonumber
\geq & n \int _0^1 \int _M \tau (1-\tau)^{n-1}  e^{f+\tau \frac{X}{2}(\varphi_s)} i \partial \varphi_s  \wedge \partialb \varphi_s \wedge \omega^{n-1} \wedge d\tau \\ \nonumber
\geq & n \int _0^1 \int _M \tau (1-\tau)^{n-1}  e^{(1-\tau)f} i \partial \varphi_s  \wedge \partialb \varphi_s \wedge \omega^{n-1} \wedge d\tau  \\ \nonumber
 =&n  \int _M \left(\int _0^1  \tau (1-\tau)^{n-1}  e^{(1-\tau)f} d\tau\right) \wedge i \partial \varphi_s  \wedge \partialb \varphi_s \wedge \omega^{n-1} ,
	\end{align}
	where the penultimate line holds since $\frac{X}{2}(\varphi_s) \geq - f$ by Lemma \ref{lemma rough lower bound on X(varphi) in terms of f}. Thanks to \cite{conlon2020steady}[Claim 7.8], there exists a constant $C>0$ such that 
	\begin{align}\label{in proposition energy bound: tau and f integral from below}
n \int _0^1  \tau (1-\tau)^{n-1}  e^{(1-\tau)f} d\tau \geq C \frac{e^f}{f^2},
	\end{align}
	which, in combination with (\ref{in proposition energy bound: I-J estimated from below}), then leads to 
	\begin{align} \label{in proposition engery estimate: I-J final lower bound}
		(	I_{\omega,X}-J_{\omega,X} )(\varphi_s) \geq  C \int _M \frac{e^f}{f^2}i \partial \varphi_s  \wedge \partialb \varphi_s \wedge \omega^{n-1}.
	\end{align}
	To estimate $(	I_{\omega,X}-J_{\omega,X} )(\varphi_s)$ from above, we recall from Theorem \ref{theorem functional J is independent of path} that $	J_{\omega,X}$ is independent of the choice of  path from $0$ to $\varphi_s$. Thus, we can compute $(	I_{\omega,X}-J_{\omega,X} )(\varphi_s)$ by defining a new path $(\psi_\tau )_{0\leq \tau \leq 1}$ as $\psi_\tau:= \varphi_{\tau s}$. We point out that $\psi_0= \varphi_0\equiv 0$ follows from the maximum principle applied to the Monge-Amp\`ere equation (\ref{one-parameter family of MA}). For calculating $\dot\psi_{\tau}$, differentiate (\ref{one-parameter family of MA}) with respect to $s$ and obtain
	\begin{align*}
	n \,i \partial \partialb \dot{\varphi}_s \wedge \omega_{\varphi_{ s}}^{n-1} = \left(F-\frac{X}{2}(\dot{\varphi}_s)   \right) \omega_{\varphi_s} ^n.
	\end{align*}
	Combining with (\ref{riemannian laplace operator kähler relation}) and using $\dot \psi _\tau = s\dot \varphi_{\tau s}$, we arrive at 
	\begin{align*}
		\frac{1}{2}\Delta_{\psi_{\tau}} \dot \psi_{\tau} + \frac{X}{2} (\dot \psi_{\tau}) = s F,
	\end{align*}
 to which we further apply Theorem (\ref{theorem functional J is independent of path}) and continue:
 \begin{equation}\label{in proposition energy estimate: I_J from above}
 \begin{aligned} \nonumber
  (I_{\omega, \, X} - J_{\omega, \, X})(\varphi_s)&=-\int_0^1 \int_M \psi_{\tau} \cdot sF e^{f+ \frac{X}{2}(\psi_{\tau})} \omega^n _{\psi_{\tau}} \wedge d\tau  \\ 
  &=- \int_0 ^1 \int_M \psi_\tau \cdot  sF e^{f+\tau s F} \omega^n \wedge d\tau \\ 
  & \leq  \int_0 ^1 \int_M |\psi_\tau|    |F| e^{f+ |F|} \omega^n \wedge d\tau \\
  & = \int_0 ^1 \int_M   f |F| e^{\frac{f}{2}+ |F|}  \cdot  |\psi_\tau| \frac{e^{\frac{f}{2}}}{f} \omega^n \wedge d\tau  \\
  &\leq C \int_0 ^1 \left( \int_M |\psi_\tau |^2 \frac{e^f}{f^2} \omega^n \right)^{\frac{1}{2}}  d\tau.
 \end{aligned}
 \end{equation}
 Here, we applied (\ref{one-parameter family of MA}) in the second line, Cauchy-Schwarz in the last one and the uniform constant $C>0$ is given by  $C^2=\int_M   f^2 |F|^2 e^{f+ 2|F|} \omega^n $, which is finite since $f^2e^f =O(t^2 e^{2t})$ and $F^2=O(e^{-2\varepsilon t})$ with $\varepsilon>1$. 
  From the previous estimate together with (\ref{in proposition engery estimate: I-J final lower bound}), we thus conclude
\begin{equation} \nonumber
 \begin{aligned}
 \int _M |\nabla^g \varphi_s |^2 _g \frac{e^f}{f^2} \operatorname{dV}_g &\leq C \int _0 ^1\left( \int _M |\varphi_{\tau s}|^2\frac{e^f}{f^2} \operatorname{d V}_g\right)^{\frac{1}{2}} d\tau\\
 &= \frac{C}{s} \int _0 ^s \left( \int _M |\varphi_{\tau }|^2\frac{e^f}{f^2} \operatorname{d V}_g\right)^{\frac{1}{2}} d\tau.
 \end{aligned}
 \end{equation}
 Together with Proposition \ref{poincare inequality}, we finally arrive at
 \begin{align}\label{in proposition energy estimate: grönwall inequality}
\lambda \int _M |\varphi_{s }|^2\frac{e^f}{f^2} \operatorname{d V}_g \leq \frac{C}{s} \int _0 ^s \left( \int _M |\varphi_{\tau }|^2\frac{e^f}{f^2} \operatorname{d V}_g\right)^{\frac{1}{2}} d\tau. 
 \end{align}
 As observed by Conlon and Deruelle \cite{conlon2020steady}[Proposition 7.7], this is a Gr\"onwall-type differential inequality for the function $U:(0,1] \to \mathbb{R}_+$ defined by 
 \begin{align*}
 	U(s):= \int _0 ^s \left( \int _M |\varphi_{\tau }|^2\frac{e^f}{f^2} \operatorname{d V}_g\right)^{\frac{1}{2}} d\tau.
 \end{align*}
 Indeed, it is immediate that (\ref{in proposition energy estimate: grönwall inequality}) becomes
 \begin{align*}
 	\frac{\dot U(s)}{\sqrt{U(s)}} \leq \frac{C}{\sqrt{s}},
 \end{align*}
 so that we integrate to obtain $\sqrt{U(s)}\leq C\sqrt{s}$ with $s\in (0,1]$. Hence,
 \begin{align*}
 	 \left( \int _M |\varphi_{s }|^2\frac{e^f}{f^2} \operatorname{d V}_g\right)^{\frac{1}{2}} d\tau = \dot{U}(s) \leq C,
 \end{align*}
 where $C=C(M,g,F)$ is independent of $s\in [0,1]$, as claimed. 
\end{proof}

\subsubsection{A lower bound on $\varphi_{s }$} For proving a uniform bound on $\sup_M |\varphi_s|$, it remains to bound $\inf_M \varphi_{s }$ from below. This is the main result of this subsection:

\begin{prop}[Lower bound on $\inf_M \varphi_{s }$]\label{proposition: lower bound on inf varphi}
	Let $1<\varepsilon <2$ and suppose $(\varphi_s)_{0\leq s\leq 1}$ is  a family in $C^{\infty}_{\varepsilon, \, JX}(M)$ solving  (\ref{one-parameter family of MA}). Then there exists a constant $C>0$ such that
	\begin{align*}
		\inf_{s\in [0,1]} \inf _M \varphi_{s } \geq -C,
	\end{align*}
	where $C$ only depends $F \in C^{\infty}_{\varepsilon,\, JX}(M)$ and on the geometry of $(M,g)$.
\end{prop}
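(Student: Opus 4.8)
The plan is to bound the negative part $w := \max(-\varphi_s,0)$ from above, uniformly in $s$; note that $w\ge 0$ and, since $\varphi_s\in C^{\infty}_{\varepsilon,\,JX}(M)$, that $w\to 0$ as $t\to\infty$. The first step is to produce a differential inequality for $\varphi_s$ with respect to the metric $g_{\varphi_s}$. Applying the arithmetic--geometric mean inequality to (\ref{one-parameter family of MA}) gives $\operatorname{tr}_{\omega_{\varphi_s}}\omega\ge n\,(\omega^n/\omega_{\varphi_s}^n)^{1/n}=n\,e^{-\frac1n(sF-\frac{X}{2}(\varphi_s))}$, and since $\tfrac12\Delta_{g_{\varphi_s}}\varphi_s = n-\operatorname{tr}_{\omega_{\varphi_s}}\omega$, the elementary bound $n(1-e^{-a/n})\le a$ applied with $a:=sF-\tfrac{X}{2}(\varphi_s)$ yields
\[
	\tfrac12\Delta_{g_{\varphi_s}}\varphi_s+\tfrac{X}{2}(\varphi_s)\le sF .
\]
Hence on $\{\varphi_s<0\}$ the function $w$ satisfies $(\Delta_{g_{\varphi_s}}+X)w\ge -2sF\ge -2\|F\|_{C^0}$, i.e.\ $w$ is a subsolution of an operator whose right-hand side is uniformly bounded.

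Next I would exploit the self-adjoint structure. By the identity $X=\nabla^{g_{\varphi_s}}\!\big(f+\tfrac{X}{2}(\varphi_s)\big)$ established in the proof of Lemma \ref{lemma rough lower bound on X(varphi) in terms of f}, the operator $\Delta_{g_{\varphi_s}}+X$ is the drift Laplacian with potential $h_s:=f+\tfrac{X}{2}(\varphi_s)$, self-adjoint with respect to $d\mu_s:=e^{h_s}\,\operatorname{dV}_{g_{\varphi_s}}$; and by (\ref{one-parameter family of MA}) one has $e^{\frac{X}{2}(\varphi_s)}\omega_{\varphi_s}^n=e^{sF}\omega^n$, so $d\mu_s$ is a fixed multiple of $e^{f}e^{sF}\,\operatorname{dV}_g$ and therefore uniformly comparable to $e^{f}\,\operatorname{dV}_g$, with constants depending only on $\|F\|_{C^0}$. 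Moreover $g_{\varphi_s}$ is itself ACyl with the same asymptotic cylinder as $g$ (since $i\partial\partialb\varphi_s$ decays exponentially), and $\omega_{\varphi_s}^n\ge c\,e^{-f}\omega^n$ by Lemma \ref{lemma rough lower bound on X(varphi) in terms of f}, so the relevant geometry is controlled uniformly in $s$ near infinity.

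With these ingredients I would then run the Moser iteration of \cite{conlon2020steady}[Section 7.1]. Testing the subsolution inequality against truncated powers $w^{2p-1}$ (with cut-offs; the integrations by parts are legitimate by the exponential decay, cf.\ Lemma \ref{lemma integration by parts}) and integrating against $d\mu_s$ gives
\[
	\tfrac{2p-1}{p^{2}}\int_M \big|\nabla^{g_{\varphi_s}}(w^{p})\big|^{2}\,d\mu_s \ \le\ C\,\|F\|_{C^0}\int_M w^{2p-1}\,d\mu_s ,
\]
and feeding in the weighted Poincar\'e inequality of Proposition \ref{poincare inequality} together with the a priori weighted $L^{2}$-bound of Proposition \ref{weighted L^2 estimate} (which, via Chebyshev, controls $\mu_s(\{w>k\})$) allows one to iterate over the super-level sets $\{w>k\}$ and conclude $\sup_M w\le C$ with $C=C(M,g,F)$ independent of $s$; this is where the restriction $1<\varepsilon<2$ enters, through the finiteness and decay of the weighted integrals in Proposition \ref{weighted L^2 estimate}. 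I expect the iteration step to be the main obstacle: keeping all constants uniform in $s$ on the noncompact manifold, and reconciling the weight $e^{f}/f^{2}$ of the base $L^{2}$-estimate with the measure $e^{f}e^{sF}\,\operatorname{dV}_g$ governing the operator --- the polynomial factor $f^{2}$ (recall $f\ge 1$) being absorbed into the exponential gains produced at each stage of the iteration.
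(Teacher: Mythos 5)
Your preliminary reductions are fine: the inequality $\tfrac12\Delta_{g_{\varphi_s}}\varphi_s+\tfrac{X}{2}(\varphi_s)\le sF$ follows correctly from AM--GM, the identification of $\Delta_{g_{\varphi_s}}+X$ as a drift Laplacian self-adjoint for $d\mu_s=e^{sF}e^{f}\operatorname{dV}_g$ is right, and the observation that $1<\varepsilon<2$ enters through Proposition \ref{weighted L^2 estimate} is accurate. The gap is the iteration itself. A Moser (or De Giorgi) iteration that upgrades the weighted $L^2$-bound to an $L^\infty$-bound requires a \emph{Sobolev} inequality for the relevant weighted measure, i.e.\ a gain of integrability $\|u\|_{L^{2\chi}(\mu_s)}\le C\|\nabla u\|_{L^{2}(\mu_s)}+\dots$ with $\chi>1$. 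Proposition \ref{poincare inequality} is only a Poincar\'e inequality ($\chi=1$): plugging it into your energy estimate $\frac{2p-1}{p^2}\int|\nabla(w^p)|^2\,d\mu_s\le C\int w^{2p-1}\,d\mu_s$ returns $\int w^{2p}\,d\mu_s\lesssim\int w^{2p-1}\,d\mu_s$ with no improvement in the exponent, and since $\mu_s(M)=\infty$ (the weight $e^{f}\sim e^{2t}$ has infinite mass on the cylindrical end) you cannot even close this with H\"older. No Sobolev inequality for $e^{f}\operatorname{dV}_g$ or $e^{f}f^{-2}\operatorname{dV}_g$ is established in the paper, and none is obviously true here; likewise the claim that the $f^{2}$ discrepancy between the Poincar\'e weight and $d\mu_s$ is ``absorbed into exponential gains produced at each stage'' has no content — Moser iteration gains powers of $w$, not decay in $t$. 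So the central step of your argument is unsupported, and this is precisely the point where Conlon--Deruelle (and the paper) take a different road.

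What the paper actually does is split $M$ into a compact piece $K$ and its complement (Lemma \ref{lemma: construction of compactum}). On $K$ it converts the weighted $L^2$-bound of Proposition \ref{weighted L^2 estimate} into a pointwise lower bound at the minimum via B\l ocki's \emph{local} $L^1$-to-$L^\infty$ stability estimate for the complex Monge--Amp\`ere operator on Euclidean balls (Proposition \ref{prop: lower bound for varphi on compact sets}); this uses the pluripotential structure and a finite cover of $K$, so no global Sobolev inequality is needed. Outside $K$ it uses the barrier $\varphi_s+Ae^{-\varepsilon_0(f+\frac{X}{2}(\varphi_s))}$ and Hopf's maximum principle (Proposition \ref{prop: lower bound OUTSIDE a compact set}), which in turn requires the uniform bounds on $X(\varphi_s)$ and $X^2(\varphi_s)$ from Lemmas \ref{lemma: bound of X varphi from above on compact set} and \ref{lemma: unform bound on X^2 of varphi} — ingredients absent from your outline. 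If you want to salvage a purely integral approach you would need to prove a weighted Sobolev inequality adapted to $e^{f}\operatorname{dV}_g$ on the cylindrical end; as written, the proposal does not establish the proposition.
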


If we assumed that $F$ was compactly supported, the same argument as in \cite{conlon2020steady}[Proposition 7.10] would go through verbatim and provide the required bound on $\inf_M \varphi_s$, since we already obtained  uniform bounds on $\sup _M \varphi_{s } $ (Proposition  \ref{upper-C^0 estimate for varphi_s from above}) and on the weighted $L^2$-norm (Proposition \ref{weighted L^2 estimate}).

In our situation, however, we do \textit{not} assume that $F$ has compact support, but merely $F\in C^{\infty}_{\varepsilon, \,JX}(M)$ with $1<\varepsilon<2$. Thus, we proceed as follows.

First, we construct a compact domain $K\subset M$ so that we obtain a suitable barrier function on its complement $M\setminus K$, which will be useful for arguments relying on the maximum principle. 
In a second step, the argument in  \cite{conlon2020steady}[Proposition 7.10] gives a lower bound on 
$\inf_K \varphi_s$. And finally, we will see that the maximum principle yields a lower bound on $\inf_{M\setminus K} \varphi_{ s}$.

In other words, our strategy is to prove the following lemma, as well as the next two propositions:

\begin{lemma}[Construction of $K$] \label{lemma: construction of compactum} 
Let $1<\varepsilon <2$ and suppose $(\varphi_s)_{0\leq s\leq 1}$ is  a family in $C^{\infty}_{\varepsilon, \, JX}(M)$ solving  (\ref{one-parameter family of MA}). Then there exists a constant $0<\varepsilon_0<1$ and a compact domain $K\subset M$ such that for all $s\in [0,1]$, we have
\begin{align} \label{in proposition lower weighted bound: inequality for weight function}
	\left( \Delta_{g_{\varphi_s}} +X  \right) \left(e^{-\varepsilon_0 \left(  f+ \frac{X}{2}(\varphi_{ s})  \right)} \right) \leq - \frac{\varepsilon_0}{2} e^{-\varepsilon_0 \left(  f+ \frac{X}{2}(\varphi_{ s})  \right)} <0 \;\; \text{ on } \;\; M\setminus K,
\end{align}
where both $\varepsilon_0$  and $K$ only depend on $F\in C^{\infty}_{\varepsilon, \, JX}(M)$ and the geometry of $(M,g)$.
\end{lemma}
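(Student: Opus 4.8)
The goal is to find $\varepsilon_0 \in (0,1)$ and a compact $K$ so that $(\Delta_{g_{\varphi_s}} + X)$ applied to the weight $w_s := e^{-\varepsilon_0 h_s}$, with $h_s := f + \tfrac{X}{2}(\varphi_s)$, is bounded above by $-\tfrac{\varepsilon_0}{2} w_s$ outside $K$, uniformly in $s \in [0,1]$. The natural first step is to compute $(\Delta_{g_{\varphi_s}} + X)(w_s)$ explicitly. Since $X = \nabla^{g_{\varphi_s}} h_s$ by (\ref{in Lemma: a first bound on X(varphi); X is gradient}), writing $\Delta_s := \Delta_{g_{\varphi_s}}$ and using the chain rule,
\begin{align*}
	(\Delta_s + X)(e^{-\varepsilon_0 h_s}) = e^{-\varepsilon_0 h_s}\left( -\varepsilon_0 \Delta_s h_s + \varepsilon_0^2 |\nabla^{g_{\varphi_s}} h_s|^2_{g_{\varphi_s}} - \varepsilon_0 X(h_s) \right),
\end{align*}
and since $X(h_s) = g_{\varphi_s}(\nabla^{g_{\varphi_s}} h_s, \nabla^{g_{\varphi_s}} h_s) = |\nabla^{g_{\varphi_s}} h_s|^2_{g_{\varphi_s}} = |X|^2_{g_{\varphi_s}}$, this collapses to
\begin{align*}
	(\Delta_s + X)(e^{-\varepsilon_0 h_s}) = \varepsilon_0\, e^{-\varepsilon_0 h_s}\left( -\Delta_s h_s - (1-\varepsilon_0) |X|^2_{g_{\varphi_s}} \right).
\end{align*}
So the claim reduces to showing that $\Delta_s h_s + (1-\varepsilon_0)|X|^2_{g_{\varphi_s}} \geq \tfrac{1}{2}$ on $M \setminus K$, for suitable $\varepsilon_0$ and $K$.

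The second step is to control $\Delta_s h_s$. Here I would use the soliton/Monge-Amp\`ere structure: differentiating (\ref{one-parameter family of MA}) and the identity $X = \nabla^{g_{\varphi_s}} h_s$, one obtains (as in \cite{conlon2020steady}) an equation of the shape $\Delta_s h_s = $ (something controlled), or alternatively one uses $\tfrac12 \Delta_s h_s + \tfrac{X}{2}(h_s)$-type identities. More directly: since $h_s \to 2t + c$ in $C^\infty_{loc}$ on the cylindrical end (as $\varphi_s \in C^\infty_{\varepsilon, JX}(M)$ and $|X|_{g_{\varphi_s}}$ is uniformly bounded, $\tfrac{X}{2}(\varphi_s) = O(e^{-\varepsilon t})$ with all derivatives), and since $g_{\varphi_s}$ converges to $g_{cyl}$ on the end (with uniform rate, because $\varphi_s \in C^\infty_\varepsilon$ — though here one must be careful that the bound is \emph{uniform in $s$}; this is exactly where one invokes that the a priori estimates being proven will eventually close, or rather one uses only the qualitative fact that each $\varphi_s$ is ACyl together with the structure of the argument — see remark below), we get $\Delta_s h_s \to \Delta_{g_{cyl}}(2t) = 0$ and $|X|^2_{g_{\varphi_s}} \to |2\partial_t|^2_{g_{cyl}} = 4$ as $t \to \infty$. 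Hence $\Delta_s h_s + (1-\varepsilon_0)|X|^2_{g_{\varphi_s}} \to 4(1-\varepsilon_0) > 1$ provided $\varepsilon_0 < 3/4$; so fixing e.g. $\varepsilon_0 = 1/2$, there is $t_0$ so that this quantity exceeds $1/2$ for $t \geq t_0$, and we set $K := \{t \leq t_0\}$.

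The main obstacle is making the convergence $g_{\varphi_s} \to g_{cyl}$, $\Delta_s h_s \to 0$, $|X|^2_{g_{\varphi_s}} \to 4$ \textbf{uniform in the parameter $s$} with a compact set $K$ not depending on $s$ — because at this stage of the continuity method we do \emph{not} yet have uniform $C^k_\varepsilon$-bounds on $\varphi_s$ (those are the conclusion of Theorem \ref{theorem APRIORI estimates and regularity}, whose proof this lemma feeds into). The resolution, following \cite{conlon2020steady}, is that this lemma is only ever applied \emph{after} the upper bound (Proposition \ref{upper-C^0 estimate for varphi_s from above}) and the weighted $L^2$-bound (Proposition \ref{weighted L^2 estimate}) have been established, and in fact the barrier argument for $\inf_M \varphi_s$ only needs the inequality (\ref{in proposition lower weighted bound: inequality for weight function}) outside some fixed $K$ — so one may instead argue by contradiction/compactness: if no uniform $(\varepsilon_0, K)$ worked, extract a sequence $s_j$ and points $p_j$ with $t(p_j) \to \infty$ violating the inequality, then use that along \emph{any} fixed solution the estimate holds near infinity, combined with a limiting argument exploiting the translation-invariant structure of the cylindrical end and the fact that $h_{s_j} - 2t$ is harmonic-at-infinity with controlled decay. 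Concretely, I expect the cleanest route is: observe $|X|^2_{g_{\varphi_s}} = X(h_s)$ and the identity above reduces everything to estimating $\Delta_s h_s$ from below, then bound $\Delta_s h_s$ using the equation $(\Delta_s + X)h_s = \Delta_s h_s + |X|^2_{g_{\varphi_s}}$ together with $\tr_{\omega_{\varphi_s}}(\omega) \leq $ (controlled) — i.e. pass through the already-available geometry of $g_{\varphi_s}$ on the end, where the Monge-Amp\`ere equation (\ref{one-parameter family of MA}) with compactly supported or exponentially decaying $F$ forces $\omega_{\varphi_s}^n / \omega^n \to 1$, pinning down the metric asymptotically. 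Fixing $\varepsilon_0 = 1/2$ and choosing $t_0$ large (depending only on $F$ and on the fixed background geometry, via these asymptotics) then yields $K = \{t \le t_0\}$ and completes the proof.
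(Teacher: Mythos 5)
Your opening computation is exactly the paper's: writing $h_s = f + \tfrac{X}{2}(\varphi_s)$ and using $X = \nabla^{g_{\varphi_s}}h_s$, both you and the paper reduce the claim to showing $\Delta_{g_{\varphi_s}}h_s + (1-\varepsilon_0)\,|X|^2_{g_{\varphi_s}} \geq \tfrac12$ outside a fixed compact set, uniformly in $s$. You also correctly identify the real difficulty: at this stage of the continuity method there are no uniform $C^k_\varepsilon$-bounds on $\varphi_s$, so one cannot let $g_{\varphi_s}\to g_{cyl}$ ``uniformly in $s$''. But your proposed ways around this do not work, and this is a genuine gap. The compactness/contradiction sketch would need some compactness of the family $(\varphi_{s_j})$ on the end, which is precisely what is unavailable; the remark that (\ref{one-parameter family of MA}) forces $\omega_{\varphi_s}^n/\omega^n\to 1$ and thereby ``pins down the metric asymptotically'' is false (closeness of volume forms does not control the metric); and invoking a bound on $\operatorname{tr}_{\omega_{\varphi_s}}\omega$ is circular, since the $C^2$-estimate (Proposition \ref{proposition: uniform bound on metrics}) is proved only after the $C^0$-estimate that this lemma feeds into.

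The paper closes the gap with two exact identities that your outline does not exploit. First, applying $\mathcal{L}_{X/2}$ to (\ref{one-parameter family of MA}) yields (\ref{in lemma X^2 varphi: first derivative of MA equation}), which via the K\"ahler identity for the Riemannian Laplacian reads $\Delta_{g_{\varphi_s}}h_s = X(sF) - \tfrac{X^2}{2}(\varphi_s) + \Delta_g f$; the unknown-metric Laplacian of $h_s$ is thus expressed through fixed background quantities plus the radial second derivative $X^2(\varphi_s)$. Second, $|X|^2_{g_{\varphi_s}} = X(h_s) = X(f) + \tfrac{X^2}{2}(\varphi_s)$, so the $\tfrac{X^2}{2}(\varphi_s)$ terms cancel and one is left with $(\Delta_{g_{\varphi_s}}+X)\bigl(e^{-\varepsilon_0 h_s}\bigr) = \varepsilon_0 e^{-\varepsilon_0 h_s}\bigl(\varepsilon_0 |X|^2_{g_{\varphi_s}} - X(f) - X(sF) - \Delta_g f\bigr)$. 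Here $|X|^2_{g_{\varphi_s}}$ is uniformly bounded by Lemma \ref{lemma: unform bound on X^2 of varphi} (the maximum-principle bound on $X^2(\varphi_s)$, already established independently of any metric estimates), so its contribution is absorbed by taking $\varepsilon_0$ small, while $X(f)$, $X(F)$ and $\Delta_g f$ are fixed background functions whose asymptotics determine $K=\{t\leq t_0\}$ independently of $s$. Without the differentiated Monge--Amp\`ere identity and Lemma \ref{lemma: unform bound on X^2 of varphi}, the required uniformity in $s$ cannot be obtained, so your proof is incomplete as it stands.
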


\begin{prop}[Lower bound on a compact set]\label{prop: lower bound for varphi on compact sets}
	Let $1<\varepsilon <2$ and suppose $(\varphi_s)_{0\leq s\leq 1}$ is  a family in $C^{\infty}_{\varepsilon, \, JX}(M)$ solving  (\ref{one-parameter family of MA}). For the compact domain $K\subset M$ given by Lemma \ref{lemma: construction of compactum},  there exists a constant $C>0$ such that
	\begin{align*}
		\inf_{s\in [0,1]} \inf _K \varphi_{s } \geq -C,
	\end{align*}
	where $C$ only depends on $K$, $F \in C^{\infty}_{\varepsilon,\, JX}(M)$ and  the geometry of $(M,g)$.
\end{prop}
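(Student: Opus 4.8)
The plan is to adapt the argument of \cite{conlon2020steady}[Proposition 7.10] essentially verbatim, now that we have the compact domain $K$ from Lemma \ref{lemma: construction of compactum} at our disposal and the global ingredients already in place: the weighted upper bound $\sup_M e^{\varepsilon t}\varphi_s \le C$ (Proposition \ref{upper-C^0 estimate for varphi_s from above}) and the weighted $L^2$-bound $\int_M |\varphi_s|^2 \frac{e^f}{f^2}\operatorname{dV}_g \le C$ (Proposition \ref{weighted L^2 estimate}). The key observation is that the Monge-Amp\`ere equation (\ref{one-parameter family of MA}) forces a pointwise inequality in the reverse direction from the one used for the upper bound: writing $\omega_{\varphi_s}^n = e^{sF - \frac{X}{2}(\varphi_s)}\omega^n$ and diagonalising $i\partial\partialb\varphi_s$ with respect to $\omega$ at a point as in the proof of Proposition \ref{upper-C^0 estimate for varphi_s from above}, one gets a lower bound on the product $\prod_j(1+\lambda_j)$, hence a bound controlling how negative $\varphi_s$ can be on a fixed compact set. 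More precisely, the standard Moser-type iteration (or the Trudinger-type $L^2$-to-$L^\infty$ argument from \cite{conlon2020steady}, which rests only on the Sobolev inequality on the fixed background $(M,g)$) upgrades the $L^2$-bound on $K$ into a uniform $L^\infty$-bound: one fixes a slightly larger compact domain $K' \supset K$, works with the function $\varphi_s - \sup_{K'}\varphi_s$, which is nonpositive by Proposition \ref{upper-C^0 estimate for varphi_s from above} with a uniform bound on $\sup_{K'}\varphi_s$, and applies the local elliptic estimates for the operator $\tfrac12\Delta_{g_{\varphi_s}} + \tfrac{X}{2}$ together with the comparison $e^{sF - \frac{X}{2}(\varphi_s)} \le e^{|F|}\cdot(\dots)$; the iteration only needs the measure $e^f\operatorname{dV}_g$ and constants depending on $K'$, $F$, and the geometry of $(M,g)$, all of which are uniform in $s$.

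Concretely, I would proceed as follows. First, set $w_s := \varphi_s - \sup_{K'}\varphi_s \le 0$ on $K'$, where $K' = \{t \le t_1\}$ for some $t_1$ large enough that $K \subset \{t \le t_1 - 1\}$. By Proposition \ref{upper-C^0 estimate for varphi_s from above}, $0 \le \sup_{K'}\varphi_s \le C$ and $\int_{K'}|w_s|^2\operatorname{dV}_g \le 2\int_{K'}|\varphi_s|^2\operatorname{dV}_g + 2C^2\operatorname{Vol}(K') \le C'$ using Proposition \ref{weighted L^2 estimate} (the weight $\frac{e^f}{f^2}$ is bounded above and below by positive constants on $K'$). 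Second, observe that $w_s$ satisfies the same Monge-Amp\`ere equation as $\varphi_s$ (it differs by a constant), and the pointwise inequality coming from the arithmetic-geometric mean applied to (\ref{one-parameter family of MA}) in the \emph{opposite} direction gives $\Delta_{g}(\varphi_s) \ge n\big(e^{(sF - \frac{X}{2}(\varphi_s))/n} - 1\big)$; combined with a bound on $\frac{X}{2}(\varphi_s)$ (which is $O(e^{-\varepsilon t})$ and hence bounded on $K'$, cf. Lemma \ref{lemma rough lower bound on X(varphi) in terms of f}) and the boundedness of $F$, one obtains $\Delta_g w_s \ge -C_1$ on $K'$ for a uniform $C_1$. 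Third, apply the local mean-value / De Giorgi-Nash-Moser inequality for subsolutions of $\Delta_g + (\text{bounded zeroth/first order terms})$ on $K'$ relative to $K$: for a nonpositive function $v$ with $\Delta_g v \ge -C_1$ one has $\sup_K(-v) \le C_2\big(\|v\|_{L^2(K')} + C_1\big)$, with $C_2$ depending only on $K, K'$ and $(M,g)$. Applying this to $v = w_s$ yields $\inf_K w_s \ge -C_3$ uniformly in $s$, and therefore $\inf_K \varphi_s \ge \inf_K w_s \ge -C_3$, which is the claim.

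The main obstacle — and the only place requiring care — is verifying that the local elliptic (Moser/Trudinger) estimate can be applied with constants \emph{uniform in $s$}. This is subtle because the natural elliptic operator here, $\tfrac12\Delta_{g_{\varphi_s}} + \tfrac{X}{2}$, has coefficients (namely the inverse metric $g_{\varphi_s}^{i\bar j}$) that a priori degenerate as $s$ varies, so one cannot directly invoke the standard estimate for a \emph{fixed} operator. The resolution, following \cite{conlon2020steady}[Proposition 7.10] and the classical Ko\l{}odziej/Trudinger approach, is to work instead with the \emph{fixed} background Laplacian $\Delta_g$ and to use only the one-sided pointwise inequality $\Delta_g \varphi_s \ge -C_1$ that the Monge-Amp\`ere equation supplies directly (via $\operatorname{tr}_\omega(i\partial\partialb\varphi_s) \ge n(\det(\text{stuff}))^{1/n} - n$); then the Moser iteration is performed purely with respect to $g$, whose Sobolev constant on the fixed domain $K'$ is of course independent of $s$. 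I would spell out this reduction carefully, cite \cite{conlon2020steady}[Proposition 7.10] for the iteration itself, and note that the only inputs beyond that reference are our Propositions \ref{upper-C^0 estimate for varphi_s from above} and \ref{weighted L^2 estimate}, which replace the compactly-supported-$F$ bounds used there.
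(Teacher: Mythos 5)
Your overall framing (combine the upper bound of Proposition \ref{upper-C^0 estimate for varphi_s from above} and the weighted $L^2$ bound of Proposition \ref{weighted L^2 estimate} with a local $L^2\to L^\infty$ estimate on a finite cover of $K$) matches the paper's strategy, but the specific local estimate you invoke in your third step does not exist, and this is a genuine gap rather than a technicality. You set $v=w_s\le 0$ and claim that $\Delta_g v\ge -C_1$ yields $\sup_K(-v)\le C_2(\|v\|_{L^2(K')}+C_1)$. The De Giorgi--Nash--Moser local boundedness estimate controls the supremum of \emph{subsolutions}: to bound $\sup_K(-v)$ you would need $-v$ to satisfy $\Delta_g(-v)\ge -C_1$, i.e. $\Delta_g\varphi_s\le C_1$. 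What the Monge--Amp\`ere equation (via AM--GM, or simply via positivity of $\omega_{\varphi_s}$, since each eigenvalue satisfies $1+\lambda_j>0$) supplies is the \emph{opposite} inequality $\Delta_g\varphi_s\ge -2n$; the upper bound on the product $\prod_j(1+\lambda_j)$ coming from (\ref{one-parameter family of MA}) gives no upper bound on $\sum_j\lambda_j$ (one eigenvalue can blow up while another approaches $-1$). Indeed, a uniform bound $\Delta_g\varphi_s\le C$ is equivalent to the $C^2$ estimate, which in this scheme is only proved \emph{after} the $C^0$ bound (Proposition \ref{proposition: uniform bound on metrics}). With the inequality you actually have, $-v$ is a supersolution, and the relevant statement for supersolutions is the weak Harnack inequality, which bounds $\inf(-v)$ from below by an $L^p$ norm for small $p$ --- the wrong direction. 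So the linear iteration with respect to the fixed background Laplacian cannot produce the lower bound on $\inf_K\varphi_s$.

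The paper (following Conlon--Deruelle, which in turn relies on B{\l}ocki rather than on a Trudinger/Moser iteration) replaces the linear estimate by a genuinely pluripotential-theoretic one: on each ball of a finite cover of $K$ one writes $\omega=i\partial\partialb G$ and observes that $u=\varphi_s+G$ (shifted by $\sup_M\varphi_s$ if necessary) is plurisubharmonic and nonpositive, with Monge--Amp\`ere measure $\omega_{\varphi_s}^n$ whose density relative to $\omega^n$ is bounded above by $e^{|F|+f}$ on a neighbourhood of $K$ --- this upper bound uses only the \emph{lower} bound $\frac{X}{2}(\varphi_s)\ge 1-f$ from Lemma \ref{lemma rough lower bound on X(varphi) in terms of f}, which is available at this stage (the upper bound on $X(\varphi_s)$ of Lemma \ref{lemma: bound of X varphi from above on compact set} is proved later and itself uses the present proposition, so it must not be used here). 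B{\l}ocki's estimate then bounds $\sup_B|u|$ by $a+(c_n\cdot 2r\cdot a^{-1})^{2n}\int_B|u|\,\mathrm{dV}_g\cdot(\sup_B\omega_{\varphi_s}^n/\omega^n)^2$; the integral is controlled by Cauchy--Schwarz and Proposition \ref{weighted L^2 estimate}, and finiteness of the cover gives uniformity. If you prefer an iteration-style argument, the correct analogue is the Yau/Ko{\l}odziej global iteration, which works with $\int(-\varphi_s)^{p-1}(\omega^n-\omega_{\varphi_s}^n)$ expanded as a telescoping sum of positive mixed terms $\omega^j\wedge\omega_{\varphi_s}^{n-1-j}$ --- again an essentially nonlinear input, not the Sobolev inequality for $\Delta_g$ alone.
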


\begin{prop}[Lower bound outside of a compact set] \label{prop: lower bound OUTSIDE a compact set}
Let $1<\varepsilon <2$ and suppose $(\varphi_s)_{0\leq s\leq 1}$ is  a family in $C^{\infty}_{\varepsilon, \, JX}(M)$ solving  (\ref{one-parameter family of MA}). For the compact domain $K\subset M$ constructed in Lemma \ref{lemma: construction of compactum}, there exists a constant $C>0$ such that
\begin{align*}
	\inf_{s\in[0,1]} \inf_{M\setminus K} \varphi_{ s} \geq -C,
\end{align*}
where  $C$ only depends on $K$, $F \in C^{\infty}_{\varepsilon,\, JX}(M)$ and  the geometry of $(M,g)$.
\end{prop}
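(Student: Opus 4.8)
The plan is to use a maximum-principle argument on $M \setminus K$, comparing $\varphi_s$ with a barrier built from the weight function $w_s := e^{-\varepsilon_0(f + \frac{X}{2}(\varphi_s))}$ supplied by Lemma \ref{lemma: construction of compactum}. First I would record the differential inequality satisfied by $\varphi_s$ on all of $M$: exactly as in the proof of Proposition \ref{upper-C^0 estimate for varphi_s from above}, expanding $\omega_{\varphi_s}^n$ in normal coordinates and using concavity of $\log$, one gets at each point (with respect to the metric $g_{\varphi_s}$) a reverse inequality, namely $\frac12 \Delta_{g_{\varphi_s}} \varphi_s + \frac{X}{2}(\varphi_s) \le sF$ when read against $\omega_{\varphi_s}$, but the one I actually want is $\frac12 \Delta_{g_{\varphi_s}}(-\varphi_s) + \frac{X}{2}(-\varphi_s) \le -sF + (\text{something nonnegative})$; more precisely, since $(\omega + i\partial\bar\partial\varphi_s)^n = e^{sF - \frac{X}{2}(\varphi_s)}\omega^n$, tracing with $\omega_{\varphi_s}$ gives $n\, i\partial\bar\partial\varphi_s \wedge \omega_{\varphi_s}^{n-1} = \frac12\Delta_{g_{\varphi_s}}\varphi_s \cdot \omega_{\varphi_s}^n$ and the inequality $\operatorname{tr}_{\omega_{\varphi_s}}\omega \le n e^{-(sF - \frac{X}{2}(\varphi_s))/n}\,(\det)^{\dots}$... — in any case the clean statement is that $(\Delta_{g_{\varphi_s}} + X)(-\varphi_s) \le -2sF \le 2|F|$ on $M$. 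Since $F \in C^\infty_{\varepsilon}(M)$ with $\varepsilon > 1$, we have $|F| \le \|F\|_{C^0_\varepsilon} e^{-\varepsilon t}$, which decays faster than the barrier $w_s$ decays (the barrier decays like $e^{-2\varepsilon_0 t}$ with $\varepsilon_0 < 1$, hence slower, so $w_s$ dominates $|F|$ once $t$ is large, up to a multiplicative constant).

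Next, set $h_s := -\varphi_s - A\, w_s$ on $M \setminus K$, where $A > 0$ is a constant (independent of $s$) to be chosen. On $M \setminus K$, Lemma \ref{lemma: construction of compactum} gives $(\Delta_{g_{\varphi_s}} + X) w_s \le -\tfrac{\varepsilon_0}{2} w_s$, so
\begin{align*}
(\Delta_{g_{\varphi_s}} + X) h_s \;\ge\; -2|F| + A\tfrac{\varepsilon_0}{2} w_s.
\end{align*}
Using $|F| \le \|F\|_{C^0_\varepsilon} e^{-\varepsilon t}$ and $w_s \ge c\, e^{-2\varepsilon_0 t}$ for a uniform $c>0$ (which holds because $f \le 2t + O(1)$ and $\frac{X}{2}(\varphi_s) = O(e^{-\varepsilon t})$ is bounded by Assumption \ref{section MA: asympotitc behaviour of hamiltonian function} and $\varphi_s \in C^\infty_\varepsilon$), and since $2\varepsilon_0 < 2 < 2\varepsilon$ so $e^{-2\varepsilon_0 t} \ge e^{-\varepsilon t}$ for $t \ge 0$, we can choose $A$ large enough (depending only on $\|F\|_{C^0_\varepsilon}$, $\varepsilon_0$, $c$) so that $(\Delta_{g_{\varphi_s}} + X) h_s \ge 0$ on $M \setminus K$ for every $s \in [0,1]$. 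Thus $h_s$ is a subsolution of the operator $\Delta_{g_{\varphi_s}} + X$ on $M \setminus K$.

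Now apply the maximum principle to $h_s$ on $M \setminus K$. The boundary of this region is $\partial K$, and "at infinity" $h_s \to 0$ because $\varphi_s \in C^\infty_\varepsilon(M)$ and $w_s \to 0$. Exhausting $M \setminus K$ by the domains $\{t \le t_k\} \setminus K$ and applying Hopf's maximum principle (the vector field $X = 2\partial_t$ outside a compact set is harmless, and $JX$-invariance is irrelevant here), a subsolution attains no interior maximum, so
\begin{align*}
\sup_{M \setminus K} h_s \;\le\; \max\Big\{ 0,\ \sup_{\partial K} h_s \Big\} \;\le\; \max\Big\{0,\ \sup_{\partial K}(-\varphi_s)\Big\} \;\le\; \max\Big\{0,\ -\inf_K \varphi_s\Big\},
\end{align*}
where I used $w_s \ge 0$. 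By Proposition \ref{prop: lower bound for varphi on compact sets}, $\inf_K \varphi_s \ge -C_1$ uniformly in $s$, so $\sup_{M\setminus K} h_s \le C_1$. Hence on $M \setminus K$,
\begin{align*}
-\varphi_s \;=\; h_s + A w_s \;\le\; C_1 + A \sup_M w_s,
\end{align*}
and since $w_s = e^{-\varepsilon_0(f + \frac{X}{2}(\varphi_s))}$ is uniformly bounded above (because $f + \frac{X}{2}(\varphi_s) \ge 1$ by Lemma \ref{lemma rough lower bound on X(varphi) in terms of f}, so $w_s \le e^{-\varepsilon_0} \le 1$), we conclude $-\varphi_s \le C_1 + A =: C$ on $M \setminus K$, i.e. $\inf_{M\setminus K}\varphi_s \ge -C$ uniformly in $s$, as claimed.

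The main obstacle I anticipate is twofold: first, getting the correct sign and form of the differential inequality for $-\varphi_s$ against the metric $g_{\varphi_s}$ — one must be careful that the concavity argument produces an inequality in the right direction when traced with $\omega_{\varphi_s}$ rather than $\omega$ (this may in fact force working with a slightly different elementary-symmetric-function inequality, or simply noting $\Delta_{g_{\varphi_s}}\varphi_s = \operatorname{tr}_{\omega_{\varphi_s}}(i\partial\bar\partial\varphi_s) = n - \operatorname{tr}_{\omega_{\varphi_s}}\omega \ge n - n e^{-(sF - \frac{X}{2}\varphi_s)/n}$, which after a short manipulation gives what is needed). Second, verifying the uniform lower bound $w_s \ge c\,e^{-2\varepsilon_0 t}$ on $M \setminus K$ and the uniform upper bound, which both hinge on the uniform control of $\frac{X}{2}(\varphi_s)$ — this is exactly where $\varepsilon > 1$ enters, ensuring $\frac{X}{2}(\varphi_s) = O(e^{-\varepsilon t})$ is uniformly bounded, and it is consistent with Remark \ref{remark on rate of convergence}. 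Everything else is a routine maximum-principle exhaustion of the type already used repeatedly in the paper (e.g.\ in the proof of Theorem \ref{theorem ACyl drift laplace is iso} and Proposition \ref{upper-C^0 estimate for varphi_s from above}).
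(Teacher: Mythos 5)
Your overall strategy is exactly the paper's: compare $\varphi_s$ with the barrier $A\,e^{-\varepsilon_0(f+\frac{X}{2}(\varphi_s))}$ from Lemma \ref{lemma: construction of compactum}, use the supersolution/subsolution inequality coming from concavity of $\log$ traced against $\omega_{\varphi_s}$, apply the maximum principle on an exhaustion of $M\setminus K$, control the boundary term by Proposition \ref{prop: lower bound for varphi on compact sets}, and finish with $f+\frac{X}{2}(\varphi_s)\geq 1$. All of that is correct.

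The genuine gap is in your justification of the lower bound $w_s\geq c\,e^{-2\varepsilon_0 t}$ with a \emph{uniform} $c$. You argue that $\frac{X}{2}(\varphi_s)=O(e^{-\varepsilon t})$ is uniformly bounded ``because $\varphi_s\in C^\infty_\varepsilon$'', but the implied constant is $\|\varphi_s\|_{C^1_\varepsilon}$, which is precisely one of the quantities the a priori estimates are meant to control and which is \emph{not} uniformly bounded at this stage; the uniform bound on $X(\varphi_s)$ is only established afterwards (Corollary \ref{proposition: estimate on radial derivative}), and its proof uses the lower bound on $\inf_M\varphi_s$ that you are in the middle of proving. So as written the argument is circular. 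What \emph{is} available here is only Lemma \ref{lemma: bound of X varphi from above on compact set}, which gives $X(\varphi_s)\leq Ct+C$ on $M\setminus K$ with a possibly large uniform $C$; hence $\varepsilon_0\bigl(f+\frac{X}{2}(\varphi_s)\bigr)\leq \varepsilon_0 C' t+C'$, and the barrier only satisfies $w_s\geq e^{-C'}e^{-\varepsilon_0 C' t}$, not $c\,e^{-2\varepsilon_0 t}$. The repair is the one the paper makes: shrink $\varepsilon_0$ so that $\varepsilon_0 C'<\varepsilon$ (Lemma \ref{lemma: construction of compactum} permits any sufficiently small $\varepsilon_0$), which yields $w_s\geq e^{-C'}e^{-\varepsilon t}\geq e^{-C'}\|F\|_{C^0_\varepsilon}^{-1}|F|$, and then choose $A$ accordingly. (A second, smaller slip: $2\varepsilon_0<2<2\varepsilon$ does not imply $e^{-2\varepsilon_0 t}\geq e^{-\varepsilon t}$ — you would need $2\varepsilon_0\leq\varepsilon$ — but this is moot once $\varepsilon_0$ is chosen small as above.)
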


Clearly, Proposition \ref{prop: lower bound OUTSIDE a compact set}, together with Proposition \ref{prop: lower bound for varphi on compact sets},  yield a uniform lower bound on $\inf _M \varphi_{ s}$, as claimed in Proposition \ref{proposition: lower bound on inf varphi}.

Since  Lemma \ref{lemma: construction of compactum}  requires some preparation, let us for the moment assume that we are given the compact set $K\subset M$ from Lemma \ref{lemma: construction of compactum} and see how this implies the lower bound on $\inf_K \varphi_{ s}$, i.e. Proposition \ref{prop: lower bound for varphi on compact sets}.

\begin{proof}[Proof of Proposition \ref{prop: lower bound for varphi on compact sets}]
	We follow the proof of \cite{conlon2020steady}[Proposition 7.10], which in turn relies on B\l ocki's  local argument \cite{blocki2005uniform}.

	Let $K\subset M$ be the compact domain constructed in Lemma \ref{lemma: construction of compactum}.
	 For each  $p\in K$, let $V$ be a chart around $p$ so that $\omega$ can be written as $\omega= i \partial \partialb G$. According to the proof of \cite{blocki2005uniform}[Theorem 4],  there are constants $a,r>0$ only depending on the local geometry of $(M,g)$ around $p$ such that $G<0$ on $B_g(p,2r)$, $G$ is minimal at $p$ and $G\geq G(p)+a $ on $B_g(p,2r) \setminus B_g(p,r)$, where $B_g(p,2r) \subset V$ is the  geodesic ball of radius $2r$ around $p$. Since $K$ is compact, we can cover $K$ by a \textit{finite}  number of such balls $B_g(p,2r)$.
	 
	 For a given $s\in [0,1]$, we consider $\varphi_{ s} $ solving (\ref{one-parameter family of MA}) and point out that there exists a  $p_s\in K$ such that $\varphi_{ s}(p_s)= \inf _K \varphi_{ s}$. Then $p_s \in B_g(p,2r)$ for one of the balls constructed above.   Define a plurisubharmonic function $u: B_g(p,2r) \to \mathbb{R}_{\leq 0}$ by
	\begin{align*}
		u= \begin{cases}
		\varphi_s + G & \text{if } \sup _M \varphi_{ s} \leq 0, \\
		\varphi_s - \sup _M \varphi_{ s} + G& \text{otherwise},
		\end{cases}
	\end{align*}
	so that \cite{blocki2005uniform}[Proposition 3] implies the following estimate
	\begin{align}\label{in proposition lower bound for varphi: blockis esimate for u}
		\sup _{B_g(p,2r)} |u| \leq a + \left(c_n \cdot 2r \cdot a^{-1}   \right)^{2n} \int _{B_g(p,2r)}|u| \operatorname{dV}_g \cdot \left(\sup _{B_g(p,2r)} \frac{\omega^n_{\varphi_{ s}}}{\omega^n } \right)^2
	\end{align}
	where $\omega_{\varphi_{ s}}= \omega+ i \partial \partialb \varphi_{ s}$ and $c_n>0$ is a constant only depending on the  dimension $n$ of $M$.
	
	 We now explain how to estimate the terms appearing on the right hand side of (\ref{in proposition lower bound for varphi: blockis esimate for u}). 
	We begin by using (\ref{one-parameter family of MA}) together with Lemma \ref{lemma rough lower bound on X(varphi) in terms of f} to obtain
	\begin{align}\label{in proposition lower bound for varphi: bounding volume form fraction from above}
		\sup_{B_g(p,2r)} \frac{\omega_{\varphi_{ s}}^n }{\omega^n} = \sup_{B_g(p,2r)} e^{s\cdot F - \frac{X}{2}(\varphi_{ s})} 
		 \leq \sup_{N_{2r}(K) }e^{|F|+f}=:C_1.
	\end{align}
	Here $N_{2r}(K)$ denotes the tabular neighborhood of radius $2r$ around $K$. Note that since $K$ is compact, the constant $C_1$ is indeed finite. 
	
	Next, we focus on the integral appearing in (\ref{in proposition lower bound for varphi: blockis esimate for u}) and first consider  the case $\sup _M \varphi_{ s}\leq 0$. We continue:
	\begin{align}
	\nonumber &	\int _{B_g(p,2r)} |u|\operatorname{dV}_g \\
\nonumber	&\leq \int _{B_g(p,2r)} |\varphi_{ s}| \operatorname{dV}_g+ \sup _M \varphi_{ s} - G(p)  \\
\nonumber & \leq \max \left\{ 1, \operatorname{Vol}(B_g(p,2r)) \right\}\left( \left(\int _{B_g(p,2r)} |\varphi_{ s}|^2 \operatorname{dV}_g\right)^{\frac{1}{2}}   + C-G(p) \right)\\
	\nonumber	&\leq \max \left\{ 1, \operatorname{Vol}(N_{2r}(K)) \right\} \left( \sup _M \frac{e^{-f}}{f^2 } \cdot \left(\int _M |\varphi_{ s}|^2 \frac{e^f}{f^2 } \operatorname{dV}_g\right)^{\frac{1}{2}}   + C-G(p) \right) \\
\nonumber &	\leq \max \left\{ 1, \operatorname{Vol}(N_{2r}(K)) \right\} \left( \sup _M \frac{e^{-f}}{f^2 } \cdot C  + C-G(p) \right) =:C_2,
	\end{align}
	where we used Cauchy-Schwarz and Proposition \ref{upper-C^0 estimate for varphi_s from above} in the second line and Proposition \ref{weighted L^2 estimate} in the last one. Combining this estimate with (\ref{in proposition lower bound for varphi: blockis esimate for u}) and (\ref{in proposition lower bound for varphi: bounding volume form fraction from above}) then leads to
	\begin{align}
	\nonumber	-\inf _K \varphi_{ s} = - \varphi_{ s}(p_s)&= -u(p_s) - \sup_M \varphi_{ s} + G(p_s) \\ 
		&\leq \sup_{B_g(p,2r)} |u|    \label{in prop: the final lower bound on compact subset}   \\
		&\leq a+\left(c_n \cdot 2r \cdot a^{-1}   \right)^{2n} \cdot C_2 \cdot C_1 ^2.  \nonumber
	\end{align}
Note that a priori, the constants in the last line of (\ref{in prop: the final lower bound on compact subset}) depend on the ball $B_g(p,2r)$  containing the point in which $\varphi_{ s}$ attains its minimum inside $K$. However, since $K$ is covered by only \textit{finitely}  many of such balls $B_g(p,2r)$, (\ref{in prop: the final lower bound on compact subset}) does indeed prove the required uniform lower bound on  $\inf _K \varphi_{ s}$. 
Observing that  the above estimates hold in the case  $\sup_M\varphi_{ s}>0 $ as well then finishes the proof. 
\end{proof}

Thus, it only remains to show Lemma \ref{lemma: construction of compactum} and Proposition \ref{prop: lower bound OUTSIDE a compact set}. We begin with the following crucial observation. 

\begin{lemma}[Uniform bound on $X^2(\varphi_{ s})$] \label{lemma: unform bound on X^2 of varphi}
Let $1<\varepsilon <2$ and suppose $(\varphi_s)_{0\leq s\leq 1}$ is  a family in $C^{\infty}_{\varepsilon, \, JX}(M)$ solving  (\ref{one-parameter family of MA}). Then there exists a constant $C>0$ such that 
\begin{align*}
	\sup _{s\in[0,1]} \sup_M |X(X(\varphi_{ s})) |\leq C,
\end{align*}
where $C$ only depends on $F\in C^{\infty}_{\varepsilon, \, JX}(M)$ and the geometry of $(M,g)$.
\end{lemma}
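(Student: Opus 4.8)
The plan is to reduce the claimed estimate to a uniform upper bound on the nonnegative function $P_s := |X|^2_{g_{\varphi_s}}$ on $M$, and to obtain that bound from an elliptic differential inequality for $P_s$ together with the maximum principle. Write $w_s := X(\varphi_s)$ and $h_s := f + \tfrac{X}{2}(\varphi_s) = f + \tfrac12 w_s$. Since $\varphi_s$ and $f$ are $JX$-invariant, the computation in the proof of Lemma~\ref{lemma rough lower bound on X(varphi) in terms of f} gives $X = \nabla^{g_{\varphi_s}} h_s$; moreover $\mathcal L_{JX}\omega_{\varphi_s} = \mathcal L_{JX}\omega + i\partial\partialb\bigl(JX(\varphi_s)\bigr) = 0$ and $\mathcal L_{JX}J = 0$, so $JX$ is Killing for $g_{\varphi_s}$ as well. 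Exactly as in the proof of Lemma~\ref{lemma reducing to MA equation} one then obtains $\tfrac12\mathcal L_X\omega = i\partial\partialb f$ and $\tfrac12\mathcal L_X\omega_{\varphi_s} = i\partial\partialb h_s$, together with the pointwise identities $(i\partial\partialb u)(X,JX) = \tfrac12 X(X(u))$ for $JX$-invariant $u$, $\omega(X,JX) = |X|^2_g$, and $\omega_{\varphi_s}(X,JX) = P_s$. In particular $P_s = |X|^2_g + \tfrac12 X(w_s) = X(h_s) \ge 0$ and $X(X(\varphi_s)) = 2\bigl(P_s - |X|^2_g\bigr)$. Since $|X|_g$ is bounded on $M$ by a constant depending only on the geometry (as $X = 2\Phi_*\partial_t$ near infinity), it suffices to bound $\sup_M P_s$ uniformly in $s \in [0,1]$.

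The first step is to derive the scalar equation
\begin{align*}
\Delta_{g_{\varphi_s}} h_s + P_s = G_s, \qquad G_s := s\,X(F) + \Delta_g f + |X|^2_g .
\end{align*}
This follows by taking $\log$ in (\ref{one-parameter family of MA}) and differentiating along $X$, using $\mathcal L_X(\omega_{\varphi_s}^n)/\omega_{\varphi_s}^n = \operatorname{tr}_{\omega_{\varphi_s}}(\mathcal L_X\omega_{\varphi_s}) = 2\operatorname{tr}_{\omega_{\varphi_s}}(i\partial\partialb h_s) = \Delta_{g_{\varphi_s}} h_s$ (by (\ref{riemannian laplace operator kähler relation})), together with $\mathcal L_X\omega^n/\omega^n = \operatorname{tr}_\omega(\mathcal L_X\omega) = \Delta_g f$ and $\tfrac12 X(w_s) = P_s - |X|^2_g$. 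The point is that $G_s$ is bounded on $M$ by a constant $C_0 = C_0(F,g)$ independent of $s$: one has $|X(F)| \le \|F\|_{C^1_\varepsilon}\sup_M|X|_g$ since $F \in C^\infty_\varepsilon(M)$, the function $|X|^2_g$ is bounded, and $\Delta_g f \to 0$ at infinity because $f - 2t \in C^\infty_\delta(M)$ and $\Delta_{g_{cyl}} t = 0$.

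The second step is a Bochner argument. Applying the Bochner formula to $h_s$ and inserting $\nabla^{g_{\varphi_s}} h_s = X$ and $\Delta_{g_{\varphi_s}} h_s = G_s - P_s$ gives
\begin{align*}
\tfrac12\Delta_{g_{\varphi_s}} P_s + X(P_s) = \bigl|\nabla^2_{g_{\varphi_s}} h_s\bigr|^2 + X(G_s) + \operatorname{Ric}_{g_{\varphi_s}}(X,X).
\end{align*}
For the Ricci term one uses $\operatorname{Ric}(\omega_{\varphi_s}) = \operatorname{Ric}(\omega) + \tfrac12 i\partial\partialb w_s - s\,i\partial\partialb F$, evaluated on $(X,JX)$; since $F$ and $w_s$ are $JX$-invariant and $X(X(w_s)) = 2X(P_s) - 2X(|X|^2_g)$ by the identity above, this yields
\begin{align*}
\operatorname{Ric}_{g_{\varphi_s}}(X,X) = \operatorname{Ric}_g(X,X) + \tfrac12 X(P_s) - \tfrac12 X(|X|^2_g) - \tfrac{s}{2}X(X(F)).
\end{align*}
Substituting and absorbing the $X(P_s)$-terms leaves
\begin{align*}
\tfrac12\Delta_{g_{\varphi_s}} P_s + \tfrac12 X(P_s) = \bigl|\nabla^2_{g_{\varphi_s}} h_s\bigr|^2 + R_s,
\end{align*}
where $R_s := X(G_s) + \operatorname{Ric}_g(X,X) - \tfrac12 X(|X|^2_g) - \tfrac{s}{2}X(X(F))$ is bounded on $M$ by a constant $C_1 = C_1(F,g)$ independent of $s$, each summand being a fixed function (or $s$-multiple thereof) with bounded derivative along $X$, using $F \in C^\infty_\varepsilon(M)$, $|X|_g, |\nabla^g X|_g = O(1)$, and the ACyl asymptotics of $g$.

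The third step is the maximum principle. Using $\bigl|\nabla^2_{g_{\varphi_s}} h_s\bigr|^2 \ge \tfrac1{2n}(\Delta_{g_{\varphi_s}} h_s)^2 = \tfrac1{2n}(G_s - P_s)^2$ and the fact that $P_s$ is continuous and tends to $4$ along the cylindrical end (so $\sup_M P_s$ is attained at some interior point $p$ as soon as it exceeds $4$), one has $\Delta_{g_{\varphi_s}} P_s(p) \le 0$ and $X(P_s)(p) = 0$, whence $0 \ge \tfrac1{2n}\bigl(G_s(p) - P_s(p)\bigr)^2 - C_1$ and thus $P_s(p) \le C_0 + \sqrt{2nC_1}$. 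Therefore $\sup_M P_s \le \max\{4,\,C_0 + \sqrt{2nC_1}\}$, a bound depending only on $n$, $F$ and the geometry of $(M,g)$, and finally $|X(X(\varphi_s))| = 2\bigl|P_s - |X|^2_g\bigr| \le 2\sup_M P_s + 2\sup_M|X|^2_g$, which is the assertion. The main obstacle is the second step: one must check that the two terms of $\tfrac12\Delta_{g_{\varphi_s}} P_s + X(P_s)$ that are a priori uncontrolled — namely $\operatorname{Ric}_{g_{\varphi_s}}(X,X)$ and $\bigl|\nabla^2 h_s\bigr|^2$ — combine favourably, so that after eliminating $\operatorname{Ric}_{g_{\varphi_s}}(X,X)$ via the Monge-Amp\`ere equation the first-order $X(P_s)$-contributions partially cancel and the surviving quadratic lower bound on $\bigl|\nabla^2 h_s\bigr|^2$ closes the estimate, with every remaining term bounded uniformly in $s \in [0,1]$.
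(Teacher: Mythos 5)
Your proof is correct, and it reaches the paper's key differential inequality by a genuinely different computation. Both arguments estimate the same quantity --- your $P_s = |X|^2_{g_{\varphi_s}} = X(f) + \tfrac{X^2}{2}(\varphi_s)$ is exactly the function the paper bounds --- and both close with the identical Cauchy--Schwarz plus maximum-principle step at an interior maximum (the paper's remark that this quantity tends to $1$ at infinity should indeed read $4$, as you have it, but this is immaterial to the argument). The difference lies in how the inequality $(\Delta_{g_{\varphi_s}}+X)(P_s) \geq c\,(G_s-P_s)^2 - C$ is produced: the paper differentiates the Monge-Amp\`ere equation \emph{twice} along $X/2$ and exploits the algebraic identity $n(n-1)\,\alpha^2\wedge\omega_{\varphi_s}^{n-2} = ((\operatorname{tr}_{\omega_{\varphi_s}}\alpha)^2 - |\alpha|^2_{g_{\varphi_s}})\,\omega_{\varphi_s}^n$, the $(\operatorname{tr}_{\omega_{\varphi_s}}\alpha)^2$ terms cancelling to leave the good term $4|i\partial\bar\partial h_s|^2_{g_{\varphi_s}}$; you instead differentiate only once (your $\Delta_{g_{\varphi_s}}h_s + P_s = G_s$ is precisely the paper's first-derivative identity, since $X(f)=|X|^2_g$) and then apply the Riemannian Bochner formula to $h_s$ with respect to $g_{\varphi_s}$, eliminating $\operatorname{Ric}_{g_{\varphi_s}}(X,X)$ via $\operatorname{Ric}(\omega_{\varphi_s}) = \operatorname{Ric}(\omega) + \tfrac12 i\partial\bar\partial w_s - s\,i\partial\bar\partial F$. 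Your route is arguably more conceptual: the quadratic good term $|\nabla^2_{g_{\varphi_s}}h_s|^2$ comes for free from Bochner, the partial cancellation of the $X(P_s)$-terms plays the role of the paper's cancellation of $(\operatorname{tr}\alpha)^2$, and viewing the quantity as $|X|^2_{g_{\varphi_s}}\geq 0$ makes the lower bound on $X^2(\varphi_s)$ immediate (the paper derives it the same way, just separately at the end); the paper's route stays entirely within K\"ahler-form identities and never invokes the Riemannian Bochner formula. Your bookkeeping of conventions --- $(i\partial\bar\partial u)(X,JX)=\tfrac12 X(X(u))$ for $JX$-invariant $u$, and the uniform boundedness of $G_s$ and $R_s$ from $F\in C^{\infty}_{\varepsilon}(M)$ and the ACyl asymptotics --- is consistent with the paper's, so I see no gap.
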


\begin{proof}
	The idea is to obtain a differential equality to which the maximum principle applies, so that the desired estimate follows. 
	
	First, we differentiate (\ref{one-parameter family of MA}) in the direction of $\frac{X}{2}$, i.e. apply $\mathcal{L}_{\frac{X}{2}}$, which leads to
	\begin{align} \label{in lemma X^2 varphi: first derivative of MA equation}
	n i \partial \partialb \left( f + \frac{X}{2}(\varphi_{ s})  \right) \wedge \omega_{\varphi_{ s}}^{n-1}  = &\left( \frac{X}{2}(sF) - \frac{X^2}{4} (\varphi_{ s}) + \frac{1}{2} \Delta_g f \right) \omega_{\varphi_{ s}}^ n 
	\end{align}
where we abbreviated $X(X(\cdot))=X^2(\cdot)$. Here, we also used two formulas, $\mathcal{L}_{\frac{X}{2}} \omega = i \partial \partialb f$ and $\mathcal{L}_{\frac{X}{2}} \omega_{\varphi_{ s}}= i \partial \partialb f + \frac{X}{2}(\varphi_{ s})$, whose computations can be found in the proof Lemma \ref{lemma reducing to MA equation}. Next, recall that for any real $(1,1)$-form $\alpha$, we have
\begin{align}\label{in lemma X^2 bound: formula for any alpha}
	n (n-1) \alpha^2 \wedge\omega_{\varphi_{ s}} ^{n-2} = \left( (\operatorname{tr}_{\omega_{\varphi_{ s}}} (\alpha))^2 - |\alpha|^2_{g_{\varphi_{ s}}}  \right) \omega_{\varphi_{ s}}^{n},
\end{align}
where $\operatorname{tr}_{\omega_{\varphi_{ s}}} (\alpha)$ is  defined by 
\begin{align} \label{in lemma definition of trace}
  n \alpha \wedge \omega_{\varphi_{ s}} ^{n-1}=\operatorname{tr}_{\omega_{\varphi_{ s}}} (\alpha) \,\omega_{\varphi_{ s}} ^n .
\end{align}
Setting $\alpha := \mathcal{L}_{\frac{X}{2}} \omega_{\varphi_{ s}} = i \partial \partialb f + \frac{X}{2} (\varphi_{ s})$ and applying $\mathcal{L}_{\frac{X}{2}}$ to the left-hand side of (\ref{in lemma definition of trace}) then yields
\begin{equation}
	\begin{aligned} \label{in lemma differentiate LHS}
		\mathcal{L}_{\frac{X}{2}} \left( n \alpha \wedge \omega_{\varphi_{ s}} ^{n-1}  \right) 
		=& n \left( \mathcal{L}_{\frac{X}{2}} \alpha \right) \wedge \omega^{n-1}_{\varphi_{ s}} + n (n-1) \alpha^2 \wedge \omega^{n-2}_{\varphi_{ s}} \\
		=& \frac{n}{2} i \partial \partialb \left( X(f) + \frac{X^2}{2} (\varphi_{ s})  \right) \wedge \omega_{\varphi_{ s}}^{n-1} \\
		&+ \left( (\operatorname{tr}_{\omega_{\varphi_{ s}}} (\alpha))^2 - |\alpha|^2_{g_{\varphi_{ s}}}  \right) \omega_{\varphi_{ s}}^{n},
	\end{aligned}
\end{equation}
where we used $\mathcal{L}_{X} \alpha = i \partial \partialb X(f) + \frac{X^2}{2}(\varphi_{ s})$ and (\ref{in lemma X^2 bound: formula for any alpha}) to conclude the second inequality. 

If we differentiate the right-hand side of (\ref{in lemma definition of trace}) in direction of $\frac{X}{2}$, we obtain
\begin{equation}
	\begin{aligned}\label{in lemma differentiate RHS}
		\mathcal{L}_{\frac{X}{2}} \left(\operatorname{tr}_{\omega_{\varphi_{ s}}} (\alpha )\omega_{\varphi_{ s}} ^n\right)=& \frac{X}{2} \left(\operatorname{tr}_{\omega_{\varphi_{ s}}} (\alpha ) \right) \omega_{\varphi_{ s}} ^n + \operatorname{tr}_{\omega_{\varphi_{ s}}} (\alpha) n \alpha \wedge \omega^{n-1}_{\varphi_{ s}} \\
		=& \left(    \frac{X^2}{4}(sF) - \frac{X^3}{8} (\varphi_{ s}) + \frac{X}{4}(\Delta_g f)    \right) \omega^n _{\varphi_{ s}} \\
		&+ \left( \operatorname{tr}_{\omega_{\varphi_{ s}} } (\alpha) \right)^2 \omega_{\varphi_{ s}} ^n,
	\end{aligned}
\end{equation}
where the second equality follows from (\ref{in lemma definition of trace}) together with the expression of $\operatorname{tr}_{\omega_{\varphi_{ s}}} (\alpha)$ provided by (\ref{in lemma X^2 varphi: first derivative of MA equation}).

Since (\ref{in lemma differentiate LHS}) equals (\ref{in lemma differentiate RHS}), we see that the $\operatorname{tr}_{\omega_{\varphi_{ s}}} (\alpha) ^2$-term is canceled and, after dividing by $\omega_{\varphi_{ s}}^n$, we conclude that
\begin{align*}
	\operatorname{tr}_{\omega_{\varphi_{ s}}} i \partial \partialb \left( \frac{X}{2}(f)   + \frac{X^2}{4}(\varphi_{ s}) \right) - |\alpha|^2_{g_{\varphi_{ s}}} =   \frac{X^2}{4}(sF) - \frac{X^3}{8} (\varphi_{ s})+ \frac{X}{4} (\Delta_g f).
\end{align*}
Multiplying  by $4$, adding $X^2(f)$ on both sides and keeping in mind that $2\operatorname{tr}_{\omega_{\varphi_{ s}}} i \partial \partialb = \Delta_{g_{\varphi_{ s}}}$, we may rearrange the previous equation to finally arrive at
\begin{align}\label{in lemma: differential equality for X^2 varphi}
(	\Delta_{g_{\varphi_{ s}}   } + X )\left(  X(f) + \frac{X^2}{2}(\varphi_{ s})    \right) =  H_1+ 4 \left|  \partial \partialb f + \frac{X}{2}(\varphi_{ s})\right|^2_{g_{\varphi_{ s}}}
\end{align}
with $H_1:= X^{2} (sF) + X(\Delta_g f) + X^2(f) $. We continue to estimate the right-hand side of (\ref{in lemma: differential equality for X^2 varphi}) from below :
\begin{align*}
	4 \left|  \partial \partialb f + \frac{X}{2}(\varphi_{ s})\right|^2_{g_{\varphi_{ s}}}&\geq \frac{1}{n} \left( 	\Delta_{g_{\varphi_{ s}} } \left( f+ \frac{X}{2}(\varphi_{ s})  \right) \right) ^2 \\
	 &= \frac{1}{n} \left(  X(f) + \frac{X^2}{2} (\varphi_{ s}) - H_2 \right)^2,
\end{align*}
where $H_2:= X(f) + X(sF)+ \Delta_g f$ and we made use of (\ref{in lemma X^2 varphi: first derivative of MA equation}) in the second line. Combining the previous inequality with (\ref{in lemma: differential equality for X^2 varphi}), we then obtain
\begin{align*}
 (	\Delta_{g_{\varphi_{ s}}   } + X )\left(  X(f) + \frac{X^2}{2}(\varphi_{ s})    \right)	\geq H_1+\frac{1}{n} \left(  X(f) + \frac{X^2}{2} (\varphi_{ s}) - H_2 \right)^2.
\end{align*}
Note that by Assumption \ref{section MA: asympotitc behaviour of hamiltonian function}, the function $ X(f) + \frac{X^2}{2}(\varphi_{ s})$ tends to $1$ as $t\to \infty$, and so either $ X(f) + \frac{X^2}{2}(\varphi_{ s})\leq 1$, or $ X(f) + \frac{X^2}{2}(\varphi_{ s})$ attains its maximum at some point. In the first case, we are done so we assume that  $X(f) + \frac{X^2}{2}(\varphi_{ s})$  is maximal at $p_{\max} \in M$. Then we observe that the previous inequality gives at $p_{\max}$
\begin{align*}
	X(f) + \frac{X^2}{2}(\varphi_{ s}) \leq \sqrt{ n \, \sup _M |H_1|} + \sup_M |H_2| <\infty,
\end{align*} 
i.e.  $X(f) + \frac{X^2}{2} (\varphi_{ s})$ is uniformly bounded from above. This, in turn, implies the required uniform upper bound on $X^2(\varphi_{ s})$ since $X(f)$ is bounded.

 For the lower bound on $X^2(\varphi_{ s})$ we recall from (\ref{in Lemma: a first bound on X(varphi); X is gradient}) that 
\begin{align*}
	X= \nabla^{g_{\varphi_{ s}}}\left( f+\frac{X}{2}(\varphi_{ s})  \right),
\end{align*}
so that we can estimate as follows:
\begin{align*}
	X	\left(	\frac{X}{2}(\varphi_{ s}) \right) = - X(f) + |X|^2_{g_{\varphi_{ s}}} \geq - \sup_M |X(f)| ,
\end{align*}
which is finite. This completes the proof.

\end{proof}

With the previous lemma, we can finish the proof of Lemma \ref{lemma: construction of compactum}.

\begin{proof}[Proof of Lemma \ref{lemma: construction of compactum}]
	Let us define the barrier function  $v:=e^{\varepsilon_0 \left(f+ \frac{X}{2}(\varphi_{ s})   \right)}$ for some $0<\varepsilon_0<1$ to be chosen later on.  Since $X= \nabla^{g_{\varphi_{ s}} } \left( f+ \frac{X}{2}(\varphi_{ s}) \right)$, we compute
	\begin{align*}
		(\Delta_{g_{\varphi_{ s}}} + X ) \left(  v^{-1}   \right) 
		&= \varepsilon_0 v^{-1} \left( (\varepsilon_0 -1) |X|^2_{g_{\varphi_{ s}}}   - \Delta_{g_{\varphi_{ s}}} \left( f+ \frac{X}{2} (\varphi_{ s})\right)  \right)\\
		&= \varepsilon_0 v^{-1} \left(  (\varepsilon_0 -1) |X|^2_{g_{\varphi_{ s}}} + \frac{X^2}{2}(\varphi_{ s}) - X(sF)  - \Delta_g f  \right)  
	\end{align*}
where we used (\ref{in lemma X^2 varphi: first derivative of MA equation}) in the second line. Recalling the identity
\begin{align}\label{bounding |X|^2}
	|X|^2_{g_{\varphi_{ s}}} = \frac{X^2}{2}(\varphi_{ s}) + X(f),
\end{align}
we may further simplify the previous equation to
\begin{equation}\label{another barrier function v}
	\begin{aligned}
(\Delta_{g_{\varphi_{ s}}} + X ) \left(  v^{-1}   \right)
 &=   \varepsilon_0 v^{-1} \left(  \varepsilon_0  |X|^2_{g_{\varphi_{ s}}} -X(f) - X(sF)  - \Delta_g f  \right)\\
	&\leq \varepsilon_0 v^{-1} \left(  \varepsilon_0 C  - X(f)- X(sF)  - \Delta_g f  \right)
	\end{aligned}
\end{equation} 
for some uniform constant $C>0$ only depending on $\sup_M X(f) $ and the uniform bound on $X^2 (\varphi_{ s})$ from Lemma \ref{lemma: unform bound on X^2 of varphi}. Note that this estimate again uses (\ref{bounding |X|^2}).

 Since $X(f) \to 1$, and $X(F), \Delta_g f \to 0$ as $t\to \infty$, there exists a compact domain $K \subset M$  such that 
\begin{align*}
	X(f)\geq \frac{3}{4} \;\; \text{ and } \;\; |\Delta_g f| + |X(F)| \leq \frac{1}{8} \;\; \text{ on } \;\; M\setminus K.
\end{align*}
Moreover, we can assume that $K$ is of the form
\begin{align} \label{K explicit form}
	K= \{ x\in M \,|\, t(x) \leq t_0 \} 
\end{align}
for some $t_0>0$.
Choosing $\varepsilon_0>0$ sufficiently small so that 
\begin{align*}
	\varepsilon_0 C \leq \frac{1}{8},
\end{align*}
we thus obtain from (\ref{another barrier function v}) that
\begin{align*}
	(\Delta_{g_{\varphi_{ s}}} + X ) \left(  v^{-1}   \right) 
	&\leq  - \frac{\varepsilon_0}{2} v^{-1} \;\; \text{ on } \;\; M\setminus K,
\end{align*}
as claimed.
\end{proof}

Before obtaining Proposition \ref{prop: lower bound OUTSIDE a compact set}, we  require yet another

\begin{lemma}[Bounding $X(\varphi_{ s})$ on  a compact set] \label{lemma: bound of X varphi from above on compact set}
Let $1<\varepsilon<2$ and suppose $(\varphi_{ s})_{0\leq s\leq 1}$ is a family in $C^{\infty}_{\varepsilon,\, JX}(M)$ solving (\ref{one-parameter family of MA}). For the compact domain $K\subset M$ given by Lemma \ref{lemma: construction of compactum}, there exists a constant $C>0$ such that 
\begin{align*}
	\sup_{s\in [0,1]} \sup_K X(\varphi_{ s}) \leq C \;\; \text{ and } \;\; \sup_{s\in [0,1]} X(\varphi_{ s}) \leq C t + C \text{ on } M\setminus K,
\end{align*}
where $C$ only depends on $K$, $F\in C^{\infty}_{\varepsilon, \, JX}(M)$ and the geometry of $(M,g)$.
\end{lemma}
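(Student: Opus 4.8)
The plan is to bound $X(\varphi_s)$ by first recalling the identity $X = \nabla^{g_{\varphi_s}}\bigl(f + \tfrac{X}{2}(\varphi_s)\bigr)$ from \eqref{in Lemma: a first bound on X(varphi); X is gradient}, which immediately gives $|X|^2_{g_{\varphi_s}} = X(f) + \tfrac{X^2}{2}(\varphi_s)$ as in \eqref{bounding |X|^2}. Since $X^2(\varphi_s)$ is uniformly bounded by Lemma \ref{lemma: unform bound on X^2 of varphi} and $X(f)$ is bounded by Assumption \ref{section MA: asympotitc behaviour of hamiltonian function}, we obtain a uniform bound $|X|^2_{g_{\varphi_s}} \leq C$ on all of $M$, with $C$ independent of $s$. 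The strategy is then to turn this gradient-norm bound into a pointwise bound on $X(\varphi_s) = 2\,d\varphi_s(\tfrac12 X)$ by integrating along flow lines of (a multiple of) $X$, which on the cylindrical end are exactly the radial lines $t \mapsto (t,x) \in \mathbb{R}_+ \times L$.

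First I would establish the bound on the compact set $K = \{t \leq t_0\}$. On $K$, we have $X(\varphi_s) = g_{\varphi_s}(X, \nabla^{g_{\varphi_s}}\varphi_s)$, so $|X(\varphi_s)| \leq |X|_{g_{\varphi_s}} \cdot |\nabla^{g_{\varphi_s}}\varphi_s|_{g_{\varphi_s}}$; however, it is cleaner to avoid gradient bounds on $\varphi_s$ (which we do not yet have) and instead argue directly. Since $\varphi_s \leq s\,u_F$ everywhere by (the proof of) Proposition \ref{upper-C^0 estimate for varphi_s from above}, and since $K$ is compact, $\varphi_s$ is uniformly bounded above on a slightly larger compact neighbourhood $N_\rho(K)$; combining with the uniform lower bound $\inf_M\varphi_s \geq -C$ — wait, that is exactly what we are ultimately proving, so I must not invoke it here. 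Instead: integrate $X(\varphi_s)$ along the flow $\Psi_r$ of $X$. For $p \in K$, the forward orbit of $p$ under the flow of $X$ eventually enters the cylindrical end (since $X = 2\partial_t$ there, pushing $t \to \infty$), so $\varphi_s(\Psi_r(p)) \to 0$ as $r \to \infty$ because $\varphi_s \in C^\infty_\varepsilon(M)$. Then
\[
  X(\varphi_s)(p) = -\int_0^\infty \frac{d}{dr}\Bigl[X(\varphi_s)\bigl(\Psi_r(p)\bigr)\Bigr]\,dr = -\int_0^\infty X\bigl(X(\varphi_s)\bigr)\bigl(\Psi_r(p)\bigr)\,dr,
\]
and since $|X^2(\varphi_s)| \leq C$ is uniform but the orbit has infinite length, this naive bound diverges; so I would instead use that $X^2(\varphi_s) = |X|^2_{g_{\varphi_s}} - X(f)$, with $X(f) \to 1$ and $|X|^2_{g_{\varphi_s}} \to 1$ at rate governed by the convergence of $g_{\varphi_s}$ to the cylinder, hence $X^2(\varphi_s) = O(e^{-\varepsilon t})$ along the orbit, making the integral convergent. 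The length of the orbit segment with $t \leq t_0$ is bounded uniformly (the orbit moves at unit speed in $t$ once on the end), giving $\sup_K X(\varphi_s) \leq C$.

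For the bound on $M \setminus K$, I would use the same integral representation but split the orbit into the part inside $\{t \leq t_0\}$ and the part with larger $t$. Starting from $p$ with $t(p) = \tau > t_0$: the orbit of $X$ through $p$ immediately runs towards $t \to \infty$, so $X(\varphi_s)(p) = -\int_0^\infty X^2(\varphi_s)(\Psi_r(p))\,dr$ with the integrand $O(e^{-\varepsilon(\tau + 2r)})$ (since $t$ increases at rate $2$ along the flow of $X = 2\partial_t$), yielding $|X(\varphi_s)(p)| = O(e^{-\varepsilon\tau})$ — in fact better than the claimed $Ct + C$. Alternatively, if one worries about the orbit of $X$ not staying on the end for $p$ near $\partial K$, one integrates backward: $X(\varphi_s)(p) = X(\varphi_s)(q) + \int_0^{r_0} X^2(\varphi_s)(\Psi_r(q))\,dr$ where $q = \Psi_{-r_0}(p) \in \partial K$ and $r_0 \lesssim (\tau - t_0)/2$; then $|X(\varphi_s)(q)| \leq C$ by the compact-set bound and the integral is $\leq C\,r_0 \leq C t + C$. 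This is where the linear-in-$t$ growth in the statement comes from, and it is the only place the crude estimate (rather than the exponential one) is needed. The main obstacle is bookkeeping the flow of $X$ near $\partial K$ and confirming that every forward orbit does escape to $t = +\infty$ (which follows since $X = 2\partial_t$ outside a compact set and $JX$ Killing forces the compact part of $M$ to contain no periodic orbits escaping to $t=-\infty$, as $M$ has only one end); once that is in hand, the estimates are routine applications of the uniform bound on $X^2(\varphi_s)$ from Lemma \ref{lemma: unform bound on X^2 of varphi} together with the exponential decay of $g_{\varphi_s} - g_{cyl}$.
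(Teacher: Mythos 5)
Your reduction of the second claim (the bound $Ct+C$ on $M\setminus K$) to the first via backward integration along the flow and the uniform bound on $X^2(\varphi_s)$ from Lemma \ref{lemma: unform bound on X^2 of varphi} is exactly the paper's argument and is fine. The problem is the first claim, the uniform bound on $\sup_K X(\varphi_s)$. Your integral representation $X(\varphi_s)(p)=-\int_0^\infty X^2(\varphi_s)(\Psi_r(p))\,dr$ is valid for each fixed $s$, but to extract a bound \emph{uniform in $s$} you assert that $X^2(\varphi_s)=2\bigl(|X|^2_{g_{\varphi_s}}-X(f)\bigr)=O(e^{-\varepsilon t})$ "at a rate governed by the convergence of $g_{\varphi_s}$ to the cylinder". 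That rate is controlled precisely by $\|i\partial\bar\partial\varphi_s\|_{C^0_\varepsilon}$, i.e.\ by a weighted $C^2$-norm of $\varphi_s$ — which is not available at this stage and is only established much later (Proposition \ref{prop: final uniform bound on varepsilon weighted Ckalpha norm}). At the point where this lemma sits in the chain of a priori estimates, the only uniform information on $X^2(\varphi_s)$ is the unweighted sup bound of Lemma \ref{lemma: unform bound on X^2 of varphi}, and with that alone your integral over an orbit of infinite length gives nothing: the convergent improper integral $\int_0^\infty X^2(\varphi_s)\,dr$ equals (up to a factor) the quantity $X(\varphi_s)(p)$ you are trying to bound, so the representation merely restates the problem.

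The paper closes this gap with a weight: setting $\psi_x(\tau)=\varphi_s(\Phi_\tau(x))$ for the flow $\Phi_\tau$ of $X/2$ and integrating $\int_0^\infty e^{-\tau}\psi_x(\tau)\,d\tau$ by parts twice yields
\begin{align*}
\frac{X}{2}(\varphi_s)(x)=\psi_x'(0)=\int_0^\infty e^{-\tau}\psi_x(\tau)\,d\tau-\int_0^\infty e^{-\tau}\psi_x''(\tau)\,d\tau-\psi_x(0),
\end{align*}
and each term is bounded using only uniform $C^0$ data: $\sup_M\varphi_s$ (Proposition \ref{upper-C^0 estimate for varphi_s from above}), $\sup_M|X^2(\varphi_s)|$ (Lemma \ref{lemma: unform bound on X^2 of varphi}), and $-\inf_K\varphi_s$. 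On this last point you talked yourself out of the right move: the \emph{global} lower bound on $\varphi_s$ is indeed not yet available, but the lower bound on the compact set $K$ is — Proposition \ref{prop: lower bound for varphi on compact sets} is proved before this lemma (via B\l ocki's argument, using only the upper bound, the weighted $L^2$ bound, and Lemma \ref{lemma: construction of compactum}) and does not depend on it. With that term supplied and the exponential weight in place, no decay of $X^2(\varphi_s)$ is needed and the bound is uniform; your second part then goes through as written.
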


\begin{proof}
	For the first part of the statement, we essentially follow the argument in \cite{conlon2020steady}[Proposition 7.11].
	
	Consider the flow $(\Phi_{\tau})_{\tau \in \mathbb{R}}$ of the complete vector field $\frac{X}{2}$. In particular, the map $\Phi_{\tau}$ corresponds to translation  by $\tau$ in the radial parameter $t$ on the cylindrical end $[0,\infty) \times L$.
	Then we let $\psi_x(\tau):= \varphi_{ s}(\Phi_\tau(x))$ for $(x,\tau)\in M\times \mathbb{R}$ and observe that for each fixed $x\in M$, the limit $\lim_{\tau \to \pm \infty} \psi_x(\tau)$ always exists because $\varphi_s$ tends to zero as $t\to \infty$. 
	Keeping this in mind, we consider $\eta_{-}(\tau):= e^{-\tau}$ and integrate  by parts as follows
	\begin{align} \label{in lemma: bounding X varphi on K, integration by parts eta minus}
		\begin{split}
		\int _0^{\infty} \eta_{-}''(\tau) \psi_x(\tau) d\tau &= - \int_{0}^{\infty} \eta_{-}'(\tau) \psi_{x}'(\tau )d\tau + \psi_{x}(0) \\
		&=\int _{0}^{\infty} \eta_{-}(\tau) \psi_x''(\tau) d\tau + \psi_x'(0)+ \psi_x(0).
			\end{split}
	\end{align} 
By choosing $x\in K$, rearranging (\ref{in lemma: bounding X varphi on K, integration by parts eta minus}) and using $\frac{X}{2}(\varphi_{ s}) (x) = \psi_x'(0)$, we consequently estimate 
\begin{align*}
	\frac{X}{2}(\varphi_{ s}) (x) &\leq - \inf _K \varphi_{ s} + \sup_M \varphi_{ s} \int_0 ^{\infty} e^{-\tau} d\tau - \inf_M \frac{X^2 }{4}(\varphi_{ s}) \int_0 ^{\infty} e^{-\tau} d\tau\\
	&\leq C_1,
\end{align*}
where $C_1>0$ only depends on $F$ and the geometry of $(M,g)$, thanks to Propositions \ref{prop: lower bound for varphi on compact sets} and \ref{upper-C^0 estimate for varphi_s from above} as well as Lemma \ref{lemma: unform bound on X^2 of varphi}. This shows the first part of this lemma.

For the second part, recall from (\ref{K explicit form}), that we can identify $M\setminus K \cong (t_0,\infty) \times L$ for some $t_0>0$. To emphasize this splitting, we write  $x=(t,y)$ for points $x\in M\setminus K$. Under this identification, $X= 2 \partial /\partial t$ and so we can write
\begin{align}\label{in lemma bound on compact set X varphi: rough bound outside K}
	\begin{split}
	X(\varphi_{ s}) (t,y) &= \int_{t_0}^{t} \frac{X^{2}}{2} (\varphi_{ s}) (\sigma,y) d\sigma + X(\varphi_{ s}) (t_0,y)\\
	&\leq C_2 (t-t_0) + C_1,
\end{split}
\end{align}
since $(0,y) \in K$ and $X^2(\varphi_{ s}) \leq C_2$ for some uniform constant $C_2>0$ given by Lemma \ref{lemma: unform bound on X^2 of varphi}. As the right-hand side of (\ref{in lemma bound on compact set X varphi: rough bound outside K}) is independent of $s\in [0,1]$, the lemma follows.
\end{proof}

Now we can deduce Proposition \ref{prop: lower bound OUTSIDE a compact set}.

\begin{proof}[Proof of Proposition \ref{prop: lower bound OUTSIDE a compact set}]
		As in \cite{conlon2020steady}[Proposition 7.20], we use a barrier function to show the claim.
		Let $0<\varepsilon_0<1$ and $K\subset M$ be given by Lemma \ref{lemma: construction of compactum}, i.e. on $M\setminus K$, we have
		\begin{align}\label{in proposition lower bound OUTSIDE: weight function DE}
			\left( \Delta_{g_{\varphi_s}} +X  \right) \left(e^{-\varepsilon_0 \left(  f+ \frac{X}{2}(\varphi_{ s})  \right)} \right) \leq - \frac{\varepsilon_0}{2} e^{-\varepsilon_0 \left(  f+ \frac{X}{2}(\varphi_{ s})  \right)} <0 .
		\end{align}
	The reader may observe from the proof that (\ref{in proposition lower bound OUTSIDE: weight function DE}) holds as long as $0<\varepsilon_0 \ll 1$ is sufficient small. In particular, we are free to choose $\varepsilon_0>0$ as small was we require and (\ref{in proposition lower bound OUTSIDE: weight function DE})  is still valid. 
	
	Similar to (\ref{in proposition on C0 upper bound: inequality for Delta_g varphi}), which was used for proving the upper bound, the Monge-Amp\`ere equation (\ref{one-parameter family of MA}) implies
	\begin{align*}
		(\Delta_{g_{\varphi_{ s}}} + X) (\varphi_{ s}) \leq |F|,
	\end{align*}
and so for some $A>0$ to be specified later on, we obtain 
\begin{align}\label{in proposition lower bound OUTSIDE: weight function summed up to varphi}
	\left( \Delta_{g_{\varphi_s}} +X  \right) \left(   \varphi_{ s} + Ae^{-\varepsilon_0  \left(  f+ \frac{X}{2}(\varphi_{ s})  \right)} \right) 
	&\leq |F| - A\frac{\varepsilon_0}{2} e^{-\varepsilon_0 \left(  f+ \frac{X}{2}(\varphi_{ s})  \right)} .
\end{align}
The idea is to  choose $A\gg 1$ sufficiently large so that the right term in (\ref{in proposition lower bound OUTSIDE: weight function summed up to varphi}) becomes negative. 

Note that by Lemma \ref{lemma: bound of X varphi from above on compact set} and Assumption \ref{section MA: asympotitc behaviour of hamiltonian function} there exists a constant $C>0$ only depending on $F$ and the geometry of $(M,g)$ such that 
\begin{align}
	\begin{split} \label{in lemma bound of varphi OUTSIDE: preparation for max principle}
	\varepsilon_0 \left(f+ \frac{X}{2}(\varphi_{ s})       \right) \leq \varepsilon_0 C t + C 
	\leq \varepsilon t + C,
	\end{split}
\end{align}
where the second inequality holds if we fix some $\varepsilon_0>0$ with
\begin{align*}
	C\varepsilon_0 < \varepsilon. 
\end{align*}
Applying (\ref{in lemma bound of varphi OUTSIDE: preparation for max principle}) to the right-hand side of (\ref{in proposition lower bound OUTSIDE: weight function summed up to varphi}) then yields
\begin{align*}
	 |F| - A\frac{\varepsilon_0}{2} e^{-\varepsilon_0 \left(  f+ \frac{X}{2}(\varphi_{ s})  \right)} 
	&\leq e^{-\varepsilon_0 \left(  f+ \frac{X}{2}(\varphi_{ s})  \right)} \left(  e^{\varepsilon_0 \left(  f+ \frac{X}{2}(\varphi_{ s})  \right) - \varepsilon t} ||F||_{C^0_\varepsilon}  -A\frac{\varepsilon_0}{2}  \right) \\
	&\leq  e^{-\varepsilon_0 \left(  f+ \frac{X}{2}(\varphi_{ s})  \right)} \left( e^C ||F||_{C^0 _\varepsilon} - A \frac{\varepsilon_0}{2}    \right).
\end{align*}
Thus, choosing $A>0$ sufficiently large so that 
\begin{align*}
	A>\frac{2}{\varepsilon_0} e^{C} ||F||_{C^0_\varepsilon},
\end{align*}
and plugging this back into (\ref{in proposition lower bound OUTSIDE: weight function summed up to varphi}), we arrive at
\begin{align*}
	\left( \Delta_{g_{\varphi_s}} +X  \right) \left(   \varphi_{ s} + Ae^{-\varepsilon_0  \left(  f+ \frac{X}{2}(\varphi_{ s})  \right)} \right) \leq 0 \;\; \text{ on } \;\; M\setminus K.
\end{align*}
Hence, Hopf's maximum principle states that   
\begin{align} \label{in proposition: lower bound OUTSIDE, AFTEr applying max principle}
\nonumber \varphi_{ s} + Ae^{-\varepsilon_0  \left(  f+ \frac{X}{2}(\varphi_{ s})  \right)} &\geq \min \left\{ 0 , \min_{\partial K } \left(\varphi_{ s} + Ae^{-\varepsilon_0  \left(  f+ \frac{X}{2}(\varphi_{ s})  \right)} \right) \right\} \\
&\geq \min \left\{ 0, \min_K \varphi_{ s}  \right\}\\
\nonumber &\geq -C
\end{align}
holds on $M\setminus K$ because $\varphi_{ s} + Ae^{-\varepsilon_0  \left(  f+ \frac{X}{2}(\varphi_{ s})  \right)} $ goes to $0$ as $t\to \infty$ and $\min_K \varphi_{ s}$ is, according to Lemma \ref{lemma: bound of X varphi from above on compact set}, uniformly bounded from below by some constant $-C<0$. To conclude  the Proposition, we observe that 
\begin{align*}
	f+ \frac{X}{2}(\varphi_{ s}) \geq 1
\end{align*}
by Lemma \ref{lemma rough lower bound on X(varphi) in terms of f} and consequently,
\begin{align*}
	\varphi_{ s} \geq - C - A e^{-\varepsilon_0} \;\; \text{ on } \;\; M\setminus K,
\end{align*}
as claimed.

\end{proof}

Having finally finished the proof of Proposition \ref{proposition: lower bound on inf varphi}, we can now strengthen the estimates  in Lemma \ref{lemma: bound of X varphi from above on compact set}, i.e. achieve a uniform bound on the radial derivative of $ \varphi_{ s}$.

\begin{corollary}[Uniform bound on $X(\varphi_{ s})$]\label{proposition: estimate on radial derivative}
	Let $1<\varepsilon<2$ and suppose $(\varphi_{ s})_{0\leq s \leq 1}$ is a family in $C^{\infty}_{\varepsilon, \, JX}(M)$ solving (\ref{one-parameter family of MA}). Then there exists a constant $C>0$ such that
	\begin{align*}
		\sup_{s\in [0,1]} \sup_M |X(\varphi_{ s})| \leq C,
	\end{align*}
where $C$ only depends on $F\in C^{\infty}_{\varepsilon, \, JX}(M)$ and the geometry of $(M,g)$.
\end{corollary}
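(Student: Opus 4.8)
The plan is to upgrade the estimates of Lemma \ref{lemma: bound of X varphi from above on compact set}, which already give a one-sided bound $X(\varphi_s)\le Ct+C$ growing linearly in $t$, to a genuine \emph{two-sided uniform} bound. Since $X=2\,\partial/\partial t$ outside a compact set, $X(\varphi_s)$ is (twice) the radial derivative of $\varphi_s$, and the key extra input that is now available is the uniform $C^0$-bound $\sup_{s}\sup_M|\varphi_s|\le C$ coming from Proposition \ref{proposition: lower bound on inf varphi} together with Proposition \ref{upper-C^0 estimate for varphi_s from above}. The strategy is to integrate the second radial derivative $X^2(\varphi_s)$, which is uniformly bounded by Lemma \ref{lemma: unform bound on X^2 of varphi}, and to pin down the constant of integration using the fact that $\varphi_s$ (hence its radial average and thus, via a mean-value argument, its radial derivative at suitable points) decays to zero at infinity.

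Concretely, I would work on the cylindrical end $M\setminus K\cong(t_0,\infty)\times L$, writing $\psi_x(\tau)=\varphi_s(\Phi_\tau(x))$ for the flow $\Phi_\tau$ of $\tfrac{X}{2}$ as in Lemma \ref{lemma: bound of X varphi from above on compact set}. For fixed $x$ one has $|\psi_x''(\tau)|=\tfrac14|X^2(\varphi_s)|\le C$ by Lemma \ref{lemma: unform bound on X^2 of varphi}, and $|\psi_x(\tau)|\le C$ for all $\tau$ by the now-established $C^0$-bound, with $\psi_x(\tau)\to 0$ as $\tau\to\infty$. An elementary interpolation inequality for functions on a half-line — if $|\psi|\le A$ and $|\psi''|\le B$ on $[\sigma,\infty)$ then $|\psi'(\sigma)|\le 2\sqrt{AB}$ (choose the comparison point at distance $\sqrt{A/B}$ and apply Taylor's theorem) — immediately yields $|\psi_x'(\sigma)|\le 2\sqrt{CC}=2C$ uniformly, hence $|X(\varphi_s)|\le 4C$ on all of $M\setminus K$. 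On the compact set $K$ the upper bound $\sup_K X(\varphi_s)\le C$ is already in Lemma \ref{lemma: bound of X varphi from above on compact set}, and the matching lower bound follows by the same integration argument run backwards along the flow (using that $K$ is of the form $\{t\le t_0\}$ so that points of $K$ lie on flow lines emanating from the cylindrical end), or alternatively by applying the half-line interpolation on $(-\infty,\sigma]$ after noting $\psi_x(\tau)\to 0$ as $\tau\to-\infty$ is \emph{not} available — instead one uses that $\psi_x(\tau)$ stays bounded for $\tau$ in a fixed finite range reaching into $M\setminus K$ where $\psi_x$ then decays. Combining the two regions gives $\sup_{s\in[0,1]}\sup_M|X(\varphi_s)|\le C$.

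The main obstacle is the lower bound for $X(\varphi_s)$ on the compact core $K$: unlike on the cylindrical end, one cannot directly exploit decay of $\psi_x$ in both time directions. The cleanest fix is to observe that every flow line of $\tfrac{X}{2}$ through a point of $K$ eventually enters $M\setminus K=(t_0,\infty)\times L$ (since $X=2\partial/\partial t$ there and $K=\{t\le t_0\}$), so $\psi_x(\tau)\to 0$ as $\tau\to+\infty$; then writing $\psi_x'(0)=-\int_0^{\infty}\psi_x''(\tau)\,d\tau$ plus a boundary contribution, or more simply $\psi_x'(0)=\psi_x'(T)-\int_0^T\psi_x''\,d\tau$ and letting $T\to\infty$ while using $|\psi_x'(T)|\le 4C$ from the end estimate and $|\psi_x''|\le C$, gives $|\psi_x'(0)|\le 4C+\int_0^\infty|\psi_x''|$ — which is not yet finite, so one must instead use the half-line interpolation directly: on $[0,\infty)$ we have $|\psi_x|\le C$ and $|\psi_x''|\le C$, hence $|\psi_x'(0)|\le 2C$. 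This settles both signs at once and makes the dependence on $K$ disappear, so in fact the bound holds uniformly on all of $M$ with a constant depending only on $F$ and the geometry of $(M,g)$, as claimed.
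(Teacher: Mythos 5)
Your proposal is correct, and it rests on exactly the same two inputs as the paper's proof: the uniform two-sided $C^0$-bound on $\varphi_s$ (Propositions \ref{upper-C^0 estimate for varphi_s from above} and \ref{proposition: lower bound on inf varphi}) and the uniform bound on $X^2(\varphi_s)$ (Lemma \ref{lemma: unform bound on X^2 of varphi}), applied along the flow $\Phi_\tau$ of $\tfrac{X}{2}$ to the function $\psi_x(\tau)=\varphi_s(\Phi_\tau(x))$. Where you differ is in the calculus step. The paper isolates $\psi_x'(0)$ by integrating $\psi_x$ twice by parts against the weight $e^{-\tau}$ on $[0,\infty)$ for the upper bound, and separately against $e^{\tau}$ on $(-\infty,0]$ for the lower bound. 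You instead invoke the Landau--Kolmogorov interpolation $|\psi'(\sigma)|\le 2\sqrt{\|\psi\|_\infty\,\|\psi''\|_\infty}$ on the half-line $[\sigma,\infty)$, which follows from Taylor's theorem with the comparison point at distance $2\sqrt{A/B}$. This buys you both signs of the bound in one stroke, needs no convergence discussion for improper integrals or vanishing of boundary terms at infinity, and — as you correctly observe at the end after some false starts — applies verbatim at every $x\in M$, so the split into the compact core $K$ and the cylindrical end that occupies the middle of your write-up is unnecessary and can be deleted. The only thing to make explicit is that $\psi_x$ is defined and $C^2$ on all of $[0,\infty)$ because $X$ is complete, and that $|\psi_x''(\tau)|=\tfrac14|X^2(\varphi_s)|(\Phi_\tau(x))$ is bounded by the \emph{global} supremum from Lemma \ref{lemma: unform bound on X^2 of varphi}; with that, your final two-line argument is a clean and slightly more economical substitute for the paper's.
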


\begin{proof}
We apply the same idea as in the proof of Lemma \ref{lemma: bound of X varphi from above on compact set}. Namely, by (\ref{in lemma: bounding X varphi on K, integration by parts eta minus}), we can estimate for each $x\in M$:
	\begin{align*}
		\frac{X}{2} (\varphi_{s}) (x) \leq - \inf _M \varphi_{ s} + \sup _M \varphi_{ s} - \inf _M \frac{X^2}{4}(\varphi_{ s}),
	\end{align*}
so that the uniform upper bound follows from Propositions \ref{upper-C^0 estimate for varphi_s from above}, \ref{proposition: lower bound on inf varphi} and Lemma \ref{lemma: unform bound on X^2 of varphi}. The lower bound is similar. Using $\eta_+=e^{\tau}$ instead of $\eta_-= e^{-\tau}$ leads to 
\begin{align*}
	\int_{-\infty}^0 \eta_+ ''(\tau ) \psi_x(\tau) d\tau = \int _{-\infty}^0 \eta_{+}(\tau ) \psi''_x(\tau) d\tau - \psi'_x(0) + \psi_x(0) ,
\end{align*}
and estimating as before then yields
\begin{align*}
	\frac{X}{2}(\varphi_{ s})(x)&\geq \inf _M \varphi_s -\sup _M \varphi_{ s}    + \inf _M \frac{X^2 }{4} (\varphi_{ s}),  
\end{align*}
finishing the proof. 
\end{proof}

This  new bound on $X(\varphi_{ s})$ enables us to conclude a weighted lower bound on $\varphi_{ s}$, at least for \textit{some} $\varepsilon_0<\varepsilon$. 

\begin{prop}[A first weighted lower bound on $\varphi_{ s}$]\label{proposition lower weighted bound for varepsilon0}
	Let $1<\varepsilon<2$ and suppose $(\varphi_{ s})_{0\leq s\leq 1}$ is a family in $C^{\infty} _{\varepsilon, \, JX}(M)$ solving (\ref{one-parameter family of MA}). Then there exist two constants $0<\varepsilon_0<1$ and  $C>0$ such that
	\begin{align*}
		\inf _{s\in [0,1]} \inf _M e^{\varepsilon_0 t} \varphi_s \geq -C,
	\end{align*}
where both $\varepsilon_0$ and $C$ only depend on $F\in C^{\infty}_{\varepsilon, \, JX}(M)$ and the geometry of $(M,g)$.
\end{prop}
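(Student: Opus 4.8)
The plan is to rerun the barrier argument of Proposition~\ref{prop: lower bound OUTSIDE a compact set}, but now feeding in the uniform bound $\sup_M|X(\varphi_{s})|\le C$ of Corollary~\ref{proposition: estimate on radial derivative}; this is precisely what upgrades the merely constant lower bound obtained there to a weighted one. First I would fix $0<\varepsilon_0<1$ small enough that the conclusion of Lemma~\ref{lemma: construction of compactum} holds with a compact domain $K=\{t\le t_0\}$, $t_0>0$, and in addition $\varepsilon_0<\varepsilon/2$; as observed in the proof of Proposition~\ref{prop: lower bound OUTSIDE a compact set}, shrinking $\varepsilon_0$ leaves Lemma~\ref{lemma: construction of compactum} valid with the same $K$. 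Combining Assumption~\ref{section MA: asympotitc behaviour of hamiltonian function} with Corollary~\ref{proposition: estimate on radial derivative}, there is a constant $C>0$, depending only on $F$ and the geometry of $(M,g)$, with
\begin{align*}
2t-C\ \le\ f+\tfrac{X}{2}(\varphi_{s})\ \le\ 2t+C\qquad\text{on }M,\ \text{uniformly in }s\in[0,1].
\end{align*}
Writing $v^{-1}:=e^{-\varepsilon_0(f+\frac{X}{2}(\varphi_{s}))}$ as in Lemma~\ref{lemma: construction of compactum}, this gives $v^{-1}\ge c_0:=e^{-\varepsilon_0(2t_0+C)}>0$ on $\partial K$ and, crucially, the genuinely decaying bound $v^{-1}\le e^{\varepsilon_0 C}\,e^{-2\varepsilon_0 t}$ on all of $M$.

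Next I would set $\psi_s:=\varphi_{s}+A\,v^{-1}$ for a large constant $A>0$ to be chosen. Since $(\Delta_{g_{\varphi_{s}}}+X)(\varphi_{s})\le 2|F|$ by the concavity estimate $\log(1+\tau)\le\tau$ applied to the Monge-Amp\`ere equation~(\ref{one-parameter family of MA}) (exactly as in~(\ref{in proposition on C0 upper bound: inequality for Delta_g varphi})), and since $(\Delta_{g_{\varphi_{s}}}+X)(v^{-1})\le-\frac{\varepsilon_0}{2}v^{-1}$ on $M\setminus K$ by Lemma~\ref{lemma: construction of compactum}, I obtain on $M\setminus K$
\begin{align*}
(\Delta_{g_{\varphi_{s}}}+X)(\psi_s)\ \le\ 2\|F\|_{C^0_\varepsilon}\,e^{-\varepsilon t}-A\tfrac{\varepsilon_0}{2}\,v^{-1}\ =\ v^{-1}\Big(2\|F\|_{C^0_\varepsilon}\,e^{\varepsilon_0(f+\frac{X}{2}(\varphi_{s}))-\varepsilon t}-A\tfrac{\varepsilon_0}{2}\Big).
\end{align*}
On $\{t\ge t_0\ge 0\}$ the exponent satisfies $\varepsilon_0(f+\frac{X}{2}(\varphi_{s}))-\varepsilon t\le(2\varepsilon_0-\varepsilon)t+\varepsilon_0 C\le\varepsilon_0 C$, so the bracket is $\le 2\|F\|_{C^0_\varepsilon}e^{\varepsilon_0 C}-A\frac{\varepsilon_0}{2}$; choosing $A$ large enough, in terms only of $F,\varepsilon_0,C$, makes this $\le 0$, hence $(\Delta_{g_{\varphi_{s}}}+X)(\psi_s)\le 0$ on $M\setminus K$.

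Finally I would invoke Hopf's maximum principle. On $\partial K$ one has $\psi_s\ge\inf_M\varphi_{s}+Ac_0\ge -C_\ast+Ac_0$, where $-C_\ast$ is the uniform lower bound from Proposition~\ref{proposition: lower bound on inf varphi}; after enlarging $A$ once more (still depending only on $F$ and the geometry) we may assume $\psi_s\ge 0$ on $\partial K$. Since $\varphi_s\in C^\infty_{\varepsilon}(M)$ and $f+\frac{X}{2}(\varphi_s)\to\infty$, we have $\psi_s\to 0$ as $t\to\infty$; running the maximum principle on the exhaustion $\{t\le t_k\}\cap(M\setminus K)$, $t_k\to\infty$ (as in the proof of Proposition~\ref{prop: lower bound OUTSIDE a compact set}) then yields $\psi_s\ge 0$ on $M\setminus K$, that is,
\begin{align*}
\varphi_{s}\ \ge\ -A\,v^{-1}\ \ge\ -A\,e^{\varepsilon_0 C}\,e^{-2\varepsilon_0 t}\qquad\text{on }M\setminus K,
\end{align*}
so $e^{\varepsilon_0 t}\varphi_{s}\ge -Ae^{\varepsilon_0 C}$ there. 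On the compact set $K=\{t\le t_0\}$ one bounds $e^{\varepsilon_0 t}\varphi_{s}\ge -C_\ast e^{\varepsilon_0 t_0}$ using Proposition~\ref{proposition: lower bound on inf varphi}, and taking the smaller of the two constants proves the claim.

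The argument is essentially routine given the earlier estimates; the one point needing care — and the reason Corollary~\ref{proposition: estimate on radial derivative} is indispensable — is that the uniform bound on $X(\varphi_{s})$ replaces the weaker growth bound $X(\varphi_{s})\le Ct+C$ used in Proposition~\ref{prop: lower bound OUTSIDE a compact set}, thereby converting the estimate $v^{-1}\ge e^{-\varepsilon t-C}$ available there into the decaying bound $v^{-1}\le Ce^{-2\varepsilon_0 t}$, which is exactly what makes the weighted conclusion possible. One must also check that a single constant $A$ can be chosen large enough both to force the differential inequality on $M\setminus K$ and to force $\psi_s\ge 0$ on $\partial K$, which is where the positive lower bound $v^{-1}\ge c_0$ on $\partial K$ enters.
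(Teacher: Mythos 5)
Your proposal is correct and follows essentially the same route as the paper: rerun the barrier argument of Proposition \ref{prop: lower bound OUTSIDE a compact set} with $v^{-1}=e^{-\varepsilon_0(f+\frac{X}{2}(\varphi_s))}$, use Corollary \ref{proposition: estimate on radial derivative} together with Assumption \ref{section MA: asympotitc behaviour of hamiltonian function} to make $v^{-1}$ uniformly comparable to $e^{-2\varepsilon_0 t}$ from both sides, enlarge $A$ so the boundary term on $\partial K$ is nonnegative, and conclude $\varphi_s\ge -Av^{-1}\gtrsim -e^{-2\varepsilon_0 t}$ by the maximum principle. The only difference is that you write out the steps the paper merely cites from its earlier proposition; the substance and the role of the uniform bound on $X(\varphi_s)$ are identical.
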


\begin{proof}
 Using Corollary \ref{proposition: estimate on radial derivative}, the proof of Proposition \ref{proposition: lower bound on inf varphi} can be refined by following the argument in \cite{conlon2020steady}[Proposition 7.20].
 
 We repeat the proof until arriving at (\ref{in proposition: lower bound OUTSIDE, AFTEr applying max principle}), so that we have on $M \setminus K$:
 \begin{align} \label{in prop: first weighted lower bound starting point of modification}
 	\varphi_{ s} + Ae^{-\varepsilon_0  \left(  f+ \frac{X}{2}(\varphi_{ s})  \right)} &\geq \min \left\{ 0 , \min_{\partial K } \left(\varphi_{ s} + Ae^{-\varepsilon_0  \left(  f+ \frac{X}{2}(\varphi_{ s})  \right)} \right) \right\} .
 \end{align}
By Corollary  \ref{proposition: estimate on radial derivative} and Assumption \ref{section MA: asympotitc behaviour of hamiltonian function}, there is a uniform constant $C>0$ such that 
\begin{align} \label{in prop: first weighted lower bound, some inequality}
	C^{-1} e^{-2\varepsilon_0 t} \leq e^{-\varepsilon_0  \left(  f+ \frac{X}{2}(\varphi_{ s})  \right)} \leq C e^{-2\varepsilon_0 t}
\end{align} 
holds on $M$. In particular, since $\inf_K \varphi_{ s} $ is uniformly bounded from below by Proposition \ref{prop: lower bound for varphi on compact sets}, we can choose $A\gg1$ even larger, so that
\begin{align*}
\min _{\partial K} \left( \varphi_{ s} +Ae^{-\varepsilon_0  \left(  f+ \frac{X}{2}(\varphi_{ s})  \right)}\right) \geq  \inf _K \varphi_{ s} + A \inf_K C^{-1}e^{-2\varepsilon_0 t} \geq 0.
\end{align*}
Consequently, we arranged that (\ref{in prop: first weighted lower bound starting point of modification}) becomes 
\begin{align*}
	\varphi_{ s} \geq - Ae^{-\varepsilon_0  \left(  f+ \frac{X}{2}(\varphi_{ s})  \right)} \geq -A C e^{-2\varepsilon_0 t} \;\; \text{ on } \;\; M \setminus K,
\end{align*}
because of (\ref{in prop: first weighted lower bound, some inequality}). This is precisely what we wanted to prove. 
\end{proof}

Before improving the weighted bound from $\varepsilon_0$ to $\varepsilon$, we require uniform bounds on all derivatives of $\varphi_{ s}$, which is the content of the subsequent section.

\subsection{Higher order estimates} \label{subsection: higher order estimates}

In the previous section, we obtained uniform bounds on $\varphi_{ s}$ and its radial derivative up to second order.
Using these results, we begin by deriving bounds on the $C^2$- and $C^3$-norms of $\varphi_{ s}$, which then leads to estimates for all derivatives. We purse essentially the same strategy as in \cite{conlon2020steady}[Section 7], but occasionally we present different computations.

\subsubsection{The $C^2$-estimate}

The $C^2$-estimate for $\varphi_{ s}$ is equivalent to bounding the associated metric $g_{\varphi_{ s}}$ uniformly in terms of $g$.

\begin{prop}[Uniform bound on the metric] \label{proposition: uniform bound on metrics}
	Let $1<\varepsilon<2$ and suppose $(\varphi_{ s})_{0\leq s\leq 1}$ is a family in $C^{\infty}_{\varepsilon, \, JX}(M)$ solving (\ref{one-parameter family of MA}). If $g_{\varphi_{ s}}$ denotes the metric associated to the K\"ahler form $\omega+ i \partial \partialb \varphi_{ s}$, then there exists a constant $C>0$ such that
	\begin{align}\label{in proposition: equivalent metrics}
		C^{-1} g\leq g_{\varphi_{ s}} \leq C g,
	\end{align}
	where  $C$ only depends on $F\in C^{\infty}_{\varepsilon, \, JX}(M)$ and the geometry of $(M,g)$.
\end{prop}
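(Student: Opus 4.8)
The plan is to prove the $C^{2}$-estimate by adapting the classical second-order estimate of Aubin and Yau to the present drift-equation in the ACyl setting. Write $g' := g_{\varphi_s}$, $\omega' := \omega + i\partial\partialb\varphi_s$, and read (\ref{one-parameter family of MA}) as $\omega'^{\,n} = e^{h_s}\omega^{n}$ with $h_s := sF - \tfrac{X}{2}(\varphi_s)$. The first thing to record is that $h_s$ is \emph{uniformly} bounded in $C^{0}$: $F\in C^{\infty}_{\varepsilon,JX}(M)$ is fixed and $\sup_{s}\sup_{M}|X(\varphi_s)|\leq C$ by Corollary \ref{proposition: estimate on radial derivative}; equivalently the eigenvalues $1+\lambda_i$ of $g'$ relative to $g$ satisfy $\prod_i(1+\lambda_i) = e^{h_s}\in[c,c^{-1}]$ for a uniform $c>0$. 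Hence a uniform \emph{upper} bound $\operatorname{tr}_g g'\leq C$ immediately gives $g'\leq Cg$, and combined with $\prod_i(1+\lambda_i)\geq c$ it also gives $g'\geq C^{-1}g$, since then $1+\lambda_j = e^{h_s}\big/\prod_{i\neq j}(1+\lambda_i)\geq c\,C^{-(n-1)}$. So the whole proposition reduces to a uniform upper bound on $\operatorname{tr}_g g'$.

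To obtain this I would apply the Yau--Siu inequality for $\log\operatorname{tr}_g g'$. As $(M,g)$ is ACyl it has bounded geometry, so the holomorphic bisectional curvature of $g$ is bounded below by some $-C_0$, and one gets $\Delta_{g'}\log\operatorname{tr}_g g' \geq \tfrac{1}{\operatorname{tr}_g g'}\operatorname{tr}_g(i\partial\partialb h_s)\cdot\text{(const)} - C_0\operatorname{tr}_{g'}g - C_1$ with uniform $C_1$ (collecting $\operatorname{tr}_g\Ric(\omega)$, bounded). The genuinely new difficulty, compared with the Calabi--Yau case, is that $h_s$ depends on $\varphi_s$, so the term $i\partial\partialb h_s = s\,i\partial\partialb F - \tfrac12 i\partial\partialb(X(\varphi_s))$ contains \emph{third-order} derivatives of $\varphi_s$ that are not controlled a priori. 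I would handle this as in steady-soliton arguments (cf.\ \cite{conlon2020steady}[Section 7]): work with the drift operator $\Delta_{g'}+X$ rather than $\Delta_{g'}$. Since $X = \nabla^{g'}\!\big(f+\tfrac X2(\varphi_s)\big)$ by (\ref{in Lemma: a first bound on X(varphi); X is gradient}), the extra piece $X(\log\operatorname{tr}_g g')$ produces exactly the third-order terms needed to cancel those inside $i\partial\partialb(X(\varphi_s))$, leaving only terms involving $X(\varphi_s)$, $X^2(\varphi_s)$, $\Delta_g f$ and derivatives of $F$ — all uniformly bounded by Corollary \ref{proposition: estimate on radial derivative}, Lemma \ref{lemma: unform bound on X^2 of varphi} and Assumption \ref{section MA: asympotitc behaviour of hamiltonian function}, and here one uses the once-differentiated Monge--Amp\`ere identity (\ref{in lemma X^2 varphi: first derivative of MA equation}) to express $\operatorname{tr}_{g'}\!\big(i\partial\partialb(f+\tfrac X2\varphi_s)\big)$. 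The upshot should be $(\Delta_{g'}+X)\log\operatorname{tr}_g g' \geq -C_0\operatorname{tr}_{g'}g - C_2$ with $C_0,C_2$ uniform.

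The final step is the Aubin trick together with an ACyl maximum principle. Set $\Psi_s := \log\operatorname{tr}_g g' - A\varphi_s$ with $A$ chosen so that $2A - C_0\geq 1$. Using $\Delta_{g'}\varphi_s = 2n - 2\operatorname{tr}_{g'}g$ and the uniform bound on $X(\varphi_s)$ one gets $(\Delta_{g'}+X)\Psi_s \geq \operatorname{tr}_{g'}g - C_3$ for a uniform $C_3$. Now $\varphi_s\to 0$ and $\Delta_g\varphi_s\to 0$ on the cylindrical end (since $\varphi_s\in C^{\infty}_{\varepsilon_0,JX}(M)$ by Propositions \ref{upper-C^0 estimate for varphi_s from above} and \ref{proposition lower weighted bound for varepsilon0}), so $\operatorname{tr}_g g'$ is bounded and $\Psi_s\to\log n$ at infinity; hence either $\Psi_s\leq\log n$ on all of $M$, or $\Psi_s$ attains its supremum at an interior point $p$, where $(\Delta_{g'}+X)\Psi_s(p)\leq 0$ forces $\operatorname{tr}_{g'}g(p)\leq C_3$, and then, as in the first paragraph (now using the upper bound $e^{h_s}\leq c^{-1}$), $\operatorname{tr}_g g'(p)\leq C_4$. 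In both cases $\sup_M\Psi_s \leq \log(\max\{n,C_4\}) + A\|\varphi_s\|_{C^0}$, which is uniform by Propositions \ref{upper-C^0 estimate for varphi_s from above} and \ref{proposition: lower bound on inf varphi}; re-adding $A\varphi_s$ yields $\operatorname{tr}_g g'\leq C$ everywhere, and the proposition follows. The main obstacle in executing this is precisely the bookkeeping of the third-order terms introduced by the drift in the Yau--Siu computation — verifying that they genuinely cancel via $X=\nabla^{g'}(f+\tfrac X2\varphi_s)$ and the differentiated Monge--Amp\`ere equation, rather than being merely estimable.
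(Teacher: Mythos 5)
Your proposal is correct and follows essentially the same route as the paper's proof: reduce to an upper bound on $\operatorname{tr}_\omega \omega_{\varphi_s}$ via the determinant bound, run the Yau--Siu inequality, and observe that the drift term $X(\log\operatorname{tr}_\omega\omega_{\varphi_s})$ — computed from $\mathcal{L}_{X/2}\omega_{\varphi_s}=i\partial\partialb(f+\tfrac{X}{2}\varphi_s)$ — exactly cancels the uncontrolled third-order term $\operatorname{tr}_\omega i\partial\partialb(\tfrac{X}{2}\varphi_s)$ coming from $\operatorname{Ric}(\omega_{\varphi_s})$, after which the Aubin trick with $\log\operatorname{tr}_\omega\omega_{\varphi_s}-A\varphi_s$ and the maximum principle (using decay of $\varphi_s$ at infinity and the uniform bounds on $\varphi_s$ and $X(\varphi_s)$) close the argument. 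The cancellation you flag as the main obstacle does go through exactly as you describe, via the Lie-derivative computation of $\tfrac{X}{2}(\operatorname{tr}_\omega\omega_{\varphi_s})$.
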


  Before proceeding with the proof, we immediately obtain a uniform bound on the volume form by 
 looking at (\ref{one-parameter family of MA}) and applying Corollary \ref{proposition: estimate on radial derivative}.
 
 \begin{corollary}[Uniform bound on volume form] \label{corollary: uniform volume bound}
 	Let $1<\varepsilon<2$ and suppose $(\varphi_{ s})_{0\leq s\leq 1}$ is a family in $C^{\infty}_{\varepsilon, \, JX}(M)$ solving (\ref{one-parameter family of MA}). Then there exists a constant $C>0$ such that 
 	\begin{align*}
 		C^{-1} \omega^n  \leq (\omega+i\partial \partialb \varphi_{ s})^n \leq C\omega^n ,
 	\end{align*}
 	where $C>0$ only depends on $F \in C^{\infty}_{\varepsilon, \, JX}(M)$  and the geometry of $(M,g)$.
 \end{corollary}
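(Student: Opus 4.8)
The plan is to read off the bound directly from the Monge--Amp\`ere equation (\ref{one-parameter family of MA}), which expresses the ratio of the two volume forms as an exponential:
\begin{align*}
	\frac{(\omega + i \partial \partialb \varphi_{ s})^n}{\omega^n} = e^{s F - \frac{X}{2}(\varphi_{ s})}.
\end{align*}
Thus it suffices to bound the exponent $s F - \tfrac{X}{2}(\varphi_{ s})$ from above and below by constants independent of $s \in [0,1]$.

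First I would observe that $F \in C^{\infty}_{\varepsilon, \, JX}(M)$ is in particular bounded on $M$ (indeed $|F| \leq \|F\|_{C^0_\varepsilon}$ since $\varepsilon > 0$), and since $s\in[0,1]$ the term $s F$ satisfies $|sF| \leq \|F\|_{C^0_\varepsilon}$ uniformly in $s$. Next, by Corollary \ref{proposition: estimate on radial derivative}, there is a constant $C_0 > 0$, depending only on $F$ and the geometry of $(M,g)$, such that $\sup_{s\in[0,1]}\sup_M |X(\varphi_{ s})| \leq C_0$. Combining these two observations gives
\begin{align*}
	-\left( \|F\|_{C^0_\varepsilon} + \tfrac{1}{2} C_0 \right) \leq s F - \tfrac{X}{2}(\varphi_{ s}) \leq \|F\|_{C^0_\varepsilon} + \tfrac{1}{2} C_0
\end{align*}
on all of $M$ for every $s\in[0,1]$. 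Exponentiating and setting $C := \exp\!\left( \|F\|_{C^0_\varepsilon} + \tfrac{1}{2} C_0 \right)$ yields $C^{-1} \omega^n \leq (\omega + i \partial \partialb \varphi_{ s})^n \leq C \omega^n$, which is the claim.

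There is no real obstacle here: the statement is an immediate corollary of the already-established uniform $C^0$-bound on the radial derivative $X(\varphi_{ s})$ (Corollary \ref{proposition: estimate on radial derivative}), together with the trivial boundedness of $F$. The only point worth noting is that all constants entering the estimate depend only on $\|F\|_{C^0_\varepsilon}$ and the geometry of $(M,g)$, which is exactly the dependence asserted in the statement, so no further bookkeeping is needed.
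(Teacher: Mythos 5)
Your proof is correct and is exactly the argument the paper intends: the paper derives this corollary "by looking at (\ref{one-parameter family of MA}) and applying Corollary \ref{proposition: estimate on radial derivative}", i.e. by bounding the exponent $sF - \tfrac{X}{2}(\varphi_s)$ uniformly using the boundedness of $F$ and the uniform bound on $X(\varphi_s)$. Nothing further is needed.
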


\begin{proof}[Proof of Proposition \ref{proposition: uniform bound on metrics}]
	We argue as in \cite{conlon2020steady}[Proposition 7.14], but present different calculations.
	The bound (\ref{in proposition: equivalent metrics}) amounts to bounding both $\operatorname{tr}_{\omega} \omega_{\varphi_{ s}}$ and $\operatorname{tr}_{\omega_{\varphi_{ s}}} \omega$ uniformly from above. However, there is the well-known formula
	\begin{align*}
		\operatorname{tr}_{\omega_{\varphi_{ s}}} \omega \leq n\cdot  \frac{\omega^n}{\omega_{\varphi_{ s}}^n} \left( \operatorname{tr}_{\omega} \omega_{\varphi_{ s}} \right) ^{n-1}
	\end{align*}
	compare for example (\cite{boucksom2013introduction}[Lemma 4.1.1]). Thus, it suffices to estimate $\operatorname{tr}_{\omega} \omega_{\varphi_{ s}}$ since the volume form $\omega_{\varphi_{ s}} ^n$ is uniformly bounded by Corollary \ref{corollary: uniform volume bound}. 
	
	In this proof, $C>0$ denotes a uniform constant, which may increase from line to line but only depends on the geometry of $(M,g)$ and the $C^{\infty}$-norm of $F$. 
	
	Recall that a standard computation yields the following inequality
	\begin{align}\label{in propositon bound on metrics: laplace estimate from below: step 1}
	\frac{1}{2}	\Delta_{g_{\varphi_{ s}}} \log \operatorname{tr}_{\omega} \omega_{\varphi_{ s}} \geq - \frac{\operatorname{tr}_\omega \operatorname{Ric}(\omega_{\varphi_s})}{\operatorname{tr}_{\omega} \omega_{\varphi_{ s}}} - C \operatorname{tr}_{\omega_{\varphi_{ s}}} \omega,
	\end{align}
	where $\operatorname{Ric}(\omega_{\varphi_s})$ is the Ricci form of $\omega_{\varphi_{ s}}$ and $C>0$ a constant such that the holomorphic bisectional curvature of $g$ is bounded from below by $-C$. For a proof of this inequality, we refer the reader to  \cite{boucksom2013introduction}[Proposition 4.1.2]. Also observe that in our case the bisectional curvature of $g$ is bounded since $g$ is asymptotically cylindrical. 
	Starting  from (\ref{one-parameter family of MA}), we compute the Ricci form of $\omega_{\varphi_{ s}}$ :
	\begin{align} \label{in proposition bound on metrics: Ric of omega varphi}
		\operatorname{Ric}(\omega_{\varphi_{ s}}) = - i \partial \partialb \log \omega_{\varphi_{ s}} ^n = \operatorname{Ric}(\omega) - i \partial \partialb sF + i \partial \partialb\left( \frac{X}{2}(\varphi_{ s}) \right).
	\end{align}
	As both  $||F||_{C^2 }$ and the curvature of $g$ are uniformly bounded, we continue to estimate
	\begin{align} \label{in proposition bound on metrics: trace of ricci form bounded from below}
		- \operatorname{tr}_\omega \operatorname{Ric}(\omega_{\varphi_{ s}}) \geq - C- \operatorname{tr}_{\omega} i \partial \partialb\left( \frac{X}{2}(\varphi_{ s}) \right).
	\end{align}
	Also recall from \cite{boucksom2013introduction}[Lemma 4.1.1] that 
	\begin{align}\label{in proposition bound on metrics: trace bounded from below}
		\operatorname{tr}_\omega \omega_{\varphi_{ s}} \geq n \cdot \left( \frac{\omega_{\varphi_{ s}}^n}{\omega ^n}\right)^{\frac{1}{n}} \geq C^{-1}>0,
	\end{align}
	where the lower bound again follows from Corollary \ref{corollary: uniform volume bound}. 
	Combining (\ref{in proposition bound on metrics: trace bounded from below}) and (\ref{in proposition bound on metrics: trace of ricci form bounded from below}) with (\ref{in propositon bound on metrics: laplace estimate from below: step 1}), we consequently arrive at
	\begin{align} \label{in proposition bound on metrics: laplace bound from below: final step}
	\frac{1}{2}	\Delta_{g_{\varphi_{ s}}} \log \operatorname{tr}_{\omega} \omega_{\varphi_{ s}} \geq - \frac{\operatorname{tr}_\omega i \partial \partialb \left(\frac{X}{2}(\varphi_{ s})    \right) }{\operatorname{tr}_\omega \omega_{\varphi_{ s}}} - C - C\operatorname{tr}_{\omega_{\varphi_{ s}}} \omega.
	\end{align}
	Next, we calculate the radial derivative of $\operatorname{tr}_\omega \omega_{\varphi_{ s}}$ by considering its defining equation:
	\begin{align*}
		\operatorname{tr}_\omega \omega_{\varphi_{ s}} \cdot \omega ^n = n \cdot \omega_{\varphi_{ s}} \wedge \omega ^{n-1}.
	\end{align*}
	Taking the Lie derivative in direction $\frac{X}{2}$ on both sides of this equation and then dividing by $\omega^n$ leads to 
	\begin{align}\label{in proposition bound on metrics: compute X of trace omega omega varphi}
\nonumber	&\frac{X}{2}(\operatorname{tr}_\omega \omega_{\varphi_{ s}})  + \operatorname{tr}_\omega \omega_{\varphi_{ s}} \cdot   \operatorname{tr}_\omega \mathcal{L}_{\frac{X}{2}} (\omega)  \\
	=&  \operatorname{tr}_\omega \mathcal{L}_{\frac{X}{2}}(\omega_{\varphi_{ s}})  + n(n-1)\cdot \frac{ \omega_{\varphi_{ s}} \wedge \mathcal L_{\frac{X}{2}}(\omega) \wedge  \omega^{n-2} }{\omega^n} \\
	=& \operatorname{tr}_\omega \mathcal{L}_{\frac{X}{2}}(\omega_{\varphi_{ s}})  + \operatorname{tr}_\omega
\nonumber \omega_{\varphi_{ s}} \cdot \operatorname{tr}_\omega \mathcal{L}_{\frac{X}{2}} (\omega) - \langle  	\omega_{\varphi_{ s}}, \mathcal{L}_{\frac{X}{2}} (\omega) \rangle_{g},
	\end{align}
	or equivalently,
	\begin{align}\label{in proposition bound on metrics: final formula for X of tr omega omega varphi}
		\frac{X}{2}(\operatorname{tr}_\omega \omega_{\varphi_{ s} }) = \operatorname{tr}_\omega \mathcal{L}_{\frac{X}{2}} (\omega_{\varphi_{ s} }) - \langle \omega_{\varphi_{ s} }, \mathcal{L}_{\frac{X}{2}}(\omega) \rangle_g
	\end{align}
	Here, the last equation in (\ref{in proposition bound on metrics: compute X of trace omega omega varphi}) is a straight forward computation, which can be found in \cite{Szek}[Lemma 4.6], and $\langle,\rangle_g$ denotes the metric on $2$-forms induced by $g$. 
	 We recall that $\mathcal{L}_{\frac{X}{2}} (\omega) = i \partial \partialb f$ since $X=\nabla^g f$ and also that the norm $|i\partial \partialb f|_g \leq C$ is uniformly bounded by some $C>0$ because of  \ref{section MA: asympotitc behaviour of hamiltonian function}. Applying these observations to  the previous equation, we obtain
	 \begin{align*}
	 	\frac{X}{2}(\log \operatorname{tr}_\omega \omega_{\varphi_{ s}}) 
	 	&= \frac{\operatorname{tr}_\omega i \partial \partialb \left(\frac{X}{2}(\varphi_{ s}) \right)}{\operatorname{tr}_\omega \omega_{\varphi_{ s}}} + \frac{\operatorname{tr}_\omega i \partial \partialb f }{\operatorname{tr}_\omega \omega_{\varphi_{ s}}} - \frac{\langle  \omega_{\varphi_{ s}}, \mathcal{L}_{\frac{X}{2}} (\omega) \rangle_{g}}{\operatorname{tr}_\omega \omega_{\varphi_{ s}}}\\
	 	&\geq \frac{\operatorname{tr}_\omega i \partial \partialb \left(\frac{X}{2}(\varphi_{ s}) \right)}{\operatorname{tr}_\omega \omega_{\varphi_{ s}}} - \frac{C}{\operatorname{tr}_\omega \omega_{\varphi_{ s}}} - \frac{C\cdot |\omega_{\varphi_{ s}}|_g }{\operatorname{tr}_\omega \omega_{\varphi_{ s}}} \\
	 	&\geq \frac{\operatorname{tr}_\omega i \partial \partialb \left(\frac{X}{2}(\varphi_{ s}) \right)}{\operatorname{tr}_\omega \omega_{\varphi_{ s}}} -C ,
	 \end{align*}
	where we used the bound on $|i\partial \partialb f|_g$ in the second line and (\ref{in proposition bound on metrics: trace bounded from below}) in the last one. Altogether, we finally arrive at
	\begin{align}\label{in proposition bound on metric: inequality for delta plus X over 2 applied to trace}
	\frac{1}{2}	\left(  X + \Delta_{g_{\varphi_{s}}} \right)  \log \operatorname{tr}_\omega \omega_{\varphi_{ s}} \geq -C -C \operatorname{tr}_{\omega_{\varphi_{ s}} } \omega.
	\end{align}
	From there, it is  standard to conclude an upper bound on $\operatorname{tr}_\omega \omega_{\varphi_{ s}}$. We begin by  considering the following inequality 
	\begin{align*}
\frac{1}{2}	\left(  X + \Delta_{g_{\varphi_{s}}} \right) \varphi_{ s} \leq C + n -\operatorname{tr}_{\omega_{\varphi_{ s}} }\omega
	\end{align*}
	where we used the upper bound on $X(\varphi_{ s})$ from Proposition \ref{proposition: estimate on radial derivative} and the definition of $\omega_{\varphi_{ s}}$. In combination with (\ref{in proposition bound on metric: inequality for delta plus X over 2 applied to trace}), we then obtain
	\begin{align}\label{in proposition bound on metrics: final inequality for max principle set up}
	\frac{1}{2}	\left(  X + \Delta_{g_{\varphi_{s}}} \right)  (\log \operatorname{tr}_\omega \omega_{\varphi_{ s}} - (C+1) \varphi_{ s}) \geq -C + \operatorname{tr}_{\omega_{\varphi_{ s}} } \omega.
	\end{align}
	Applying the maximum principle to this equation, yields the desired estimate for $\operatorname{tr}_\omega \omega_{\varphi_{ s}}$ as follows. Note that we can assume  $\log \operatorname{tr}_\omega \omega_{\varphi_{ s}} - (C+1) \varphi_{ s}>n$ at least somewhere on $M$, because otherwise we are done by the uniform upper bound on $\varphi_{ s}$ (Proposition \ref{upper-C^0 estimate for varphi_s from above}). Thus, there exists  $p_{\max}\in M$ such that $\log \operatorname{tr}_\omega \omega_{\varphi_{ s}} - (C+1) \varphi_{ s}$ is maximal at $p_{\max}$. Then at this point, we obtain from (\ref{in proposition bound on metrics: final inequality for max principle set up}) that $\operatorname{tr}_{\omega_{\varphi_{ s}}} \omega \leq C$, so that at $p_{max}$:
	\begin{align*}
		\ \operatorname{tr}_\omega \omega_{\varphi_{ s}} \cdot e^{- (C+1) \varphi_{ s}} \leq  n  e^{- (C+1) \varphi_{ s}} \cdot \frac{\omega_{\varphi_{ s}} ^n }{\omega^n } (\operatorname{tr}_{\omega_{\varphi_{ s}}} \omega)^{n-1}  \leq C.
	\end{align*}
	Hence,  $ \log \operatorname{tr}_\omega \omega_{\varphi_{ s}} - (C+1) \varphi_{ s}$ is uniformly bounded from above, and so is $\operatorname{tr}_\omega \omega_{\varphi_{ s}}$,  finishing the proof. 
\end{proof}

\subsubsection{The $C^3$-estimate}

\begin{prop}[Uniform  $C^3$-estimate] \label{proposition uniform C3 estimate}
Let $1<\varepsilon<2$ and suppose $(\varphi_{ s})_{0\leq s\leq 1}$ is a family in $C^{\infty}_{\varepsilon,\, JX}(M)$ solving (\ref{one-parameter family of MA}). 
  If $g_{\varphi_{ s}}$ denotes the metric associated to the K\"ahler form $\omega+ i \partial \partialb \varphi_{ s}$, then there exists a constant $C>0$ such that 
	\begin{align*}
		\sup_{s\in[0,1]} \sup _M |\nabla^g g_{\varphi_{ s}}|_g \leq C,
	\end{align*}
	where the constant $C$ only depends on $F\in C^{\infty}_{\varepsilon,\, JX}(M)$ and the geometry of $(M,g)$.
\end{prop}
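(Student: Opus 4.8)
The plan is to follow the classical Calabi third-order estimate, adapted to the soliton setting and to the cylindrical geometry. The starting point is the Calabi-Yau quantity
\[
S := |\nabla^g g_{\varphi_s}|_g^2 = g_{\varphi_s}^{i\bar l} g_{\varphi_s}^{k\bar j} g_{\varphi_s}^{p\bar q}\, \varphi_{s,i\bar j p}\, \overline{\varphi_{s,l\bar k q}},
\]
where covariant derivatives are taken with respect to $g$. A standard but lengthy computation (see Yau, or \cite{boucksom2013introduction}[Section 4]) shows that, since the curvature of $g$ is bounded (as $g$ is ACyl), one has a differential inequality of the form
\[
\Delta_{g_{\varphi_s}} S \geq -C_1\, S - C_2 - C_3\, S^{1/2}\big(\operatorname{tr}_{\omega_{\varphi_s}}\omega\big)^{1/2}\cdot(\dots),
\]
where all constants depend only on $\sup_M|F|_{C^3}$, the geometry of $g$, and the uniform metric equivalence $C^{-1}g \le g_{\varphi_s}\le Cg$ already established in Proposition \ref{proposition: uniform bound on metrics}. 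The novelty here, compared with the Ricci-flat case, is the presence of the drift term $\tfrac{X}{2}(\varphi_s)$ in the Monge-Amp\`ere equation (\ref{one-parameter family of MA}): differentiating $\log\omega_{\varphi_s}^n = sF - \tfrac{X}{2}(\varphi_s) + \log\omega^n$ three times produces third-derivative terms in $\varphi_s$ through $X(\varphi_s)$, but since $X = 2\Phi_*\partial_t$ outside a compact set is a fixed smooth vector field with bounded covariant derivatives, these contribute only terms controlled by $S^{1/2}$ times bounded quantities. So the plan is: first compute $\Delta_{g_{\varphi_s}}S$ carefully, tracking the drift contributions, and record that they do not spoil the inequality.

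Next I would combine this with the $C^2$-estimate as in the usual argument: from Proposition \ref{proposition: uniform bound on metrics} we know $\operatorname{tr}_\omega\omega_{\varphi_s}$ and $\operatorname{tr}_{\omega_{\varphi_s}}\omega$ are bounded, and the standard computation gives
\[
\Delta_{g_{\varphi_s}}\big(\operatorname{tr}_\omega\omega_{\varphi_s}\big) \geq -C\,\operatorname{tr}_{\omega_{\varphi_s}}\omega\cdot\big(1 + \dots\big) + c_0\, S
\]
for some $c_0>0$, so that a suitably large multiple $A(\operatorname{tr}_\omega\omega_{\varphi_s})$ acts as a coefficient allowing one to absorb the bad terms: one considers $S + A\operatorname{tr}_\omega\omega_{\varphi_s}$ (or, to be safe with the $X$-terms, $S + A\operatorname{tr}_\omega\omega_{\varphi_s} - B\varphi_s$ using the uniform bounds on $\varphi_s$ from Propositions \ref{upper-C^0 estimate for varphi_s from above} and \ref{proposition: lower bound on inf varphi}) and derives
\[
\big(\Delta_{g_{\varphi_s}} + X\big)\big(S + A\operatorname{tr}_\omega\omega_{\varphi_s} - B\varphi_s\big) \geq \tfrac12 S - C.
\]
Here it is natural to use the drift Laplacian $\Delta_{g_{\varphi_s}} + X$ rather than $\Delta_{g_{\varphi_s}}$ alone, exactly as in the earlier sections, because $X$ applied to all the relevant quantities is controlled: $X(\varphi_s)$ is uniformly bounded by Corollary \ref{proposition: estimate on radial derivative}, $X(S)$ is a first-derivative term in $S$ that is harmless, and $X(\operatorname{tr}_\omega\omega_{\varphi_s})$ was already computed in the proof of Proposition \ref{proposition: uniform bound on metrics} (formula (\ref{in proposition bound on metrics: final formula for X of tr omega omega varphi})).

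Then I would apply the maximum principle. This is the step requiring care in the noncompact ACyl setting: one must argue that $S + A\operatorname{tr}_\omega\omega_{\varphi_s} - B\varphi_s$ attains its supremum, or else is already bounded near infinity. Since $\varphi_s\in C^\infty_{\varepsilon,JX}(M)$, all its covariant derivatives decay exponentially, so $S\to 0$ and $\operatorname{tr}_\omega\omega_{\varphi_s}\to n$ and $\varphi_s\to 0$ as $t\to\infty$; hence the quantity is bounded near infinity and, being continuous, either attains an interior maximum or is bounded by its asymptotic value. At an interior maximum $p_{\max}$, the inequality above forces $\tfrac12 S(p_{\max}) \le C$, hence $S(p_{\max})\le 2C$, which together with the (bounded) values of $A\operatorname{tr}_\omega\omega_{\varphi_s}-B\varphi_s$ gives a uniform bound on $S + A\operatorname{tr}_\omega\omega_{\varphi_s}-B\varphi_s$ everywhere, and therefore on $S = |\nabla^g g_{\varphi_s}|_g^2$. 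Taking square roots yields the claim. The main obstacle I anticipate is purely computational: organizing the third-order Bochner-type computation so that every term generated by the soliton drift $X(\varphi_s)$ and by the background vector field $X$ is visibly dominated by $\epsilon S + C$ for small $\epsilon$, using the already-established metric equivalence and the bounds on $X(\varphi_s)$; no genuinely new analytic difficulty should arise beyond what is handled in \cite{conlon2020steady}[Section 7] and \cite{boucksom2013introduction}, but the cylindrical maximum-principle bookkeeping must be done honestly.
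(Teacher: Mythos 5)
Your proposal is correct and follows essentially the same route as the paper: the Calabi quantity $S=|\nabla^g g_{\varphi_s}|_g^2$, a drift-Laplacian differential inequality for $S$ (the paper imports it verbatim from \cite{conlon2020steady}[Proposition 7.16]), combination with the Schwarz-lemma inequality for $\operatorname{tr}_\omega\omega_{\varphi_s}$ so that the positive third-order term absorbs the bad ones, and the maximum principle applied to $S+C_1\operatorname{tr}_\omega\omega_{\varphi_s}$ using the asymptotic value $n$ at infinity. The only cosmetic differences are the sign of the drift term ($\Delta_{g_{\varphi_s}}-X$ in the paper versus $+X$ in your sketch) and your optional $-B\varphi_s$ correction, which the paper does not need here.
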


\begin{proof}
	We define 
	\begin{align*}
		S:=|\nabla^g g_{\varphi_{ s}}|_g ^2,
	\end{align*}
	and then the computation in \cite{conlon2020steady}[Proposition 7.16] goes through verbatim. In particular, if $\operatorname{Rm}(g)$ denotes the curvature tensor of $g$, there exists a constant $C>0$, which only depends on the constant in Proposition \ref{proposition: uniform bound on metrics} as well as on bounds for covariant derivatives of both $F$ and $\operatorname{Rm}(g)$, such that 
	\begin{align}\label{in proposition C3 estimate: differential inequality for S}
		\frac{1}{2}\left( \Delta_{g_{\varphi_{ s}}}- X \right) S \geq -C(S+1).
	\end{align}
	Moreover, recall that the standard Schwarz-Lemma calculation in holomorphic coordinates yields
	\begin{align}\label{in proposition C3 estimate: precise formula in coordinates for laplace of metric}
		\frac{1}{2}\Delta_{g_{\varphi_{s}}} \operatorname{tr}_\omega \omega_{\varphi_{ s}}= - \operatorname{tr}_\omega\operatorname{Ric}(\omega_{\varphi_{ s}}) + g_{\varphi_{ s}}^{\bar l k} R_{k\bar l} ^{\;\;\;j \bar i} g^{\varphi_{ s}} _{i\bar j} + g^{\bar j i} g^{\bar q p} _{\varphi_{ s}} g^{\bar l k} _{\varphi_{ s}} \nabla^g _i g^{\varphi_{ s}} _{p \bar l} \nabla^g _{\bar j} g^{\varphi_{ s}} _{k\bar q},
			\end{align}
	where  $g^{\varphi_{ s}}_{i\bar j}$ denotes the components of $g_{\varphi_{ s}}$ in coordinates, with inverse $g_{\varphi_{ s} } ^{\bar j i}$, and $R_{k \bar l i \bar j}$ is the local expression of $\operatorname{Rm}(g)$. For the computation, we refer the reader to \cite{boucksom2013introduction}[(3.67)], for example.
	Starting from (\ref{in proposition C3 estimate: precise formula in coordinates for laplace of metric}), and keeping Proposition \ref{proposition: uniform bound on metrics} as well as (\ref{in proposition bound on metrics: trace of ricci form bounded from below}) in mind, we estimate
	\begin{align*}
		\frac{1}{2}\Delta_{g_{\varphi_{s}}} \operatorname{tr}_\omega \omega_{\varphi_{ s} }\geq - \operatorname{tr}_\omega i \partial \partialb \left( \frac{X}{2} (\varphi_{ s})\right) - C + C^{-1}S.
	\end{align*}
	Proposition \ref{proposition: uniform bound on metrics} applied to (\ref{in proposition bound on metrics: final formula for X of tr omega omega varphi}) also leads to 
	\begin{align*}
		\frac{X}{2} (\operatorname{tr}_\omega \omega_{\varphi_{ s}}) \geq \operatorname{tr}_\omega i \partial \partialb \left( \frac{X}{2}(\varphi_{ s})\right) - C,
	\end{align*}
	and hence,
	\begin{align} \label{in proposition C3 estimate: differential inequality for trace omega omega varphi}
	\frac{1}{2} \left( \Delta_{g_{\varphi_{ s}}} +X   \right) \operatorname{tr}_\omega \omega_{\varphi_{ s} } \geq -C + C^{-1} S
	\end{align}
	for some constant $C>0$ only depending on $||\operatorname{Rm}(g)||_{C^0(M)}$, $||\partial \partial f||_{C^0(M)}$, $||F||_{C^2(M)}$ and the constant in Proposition \ref{proposition: uniform bound on metrics}.  
	If we choose a sufficiently large constant $C_1>0$ and then add  (\ref{in proposition C3 estimate: differential inequality for S}) to $C_{1}$-times (\ref{in proposition C3 estimate: differential inequality for trace omega omega varphi}), we can arrange that
	\begin{align}\label{in proposition C3 estimate: final differential inequality for maximum principle}
		\frac{1}{2}(\Delta_{g_{\varphi_{ s}}} - X) \left( S + C_1 \operatorname{tr}_\omega \omega_{\varphi_{ s} }   \right) \geq -C + S .
	\end{align}
	Again, there are two cases to consider. If $S + C_1 \operatorname{tr}_\omega \omega_{\varphi_{ s} } \leq \lim_{t\to \infty} S + C_1 \operatorname{tr}_\omega \omega_{\varphi_{ s} }  =n$, there is nothing to show, so we can assume $S + C_1 \operatorname{tr}_\omega \omega_{\varphi_{ s} } >n$. Thus, there exists a point $p_{\max} \in M$, where $S + C_1 \operatorname{tr}_\omega \omega_{\varphi_{ s} } $ is maximal. Applying the maximum principle to (\ref{in proposition C3 estimate: final differential inequality for maximum principle}), we have at $p_{\max}$:
\begin{align}
S + C_1 \operatorname{tr}_\omega \omega_{\varphi_{ s} } \leq C + C_1 \sup_M \operatorname{tr}_\omega \omega_{\varphi_{ s} } \leq C.
\end{align}
This implies a uniform upper bound on $S$, as claimed. 
\end{proof}

Since the $C^1$-norm of $g_{\varphi_{ s}}$ is uniformly bounded, we  obtain a uniform $C^{0,\alpha}$-bound on  $g_{\varphi_{ s}}$, as in \cite{conlon2020steady}[Corollary 7.17].
\begin{corollary}\label{uniform C0,alpha estimate of metric}
	Let $1<\varepsilon<2$, $\alpha \in (0,1)$ and suppose $(\varphi_{ s})_{0\leq s\leq 1}$ is a family in $C^{\infty}_{\varepsilon,\, JX}(M)$ solving (\ref{one-parameter family of MA}). If  $g_{\varphi_{ s}}$ denotes the Riemannian metric corresponding to $\omega_{\varphi_{ s} }$, and $g_{\varphi_{ s}}^{-1}$ the induced metric on 1-forms, then there exists a constant $C>0$ such that
	\begin{align*}
		\sup_{s\in[0,1]} \left( ||g_{\varphi_{ s}} ||_{C^{0,\alpha}} + ||g_{\varphi_{ s} }^{-1}||_{C^{0,\alpha}} \right) \leq C,
	\end{align*}
	where  $C$ only depends on $\alpha$, $ F\in C^{\infty}_{\varepsilon, \, JX}(M)$ and the geometry of $(M,g)$. 
\end{corollary}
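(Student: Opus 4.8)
The plan is simply to assemble the two a priori estimates already established: Proposition~\ref{proposition: uniform bound on metrics} gives uniform $C^0$-control of $g_{\varphi_s}$ and of its inverse, while Proposition~\ref{proposition uniform C3 estimate} gives a uniform $C^0$-bound on $\nabla^g g_{\varphi_s}$; from a uniform $C^1$-bound, a uniform $C^{0,\alpha}$-bound follows by the elementary embedding $C^1\hookrightarrow C^{0,\alpha}$, valid with a uniform constant on $(M,g)$ since this manifold has bounded geometry and positive injectivity radius $i(g)$.

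First I would record the $C^0$-bounds. By Proposition~\ref{proposition: uniform bound on metrics} there is a constant $C_0>0$, independent of $s$, with $C_0^{-1}g\le g_{\varphi_s}\le C_0 g$; dualising, $C_0^{-1}g^{-1}\le g_{\varphi_s}^{-1}\le C_0 g^{-1}$, so $\sup_{s}\big(\|g_{\varphi_s}\|_{C^0}+\|g_{\varphi_s}^{-1}\|_{C^0}\big)$ is finite. Next I would deal with the first covariant derivatives. Proposition~\ref{proposition uniform C3 estimate} bounds $|\nabla^g g_{\varphi_s}|_g$ uniformly in $s$; differentiating the identity $g_{\varphi_s}\, g_{\varphi_s}^{-1}=\mathrm{Id}$ gives $\nabla^g g_{\varphi_s}^{-1}=-\,g_{\varphi_s}^{-1}\,(\nabla^g g_{\varphi_s})\,g_{\varphi_s}^{-1}$, whence $|\nabla^g g_{\varphi_s}^{-1}|_g\le |g_{\varphi_s}^{-1}|_g^{2}\,|\nabla^g g_{\varphi_s}|_g$ is uniformly bounded as well. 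Thus both $g_{\varphi_s}$ and $g_{\varphi_s}^{-1}$ are uniformly bounded in $C^1$.

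It remains to pass from $C^1$ to $C^{0,\alpha}$. Let $v$ denote either $g_{\varphi_s}$ or $g_{\varphi_s}^{-1}$, and take $x\ne y\in M$ with $r:=d_g(x,y)<i(g)/2$. If $r\le 1$, integrating $\nabla^g v$ along the minimal geodesic from $y$ to $x$ — along which $v_x-v_y$ is defined by parallel transport as in Definition~\ref{definition weighted function spaces}(i) — yields $|v_x-v_y|_g\le \|\nabla^g v\|_{C^0}\,r\le \|\nabla^g v\|_{C^0}\,r^{\alpha}$, using $0<\alpha<1$. If $r>1$, then simply $|v_x-v_y|_g\le 2\|v\|_{C^0}\le 2\|v\|_{C^0}\,r^{\alpha}$. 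In either case $[v]_{C^{0,\alpha}}\le\max\{\|\nabla^g v\|_{C^0},\,2\|v\|_{C^0}\}$, and combining with the $C^0$-bound from the previous paragraph gives the desired uniform bound on $\|g_{\varphi_s}\|_{C^{0,\alpha}}+\|g_{\varphi_s}^{-1}\|_{C^{0,\alpha}}$, with a constant depending only on $\alpha$, on $F$, and on the geometry of $(M,g)$. The argument is entirely routine; the only points deserving a word of care are the formula for $\nabla^g g_{\varphi_s}^{-1}$ and the split into the two distance regimes, so I do not anticipate any genuine obstacle here.
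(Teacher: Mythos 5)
Your proposal is correct and follows essentially the same route as the paper: both deduce the $C^{0,\alpha}$-bounds from the uniform $C^1$-bounds of Propositions~\ref{proposition: uniform bound on metrics} and~\ref{proposition uniform C3 estimate}, together with the pointwise inequality $|\nabla^g g_{\varphi_s}^{-1}|_g\le |g_{\varphi_s}^{-1}|_g^2\,|\nabla^g g_{\varphi_s}|_g$ obtained by differentiating the inverse relation. The only cosmetic difference is that you spell out the continuous embedding $C^1\hookrightarrow C^{0,\alpha}$ by the two-regime geodesic argument, where the paper simply invokes it.
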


\begin{proof}
	Recall that the natural embedding
	\begin{align*}
		C^{1}(TM \otimes TM) \subseteq C^{0,\alpha}(TM \otimes TM)
	\end{align*}
	is continuous, so that the operator norm of this inclusion only depends on $(M,g)$ and $\alpha$. 
	Hence, the $C^{0,\alpha}$-norm of $g_{\varphi_{ s}}$ is uniformly bounded from above by  $ ||g_{\varphi_{ s}}||_{C^1}$, which in turn is uniformly bounded according to Proposition \ref{proposition: uniform bound on metrics} and \ref{proposition uniform C3 estimate}. 
	
	Similarly, we find a uniform $C>0$, only depending on $(M,g)$ and $\alpha$, such that
	\begin{align} \label{in proposition C0 alpha norm of metric: g-1 bound}
		||g_{\varphi_{ s}}^{-1} ||_{C^{0,\alpha}} \leq C \left( ||g_{\varphi_{ s}} ^{-1} ||_{C^0} + ||\nabla^g g_{\varphi_{ s}} ^{-1} ||_{C^0}   \right).
	\end{align}
	Moreover, there is the following point-wise estimate
	\begin{align}\label{in proposition C0 alpha norm of metric: nabla g-1 bound}
		|\nabla^g g_{\varphi_{ s}} ^{-1} |_g \leq |g_{\varphi_{ s} }|_g^2 \cdot   |\nabla^g g_{\varphi_{ s}}  |_g.
	\end{align}
	Indeed, this inequality  follows immediately by using holomorphic normal coordinates and differentiating the relation 
	\begin{align*}
		g^{\bar j i}_{\varphi_{ s}} \cdot g^{\varphi_{ s}} _{k \bar j} = \delta^{i}_k,
	\end{align*}
	where $g^{\bar j i}_{\varphi_{ s}}$ and $g^{\varphi_{ s}} _{i \bar j}$ are the components of $g^{-1}_{\varphi_{ s}}$ and $g_{\varphi_{ s}}$, respectively. 
	 
	 Thus, we conclude the required uniform bound on $||g_{\varphi_{ s}}^{-1} ||_{C^{0,\alpha}}$ from (\ref{in proposition C0 alpha norm of metric: g-1 bound}) and (\ref{in proposition C0 alpha norm of metric: nabla g-1 bound}), together with  Proposition \ref{proposition: uniform bound on metrics} and \ref{proposition uniform C3 estimate}. 
\end{proof}

The standard Schauder theory for ACyl metrics then implies uniform $C^{2,\alpha}$-bounds for $\varphi_{ s}$.

\begin{prop}[Uniform $C^{2,\alpha}$-bound on $\varphi_{ s}$] \label{prop: C2 alpha bound for varphi}
	Let $1<\varepsilon<2$, $\alpha \in (0,1)$ and suppose $(\varphi_{ s})_{0\leq s\leq 1}$ is a family in $C^{\infty}_{\varepsilon, \, JX}(M)$ solving (\ref{one-parameter family of MA}). Then there exists a constant $C>0$ such that 
	\begin{align*}
		\sup_{s\in [0,1]} ||\varphi_{ s}||_{C^{2,\alpha}} \leq C,
	\end{align*}
where $C$ only depends on $\alpha$, $F\in C^{\infty}_{\varepsilon, \, JX}(M)$ and the geometry of $(M,g)$. 
\end{prop}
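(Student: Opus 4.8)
The plan is to show that $\varphi_s$ solves a linear elliptic equation for the \emph{fixed} background operator $\Delta_g$ whose right-hand side is uniformly bounded in $C^{0,\alpha}$, and then to invoke the ACyl Schauder estimate of Theorem~\ref{theorem: ACyl schauder estimates}. The reason for working with $\Delta_g$ rather than with $\Delta_{g_{\varphi_s}}$ is that the operator then does not depend on $s$, so the constant produced by the Schauder estimate is automatically uniform in $s$ and one avoids having to check that $\Delta_{g_{\varphi_s}}$ is asymptotically translation-invariant at a rate uniform in $s$.

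Concretely, I would first use the Kähler identity $\tfrac12\Delta_g u\,\omega^n = n\, i\partial\partialb u\wedge\omega^{n-1}$, valid for every $C^2$-function $u$ (this is the analogue for $g$ of the relation already used in the proof of Proposition~\ref{linearization is an isomorphism}), which says $\tfrac12\Delta_g u=\operatorname{tr}_\omega(i\partial\partialb u)$. Applying it to $\varphi_s$, together with $i\partial\partialb\varphi_s=\omega_{\varphi_s}-\omega$ and $\operatorname{tr}_\omega\omega=n$, gives
\[
	\Delta_g\varphi_s \;=\; 2\bigl(\operatorname{tr}_\omega\omega_{\varphi_s}-n\bigr)\;=:\;h_s .
\]
In local holomorphic coordinates $\operatorname{tr}_\omega\omega_{\varphi_s}=g^{i\bar j}g^{\varphi_s}_{i\bar j}$; since $(M,g)$ is ACyl it has bounded geometry, so $g$ and $g^{-1}$ are uniformly controlled in such charts, and combined with the uniform $C^{0,\alpha}$-bound on $g_{\varphi_s}$ from Corollary~\ref{uniform C0,alpha estimate of metric} this yields $\sup_{s\in[0,1]}\|h_s\|_{C^{0,\alpha}}\le C$, with $C$ depending only on $\alpha$, $F$ and the geometry of $(M,g)$. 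Moreover Propositions~\ref{upper-C^0 estimate for varphi_s from above} and~\ref{proposition: lower bound on inf varphi} give $\sup_{s\in[0,1]}\|\varphi_s\|_{C^0}\le C$.

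Finally, $\Delta_g$ is elliptic and asymptotically translation-invariant, being asymptotic to the cylindrical Laplacian $\Delta_{g_{cyl}}$, so I would apply Theorem~\ref{theorem: ACyl schauder estimates} to the equation $\Delta_g\varphi_s=h_s$ with $k=0$, $l=2$ and weight $\varepsilon=0$ (using $h_s\in C^{0,\alpha}_0(M)$ and $\varphi_s\in C^0_0(M)$) to conclude $\varphi_s\in C^{2,\alpha}_0(M)$ with $\|\varphi_s\|_{C^{2,\alpha}}\le C(\|h_s\|_{C^{0,\alpha}}+\|\varphi_s\|_{C^0})$, whose right-hand side is uniformly bounded by the previous step. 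Alternatively one can bypass the ACyl machinery and cover $M$ by coordinate balls of a fixed radius with uniformly bounded overlap, applying the usual interior Schauder estimate for $\Delta_g$ on each ball with a constant independent of the ball and of $s$ thanks to the bounded geometry of $(M,g)$. I do not expect a genuine obstacle here: the hard analysis (the $C^0$, $C^2$ and $C^3$ estimates) is already in place, and the only point that needs a little care is precisely to phrase the elliptic equation in terms of the fixed operator $\Delta_g$, so that uniformity in $s$ of the Schauder constant is immediate.
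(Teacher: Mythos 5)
Your proposal is correct and follows essentially the same route as the paper: write $\Delta_g\varphi_s=2(\operatorname{tr}_\omega\omega_{\varphi_s}-n)$, bound this in $C^{0,\alpha}$ via Corollary \ref{uniform C0,alpha estimate of metric}, combine with the uniform $C^0$-bound from Propositions \ref{upper-C^0 estimate for varphi_s from above} and \ref{proposition: lower bound on inf varphi}, and apply the ACyl Schauder estimate of Theorem \ref{theorem: ACyl schauder estimates} to the fixed, asymptotically translation-invariant operator $\Delta_g$. The paper's proof is a terser version of exactly this argument.
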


\begin{proof}
 Recall that $\Delta_g$ is an asymptotically translation-invariant operator of order $2$, and so the Schauder estimates (Theorem \ref{theorem: ACyl schauder estimates}) apply and yield a constant $C>0$, only depending on $(M,g)$, such that
 \begin{align*}
 ||\varphi_{ s}||_{C^{2,\alpha}} \leq C\left( || \varphi_{ s}||_{C^0} + ||\Delta_{g} \varphi_{ s}||_{C^{0,\alpha}} \right).
 \end{align*}
As $||\Delta_{g} \varphi_{ s}||_{C^{0,\alpha}} $ can be bounded from above in terms of $||g_{\varphi_{ s}}||_{C^{0,\alpha}}$, the claim then follows immediately from Corollary \ref{uniform C0,alpha estimate of metric} and the  uniform bound on $\sup_M |\varphi_{ s}|$ given by Propositions \ref{upper-C^0 estimate for varphi_s from above} and \ref{proposition: lower bound on inf varphi}.
\end{proof}

\subsubsection{Local $C^{k,\alpha}$-estimates} Uniform higher order estimates can be obtained similarly to the compact case considered by Yau \cite{yau1978ricci}. The idea is to use \textit{local} Schauder estimates to conclude higher regularity in a uniform way. Since our manifold $(M,g)$ is non-compact, we require the use of special coordinates in which the metric $g$, and all its derivatives, are uniformly bounded. This is provided by the following
\begin{theorem}\label{Theorem: uniform coordinates for (M,g)}
	Let $(M,g)$ be an $n$-dimensional ACyl K\"ahler manifold and $i(g)>0$ the corresponding injectivity radius. For each $q\in \mathbb{N}_0$, suppose that  $C_q>0$ is a constant such that the curvature tensor $\operatorname{Rm}(g)$ satisfies
	\begin{align*}
	\sup_M |\nabla^q\operatorname{Rm}(g)|_g \leq C_q.
	\end{align*}
	Then there are two constants $r_2>r_1>0$,  depending only on $n, i(g), C_q$, such that for each $x\in M$, there exists a chart $\phi: U \subset \mathbb{C}^n \to M$ satisfying the following properties:
	\begin{itemize}
		\item[(i)] $B_{\mathbb{C}^n}(0,r_1) \subset U \subset B_{\mathbb{C}^n}(0,r_2)$ and $\phi(0)=x $, where $B_{\mathbb{C}^n}(0,r_i)$ denotes the Euclidean ball of radius $r_i$ around the origin.
		\item [(ii)] There exists a constant $C>0$, depending only on $r_1,r_2$ such that the Euclidean metric $g_{\mathbb{C}^n}$ satisfies
			\begin{align*}
			C^{-1} g_{\mathbb{C}^n} \leq \phi^* g \leq C g_{\mathbb{C}^n} \;\;\; \text{ on} \;\; U.
			\end{align*}
		\item[(iii)] For each $l\in \mathbb{N}_0$, there exist constants $A_l>0$, depending only on $l,r_1,r_2$, such that 
			\begin{align*}
			\sup_U \left| \frac{\partial ^{|\mu|+|\nu|}  g_{i\bar j}}{\partial z ^{\mu} \partial \bar z ^{\nu} }  \right| \leq A_l \;\;\; \text{ for all } \; |\mu|+|\nu|\leq l,
			\end{align*}
			where $g_{i\bar j}$ are the components of $g$ in the holomorphic coordinates $(z_1,\dots,z_n)$ induced by $\phi$, and $\mu,\nu$ are multi-indices with $|\mu|= \mu_1+\dots +\mu_n$. 
	\end{itemize}
\end{theorem}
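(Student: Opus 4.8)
The plan is to establish that $(M,g)$ has uniformly bounded geometry --- a uniform positive lower bound on the injectivity radius together with the given uniform bounds on all covariant derivatives of the curvature --- and then to invoke the classical construction of harmonic (or geodesic normal) coordinate charts with uniform estimates, upgrading these to holomorphic charts by exploiting the K\"ahler condition.

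First I would record that an ACyl manifold automatically has $i(g)>0$: outside a bounded set $g$ is $C^\infty$-close to the product cylinder $dt^2+g_L$ over the \emph{compact} cross-section $L$, which has bounded geometry, so for $t$ large the injectivity radius is bounded below by, say, $\tfrac12 i(g_L)$, while on the relatively compact core it is bounded below by compactness. Combined with the hypothesis $\sup_M|\nabla^q\mathrm{Rm}(g)|_g\le C_q$ for every $q$, the classical theory of harmonic coordinates (Jost--Karcher, Anderson; see also Hebey's book) then furnishes constants $r_0>0$ and, for each $l\in\mathbb N_0$, $\Lambda_l>0$, depending only on $n$, $i(g)$ and finitely many of the $C_q$, such that through each $x\in M$ there is a $g$-harmonic chart defined on the metric ball $B(x,r_0)$ in which $C^{-1}\delta\le g\le C\delta$ and every component of $g$ is bounded in $C^{l}$ by $\Lambda_l$.

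The second step is to pass from these real coordinates to holomorphic ones. Since $g$ is K\"ahler, the complex structure $J$ is parallel, so in the harmonic chart $J$ satisfies $\partial_k J = -\,\Gamma\cdot J + J\cdot\Gamma$, a first-order linear system whose coefficients are the uniformly bounded Christoffel symbols; hence, after a bounded linear change of coordinates normalizing $g(x)$ to the Euclidean metric and $J(x)$ to the standard $J_0$ simultaneously, the almost complex structure $J$ becomes uniformly $C^{l}$-close to $J_0$ on a ball of definite radius. As $J$ is integrable, the quantitative Newlander--Nirenberg theorem (integrability with a priori estimates, as in Nijenhuis--Woolf, or in the form used by Conlon and Deruelle \cite{conlon2020steady}) yields, on a slightly smaller ball of uniform radius $r_1>0$, a holomorphic chart $\phi:U\to M$ with $\phi(0)=x$ and $B_{\mathbb C^n}(0,r_1)\subset U\subset B_{\mathbb C^n}(0,r_2)$ whose coordinate change with the harmonic chart is uniformly bounded in $C^{l+1}$. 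Pulling back $g$ and writing it as $g_{i\bar j}$ in these holomorphic coordinates, the uniform bounds on the harmonic-chart metric together with the uniform control on the coordinate change give $(ii)$ and uniform $C^{l}$-bounds on the $g_{i\bar j}$; since each $\partial^{|\mu|+|\nu|}/\partial z^{\mu}\partial\bar z^{\nu}$ is a constant-coefficient combination of real partial derivatives of order $|\mu|+|\nu|$, this is precisely $(iii)$, while $(i)$ is immediate from the construction.

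The step that requires genuine care --- and which I expect to be the main obstacle --- is the quantitative Newlander--Nirenberg argument: one must check that the radius $r_1$ of the holomorphic chart and the constants $A_l$ controlling the coordinate change depend only on $n$ and on the $C^k$-closeness of $J$ to $J_0$, hence ultimately only on $n$, $i(g)$ and the $C_q$, and in particular not on the point $x\in M$. This is exactly where bounded geometry, rather than mere smoothness, enters. An alternative route avoiding Newlander--Nirenberg is to construct Bochner-type normal holomorphic coordinates directly: on a ball whose radius is controlled by $i(g)$ and $C_0,C_1$ one normalizes the local K\"ahler potential to $|z|^2+O(|z|^4)$, after which the Taylor coefficients of the $g_{i\bar j}$ are universal polynomial expressions in the $\nabla^k\mathrm{Rm}(g)$ at $x$ and are therefore bounded in terms of the $C_q$; either approach delivers $(i)$--$(iii)$.
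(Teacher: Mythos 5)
Your argument is correct in outline, but it takes a genuinely different---and more general---route than the paper. The paper does not prove the theorem from bounded geometry alone: it observes that the asymptotic cylinder is given \emph{explicitly}, so on the end one covers the compact cross-section by finitely many holomorphic charts, translates them along the cylindrical direction (the complex structure on the end being the fixed product structure in the geometric setting of Section 4), and reads off the uniform bounds on $g_{i\bar j}$ directly from the $C^\infty$-closeness of $g$ to the explicit product metric; the compact core is handled by compactness. For the general statement it simply points to the literature (Cheng--Yau, Tian--Yau, and Wu--Yau, Theorem 9). Your route---positive injectivity radius from the ACyl structure, harmonic charts of uniform size with uniform $C^{l}$-control of the metric, then a quantitative Newlander--Nirenberg theorem to produce holomorphic charts of uniform size---is essentially the proof of that general bounded-geometry statement, so it proves more, at the cost of the quantitative integrability step, which you rightly identify as the only genuinely delicate point and which must be imported from the literature with explicit dependence of the radius and constants on $n$, $i(g)$ and the $C_q$. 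One caveat on your proposed alternative: Bochner normal coordinates do not by themselves circumvent this difficulty, since their construction is a holomorphic polynomial normalization \emph{within} an already given holomorphic chart; to guarantee they live on a ball of uniform radius you still need a holomorphic chart of uniform size to start from, i.e.\ either the quantitative Newlander--Nirenberg input or the explicit description of the cylindrical end that the paper exploits.
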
 

This theorem follows because the asymptotic cylinder is given explicitly. Similar results   have previously been used to solve complex Monge-Amp\`ere equations on non-compact K\"ahler  manifolds, see for instance \cite{cheng1980existence} and \cite{tian1990complete}. More generally, Theorem \ref{Theorem: uniform coordinates for (M,g)} is also valid for every non-compact K\"ahler manifold of positive injectivity radius and bounded geometry, compare \cite{wu2020invariant}[Theorem 9].

Using the coordinates given by Theorem \ref{Theorem: uniform coordinates for (M,g)}, we can apply the local Schauder theory and conclude estimates on the $C^{k,\alpha}_{\operatorname{loc}}$-norm of $\varphi_s$. This argument is by induction on $k$ starting at $k=3$.

\begin{prop}[Local $C^{3,\alpha}$-bound on $\varphi_{ s}$]\label{proposition uniform local C3alpha bound}
Let $1<\varepsilon<2$, $\alpha \in (0,1)$ and suppose $(\varphi_{ s})_{0\leq s\leq 1}$ is a family in $C^{\infty}_{\varepsilon, \, JX }(M)$ solving (\ref{one-parameter family of MA}). Then there exists a constant $C>0$ such that
	\begin{align*}
		\sup_{s\in [0,1]} ||\varphi_{ s}||_{C^{3,\alpha}_{\operatorname{loc}}} \leq C,
	\end{align*}
	where  $C$ only depends on $\alpha$, $F\in C^{\infty}_{\varepsilon, \, JX}(M)$ and  the geometry of $(M,g)$. 
\end{prop}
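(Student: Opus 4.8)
The plan is to differentiate the Monge-Ampère equation (\ref{one-parameter family of MA}) once and view the result as a second-order linear elliptic equation for a first derivative of $\varphi_s$ with uniformly controlled coefficients, then invoke the local Schauder estimates in the uniform coordinate charts provided by Theorem \ref{Theorem: uniform coordinates for (M,g)}. Concretely, fix $x\in M$ and work in a chart $\phi: U \to M$ as in Theorem \ref{Theorem: uniform coordinates for (M,g)}, so that $\phi^*g$ and all its coordinate derivatives are bounded uniformly in $x$ and the Euclidean and pulled-back metrics are uniformly equivalent on $U$. Writing the equation in these coordinates as $\log \det(g_{i\bar j} + \partial_i\partial_{\bar j}\varphi_s) = \log\det(g_{i\bar j}) + sF - \tfrac{X}{2}(\varphi_s)$ and applying $\partial_k$ (any first-order holomorphic or anti-holomorphic coordinate derivative), one gets
\begin{align*}
g_{\varphi_s}^{\bar j i}\,\partial_i\partial_{\bar j}(\partial_k \varphi_s) = \partial_k\big(sF\big) - \tfrac{X}{2}(\partial_k\varphi_s) - \tfrac12(\partial_k X^\ell)\partial_\ell\varphi_s + g_{\varphi_s}^{\bar j i}\partial_k g_{i\bar j} - g^{\bar j i}\partial_k g_{i\bar j}.
\end{align*}
This is a linear equation $a^{i\bar j}\partial_i\partial_{\bar j} u = h$ for $u = \partial_k\varphi_s$, where the leading coefficients $a^{i\bar j} = g_{\varphi_s}^{\bar j i}$ are uniformly elliptic and, crucially, uniformly $C^{0,\alpha}$-bounded by Corollary \ref{uniform C0,alpha estimate of metric}, while the right-hand side $h$ is uniformly $C^{0,\alpha}$-bounded using Proposition \ref{proposition: uniform bound on metrics}, Proposition \ref{proposition uniform C3 estimate}, the uniform bounds on $F$ (which lies in $C^\infty_\varepsilon(M)\subset C^\infty$), and the fact that $X$ is smooth with all derivatives controlled on the cylindrical end. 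The drift term $\tfrac{X}{2}(\partial_k\varphi_s)$ is first-order with bounded coefficients, hence harmless for interior Schauder estimates.

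Next I would apply the interior Schauder estimate for linear elliptic equations (with $C^{0,\alpha}$ coefficients) on the fixed reference balls $B_{\mathbb{C}^n}(0,r_1)\subset U$: this gives
\begin{align*}
\|\partial_k\varphi_s\|_{C^{2,\alpha}(B(0,r_1/2))} \le C\big(\|\partial_k\varphi_s\|_{C^0(B(0,r_1))} + \|h\|_{C^{0,\alpha}(B(0,r_1))}\big),
\end{align*}
with $C$ depending only on the ellipticity constants, the $C^{0,\alpha}$-norms of the coefficients, and $r_1$ — all of which are uniform in $x$ and $s$. Combining over the finite-order derivatives $\partial_k$ and using the already-established uniform $C^{2,\alpha}$-bound on $\varphi_s$ (Proposition \ref{prop: C2 alpha bound for varphi}) to control the $C^0$-term on the right, one concludes a uniform bound on $\|\varphi_s\|_{C^{3,\alpha}(B(0,r_1/2))}$, i.e. a uniform $C^{3,\alpha}_{\operatorname{loc}}$-bound. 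Since the radii $r_1,r_2$ and all the constants are independent of the base point $x$ (this is the whole point of Theorem \ref{Theorem: uniform coordinates for (M,g)}), this gives the claimed estimate with $C$ depending only on $\alpha$, $F$, and the geometry of $(M,g)$.

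The main technical point — and the only place that requires care — is verifying that the right-hand side $h$ of the differentiated equation really is uniformly $C^{0,\alpha}$-bounded in the reference charts. The potentially dangerous terms are $g_{\varphi_s}^{\bar j i}\partial_k g_{i\bar j}$ and the $\tfrac12(\partial_k X^\ell)\partial_\ell\varphi_s$ term: the former needs the uniform $C^{0,\alpha}$-control of $g_{\varphi_s}^{-1}$ from Corollary \ref{uniform C0,alpha estimate of metric} together with the uniform smoothness of $g$ from Theorem \ref{Theorem: uniform coordinates for (M,g)}(iii), and the latter needs that $X$ and its derivatives are uniformly bounded, which holds because $X = 2\Phi_*\partial_t$ outside a compact set (Assumption \ref{section MA: analytic + cpt support-- definition of X}) and is smooth on the compact part. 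I expect no genuine obstacle here, merely the bookkeeping of collecting these estimates; this is exactly the induction base step $k=3$, and the inductive step to higher $k$ follows identically by differentiating more times, each time using the already-obtained $C^{k,\alpha}_{\operatorname{loc}}$-bound to bootstrap.
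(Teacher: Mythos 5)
Your proposal is correct and follows essentially the same route as the paper: differentiate the Monge--Amp\`ere equation once, read it as a linear elliptic equation for $\partial_k\varphi_s$ with leading coefficients $g_{\varphi_s}^{\bar j i}$ controlled in $C^{0,\alpha}$ by Corollary \ref{uniform C0,alpha estimate of metric}, and apply interior Schauder estimates in the uniform charts of Theorem \ref{Theorem: uniform coordinates for (M,g)}, closing with the $C^{2,\alpha}$-bound of Proposition \ref{prop: C2 alpha bound for varphi}. The only (immaterial) blemish is the sign of the two trace terms, which should read $-g_{\varphi_s}^{\bar j i}\partial_k g_{i\bar j}+g^{\bar j i}\partial_k g_{i\bar j}$; both terms are uniformly $C^{0,\alpha}$-bounded either way.
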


\begin{proof}
	As in \cite{conlon2020steady}[Proposition 7.19], we follow the argument given in the compact case \cite{yau1978ricci}. 
	
	We consider $x\in M$ and work in the holomorphic chart $\phi: U \to M$ as in Theorem \ref{Theorem: uniform coordinates for (M,g)}. To simplify notation, we suppress $\phi$ and simply view $U$ as a subset of $M$. 
	The conditions $(ii)$ and $(iii)$ ensure that the Euclidean H\"older norm $||\cdot||_{C^{k,\alpha}(B_x)}$ on the ball $B_x:= B(0,r_1)\subset M$ is uniformly equivalent to $||\cdot ||_{C^{k,\alpha} (B_x, g)}$, the H\"older norm on $B_x$ induced by the ACyl metric $g$. In other words, there exists a constant $C_1>0$, only depending on $k,\alpha$ and the constants in Theorem $\ref{Theorem: uniform coordinates for (M,g)}$, such that 
	\begin{align}\label{in proposition uniform C3alpha bound: equivalence of holder norms}
		C_1 ^{-1} ||\cdot  ||_{C^{k,\alpha} (B_x)} \leq ||\cdot||_{C^{k,\alpha}(B_x,g)} \leq C_1 ||\cdot||_{C^{k,\alpha}(B_x)}. 
	\end{align}
	In particular, the interior Schauder estimates (\cite{gilbarg2015elliptic}[Theorem 6.2, 6.17]) on $B_x$ are valid for the norms $||\cdot||_{C^{k,\alpha}(B_x,g)} $. The goal is to apply these estimates to the equation
		\begin{align}\label{in proposition uniform local Ckalpha bound: equation to whcih we apply schauder}
		\frac{1}{2} \Delta _{g_{\varphi_{ s}}} (\partial_j \varphi_{ s}) = \partial_j \left( sF-\frac{X}{2}(\varphi_{ s})   \right) + (\operatorname{tr}_\omega - \operatorname{tr}_{\omega_{\varphi_{ s} }}) \mathcal{L}_{\partial_j} (\omega),
	\end{align}
where $\partial_j$ denotes the coordinate field $\partial/ \partial z_j$ induced by the chart $\phi$ and $j=1,\dots,n$. 
Observe that (\ref{in proposition uniform local Ckalpha bound: equation to whcih we apply schauder}) is obtained by applying the Lie derivative $\mathcal{L}_{\partial_j}$ to the Monge-Amp\`ere equation $(\ref{one-parameter family of MA})$ and dividing by $\omega^n _{\varphi_{ s}}$. 

	 Recall that in holomorphic coordinates, we have
	\begin{align*}
		\Delta_{g_{\varphi_{ s}}} = g_{\varphi_{ s}}^{\bar  j i } \partial_i \partial_{\bar j},
	\end{align*}
so that applying Schauder requires to bound the coefficients of $\Delta_{g_{\varphi_{ s}}}$ uniformly in $C^{0,\alpha}(B_x)$, i.e. we have to find a constant $D>0$, only depending on $\alpha$, $F$ and the geometry of $(M,g)$, such that 
\begin{align} \label{in proposition uniform C3alpha bound: holder norm of g bar j i bounded}
	||g_{\varphi_{ s}} ^{\bar j i } ||_{C^{0,\alpha}(B_x)} \leq D \;\; \text{ and } \;\; g_{\varphi_{ s}}^{-1} \geq D \, g_{\mathbb{C}^n}.
\end{align}
The first inequality is clear by (\ref{in proposition uniform C3alpha bound: equivalence of holder norms}) together with Corollary \ref{uniform C0,alpha estimate of metric} and the second bound follows immediately from Proposition \ref{proposition: uniform bound on metrics} and condition ($ii$) in Theorem \ref{Theorem: uniform coordinates for (M,g)}.
Thus, interior Schauder estimates provide a constant $C_2>0$, only depending on $n$, $\alpha$ and $D$, such that
\begin{align}
	\begin{split} \label{in prop C3alpha bound: Schauder estimate}
	||\partial_j \varphi_{ s} ||_{C^{2,\alpha}(B_x)}  
	\leq& C_2 \left( ||\Delta_{g_{\varphi_{ s}}} \partial _j \varphi_{ s}||_{C^{0,\alpha}(B_x)} + ||\partial_j \varphi_{ s}||_{C^0(B_x)}   \right) \\
	\leq & C_2\left( ||\Delta_{g_{\varphi_{ s}}} \partial _j \varphi_{ s}||_{C^{0,\alpha}(B_x)}  + C_1 ||\varphi_{ s}||_{C^{2,\alpha}(M,g)}   \right),
	\end{split}
\end{align}
where we used (\ref{in proposition uniform C3alpha bound: equivalence of holder norms}) for the second inequality. We continue to estimate the first term on the right-hand side of (\ref{in prop C3alpha bound: Schauder estimate}) as follows
\begin{align}
	\begin{split} \label{in prop: estimate laplace term in schauder estimate}
	& ||\Delta_{g_{\varphi_{ s}}} \partial _j \varphi_{ s}||_{C^{0,\alpha}(B_x)}\\
	\leq&  ||\varphi_{ s}|| _{C^{2,\alpha}(B_x)} + ||F||_{C^{1,\alpha}(B_x)} 
	+  ||(\operatorname{tr}_\omega -\operatorname{tr}_{\omega_{\varphi_{ s} }}) \mathcal{L}_{\partial_j } (\omega) ||_{C^{0,\alpha}(B_x)}   \\
	\leq&C_1 \left( ||\varphi_{ s}||_{C^{2,\alpha}(M,g)} + ||F||_{C^{1,\alpha}(M,g)}   
	+A \cdot ||g^{-1} - g_{\varphi_{ s}}^{-1}  ||_{C^{0,\alpha}(M,g)} \right),
	\end{split}
\end{align}
for some constant $A>0$ determined by condition ($iii$) of Theorem \ref{Theorem: uniform coordinates for (M,g)}. 
Here, the first inequality is a consequence of (\ref{in proposition uniform local Ckalpha bound: equation to whcih we apply schauder}) and the second one is obtained from (\ref{in proposition uniform C3alpha bound: equivalence of holder norms}).
In combination with (\ref{in prop: estimate laplace term in schauder estimate}), inequality (\ref{in prop C3alpha bound: Schauder estimate}) then becomes
\begin{align} \label{in prop: uniform C3alpha estimate: final estimate on partial varphi}
	||\partial _j \varphi_{ s} ||_{C^{2,\alpha}(B_x)} \leq C_3
\end{align}
for some constant $C_3>0$ which only depends on $C_1$, $C_2$, $A$, $F$ and the uniform bounds on $||\varphi_{ s}||_{C^{2,\alpha}}$ and $||g_{\varphi_{ s}} ^{-1}||_{C^{0,\alpha}}$ given by Proposition \ref{prop: C2 alpha bound for varphi} and Corollary \ref{uniform C0,alpha estimate of metric}, respectively. 

To conclude the proof, we point out that the same arguments for (\ref{in prop: uniform C3alpha estimate: final estimate on partial varphi}) also yield  
\begin{align*}
	||\partial_{\bar j} \varphi_{ s}||_{C^{2,\alpha}(B_x)} \leq C_3,
\end{align*}
and hence
\begin{align*}
	||\varphi_{ s}||_{C^{3,\alpha}(B_x)}  &\leq \sum_{j=1}^n ||\partial _j \varphi_{ s} ||_{C^{2,\alpha}(B_x)} + ||\partial _{\bar j} \varphi_{ s} ||_{C^{2,\alpha}(B_x)} + ||\varphi_{ s}|| _{C^{0}(B_x)}, \\
	&\leq 2n C_3 + C_1 ||\varphi_{ s}||_{C^{2,\alpha}(M,g)}\\
	&\leq C_4
\end{align*}
with $C_4>0$ only depending on $n$, $C_1$, $C_3$ and the uniform bound on $||\varphi_{ s}||_{C^{2,\alpha}}$. In particular, the constant $C_4$ is independent of both $x\in M$ and $s\in [0,1]$, so that the proposition then follows. 

\end{proof}

The standard bootstrapping argument then leads to uniform $C^{k,\alpha}$-estimates. 

\begin{prop}[Local $C^{k,\alpha}$-bounds on $\varphi_{ s}$]\label{proposition uniform local Ckalpha bound}
Let $1<\varepsilon<2$, $\alpha \in (0,1)$, $k\in \mathbb{N}_{\geq1}$ and suppose $(\varphi_{ s})_{0\leq s\leq 1}$ is a family in $C^{\infty}_{\varepsilon, \, JX}(M)$ solving (\ref{one-parameter family of MA}). Then there exists a constant $C>0$ such that
	\begin{align} \label{in proposition uniform local Ckalpha bound: the actual bound}
		\sup_{s\in [0,1]} ||\varphi_{ s}||_{C^{k+2,\alpha}_{\operatorname{loc}}} \leq C ,
	\end{align}
	where $C$ only depends on $k$, $\alpha$, $F\in C^{\infty}_{\varepsilon, \, JX}(M)$ and the geometry of $(M,g)$. 
\end{prop}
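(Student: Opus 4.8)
The plan is to prove \eqref{in proposition uniform local Ckalpha bound: the actual bound} by induction on $k$, the case $k=1$ being exactly Proposition \ref{proposition uniform local C3alpha bound}. Fix $x\in M$ and work in the holomorphic chart $\phi:U\to M$ with $\phi(0)=x$ provided by Theorem \ref{Theorem: uniform coordinates for (M,g)}; since an ACyl manifold has bounded geometry and positive injectivity radius, the radii $r_1<r_2$ and the constants $A_l$ in that theorem depend only on $l$ and the geometry of $(M,g)$, so on $B_x:=B_{\mathbb C^n}(0,r_1)$ (and on every concentric sub-ball) the Euclidean H\"older norms $\|\cdot\|_{C^{m,\alpha}(B_x)}$ are uniformly equivalent to the $g$-H\"older norms $\|\cdot\|_{C^{m,\alpha}(B_x,g)}$, for all $m$. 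As the charts are determined by $x$ and the geometry alone, it suffices to bound $\|\varphi_s\|_{C^{k+2,\alpha}(B_x)}$ uniformly in $x\in M$ and $s\in[0,1]$.

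For the inductive step, suppose \eqref{in proposition uniform local Ckalpha bound: the actual bound} holds for some $k\ge1$. Suppressing $\phi$, recall from \eqref{in proposition uniform local Ckalpha bound: equation to whcih we apply schauder} that each $\partial_j\varphi_s$ solves on $B_x$ the linear elliptic equation
\begin{align*}
\tfrac12\,\Delta_{g_{\varphi_s}}(\partial_j\varphi_s)=\partial_j\!\left(sF-\tfrac{X}{2}(\varphi_s)\right)+(\operatorname{tr}_\omega-\operatorname{tr}_{\omega_{\varphi_s}})\mathcal L_{\partial_j}(\omega),
\end{align*}
with leading coefficients $g_{\varphi_s}^{\bar l i}$. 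These coefficients are uniformly elliptic by Proposition \ref{proposition: uniform bound on metrics} and condition $(ii)$ of Theorem \ref{Theorem: uniform coordinates for (M,g)}, and they are uniformly bounded in $C^{k,\alpha}(B_x)$: since $g^{\varphi_s}_{i\bar j}=g_{i\bar j}+\partial_i\partial_{\bar j}\varphi_s$, the inductive hypothesis and condition $(iii)$ control $\|g_{\varphi_s}\|_{C^{k,\alpha}(B_x)}$, and differentiating $g_{\varphi_s}^{\bar l i}g^{\varphi_s}_{i\bar m}=\delta^{\bar l}_{\bar m}$ via the Leibniz rule expresses the derivatives of $g_{\varphi_s}^{-1}$ polynomially in those of $g_{\varphi_s}$ and $g_{\varphi_s}^{-1}$, giving $\|g_{\varphi_s}^{-1}\|_{C^{k,\alpha}(B_x)}\le C$. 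The right-hand side is likewise uniformly bounded in $C^{k,\alpha}(B_x)$: $\partial_jF$ is controlled because $F\in C^\infty_\varepsilon(M)$ has all derivatives bounded; $\tfrac{X}{2}(\varphi_s)=\tfrac12 X^i\partial_i\varphi_s$, where the components $X^i$ are bounded together with all derivatives in these charts (on the cylindrical end $X=2\,\partial/\partial t$ is translation-invariant, and off it $X$ is a fixed smooth field), so the inductive bound controls $\partial_j(\tfrac{X}{2}\varphi_s)$ in $C^{k,\alpha}(B_x)$; and $(\operatorname{tr}_\omega-\operatorname{tr}_{\omega_{\varphi_s}})\mathcal L_{\partial_j}\omega$ is controlled by the bounds on $g_{\varphi_s}^{-1}$ together with condition $(iii)$. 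Interior Schauder estimates (\cite{gilbarg2015elliptic}[Theorem 6.2]) then yield $\|\partial_j\varphi_s\|_{C^{k+2,\alpha}}\le C$ on a concentric sub-ball, with $C$ independent of $x$ and $s$; the same holds for the $\partial_{\bar j}\varphi_s$, and combining over $j$ gives $\|\varphi_s\|_{C^{k+3,\alpha}}\le C$ on that sub-ball, whose images over all $x\in M$ still cover $M$. This is \eqref{in proposition uniform local Ckalpha bound: the actual bound} with $k$ replaced by $k+1$, closing the induction.

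The only real work is the bookkeeping in the inductive step — tracking how the nonlinear quantities $g_{\varphi_s}^{-1}$ and $\tfrac{X}{2}(\varphi_s)$ inherit their $C^{k,\alpha}$-bounds from the inductive hypothesis, and verifying that every constant produced depends only on $k$, $\alpha$, the $C^\infty$-norm of $F$, and the geometry of $(M,g)$ (via the constants of Theorem \ref{Theorem: uniform coordinates for (M,g)} and Corollary \ref{uniform C0,alpha estimate of metric}). The uniformity across $x\in M$, automatic in the compact case of \cite{yau1978ricci}, is precisely what the uniform charts of Theorem \ref{Theorem: uniform coordinates for (M,g)} supply; beyond them and Proposition \ref{proposition uniform local C3alpha bound} no genuinely new ingredient is needed.
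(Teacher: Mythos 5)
Your proposal is correct and follows essentially the same route as the paper: induction on $k$ with base case Proposition \ref{proposition uniform local C3alpha bound}, working in the uniform charts of Theorem \ref{Theorem: uniform coordinates for (M,g)}, applying interior Schauder estimates to the differentiated Monge--Amp\`ere equation (\ref{in proposition uniform local Ckalpha bound: equation to whcih we apply schauder}), and controlling the coefficients $g_{\varphi_s}^{\bar j i}$ in $C^{k,\alpha}$ via the inductive hypothesis and the identity $g_{\varphi_s}^{\bar j i}g^{\varphi_s}_{l\bar j}=\delta^i_l$. The only differences are cosmetic (a shift in the induction index and passing to a concentric sub-ball, which the paper avoids by using the slack between $r_1$ and $r_2$).
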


\begin{proof}
	As in \cite{conlon2020steady}[Proposition 7.19], the proof is by induction on $k\geq1$, with the $k=1$ case being settled by Proposition \ref{proposition uniform local C3alpha bound}. Thus, we consider $k\geq 2$ and can assume that the statement holds for $k-1$, i.e. that there is a $C_{k-1}>0$, only depending on $k$, $\alpha$, $F$ and the geometry of $(M,g)$, such that
	\begin{align}\label{in proposition uniform local Ckalpha bound: the induction hypothesis}
     || \varphi_{ s} ||_{C^{k+1,\alpha}_{\operatorname{loc}}} \leq C_{k-1}. 
	\end{align}
	Using the same notation as in the previous proof, we work near a given $x\in M$ in the chart $\phi: U \to M$ given by Theorem \ref{Theorem: uniform coordinates for (M,g)}. Because of (\ref{in proposition uniform C3alpha bound: equivalence of holder norms}), it suffices to show (\ref{in proposition uniform local Ckalpha bound: the actual bound}) for the Euclidean ball $B_x:= B(0,r_1)$ and the Euclidean H\"older norm $||\cdot||_{C^{k,\alpha}(B_x)}$.
	
	This time, we aim at applying interior Schauder estimates (of higher order) to equation (\ref{in proposition uniform local Ckalpha bound: equation to whcih we apply schauder}), for which we require a constant $D_{k-1}$, depending only on $k$, $\alpha$, $F$ and the geometry of $(M,g)$, such that
	\begin{align} \label{in proposition uniform Ckalpha bound: holder norm of g bar j i bounded}
		||g_{\varphi_{ s}} ^{\bar j i } ||_{C^{k-1,\alpha}(B_x)} \leq D_{k-1} \;\; \text{ and } \;\; g_{\varphi_{ s}}^{-1} \geq D_{k-1}\, g_{\mathbb{C}}.
	\end{align}
	The second inequality is again clear by Proposition \ref{proposition: uniform bound on metrics} and condition ($ii$) in Theorem \ref{Theorem: uniform coordinates for (M,g)}, and for the first, recall that 
	\begin{align*}
		g^{\varphi_{ s}} _{i \bar j } = g_{i\bar j} + \partial _i \partial_{\bar j} \varphi_{ s}.
	\end{align*}
	Together with condition $(iii)$ in Theorem \ref{Theorem: uniform coordinates for (M,g)}, we obtain
	\begin{align*}
	\nonumber	||g_{i\bar j }^{\varphi_{ s}}||_{C^{k-1,\alpha}(B_x)} &\leq ||\partial \partialb \varphi_{ s} ||_{C^{k-1,\alpha}} (B_x) + A_{k-1} \\
		&\leq ||\varphi_{ s}||_{C^{k+1,\alpha}(B_x)} + A_{k-1} \\
	\nonumber	&\leq C_{k-1} + A_{k-1},
	\end{align*}
	where we used the induction hypothesis (\ref{in proposition uniform local Ckalpha bound: the induction hypothesis}) in the last line. Consequently, the entries of the inverse matrix can be bounded as well since there exists a $C_0>0$, depending only on the uniform bound on $||g^{-1}_{\varphi_{ s}}||_{C^0(M)}$ from Proposition \ref{proposition: uniform bound on metrics}, such that
	\begin{align*}
	||g_{\varphi_{ s}} ^{\bar j i } ||_{C^{k-1,\alpha}(B_x)}\leq C_0 ||g^{\varphi_{ s}} _{\bar i j }  ||_{C^{k-1,\alpha}(B_x)} .
	\end{align*}
	Note that this follows by differentiating the identity
	\begin{align*}
		g_{\varphi_{ s}}^{\bar j i} g^{\varphi_{ s}}_{l \bar j} = \delta^i_l
	\end{align*}
	and using the fact that for functions $u$ with $\inf u>0$, one has
	\begin{align*}
		||u||_{C^{0,\alpha}} \leq (\inf u )^{-1} \left( 1+ ||u||_{C^{0,\alpha}} (\inf u)^{-1}     \right).
	\end{align*}
	Thus, (\ref{in proposition uniform Ckalpha bound: holder norm of g bar j i bounded}) holds if $D_{k-1}:= C_0(C_k+A_{k-1})$. 
	Then the interior Schauder estimates \cite{gilbarg2015elliptic}[Theorem 6.17] provide a constant $E_{k-1}>0$,  depending only on $n$, $k$, $\alpha$ and $D_{k-1}$, such that 
	\begin{align*}
		&||\partial _j \varphi_{ s} ||_{C^{k+1,\alpha}(B_x)} \\ \leq & E_{k-1} \left(||\Delta_{g_{\varphi_{ s}}}  (\partial_j \varphi_{ s})  ||_{C^{k-1,\alpha}(B_x)} + ||\partial _j \varphi_{ s}||_{C^0(B_x)}  \right)\\
		\leq& E_{k-1} \left( ||\varphi_{ s}||_{C^{k+1,\alpha}(B_x)}  
		+ ||F||_{C^{k,\alpha}(B_x)}  + ||(\operatorname{tr}_\omega -\operatorname{tr}_{\omega_{\varphi_{ s} }}) \mathcal{L}_{\partial_j } (\omega) ||_{C^{k-1,\alpha}(B_x)}  \right)\\
		\leq& E_{k-1} \left( ||\varphi_{ s}||_{C^{k+1,\alpha}(B_x)}  
		+ ||F||_{C^{k,\alpha}(B_x)} + A_{k-1} ||g^{-1}-g^{-1}_{\varphi_{ s}}||_{C^{k-1,\alpha}(B_x)} \right) 
	\end{align*}
	where  (\ref{in proposition uniform local Ckalpha bound: equation to whcih we apply schauder}) implies the second inequality, and for the third one, we used the bounds in $(iii)$ of Theorem \ref{Theorem: uniform coordinates for (M,g)}. Hence, we conclude from this, together with the induction hypothesis (\ref{in proposition uniform local Ckalpha bound: the induction hypothesis}) and (\ref{in proposition uniform Ckalpha bound: holder norm of g bar j i bounded}), that
	\begin{align*}
		||\partial _j \varphi_{ s} ||_{C^{k+1,\alpha}(B_x)} \leq C_k
	\end{align*}
	for some $C_k>0$ only depending on $E_{k-1}$, $C_{k-1}$, $F$ and the constants in Theorem \ref{Theorem: uniform coordinates for (M,g)}. As in the previous proof,  we finally arrive at 
	\begin{align*}
		||\varphi_{ s}||_{C^{k+2,\alpha} (B_x)} 
	&	\leq  \sum_{j=1}^n ||\partial _j \varphi_{ s} ||_{C^{k+1,\alpha}(B_x)} +||\partial _{\bar j} \varphi_{ s} ||_{C^{k+1,\alpha}(B_x)} + ||\varphi_{ s}||_{C^{0}(B_x)} \\
	&\leq 3n C_k,
	\end{align*}
 as required. 
\end{proof}

\subsubsection{Weighted $C^{k,\alpha}$-estimates} 

Recall from Propositions \ref{upper-C^0 estimate for varphi_s from above} and \ref{proposition lower weighted bound for varepsilon0} that $|\varphi_{ s}|$ is uniformly bounded from above by $e^{-\varepsilon_0t}$ for some $0<\varepsilon_0\ll1$. 
First, we will see that the $C^{k,\alpha}_{\varepsilon_0}$-norms of $\varphi_{ s}$ are also uniformly bounded and, in a second step, we explain how to improve the decay from $\varepsilon_0$ to $\varepsilon$.

\begin{prop}[Weighted $C^{k,\alpha}$-bounds on $\varphi_{ s}$] \label{proposition weighted Ckalpha estimate for epsilon 0}
	Let $1<\varepsilon<2$, $\alpha\in (0,1)$, $k\in \mathbb{N}_{0}$ and suppose  $(\varphi_{ s})_{0\leq s\leq 1}$ is a family in $C^{\infty}_{\varepsilon, \, JX}(M)$ solving (\ref{one-parameter family of MA}). For the constant $0<\varepsilon_0<1$ given by Proposition \ref{proposition lower weighted bound for varepsilon0}, exists a  $C>0$ such that
	\begin{align*}
		\sup_{s\in [0,1]} ||e^{\varepsilon_0 t} \varphi_{ s}||_{C^{k,\alpha}} \leq C,
	\end{align*}
	where $C$ only depends on $\varepsilon_0$, $k$, $\alpha$, $F\in C^{\infty}_{\varepsilon, \, JX}(M)$ and the geometry of $(M,g)$.
\end{prop}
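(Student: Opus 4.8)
The plan is to turn the Monge--Amp\`ere equation (\ref{one-parameter family of MA}) into a \emph{linear} elliptic equation for $\varphi_s$ with a right-hand side that is bounded uniformly in $C^{k,\alpha}_{\varepsilon_0}(M)$, and then to invoke the weighted Schauder estimate of Theorem \ref{theorem: ACyl schauder estimates}. Taking the logarithm of (\ref{one-parameter family of MA}) and subtracting $\operatorname{tr}_\omega(i\partial \partialb \varphi_s) = \tfrac12 \Delta_g \varphi_s$, and recalling that by Assumption \ref{section MA: asympotitc behaviour of hamiltonian function} and Definition \ref{definition ACyl drift Laplacian} the operator $\Delta_g + X$ equals the ACyl drift Laplacian $\Delta_f$ (which is elliptic and asymptotically translation-invariant), one obtains
\[
(\Delta_g + X)\varphi_s = 2sF + 2Q_s, \qquad Q_s := \operatorname{tr}_\omega(i\partial \partialb \varphi_s) - \log\frac{(\omega + i\partial \partialb \varphi_s)^n}{\omega^n}.
\]
In holomorphic coordinates in which $g_{i\bar j} = \delta_{i\bar j}$ and $(\varphi_s)_{i\bar j} = \lambda_i \delta_{i\bar j}$ one has $Q_s = \sum_i \left(\lambda_i - \log(1+\lambda_i)\right) \ge 0$; since the metric equivalence of Proposition \ref{proposition: uniform bound on metrics} bounds the $\lambda_i$ uniformly and away from $-1$, the quantity $Q_s$ is a smooth function of $(x, i\partial \partialb \varphi_s)$ vanishing to second order in its second argument, so in particular $0 \le Q_s \le C\, |i\partial \partialb \varphi_s|_g^2$ with $C$ uniform. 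From Propositions \ref{upper-C^0 estimate for varphi_s from above} and \ref{proposition lower weighted bound for varepsilon0} we record that $\sup_{s}\|\varphi_s\|_{C^0_{\varepsilon_0}} \le C$, so the $C^0_{\varepsilon_0}$-hypothesis of Theorem \ref{theorem: ACyl schauder estimates} holds uniformly.

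The central point is the uniform bound $\sup_s \|Q_s\|_{C^{k,\alpha}_{\varepsilon_0}} \le C$. To this end one first upgrades the unweighted local estimates of Proposition \ref{proposition uniform local Ckalpha bound} to weighted ones of almost-optimal rate: working in the uniform holomorphic charts of Theorem \ref{Theorem: uniform coordinates for (M,g)}, in which $\|\varphi_s\|_{C^{N,\alpha}}$ is uniformly bounded for every $N$, a standard interpolation (Gagliardo--Nirenberg) inequality on Euclidean balls of fixed radius, combined with $\sup_B |\varphi_s| \le C e^{-\varepsilon_0 t}$ (valid since $t$ oscillates by a bounded amount on each such ball), gives for every $m \in \mathbb{N}_0$ and every $\varepsilon' < \varepsilon_0$ that $\sup_M e^{\varepsilon' t}|\nabla^m \varphi_s|_g \le C(m,\varepsilon')$ uniformly in $s$. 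Fixing $\varepsilon' \in (\varepsilon_0/2,\varepsilon_0)$ and differentiating $Q_s = Q_s(x, i\partial \partialb \varphi_s)$ by the chain rule, every term of $\nabla^j Q_s$ is a product of at least two quantities each decaying like $e^{-\varepsilon' t}$ — either two derivatives $\nabla^{\ge 2}\varphi_s$, or one such derivative together with a factor of $|i\partial \partialb \varphi_s|$ coming from the second-order vanishing of $Q_s$; hence $|\nabla^j Q_s|_g \le C e^{-2\varepsilon' t} \le C e^{-\varepsilon_0 t}$, and the H\"older seminorm of $e^{\varepsilon_0 t}\nabla^k Q_s$ is controlled in the same way using the uniform local bounds on all derivatives of $\varphi_s$. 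As $F \in C^\infty_\varepsilon(M)$ with $\varepsilon > 1 > \varepsilon_0$ we also have $\|F\|_{C^{k,\alpha}_{\varepsilon_0}} < \infty$, so the right-hand side $h_s := 2sF + 2Q_s$ satisfies $\sup_s \|h_s\|_{C^{k,\alpha}_{\varepsilon_0}} \le C$.

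Finally, $\varphi_s$ is smooth, solves $(\Delta_g + X)\varphi_s = h_s$, and lies in $C^0_{\varepsilon_0}(M)$; since $\Delta_g + X$ is elliptic and asymptotically translation-invariant, Theorem \ref{theorem: ACyl schauder estimates} yields
\[
\|\varphi_s\|_{C^{k+2,\alpha}_{\varepsilon_0}} \le C\bigl(\|h_s\|_{C^{k,\alpha}_{\varepsilon_0}} + \|\varphi_s\|_{C^0_{\varepsilon_0}}\bigr) \le C
\]
uniformly in $s \in [0,1]$, which in particular gives $\sup_s \|e^{\varepsilon_0 t}\varphi_s\|_{C^{k,\alpha}} \le C$, as claimed. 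The main obstacle is the second paragraph: pushing the nonlinear remainder $Q_s$ into $C^{k,\alpha}_{\varepsilon_0}$ uniformly. This is where one must combine the quadratic vanishing of $Q_s$ with the fact that interpolation upgrades the $C^0_{\varepsilon_0}$-bound to derivative bounds of any rate below $\varepsilon_0$ — in particular above $\varepsilon_0/2$ — so that the product of two such factors recovers the full weight $\varepsilon_0$.
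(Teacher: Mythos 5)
Your proof is correct, but it follows a genuinely different route from the paper's. The paper avoids the nonlinear remainder $Q_s$ altogether: it writes $H(\tau)=\log\bigl((\omega+i\partial\partialb(\tau\varphi_s))^n/\omega^n\bigr)$ and integrates $H'(\tau)=\tfrac12\Delta_{g_{\tau\varphi_s}}\varphi_s$ over $\tau\in[0,1]$, so that the Monge--Amp\`ere equation becomes \emph{exactly} linear in $\varphi_s$,
\begin{align*}
sF=a^{\bar ji}\,\partial_i\partial_{\bar j}\varphi_s+b_j\,\partial_j\varphi_s,
\qquad a^{\bar ji}:=\int_0^1 g^{\bar ji}_{\tau\varphi_s}\,d\tau ,
\end{align*}
with coefficients uniformly elliptic and uniformly bounded in $C^{k,\alpha}$ by Proposition \ref{proposition: uniform bound on metrics} and Proposition \ref{proposition uniform local Ckalpha bound}. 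It then applies \emph{local} interior Schauder estimates in the uniform charts of Theorem \ref{Theorem: uniform coordinates for (M,g)}, so that on each ball $B_x$ the $C^{k+2,\alpha}$-norm of $\varphi_s$ is bounded by $\sup_{B_x}|\varphi_s|+\|F\|_{C^{k,\alpha}(B_x)}\le Ce^{-\varepsilon_0 t(x)}$, and the weighted bound drops out with no interpolation and no structure of the nonlinearity needed. You instead freeze the coefficients at the background metric, keep the quadratically vanishing remainder $Q_s$ on the right-hand side, and recover the weight $\varepsilon_0$ on $Q_s$ by combining its second-order vanishing with an interpolation upgrade of the $C^0_{\varepsilon_0}$-bound to derivative bounds of any rate $\varepsilon'\in(\varepsilon_0/2,\varepsilon_0)$, so that $2\varepsilon'>\varepsilon_0$; you then conclude with the global weighted Schauder estimate of Theorem \ref{theorem: ACyl schauder estimates}. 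Both arguments rest on the same prior inputs (Propositions \ref{upper-C^0 estimate for varphi_s from above}, \ref{proposition: uniform bound on metrics}, \ref{proposition lower weighted bound for varepsilon0}, \ref{proposition uniform local Ckalpha bound}) and there is no circularity in yours. The paper's averaged-coefficient trick buys a shorter, more robust argument (it would work even if $Q_s$ vanished only to first order); your version buys the convenience of working with the fixed, translation-invariant-at-infinity operator $\Delta_g+X$ and a single global Schauder estimate, at the cost of the interpolation step and the chain-rule bookkeeping for $\nabla^jQ_s$ — both of which you carry out correctly.
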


\begin{proof}
	We follow the argument given in \cite{conlon2020steady}[Proposition 7.22]. For $\tau\in [0,1]$, consider the function 
	\begin{align}\label{in proposition weighted Ckalpha estimate for epsilon 0: definition of H}
		H(\tau):= \log \frac{\left(  \omega+ i \partial \partialb (\tau\cdot \varphi_{ s}) \right)^n}{\omega^n},
	\end{align}
	so that
	\begin{align*}
		H'(\tau) = \frac{1}{2} \Delta_{g_{\tau \varphi_{ s}}} (\varphi_{ s}),
	\end{align*} 
	where $g_{\tau \varphi_{ s}}$ denotes the metric with K\"ahler form $\omega + i \partial \partialb (\tau\varphi_{ s})$. By using (\ref{one-parameter family of MA}) and $H(0)=0$, we can write
	\begin{align} \label{in proposition weighted Ckalpha estimate for epsilon 0: global version of linearize MA equation}
		sF-\frac{X}{2}(\varphi_{ s}) = H(1) = \int_0 ^1 H'(\tau)d\tau = \frac{1}{2} \int_0 ^1 \Delta_{g_{\tau \varphi_{ s}}}(\varphi_{ s}) d\tau.
	\end{align}
	The goal is to apply local Schauder estimates to this differential equation. For any  $x\in M$, let $\phi:U \to M$ be the holomorphic chart  with $\phi(0)=x$ given by Theorem \ref{Theorem: uniform coordinates for (M,g)}. Then (\ref{in proposition weighted Ckalpha estimate for epsilon 0: global version of linearize MA equation}) becomes
	\begin{align*}
		sF = \left( \int_0 ^1  g^{\bar j  i} _{\tau \varphi_{ s}}  d\tau \right) \partial_i \partial_{\bar j} \varphi_{ s} + \frac{X}{2} (\varphi_{ s}) =: a^{\bar j i} \partial_i \partial_{\bar j} \varphi_{ s} + b_j \partial_j \varphi_{ s},
	\end{align*}
	where we use Einstein's sum convention, and the fact that $X$ is real-holomorphic as well as $JX(\varphi_{ s})=0$. 
	
	Let $k\geq 0$ be an integer and $\alpha \in (0,1)$.  Recall that by conditions (ii), (iii) of Theorem \ref{Theorem: uniform coordinates for (M,g)} and Proposition \ref{proposition uniform local Ckalpha bound}, there exists a constant $C_1>0$ such that 
	\begin{align*}
		a^{\bar j i}\geq C_1^{-1} \delta_{i\bar j}, \;\;\;  \text{and } \;\; ||a^{\bar j i }||_{C^{k,\alpha}(B_x,g)} \leq C_1,
	\end{align*}
	where $B_x$ is the holomorphic ball of radius $r_1$ around $x$ and $||\cdot||_{C^{k,\alpha}(B_x,g)}$  the H\"older norm on $B_x$ induced by the restriction of $g$.
	Moreover, we can arrange that $||b_j||_{C^{k,\alpha} (B_x,g)}\leq C_1$ since $X$ and all its covariant derivatives (w.r.t. $g$) are uniformly bounded. Recall from (\ref{in proposition uniform C3alpha bound: equivalence of holder norms}) that the norms on $B_x$ induced by $g$ are uniformly equivalent to the Euclidean H\"older norms, so that  interior Schauder estimates (\cite{gilbarg2015elliptic}[Theorem 6.17]) can be applied. Hence, there exists  a constant $C_2>0$, depending only on $n$, $k$, $\alpha$ and $C_1$, such that 
	\begin{equation} \label{in proposition weighted Ckalpha varepsilon_0: local Ckalpha schauder estimates for varphi}
	\begin{aligned}
		||\varphi_s||_{C^{k+2,\alpha}(B_x,g) }&\leq C_2 \left( ||\varphi_s||_{C^0 (B_x)} + ||F||_{C^{k,\alpha}(B_x,g)}   \right) \\
		&\leq C_2\left( C_3 + C_3 ||F||_{C^{k,\alpha}_{\varepsilon_0}(M,g)  } \right)e^{-\varepsilon_0 t (x)},
	\end{aligned} 
		\end{equation} 
	for some $C_3>0$ only depending on the radius $r_1$ of the ball $B_x$ and the bounds from Propositions \ref{upper-C^0 estimate for varphi_s from above} and \ref{proposition lower weighted bound for varepsilon0}.  Note that in the last inequality, we also used that the function $t$ is uniformly equivalent to the distance function of $(M,g)$ to some fixed point. 
	
	As the constants in (\ref{in proposition weighted Ckalpha varepsilon_0: local Ckalpha schauder estimates for varphi}) are independent of the considered point $x\in M$, we  conclude the desired estimate for $||e^{\varepsilon_0 t}\varphi_{ s}||_{C^{k,\alpha}}$ as follows. Let $0\leq l \leq k+1$ and notice that  (\ref{in proposition weighted Ckalpha varepsilon_0: local Ckalpha schauder estimates for varphi}) implies 
	\begin{align*}
		 |(\nabla^g )^l \varphi_{ s}|_g (x) \leq ||\varphi_s|| _{C^{k,\alpha}(B_x,g)} &\leq C_2 C_3 \left(  1+ ||F||_{C^{k,\alpha}_{\varepsilon_0}(M,g)}\right) e^{-\varepsilon_0 t(x)} \\
		 &=: Ce^{-\varepsilon_0 t(x)}
		\end{align*}
		holds for all $x\in M$, or equivalently,
		\begin{align*}
			||e^{\varepsilon_0 t} \varphi_{ s}||_{C^{k+1}(M,g)} \leq C.
		\end{align*}
This finishes the proof because the inclusion $C^{k+1}_{\varepsilon_0}(M) \subset C^{k,\alpha}_{\varepsilon_0}(M)$ is continuous. 

\end{proof}

It remains to improve the uniform decay rate of $\varphi_s$ from $e^{-\varepsilon_0t}$ to $e^{-\varepsilon t}$, which is achieved in the next

\begin{prop}[Improved weighted $C^{k,\alpha}$-bounds on $\varphi_{ s}$] \label{prop: final uniform bound on varepsilon weighted Ckalpha norm}
Let $1<\varepsilon<2$, $\alpha\in (0,1)$, $k \in \mathbb{N}_0$ and suppose $(\varphi_{ s})_{0\leq s\leq1}$ is a family in $C^{\infty}_{\varepsilon, \, JX}(M)$ solving (\ref{one-parameter family of MA}). Then there exists a constant $C>0$ such that 
	\begin{align*}
		\sup_{s\in [0,1]} ||e^{\varepsilon t}\varphi_{ s}||_{C^{k,\alpha}} \leq C,
	\end{align*}
	where $C$ only depends on  $k$, $\alpha$, $F\in C^{\infty}_{\varepsilon, \, JX}(M)$ and the geometry of $(M,g)$.
\end{prop}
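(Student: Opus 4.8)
The plan is to bootstrap the Monge--Amp\`ere equation (\ref{one-parameter family of MA}) against the linear isomorphism of Theorem \ref{theorem ACyl drift laplace is iso}, successively doubling the available decay rate until it reaches $\varepsilon$. First I would rewrite (\ref{one-parameter family of MA}) in the form
\[
	(\Delta_g + X)(\varphi_s) = 2sF + \mathcal{N}(\varphi_s),
\]
where the nonlinear remainder is
\[
	\mathcal{N}(\varphi_s) := \Delta_g(\varphi_s) - 2\log\frac{(\omega+i\partial\bar\partial\varphi_s)^n}{\omega^n} = -2\sum_{i=1}^n\big(\log(1+\lambda_i)-\lambda_i\big),
\]
with $\lambda_1,\dots,\lambda_n$ the eigenvalues of $i\partial\bar\partial\varphi_s$ relative to $\omega$. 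Since $\log(1+x)-x=O(x^2)$ near $x=0$, and, away from a fixed compact set, $i\partial\bar\partial\varphi_s$ is uniformly small by Proposition \ref{proposition: uniform bound on metrics}, the term $\mathcal{N}(\varphi_s)$ is a smooth function of the entries of $i\partial\bar\partial\varphi_s$ vanishing to second order there; so it should be genuinely quadratic in $i\partial\bar\partial\varphi_s$ with respect to weighted H\"older norms.

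The key intermediate estimate I would prove is: whenever $\varphi_s$ is uniformly bounded in $C^{k+2,\alpha}_\mu(M)$ for some $0<\mu<\varepsilon$, then $\mathcal{N}(\varphi_s)$ is uniformly bounded in $C^{k,\alpha}_{\min(2\mu,\varepsilon)}(M)$. This uses the submultiplicativity $||uv||_{C^{k,\alpha}_{\mu_1+\mu_2}}\leq C\,||u||_{C^{k,\alpha}_{\mu_1}}\,||v||_{C^{k,\alpha}_{\mu_2}}$ of the weighted norms, the uniform \emph{unweighted} higher-order bounds of Proposition \ref{proposition uniform local Ckalpha bound}, and the fact that the argument of the logarithm stays bounded away from $0$ (again Proposition \ref{proposition: uniform bound on metrics}), so that the composition is harmless on the compact part while contributing the factor $e^{-2\mu t}$ at infinity.

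Granting this, the bootstrap is immediate, following the final step of \cite{conlon2020steady}[Section 7]. By Proposition \ref{proposition weighted Ckalpha estimate for epsilon 0} there are uniform bounds on $||\varphi_s||_{C^{k+2,\alpha}_{\varepsilon_0}}$ for every $k$, with $0<\varepsilon_0<1$; set $\varepsilon_1:=\min(2\varepsilon_0,\varepsilon)$, so $0<\varepsilon_0<\varepsilon_1<2$. Then the right-hand side $2sF+\mathcal{N}(\varphi_s)$ is uniformly bounded in $C^{k,\alpha}_{\varepsilon_1}(M)$, since $F\in C^{k,\alpha}_\varepsilon\subset C^{k,\alpha}_{\varepsilon_1}$. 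By Theorem \ref{theorem ACyl drift laplace is iso}, $\Delta_g+X:C^{k+2,\alpha}_{\varepsilon_1}(M)\to C^{k,\alpha}_{\varepsilon_1}(M)$ is an isomorphism, so there is a unique $u_s\in C^{k+2,\alpha}_{\varepsilon_1}(M)$ with $(\Delta_g+X)u_s=2sF+\mathcal{N}(\varphi_s)$ and $||u_s||_{C^{k+2,\alpha}_{\varepsilon_1}}\leq C$ uniformly in $s$. As $\varepsilon_1>\varepsilon_0$, the difference $\varphi_s-u_s$ lies in $C^{k+2,\alpha}_{\varepsilon_0}(M)$, is annihilated by $\Delta_g+X$, and decays at infinity, so the maximum principle (used exactly this way in the proof of Theorem \ref{theorem ACyl drift laplace is iso}) forces $\varphi_s=u_s$; hence $||\varphi_s||_{C^{k+2,\alpha}_{\varepsilon_1}}$ is uniformly bounded for every $k$. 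Iterating with $\varepsilon_0$ replaced successively by $2\varepsilon_0,4\varepsilon_0,\dots$ — the cut-off by $\varepsilon$ keeping every intermediate weight strictly below $\varepsilon<2$ — after finitely many steps (once $2^m\varepsilon_0\geq\varepsilon$) one arrives at a uniform bound on $||\varphi_s||_{C^{k+2,\alpha}_\varepsilon}$, which a fortiori gives the asserted bound on $||e^{\varepsilon t}\varphi_s||_{C^{k,\alpha}}$.

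The only non-routine point, everything else being the iteration just described, is the weighted estimate for $\mathcal{N}(\varphi_s)$ of the second paragraph: one must verify that composing the analytic map $A\mapsto\log\det(I+A)-\operatorname{tr}A$ with the uniformly $C^{k,\alpha}$-bounded and $C^{k,\alpha}_\mu$-decaying tensor $i\partial\bar\partial\varphi_s$ produces something uniformly bounded in $C^{k,\alpha}_{2\mu}$. This is a standard tame (Moser-type) estimate, but it is precisely where the uniform unweighted bounds of Proposition \ref{proposition uniform local Ckalpha bound} and the two-sided metric equivalence of Proposition \ref{proposition: uniform bound on metrics} are needed in an essential way.
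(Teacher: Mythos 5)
Your proposal is correct, and at its core it is the same weight-doubling bootstrap as the paper's proof: both exploit that the Monge--Amp\`ere nonlinearity is quadratic in $i\partial\partialb\varphi_s$, so that control of $\varphi_s$ in weight $\mu$ yields control of the right-hand side of the linearised equation in weight $\min(2\mu,\varepsilon)$, and one iterates finitely many times from $\varepsilon_0$ up to $\varepsilon$. The execution of each step differs, though. The paper makes the quadratic structure manifest by writing the remainder as the explicit Taylor term $\int_0^1\int_0^\tau|\partial\partialb\varphi_s|^2_{g_{\tau\varphi_s}}\,d\sigma\,d\tau$ (the double integral of $H''$), deduces the one-sided inequality $(\Delta_g+X)\varphi_s\le Ce^{-\varepsilon_1 t}$, improves the weighted $C^0$ bound by comparison with a barrier $v$ solving $(\Delta_g+X)v=Ce^{-\varepsilon_1 t}$, and then re-runs the local Schauder argument of Proposition \ref{proposition weighted Ckalpha estimate for epsilon 0} to upgrade to weighted $C^{k,\alpha}$. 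You instead apply the global isomorphism $\Delta_g+X\colon C^{k+2,\alpha}_{\varepsilon_1}(M)\to C^{k,\alpha}_{\varepsilon_1}(M)$ from Theorem \ref{theorem ACyl drift laplace is iso} and identify $\varphi_s$ with the unique solution $u_s$ via the maximum principle, which delivers the full weighted $C^{k+2,\alpha}_{\varepsilon_1}$ bound in one stroke --- at the price of the tame composition estimate for $A\mapsto\log\det(I+A)-\operatorname{tr}A$, which you flag but do not prove. That estimate does hold: $G(A):=\log\det(I+A)-\operatorname{tr}A$ is smooth with bounded derivatives on the region $I+A\ge C^{-1}I$ guaranteed by Proposition \ref{proposition: uniform bound on metrics} and vanishes to second order at $A=0$, so $G(A)=\tilde G(A)[A,A]$ with $\tilde G(A)$ bounded in unweighted $C^{k,\alpha}$ by Proposition \ref{proposition uniform local Ckalpha bound}, and submultiplicativity of the weighted norms supplies the factor $e^{-2\mu t}$; this is essentially the same computation as bounding the paper's double integral. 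Both routes are valid: yours is slicker on the linear side, the paper's is more self-contained on the nonlinear side.
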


\begin{proof}
	This improvement of the rate based on  \cite{conlon2020steady}[p. 63]. We begin by noting that $H(\tau)$ define by (\ref{in proposition weighted Ckalpha estimate for epsilon 0: definition of H}) satisfies
	\begin{align*}
		H''(\tau)= - |\partial \partialb \varphi_{ s}|^2 _{g_{\tau \varphi_{ s}}},
	\end{align*}
		so that we can write
	\begin{equation}\label{in proposition improvement of weighted bound: linearized MA equation}
	\begin{aligned}
		sF + \int _0 ^1 \int _0 ^\tau |\partial \partialb \varphi_{ s}|^2 _{g_{\tau \varphi_{ s}}}d\sigma \,d\tau &= sF + H'(0)-H(1) + H(0)  \\
		&=\frac{1}{2} \left( \Delta_g + X \right)(\varphi_{ s}),
	\end{aligned}
\end{equation}
where we used (\ref{one-parameter family of MA}) for the second inequality. From (\ref{in proposition improvement of weighted bound: linearized MA equation}) and Proposition \ref{proposition weighted Ckalpha estimate for epsilon 0}, we conclude that there exists a uniform constant $C>0$ such that
\begin{align}\label{in proposition improvement of weighted decay: iteration start}
	\left(\Delta_g + X  \right)(\varphi_{ s}) \leq C e^{-\varepsilon_1 t} \;\; \text{ with } \;\; \varepsilon_1:= \min \{2\varepsilon_0, \varepsilon   \}>\varepsilon_0.
\end{align}
Starting from this equation, we can obtain a uniform lower bound on $e^{\varepsilon_1 t} \varphi_{ s}$ by using the maximum principle and arguing as in Proposition \ref{upper-C^0 estimate for varphi_s from above}. Let $v\in C^{\infty}_{\varepsilon_1}(M)$ be the unique solution to 
\begin{align*}
	(\Delta_g + X)(v) = C e^{-\varepsilon_1 t},
\end{align*}
so that we have
\begin{align*}
		(\Delta_g + X)(\varphi_{ s} - v) \leq 0 \;\; \text{ on } \;\; M.
\end{align*}
Thus, the maximum principle implies
\begin{align} \label{in proposition final varepsilon boun: after applying max principle}
	\varphi_{ s} -v \geq  \lim_{t\to \infty} (\varphi_{ s} -v )=0 \;\; \text{ on } \;\; M
\end{align}
which is  a uniform weighted lower bound on $\varphi_{ s}$ since $v\in C^{\infty}_{\varepsilon_1}(M)$ only depends on $\varepsilon_1$, $C$ and $(M,g)$. Combining (\ref{in proposition final varepsilon boun: after applying max principle}) with the upper bound in Proposition \ref{upper-C^0 estimate for varphi_s from above},  the term $||e^{\varepsilon_1 t}\varphi_s||_{C^0}$ is uniformly bound from above.


The next step is to prove that for each $k\in \mathbb{N}_0$, $\alpha \in (0,1)$, there exists a uniform constant $C>0$ such that
\begin{align}\label{in proposition improvement of weighted bound: iteration end}
	||e^{\varepsilon_1 t } \varphi_{ s}||_{C^{k,\alpha}} \leq C.
\end{align}
Indeed, the same argument as in Proposition \ref{proposition weighted Ckalpha estimate for epsilon 0} goes through verbatim, starting this time from the uniform bound on $||e^{\varepsilon_1 t}\varphi_s||_{C^0}$ instead of merely $||e^{\varepsilon_0 t}\varphi_s||_{C^0}$. 
Hence, we improved the uniform decay from $\varepsilon_0 $ to $\varepsilon_1$. 

If $\varepsilon_1= \varepsilon$, we are done, so we  assume  $\varepsilon_1= 2\varepsilon_0<\varepsilon$.
Notice that (\ref{in proposition improvement of weighted bound: iteration end}) and (\ref{in proposition improvement of weighted bound: linearized MA equation}) can then be used to further improve the uniform decay of $(\Delta_g+X)\varphi_{ s}$ in (\ref{in proposition improvement of weighted decay: iteration start}) to 
\begin{align*}
	\varepsilon_2:= \min\{2\varepsilon_1, \varepsilon\} >\varepsilon_1=2\varepsilon_0
\end{align*}
so that repeating the entire argument then gives a uniform bound on $||e^{\varepsilon_2 t } \varphi_{ s}||_{C^{k,\alpha}}$.

After  iterating this process a bounded number of times, we finally conclude the required uniform estimate on $||e^{\varepsilon t} \varphi_{ s}||_{C^{k,\alpha}}$. 
\end{proof}

Since the previous Proposition is precisely the content of Theorem \ref{theorem APRIORI estimates and regularity}, the only statement left to show is the regularity result in Proposition \ref{proposition regularity}. 

\begin{proof}[Proof of Proposition \ref{proposition regularity}]
	Let $F\in C^{\infty}_{\varepsilon, \, JX}(M)$ for some $1<\varepsilon<2$ and suppose that $\varphi \in C^{3,\alpha}_{\varepsilon', \, JX}(M)$ solves 
	\begin{align}\label{in prop regularity: the MA equation}
		(\omega+ i \partial \partialb \varphi) ^n = e^{F-\frac{X}{2}(\varphi)} \omega ^n 
	\end{align}
with $0<\varepsilon'\leq \varepsilon$. We have essentially seen all required arguments in Propositions \ref{proposition uniform local Ckalpha bound}, \ref{proposition weighted Ckalpha estimate for epsilon 0} and \ref{prop: final uniform bound on varepsilon weighted Ckalpha norm}, but the difference is that we now only require \textit{qualitative} information on the solution $\varphi$, i.e. all the constants below a priori \textit{do} depend on $\varphi$. 

First, we improve the regularity and claim that $\varphi \in C^{k,\alpha}_{\operatorname{loc}}(M)$ for each integer $k\geq 3$ and $\alpha \in (0,1)$. As in Proposition \ref{proposition uniform local Ckalpha bound}, we work around some $x\in M$ in the holomorphic chart $\phi: B_x= B(0,r_1) \to M$ given by Theorem \ref{Theorem: uniform coordinates for (M,g)}. Differentiating (\ref{in prop regularity: the MA equation}) in direction of $\partial_j = \partial / \partial z_j$, we obtain
\begin{align} \label{in prop regularity: local bootstrapping}
	\frac{1}{2} \Delta_{g_\varphi} (\partial_j \varphi) = \partial_j \left( F-\frac{X}{2}(\varphi)  \right) + \left( \operatorname{tr}_\omega-\operatorname{tr}_{\omega_\varphi} \right) \mathcal{L}_{\partial_j} (\omega)
\end{align}
Then we notice that the coefficients $g_\varphi^{\bar j i }$ of $\Delta_{g_\varphi}$, as well as the right-hand side of (\ref{in prop regularity: local bootstrapping}), are in $C^{1,\alpha}(B_x)$, so that the local regularity for elliptic equations (\cite{gilbarg2015elliptic}[Theorem 6.17])  implies $\partial_j \varphi  \in C^{3,\alpha}(B_x)$ for all $j=1,\dots,n$. Similarly, one can show that each $\partial_{\bar j } \varphi $ is also in  $C^{3,\alpha}(B_x)$, implying $\varphi \in C^{4,\alpha}(B_x)$. Hence, the standard bootstrapping gives $\varphi \in C^{k,\alpha}(B_x)$ for any given $k\in \mathbb{N}$ and $\alpha\in (0,1)$. Indeed, using $\varphi \in C^{4,\alpha}(B_x)$, we observe that the coefficients $g_{\varphi}^{\bar j i }$ and the right-hand side of (\ref{in prop regularity: local bootstrapping}) are in $C^{2,\alpha}(B_x)$, so that $\partial_j\varphi, \partial_{\bar j} \varphi \in C^{4,\alpha}(B_x)$. This implies $\varphi$ is $C^{5,\alpha}(B_x)$, and so forth, until we finally arrive that $\varphi \in C^{k,\alpha}(B_x)$, as claimed.

In the second step, we show that $\varphi \in C^{k,\alpha}_{\varepsilon'}(M)$, i.e. that higher order derivatives of $\varphi$ decay as $e^{-\varepsilon't}$. In the same notation as in Proposition \ref{proposition weighted Ckalpha estimate for epsilon 0}, consider the following equation on $B_x$:
\begin{align*}
	F = \left( \int_0 ^1  g^{\bar j  i} _{\tau \varphi}  d\tau \right) \partial_i \partial_{\bar j} \varphi + \frac{X}{2} (\varphi) =: a^{\bar j i} \partial_i \partial_{\bar j} \varphi + b_j \partial_j \varphi.
\end{align*}
Then, by local Schauder estimates, there exists a constant $C>0$, depending on $k$, $\alpha$, $||g_{\varphi}||_{C^{k,\alpha}(M)}$ and $||X||_{C^{k,\alpha}(M)}$, such that
\begin{align*}
	||\varphi||_{C^{k+2,\alpha}(B_x)} \leq C\left( ||\varphi||_{C^{0}(B_x)} + ||F||_{C^{k,\alpha}(B_x)}  \right).
\end{align*}
Since $\varphi= O(e^{-\varepsilon ' t})$, $F\in C^{\infty}_{\varepsilon}(M)$ and because the Euclidean H\"older norms on $B_x$ are uniformly equivalent to the ones induced by the restriction of $g$, we conclude from this equation that $\varphi \in C^{k,\alpha}_{\varepsilon'}(M)$ by the same argument used in Proposition \ref{proposition weighted Ckalpha estimate for epsilon 0}. 

Finally, it remains to show $\varphi \in C^{k,\alpha}_{\varepsilon}(M)$, i.e. to improve the decay rate from $\varepsilon'$ to $\varepsilon$. Similarly to Proposition \ref{prop: final uniform bound on varepsilon weighted Ckalpha norm}, consider the equation 
\begin{align}\label{in prop: regularity last equation for improving rate}
	F + \int _0 ^1 \int _0 ^\tau |\partial \partialb \varphi|^2 _{g_{\tau \varphi}}d\sigma \,d\tau =\frac{1}{2} \left( \Delta_g + X \right)(\varphi),
\end{align}
and deduce that 
\begin{align*}
	\left( \Delta_g + X \right)(\varphi) \in C^{\infty}_{\varepsilon_1}(M) \;\; \text{ for } \;\; \varepsilon_1= \min \{ 2\varepsilon', \varepsilon \}. 
\end{align*}
Applying Theorem \ref{theorem ACyl drift laplace is iso}, we find a unique $v\in C^{\infty}_{\varepsilon_1}(M)$ such that
\begin{align*}
	(\Delta_g + X) (v) = (\Delta_g + X)(\varphi),
\end{align*}
but then the maximum principle implies $\varphi=v \in C^{\infty}_{\varepsilon_1}(M)$. If $\varepsilon_1<\varepsilon$, iterate this process starting from (\ref{in prop: regularity last equation for improving rate}) a bounded number of times, and conclude that $\varphi \in C^{\infty}_\varepsilon (M)$, finishing the proof.

\end{proof}

		\bibliography{ms02}
		\bibliographystyle{amsalpha}
\end{document}